\documentclass[11pt]{article}
\usepackage[utf8]{inputenc}
\usepackage[english]{babel}
\usepackage{graphicx}
\usepackage{amsmath}
\usepackage{amssymb}
\usepackage{amsfonts}
\usepackage{amsthm}
\usepackage[left=2.1cm,top=1.5cm,right=2.1cm, bottom=2.2cm,letterpaper]{geometry}
\usepackage{mathrsfs}
\usepackage{caption}
\usepackage{subcaption}
\usepackage{latexsym}
\usepackage{xfrac}
\usepackage{tcolorbox}
\usepackage{enumitem}
\usepackage{float}
\usepackage{hyperref}
\hypersetup{
	colorlinks=true,
	linkcolor=red,
	filecolor=magenta,
	urlcolor=cyan,
}

\usepackage[all]{hypcap}
\usepackage{autonum}
\usepackage{url}
\usepackage[toc]{appendix}
\newtheorem{theorem}{Theorem}[section]

\newtheorem{lemma}[theorem]{Lemma}
\newtheorem{definition}[theorem]{Definition}
\newtheorem{proposition}[theorem]{Proposition}

\newtheorem{remark}[theorem]{Remark}
\numberwithin{equation}{section}
\numberwithin{figure}{section}

\newcommand{\N}{\mathbb{N}}
\newcommand{\R}{\mathbb{R}}
\newcommand{\eps}{\varepsilon}

\newcommand{\bs}{\boldsymbol}
\makeatletter
\renewcommand*\env@matrix[1][*\c@MaxMatrixCols c]{%
	\hskip -\arraycolsep
	\let\@ifnextchar\new@ifnextchar
	\array{#1}}
\makeatother

\def\neweq#1{\begin{equation}\label{#1}}
	\def\endeq{\end{equation}}

\newcommand{\wto}{\rightharpoonup}

\def\XXint#1#2#3{{\setbox0=\hbox{$#1{#2#3}{\int}$ }
		\vcenter{\hbox{$#2#3$ }}\kern-.6\wd0}}

\definecolor{darkblue}{rgb}{0,0,0.7} 
\definecolor{darkred}{rgb}{0.9,0.1,0.1}
\definecolor{darkgreen}{rgb}{0,0.5,0}

\definecolor{labelkey}{gray}{.8}
\definecolor{refkey}{gray}{.8}

\begin{document}
	
	\title{Brinkman's term for the steady-state Navier-Stokes equations with prescribed flux rate or pressure drop in perforated pipes}
	
	\author{Richard H\"ofer\thanks{Fakultät für Mathematik, Universität Regensburg, Universitätsstraße 31, 93053 Regensburg, Germany. Email: \texttt{richard.hoefer@ur.de}.} \and Amina Mecherbet\thanks{Institut de Mathématiques de Jussieu - Paris Rive Gauche, Université Paris Cité, 8 Place Aurélie Nemours, 75013 Paris, France. Email: \texttt{mecherbet@imj-prg.fr}.} \and Gianmarco Sperone\thanks{\noindent Facultad de Matemáticas, Pontificia Universidad Católica de Chile, Avenida Vicuña Mackenna 4860, 7820436 Santiago, Chile. Email: \texttt{gianmarco.sperone@uc.cl}.}}
	\date{}
	\maketitle
	
	\begin{abstract}
	\noindent
	The steady motion of a viscous incompressible fluid in a distorted pipe, containing several small particles of diameter $\eps^3$ and mutual distance $\eps$, is modeled through the Navier-Stokes equations with mixed boundary conditions. Apart from inhomogeneous Dirichlet boundary conditions on the particles, these involve the Bernoulli pressure and the tangential velocity on the inlet and outlet of the tube, while either the transversal flux rate or the pressure drop is prescribed along the pipe. Applying the energy method in homogenization theory, we study the asymptotic behavior of the solutions to these systems as $\eps \to 0$, without any restriction on the magnitude of the data, and show that the effective equations display an additional Brinkman term. An important feature of the present work concerns the required uniform bounds, which are achieved (in the case of the prescribed flux problem) by a contradiction argument based on Bernoulli's law for solutions of the stationary Euler equations. 
		\par\noindent
		
	\smallskip
	\noindent	
	{\bf AMS Subject Classification:} 76M50, 76D05, 35B27, 35M32, 35Q31.\par\noindent
	{\bf Keywords:} incompressible fluids, mixed boundary conditions, homogenization, perforated domain.
	\end{abstract}
	
	{
		\hypersetup{linkcolor=black}
		\tableofcontents
	}
	
	\newpage
	\section{Introduction} \label{sec:intro}
	The study of the laminar motion of a viscous incompressible fluid in a junction of pipes $\Omega \subset \mathbb{R}^{3}$, whose bounding walls are rigid and impermeable, constitutes a fundamental theme in Fluid Mechanics \cite{landau} due to its applications in a variety of fields such as aerodynamics \cite{von2004aerodynamics}, hemodynamics \cite{galdi2008hemodynamical} and petroleum engineering \cite{bradley1987petroleum}. In all these disciplines, an essential investigation involves the fluid motion through an array of solid particles or perforations contained in the junction of pipes \cite{bear2013dynamics, hornung1996homogenization}. More precisely, for an arbitrarily small parameter $\eps > 0$, the fluid flow equations are set in a new domain $\Omega_{\eps} \subset \mathbb{R}^{3}$, obtained by placing inside $\Omega$ a large quantity $N(\eps)$ of particles, each one of diameter $\eps^\alpha$ ($\alpha \geq 1$), at a mutual distance $\eps$ from each other. The goal is then to derive an \textit{effective} description of the system dynamics when $\eps \to 0$; mathematically, this can be achieved within the context of \textit{homogenization theory} \cite{tartar2009general}.
	 
	In the seminal works of Marchenko \& Khruslov \cite[Chapter IV]{marchenko1974boundary}, Sánchez-Palencia \cite{ene1973equations,sanchez1980non}, Tartar \cite{tartar2006homogeneisation} and Allaire \cite{allaire1, allaire2}, three different regimes have been identified concerning the homogenization of the stationary Stokes or Navier-Stokes equations:
	
	    \noindent
		$\bullet$ In the \emph{subcritical regime} $\alpha > 3$, the particles have a vanishing effect as $\eps \to 0$, resulting in the original Stokes or Navier-Stokes system in $\Omega$.
		
		\noindent
		$\bullet$ In the \emph{critical} regime $\alpha = 3$, the particles give rise to an additional zero-order  term -- the \emph{Brinkman force} --  in the homogenized equation. This effective system is governed by the \textit{Brinkman equation} \cite{brinkman1949calculation}.
		
		\noindent
		$\bullet$ In the \emph{supercritical regime} $\alpha < 3$, the Brinkman force dominates the viscous forces (and inertial forces in the case of the Navier-Stokes system), resulting in \textit{Darcy's law} \cite{darcy1856fontaines} as the effective limit equation.   

	Since then, homogenization methods in the context of Fluid Mechanics have attracted the interest of several authors. This includes, for instance, the study of convergence rates \cite{mecherbet2020estimates,jing2025unified,shen2022sharp},  the evolutionary incompressible and compressible Navier-Stokes equations \cite{feireisl2016homogenization, hofer2023homogenization, hofer2024quantitative,masmoudi2002homogenization}, the study of non-zero boundary data \cite{CarrapatosoHillairet20,DesvillettesGolseRicci08,Hillairet2018,hillairet2019effect}, relaxation of the separation assumptions between the particles towards random configurations \cite{CarrapatosoHillairet20,Giunti21,GiuntiHoefer21,Hillairet2018,HoeferJansen20}, among various others.
	
	Traditionally, the homogenization result for the stationary Navier-Stokes equations is obtained as a by-product of the corresponding result for the Stokes problem via a compact perturbation argument \cite{allaire1, allaire2, conca1985application}.  This approach is unfeasible in fluid flow models where a-priori bounds cannot be derived by simply testing the momentum equation by the solution itself, or else, without imposing a \textit{smallness} condition on the data of the underlying system. One example is given by the celebrated \textit{Leray flux problem} for the stationary Navier-Stokes equations in multiply-connected domains, under non-homogeneous Dirichlet boundary conditions \cite{korobkov2015solution}. A similar situation arises in the study of pipe flows with mixed boundary conditions, where the pressure drop ought to be calculated from a given velocity net flux \cite{heywood1996artificial, korobkov2020solvability}. In fact, the unrestricted solvability of these problems, at the $\eps$-level, is guaranteed by the Leray-Schauder Principle, reaching the required a-priori bounds through a \textit{reductio ad absurdum} argument that employs Bernoulli's law \cite{korobkov2011bernoulli} for solutions of the stationary Euler equations. Application of homogenization techniques in such models becomes a delicate matter because, if one intends to deploy this contradiction argument for the obtainment of $\eps$-uniform bounds, the complex structure of the perforations must be taken into account. Indeed, this approach was implemented in \cite{patriarca2025homogenization} for the homogenization of Leray's flux problem in two dimensions, as well as in \cite{sperone2023homogenization} for the homogenization of the prescribed net flux/prescribed pressure drop problem in a perforated pipe in the subcritical case $\alpha > 3$.
	
	The present paper investigates the homogenization of the steady-state, incompressible Navier-Stokes equations in obstructed pipes in a polydisperse regime with mixed boundary conditions involving the Bernoulli pressure and the tangential velocity on the inlet and outlet of the tube, while either the transversal flux rate or the pressure drop is prescribed along the pipe. Our main contribution is the extension of the results contained in \cite{sperone2023homogenization} to the critical case $\alpha = 3$. Moreover, we consider distorted pipes (of finite length) with cylindrical outlets, and we allow non-zero constant Dirichlet boundary conditions at individual particles contained inside the pipes. This feature is relevant for applications, and a first step towards a more complete description that includes the particle evolution. Unfortunately, the rigorous mathematical treatment of the asymptotic behavior of such fully-coupled, dynamical many-particle-fluid problems is extremely challenging and there are only few works in related simplified settings \cite{bravin2024collective,HoeferSchubert25, HMS25}.
	
	The most significant results of this manuscript, Theorems~\ref{thm_hom} and \ref{thm_hom_pd}, dictate that the homogenization process furnishes Brinkman's equation in the limit, without any smallness assumption on the data (external force, flux rate or pressure drop). This is achieved by applying the energy method of Tartar \cite[Appendix]{sanchez1980non} which, broadly speaking, consists of three steps:
	\begin{itemize}[leftmargin=7mm]
		\item[(a)] Proof of $\eps$-uniform bounds for the solutions of the $\eps$-dependent Navier-Stokes model.
		\item[(b)] Modification (correction) of the test functions in the homogenized domain (without perforations), turning them into admissible test functions at the $\eps$-level.
		\item[(c)] Passage to the limit, as $\eps \to 0$, in the Navier-Stokes equations at the $\eps$-level.
	\end{itemize}
We emphasize once more that, in most related  homogenization results for the (in)-stationary Navier-Stokes equations in perforated domains, step (a) is just a consequence of a simple energy estimate (and a Poincar\'e inequality in the supercritical regime). In contrast, here we exploit the fact that we already anticipate the behavior of the $\eps$-dependent fluid velocity as $\eps \to 0$, and therefore incorporate this feature in the aforementioned contradiction argument. This is also in distinction to  \cite{sperone2023homogenization}, where the precise way in which the particles influence the fluid flow was irrelevant, since their effect is negligible in the limit as $\eps \to 0$, in the subcritical case $\alpha > 3$. More precisely, a  \emph{scalar capacitary corrector} was sufficient in  \cite{sperone2023homogenization} to complete all three steps (a)-(b)-(c), because the \textit{relative capacity} of the swarm of particles vanishes in the homogenization limit, whereas it just remains uniformly bounded when $\alpha=3$, see Lemma~\ref{refcapacitypro}. Therefore, in the current framework, this fundamental role is played by \textit{Stokes correctors} which, in turn, allow for the construction of a suitable \textit{restriction operator} in this polydisperse setting. 
		
Once step (a) is fulfilled, the same restriction operator allows us to complete phases (b)-(c). Contrary to the classical procedure due to Allaire \cite[Theorem 1.1.8]{allaire1}, we pass to the limit by applying the restriction operator to test functions rather than multiplying the momentum equation by an \textit{oscillating} test function. In this regard, here we proceed similarly as in \cite{GiuntiHoefer21}, but extending the method to a polydisperse setting with non-zero particle velocities, and including the convergence of the pressure term in the homogenization limit.
Moreover, in contrast to \cite[Lemma 2.5]{GiuntiHoefer21}, we build a genuine restriction operator, in the sense that it does not modify test functions that already vanish on the particles. 
		
Since the discoveries of Brinkman \cite{brinkman1949calculation}, it is well understood nowadays that the spatial distribution of the particles in the critical regime causes the appearance of an additional (friction) force in the fluid equations,
originated from the collective reaction of the particles to the drag forces exerted on them by the viscous incompressible flow. Since the critical regime is dilute, the total drag force on the fluid is effectively given by the sum of the individual drag forces of the particles, which are determined by Stokes' law: they are proportional to the resistance of the particle and the difference between the particle and the fluid velocity in the vicinity of the particle.  Mathematically, this phenomenon has been properly identified in the case of zero particle velocities \cite{allaire1, feireisl2016homogenization}, and in \cite{CarrapatosoHillairet20,DesvillettesGolseRicci08,Hillairet2018,hillairet2019effect} for non-zero particle velocities; accordingly, the Brinkman term \eqref{homog.eqpnf}-\eqref{homog.eqppd} produced by our homogenization process is the sum of a friction term $\mathcal G \bs u$ and a  source term $\bs{\mathcal{J}}$ that arises from the limiting particle flux.
 
This paper is organized as follows. In Section \ref{sec:setting}, we first define the precise geometric setting of the pipes and the perforations, as well as the fluid equations under investigation. We then present the main results of our paper, and the necessary assumptions to reach them. We gather, in Section \ref{section_preliminary_results}, several preliminary lemmas involving classical tools such as the capacity potential, the divergence equation and the construction of a flux carrier adapted to a perforated domain with non-homogeneous Dirichlet boundary conditions. The Stokes correctors and the restriction operator are also assembled in this section, see Theorem \ref{pro:corrector}. Section \ref{epslevelsec} is devoted to the definition of the functional spaces, the weak formulation of the problems under study, together with the proofs of Theorems \ref{epslevel}-\ref{epslevelpd} concerning the obtainment of $\eps$-uniform bounds. We furnish the proof of our main result, Theorem \ref{thm_hom}, at the end of Section \ref{epslevelsec}. Finally, the Appendix \ref{appendix_whole} is devoted to a detailed description of three essential topics which, even though are recurrent in the homogenization literature, are included here for the sake of completeness: the Stokes problem in the exterior of a single particle, the Stokes resistance matrix and the limits of the capacity densities treated in this manuscript. 
	
	\section{Presentation of the problem and main results} 
	\label{sec:setting}
	
	\subsection{The geometry of the pipe}
	
	\begin{definition} \label{addomain3}
		An open bounded set $\Omega \subset \mathbb{R}^{3}$ will be called \textbf{admissible} if $\partial \Omega$ is piecewise of class $\mathcal{C}^{2}$, $\Omega$ is simply connected and $\Omega$ is the union of three disjoint subsets as follows:
		\begin{itemize}
			\item [(1)] In some coordinate system, $\Omega_{1} \doteq \Theta_{1} \times (0,\ell_{1})$, for some $\ell_{1}>0$ and some open, bounded and planar domain $\Theta_{1} \subset \mathbb{R}^{2}$ having boundary of class $\mathcal{C}^{2}$. Therefore, $\Omega_{1}$ is a cylinder of length $\ell_{1}$ and fixed cross-section, given by the planar set $\Theta_{1}$;
			\item [(2)] In some coordinate system, $\Omega_{2} \doteq \Theta_{2} \times (0,\ell_{2})$, for some $\ell_{2}>0$ and some open, bounded and planar domain $\Theta_{2} \subset \mathbb{R}^{2}$ having boundary of class $\mathcal{C}^{2}$. Therefore, $\Omega_{2}$ is a cylinder of length $\ell_{2}$ and fixed cross-section, given by the planar set $\Theta_{2}$;
			\item [(3)] $\Omega_{0} \doteq \Omega \setminus (\Omega_{1} \cup \Omega_{2})$  (note that $\Omega_{0}$ is not open).
		\end{itemize}
		The boundary of $\Omega$ is decomposed as $ \partial \Omega = \Gamma_{I} \cup \Gamma_{W} \cup \Gamma_{O}$, where
		\begin{equation}\label{boundaryomega3d}
			\begin{aligned}
				& \Gamma_{I} \doteq \Theta_{1} \times \{0\} \quad \text{(in the coordinate system defining $\Omega_{1}$)}  \,, \\[5pt]
				& \Gamma_{O} \doteq \Theta_{2} \times \{\ell_{2}\} \quad \text{(in the coordinate system defining $\Omega_{2}$)} \, ,
			\end{aligned}
		\end{equation}	
		and $\Gamma_{W} \subset \mathbb{R}^{3}$ represents the $\mathcal{C}^{2}$-surface connecting $\Gamma_{I}$ with $\Gamma_{O}$.
	\end{definition}
	\noindent	
	Let $\Omega \subset \mathbb{R}^{3}$ be an admissible domain in the sense of Definition \ref{addomain3}. The outward unit normal to $\partial \Omega$ is denoted by $\bs{\nu} \in \mathbb{R}^{3}$. Henceforth we will refer to $\Gamma_{I}$ and $\Gamma_{O}$ in \eqref{boundaryomega3d} as the \textit{inlet} and \textit{outlet} of $\Omega$, respectively, while $\Gamma_{W}$ includes all the \textit{physical walls} of $\Omega$.

	In few words, an admissible domain $\Omega \subset \mathbb{R}^{3}$, in the sense of Definition \ref{addomain3}, is a truncation (orthogonal to the symmetry axis of the inlet and outlet) of the domain considered in the celebrated \textbf{Leray problem}, as described in \cite[Definition 1.1]{amick1977steady} or \cite[Chapter III]{pileckas2007navier}. Figure \ref{dom3} illustrates an example of such admissible three-dimensional pipe.
	\vspace*{-3mm}
	\begin{figure}[H]
		\begin{center}
			\includegraphics[scale=0.71]{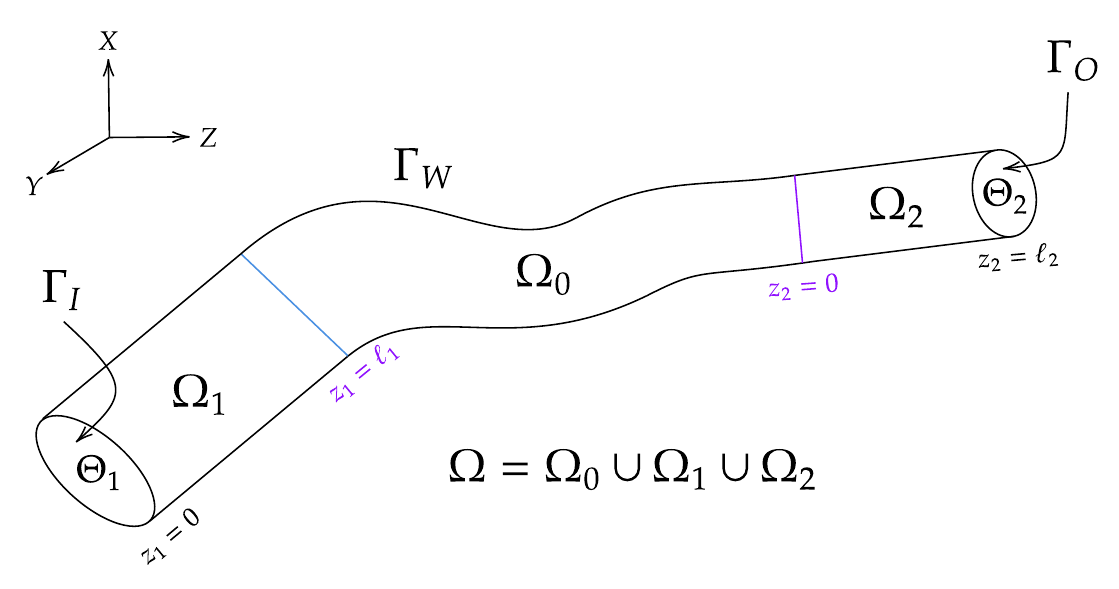}
		\end{center}
		\vspace*{-8mm}
		\caption{Representation of an admissible domain $\Omega \subset \mathbb{R}^{3}$.}\label{dom3}
	\end{figure}
	
	The reason behind the choice of domains with cylindrical ends is motivated by the existence of \textit{fully developed flows} in these outlets, such as the \textit{Hagen-Poiseuille flow} \cite[Chapter II]{landau}, characterized by the fact that the only nonzero component of the associated velocity field is directed along the axis of the tubes, see formula \eqref{poi23d} below. This allows us to construct, in Lemma \ref{fluxcarrier}, sufficiently smooth \emph{flux carriers}, i.e., vector fields satisfying the boundary conditions of the prescribed net flux problem.

	\begin{remark}
		We emphasize that we do not assume that $\partial \Omega$ is connected. This allows us to treat  ``pipe networks'' as long as there is one inlet and one outlet. Furthermore, all the results presented in this manuscript can be extended to the case when the admissible domain $\Omega \subset \mathbb{R}^3$ has several cylindrical inlets and outlets, as depicted in Figure \ref{polpo3d}.

		\vspace*{-3mm}
	\begin{figure}[H]
		\begin{center}
			\includegraphics[scale=0.5]{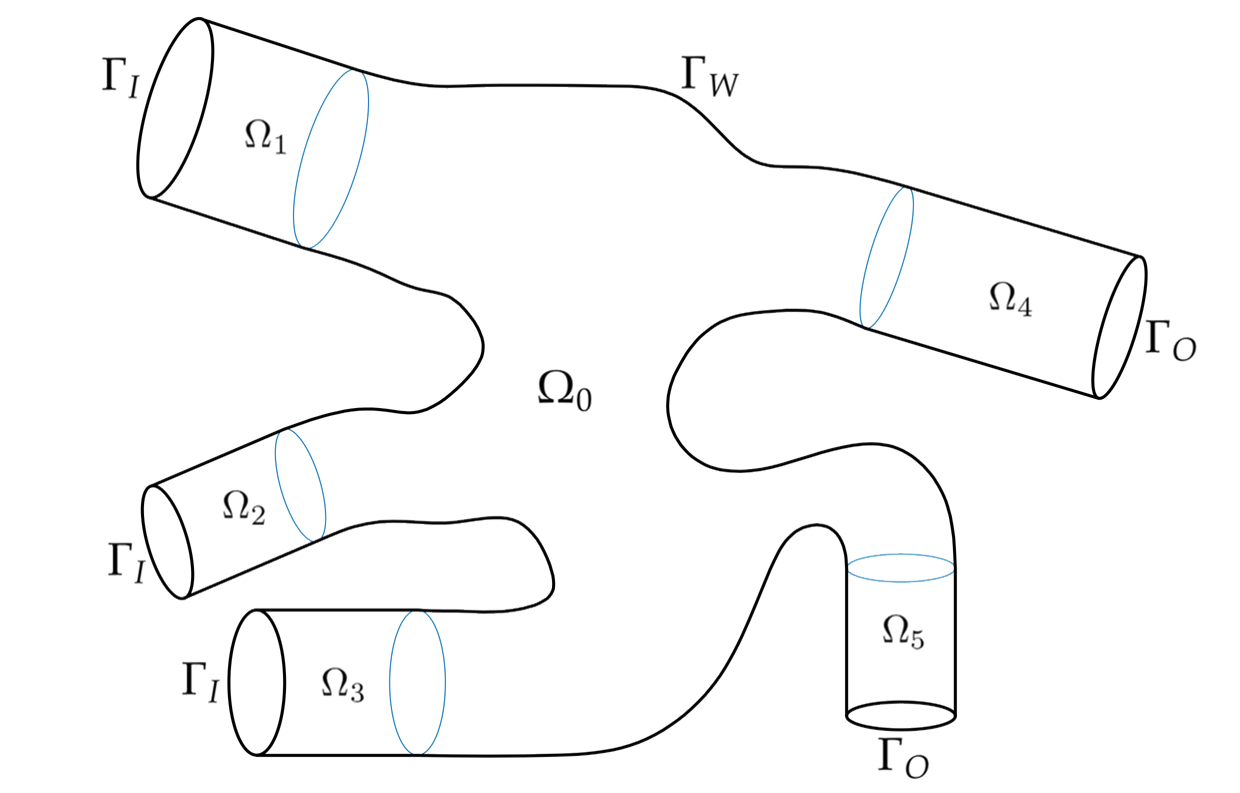}
		\end{center}
		\vspace*{-6mm}
		\caption{Representation of an admissible domain $\Omega \subset \mathbb{R}^{3}$ with several inlets and outlets.}\label{polpo3d}
	\end{figure}
\end{remark}
	
	\subsection{The perforated domain} \label{perfdomassumptions}
	
	For any $\bs{\xi} \in \mathbb{R}^{3}$ and $r > 0$ we denote by $B(\bs{\xi}, r) \subset \mathbb{R}^{3}$ the open ball of radius $r$ with center at $\bs{\xi}$. Let $( K_{n} )_{n \in \mathbb{N}}$ be a sequence of open, bounded and simply connected $\mathcal{C}^{2}$-sets with a uniform John constant \cite[Definition 2.2.1]{acosta2017divergence} such that 
\begin{equation} \label{perforationinitial}
	\bs{0} \in K_{n} \quad \text{and} \quad \overline{K_{n}} \subset B(\bs{0},\delta_0) \qquad \forall n \in \mathbb{N} \, ,
\end{equation}
for some $\delta_0 > 0$ independent of $ n \in \mathbb{N}$. Following \cite{diening2017inverse,feireisl2021homogenization}, given $\varepsilon_{*} > 0$, we assume that for all $ \varepsilon \in (0,\varepsilon_{*})$ there exist an integer $N(\varepsilon) \geq 1$ and a collection of points $\bs{\xi}^{\varepsilon}_{1},...,\bs{\xi}^{\varepsilon}_{N(\varepsilon)} \in \mathbb{R}^{3}$ such that
	\begin{equation} \label{perforation}
		\begin{aligned}
			& \bs{\xi}^{\varepsilon}_{n} + \varepsilon^{3} \, \overline{K_{n}} \subset B(\bs{\xi}^{\varepsilon}_{n}, \delta_{0} \varepsilon^{3}) \subset B(\bs{\xi}^{\varepsilon}_{n}, \delta_{1} \varepsilon) \subset B \! \left(\bs{\xi}^{\varepsilon}_{n}, \delta_{2} \varepsilon \right) \subset \Omega \qquad \forall n \in \{1,...,N(\varepsilon)\} \,, \\[6pt]
			& \partial B \! \left(\bs{\xi}^{\varepsilon}_{n}, \delta_{2} \varepsilon \right) \cap \partial B \! \left(\bs{\xi}^{\varepsilon}_{m}, \delta_{2} \varepsilon \right) = \emptyset \qquad \forall n,m \in \{1,...,N(\varepsilon)\} \,, \ n \neq m \, , \\[6pt]
			& \partial B \! \left(\bs{\xi}^{\varepsilon}_{n}, \delta_{2} \varepsilon \right) \cap \partial \Omega = \emptyset \qquad \forall n \in \{1,...,N(\varepsilon)\} \, ,
		\end{aligned}
	\end{equation}
	for some constants $\delta_{2}> \delta_{1} > 0$ that are independent of $\varepsilon \in (0,\varepsilon_{*})$. Setting $K^{\varepsilon}_{n} \doteq \bs{\xi}^{\varepsilon}_{n} + \varepsilon^{3} K_{n}$ for every $n \in \{1,...,N(\varepsilon)\}$, we refer to the family $\{ K^{\varepsilon}_{n} \}^{N(\varepsilon)}_{n=1}$ satisfying \eqref{perforation} as the \textit{particles}, while
	\begin{equation} \label{perfordomain}
		\Omega_{\varepsilon} \doteq \Omega \setminus \overline{K_{\varepsilon}} \doteq \Omega \setminus \bigcup^{N(\varepsilon)}_{n=1} \overline{K^{\varepsilon}_{n}} \, ,
	\end{equation}
	represents the \textit{perforated fluid domain} at the $\varepsilon$-level. We emphasize that, given $\varepsilon \in (0,\varepsilon_{*})$, the family of particles $\{ K^{\varepsilon}_{n} \}^{N(\varepsilon)}_{n=1}$ is built in such a way that the \textit{size} of each solid is proportional to $\varepsilon^{3}$, while the mutual distance between any two consecutive particles is proportional to $\varepsilon$. Moreover, since we only consider those particles that are \textit{strictly} contained in $\Omega$ (in the sense of \eqref{perforation}$_3$), the following bound on the number $N(\varepsilon)$ holds:
	\begin{equation} \label{count}
		N(\varepsilon) \leq \dfrac{3| \Omega |}{4 \pi \delta_{2}^{3} \varepsilon^{3}} \, .
	\end{equation}
	Notice, however, that the solids $\{ K^{\varepsilon}_{n} \}^{N(\varepsilon)}_{n=1}$ may have different shapes and that they are not necessarily periodically distributed in $\Omega$. 

	\begin{figure}[H]
		\begin{center}
			\includegraphics[scale=0.71]{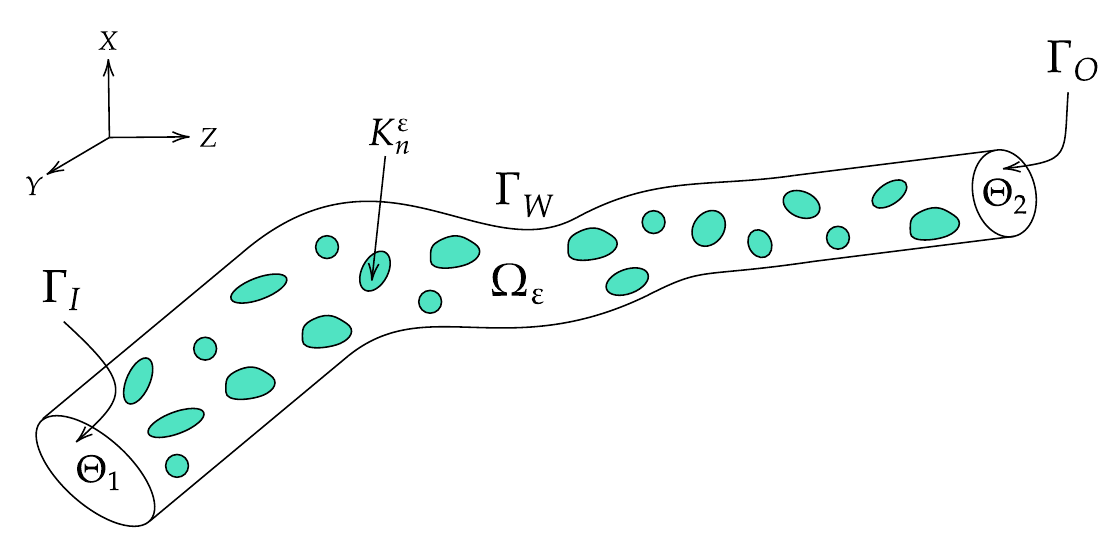}
		\end{center}
		\vspace*{-8mm}
		\caption{Representation of the perforated domain $\Omega_{\varepsilon}$.}\label{dom1}
	\end{figure}
	
	\subsection{The fluid flow equations}
	We decompose the boundary of $\Omega_{\varepsilon}$ as $ \partial \Omega_{\varepsilon} = \Gamma_{I} \cup \Gamma_{W} \cup \partial K_{\varepsilon} \cup \Gamma_{O}$, where
	\begin{equation}\label{boundaryomega1}
		\partial K_{\varepsilon} \doteq \bigcup^{N(\varepsilon)}_{n=1} \partial K^{\varepsilon}_{n} \, .
	\end{equation}
	The outward unit normal to $\partial \Omega_{\varepsilon}$ is still denoted by $\bs{\nu}$ (with some abuse of notation, as such vector also depends on $\varepsilon$). Given $\varepsilon \in (0,\varepsilon_{*})$ and a collection of constant vectors $\bs{\mu}^{\varepsilon}_{1},...,\bs{\mu}^{\varepsilon}_{N(\varepsilon)} \in \mathbb{R}^{3}$,
	we analyze the steady motion of a viscous incompressible fluid (having unit kinematic viscosity) along $\Omega_{\varepsilon}$, which is characterized by its velocity vector field $\bs{u}_{\varepsilon} : \Omega_{\varepsilon} \longrightarrow \mathbb{R}^3$ and its scalar pressure $p_{\varepsilon} : \Omega_{\varepsilon} \longrightarrow \mathbb{R}$, under the action of an external force $\bs{f} : \Omega \longrightarrow \mathbb{R}^3$. Such stationary motion will be modeled through two different boundary-value problems (with mixed boundary conditions) associated to the steady-state Navier-Stokes equations in $\Omega_{\varepsilon}$. We firstly consider the \textbf{prescribed net flux problem}, which reads
	\begin{equation}\label{nsstokes0}
		\left\{
		\begin{aligned}
			& -\Delta \bs{u}_{\varepsilon} + (\bs{u}_{\varepsilon}\cdot\nabla)\bs{u}_{\varepsilon}+\nabla p_{\varepsilon}=\bs{f} \, , \quad  \nabla\cdot \bs{u}_{\varepsilon}=0 \ \ \mbox{in} \ \ \Omega_{\varepsilon} \, ; \\[4pt]
			& \bs{u}_{\varepsilon}=\bs{0} \ \ \mbox{on} \ \ \Gamma_{W} \, ; \quad \bs{u}_{\varepsilon}=\bs{\mu}^{\varepsilon}_{n} \ \ \mbox{on} \ \ \partial K^{\varepsilon}_{n} \qquad \forall n \in \{1,...,N(\varepsilon)\} \, ; \\[4pt]
			& \bs{u}_{\varepsilon} \times \bs{\nu} = \bs{0} \, , \quad - \dfrac{\partial}{\partial \bs{\nu}}(\bs{u}_{\varepsilon} \cdot \bs{\nu}) + p_{\varepsilon} +  \dfrac{1}{2} |\bs{u}_{\varepsilon}|^{2} = p_{\varepsilon}^{-} \ \ \mbox{on} \ \ \Gamma_{I} \, ; \\[4pt]
			& \bs{u}_{\varepsilon} \times \bs{\nu} = \bs{0} \, , \quad - \dfrac{\partial}{\partial \bs{\nu}}(\bs{u}_{\varepsilon} \cdot \bs{\nu}) + p_{\varepsilon} +  \dfrac{1}{2} |\bs{u}_{\varepsilon}|^{2} = p_{\varepsilon}^{+} \ \ \mbox{on} \ \ \Gamma_{O} \, ;\\[4pt]
			& \int_{\Gamma_{O}} \bs{u}_{\varepsilon} \cdot \bs{\nu} = F \, .
		\end{aligned}
		\right.
	\end{equation}
	While the first equality in \eqref{nsstokes0}$_{2}$ corresponds to the usual no-slip boundary condition on the physical walls $\Gamma_{W}$, the second identity in \eqref{nsstokes0}$_{2}$ determines the motion of the particle $K^{\varepsilon}_{n}$ with constant velocity $\bs{\mu}^{\varepsilon}_{n}$, for every $n \in \{1,...,N(\varepsilon)\}$. On the other hand, the first equality in \eqref{nsstokes0}$_{3}$-\eqref{nsstokes0}$_{4}$ dictates that the fluid flow must enter and leave the domain $\Omega$ orthogonal to the inlet and outlet walls. Introducing the \textit{Bernoulli pressure} as 
	$$
	\Phi_{\varepsilon} \doteq p_{\varepsilon} + \dfrac{1}{2} |\bs{u}_{\varepsilon}|^{2} \ \ \text{in} \ \ \Omega_{\varepsilon} \, ,
	$$ 
	the second identity in \eqref{nsstokes0}$_{3}$-\eqref{nsstokes0}$_{4}$ imposes that, on both $\Gamma_{I}$ and $\Gamma_{O}$, the sum between the normal derivative of $\bs{u}_{\varepsilon} \cdot \bs{\nu}$ and the Bernoulli pressure must equal some constants $p_{\varepsilon}^{\mp} \in \mathbb{R}$ that represent the \textit{unknown pressure drop} $p_{\varepsilon}^{+} - p_{\varepsilon}^{-}$ along the perforated pipe (therefore, $p_{\varepsilon}^{\mp}$ are unknown, not prescribed, constants that depend on the solution). Finally, \eqref{nsstokes0}$_{5}$ dictates that the transversal flow rate of the velocity field, at the outlet $\Gamma_{O}$, is given by a quantity $F \in \mathbb{R}$. Owing to the Divergence Theorem and the boundary conditions \eqref{nsstokes0}$_2$, it can be easily seen that \eqref{nsstokes0}$_{5}$ is equivalent to the requirement
	\begin{equation} \label{initialflux}
		\int_{\Gamma_{I}} \bs{u}_{\varepsilon} \cdot \bs{\nu} = - F \, .
	\end{equation}
	
	Secondly, we analyze the \textbf{prescribed pressure drop problem}, which reads
	\begin{equation}\label{nsstokespd}
		\left\{
		\begin{aligned}
			& -\Delta \bs{u}_{\varepsilon} + (\bs{u}_{\varepsilon}\cdot\nabla)\bs{u}_{\varepsilon}+\nabla p_{\varepsilon}=\bs{f} \, , \quad  \nabla\cdot \bs{u}_{\varepsilon}=0 \ \ \mbox{in} \ \ \Omega_{\varepsilon} \, ; \\[4pt]
			& \bs{u}_{\varepsilon}=\bs{0} \ \ \mbox{on} \ \ \Gamma_{W} \, , \quad \bs{u}_{\varepsilon}=\bs{\mu}^{\varepsilon}_{n} \ \ \mbox{on} \ \ \partial K^{\varepsilon}_{n} \qquad \forall n \in \{1,...,N(\varepsilon)\} \, ; \\[4pt]
			& \bs{u}_{\varepsilon} \times \bs{\nu} = \bs{0} \, , \quad - \dfrac{\partial}{\partial \bs{\nu}}(\bs{u}_{\varepsilon} \cdot \bs{\nu}) + p_{\varepsilon} +  \dfrac{1}{2} |\bs{u}_{\varepsilon}|^{2} = p^{-} \ \ \mbox{on} \ \ \Gamma_{I} \, ; \\[4pt]
			& \bs{u}_{\varepsilon} \times \bs{\nu} = \bs{0} \, , \quad - \dfrac{\partial}{\partial \bs{\nu}}(\bs{u}_{\varepsilon} \cdot \bs{\nu}) + p_{\varepsilon} +  \dfrac{1}{2} |\bs{u}_{\varepsilon}|^{2} = p^{+} \ \ \mbox{on} \ \ \Gamma_{O} \, ,
		\end{aligned}
		\right.
	\end{equation}
	where now the constants $p^{\mp} \in \mathbb{R}$ are given and prescribe the pressure drop $p^{+} - p^{-}$ along the obstructed pipe. Consider a strong solution $(\bs{u}_{\varepsilon},p_{\varepsilon}) \in W^{2,2}(\Omega_{\varepsilon}; \mathbb{R}^3) \times W^{1,2}(\Omega_{\varepsilon}; \mathbb{R})$ to system \eqref{nsstokespd}. In view of the Divergence Theorem and the boundary conditions \eqref{nsstokespd}$_2$, we observe that the transversal flux rate
	\begin{equation} \label{refer00}
		F_{\varepsilon} \doteq - \int_{\Gamma_{I}} \bs{u}_{\varepsilon} \cdot \bs{\nu} \, ,
	\end{equation}
	is constant along the pipe, but \textit{depends} on the solution, meaning that
	$$
	F_{\varepsilon} = \int_{\Gamma_{O}} \bs{u}_{\varepsilon} \cdot \bs{\nu} \, .
	$$
	
	Since the scalar pressure can be determined up to an additive constant, without loss of generality we may take $p_{\varepsilon}^{-}=0$ in \eqref{nsstokes0}$_{3}$ (as a consequence, we can no longer assume that the scalar pressure has zero mean value over $\Omega_{\varepsilon}$). Moreover, in view of the identity
	\begin{equation} \label{vectorcalculus}
		\nabla \left( \dfrac{1}{2} |\bs{u}_{\varepsilon}|^2 \right) = (\bs{u}_{\varepsilon}\cdot\nabla)\bs{u}_{\varepsilon} - (\nabla \times \bs{u}_{\varepsilon}) \times \bs{u}_{\varepsilon} \quad \text{in} \quad \Omega_{\varepsilon} \, ,
	\end{equation}
	it is customary (see, for example, \cite{heywood1996artificial,korobkov2020solvability}) to formulate systems \eqref{nsstokes0}-\eqref{nsstokespd} in terms of the Bernoulli pressure, thereby resulting in the following equivalent problems:
	\begin{equation}\label{nsstokes1}
		\left\{
		\begin{aligned}
			& -\Delta \bs{u}_{\varepsilon}+ (\nabla \times \bs{u}_{\varepsilon}) \times \bs{u}_{\varepsilon} + \nabla \Phi_{\varepsilon}=\bs{f} \, ,\ \quad  \nabla\cdot \bs{u}_{\varepsilon}=0 \ \ \mbox{in} \ \ \Omega_{\varepsilon} \, ; \\[3pt]
			& \bs{u}_{\varepsilon}=\bs{0} \ \ \mbox{on} \ \ \Gamma_{W} \, ; \quad \bs{u}_{\varepsilon}=\bs{\mu}^{\varepsilon}_{n} \ \ \mbox{on} \ \ \partial K^{\varepsilon}_{n} \qquad \forall n \in \{1,...,N(\varepsilon)\} \, ; \\[3pt]
			& \bs{u}_{\varepsilon} \times \bs{\nu} = \bs{0} \, , \quad - \dfrac{\partial}{\partial \bs{\nu}}(\bs{u}_{\varepsilon} \cdot \bs{\nu}) + \Phi_{\varepsilon} = 0 \ \ \mbox{on} \ \ \Gamma_{I} \, ;\\[3pt]
			& \bs{u}_{\varepsilon} \times \bs{\nu} = \bs{0} \, , \quad - \dfrac{\partial}{\partial \bs{\nu}}(\bs{u}_{\varepsilon} \cdot \bs{\nu}) + \Phi_{\varepsilon} = p_{\varepsilon}^{+} \ \ \mbox{on} \ \ \Gamma_{O} \, ;\\[3pt]
			& \int_{\Gamma_{O}} \bs{u}_{\varepsilon} \cdot \bs{\nu} = F 
		\end{aligned}
		\right.
	\end{equation}
	and
	\begin{equation}\label{nsstokespd1}
		\left\{
		\begin{aligned}
			& -\Delta \bs{u}_{\varepsilon}+ (\nabla \times \bs{u}_{\varepsilon}) \times \bs{u}_{\varepsilon} + \nabla \Phi_{\varepsilon}=\bs{f} \, ,\ \quad  \nabla\cdot \bs{u}_{\varepsilon}=0 \ \ \mbox{in} \ \ \Omega_{\varepsilon} \, ; \\[3pt]
			& \bs{u}_{\varepsilon}=\bs{0} \ \ \mbox{on} \ \ \Gamma_{W} \, ; \quad \bs{u}_{\varepsilon}=\bs{\mu}^{\varepsilon}_{n} \ \ \mbox{on} \ \ \partial K^{\varepsilon}_{n} \qquad \forall n \in \{1,...,N(\varepsilon)\} \, ; \\[3pt]
			& \bs{u}_{\varepsilon} \times \bs{\nu} = \bs{0} \, , \quad - \dfrac{\partial}{\partial \bs{\nu}}(\bs{u}_{\varepsilon} \cdot \bs{\nu}) + \Phi_{\varepsilon} = p^{-} \ \ \mbox{on} \ \ \Gamma_{I} \, ;\\[3pt]
			& \bs{u}_{\varepsilon} \times \bs{\nu} = \bs{0} \, , \quad - \dfrac{\partial}{\partial \bs{\nu}}(\bs{u}_{\varepsilon} \cdot \bs{\nu}) + \Phi_{\varepsilon} = p^{+} \ \ \mbox{on} \ \ \Gamma_{O} \, .
		\end{aligned}
		\right.
	\end{equation}
	
	\begin{remark} \label{firstidentity}
		Let $\Omega \subset \mathbb{R}^{3}$ be an admissible domain in the sense of Definition \ref{addomain3}. Given two vector fields $\bs{u}, \bs{\varphi} \in  W^{2,2}(\Omega_{\varepsilon}; \mathbb{R}^3)$ such that
		$$
		\bs{u} \times \bs{\nu} = \bs{\varphi} \times \bs{\nu} = \bs{0} \ \ \mbox{on} \ \ \Gamma_{I} \cup \Gamma_{O} \, ,
		$$
		a straightforward computation shows 
		\begin{equation} \label{firstidentity0}
			\bs{\varphi} \cdot \dfrac{\partial \bs{u}}{\partial \bs{\nu}}  = (\bs{\varphi} \cdot \bs{\nu}) \dfrac{\partial}{\partial \bs{\nu}}(\bs{u} \cdot \bs{\nu})  \ \ \mbox{on} \ \ \Gamma_{I} \cup \Gamma_{O} \, .
		\end{equation}
	\end{remark}

	\subsection{Assumptions and main results}
	
	The weak formulation of the prescribed net flux \eqref{nsstokes1} and pressure drop \eqref{nsstokespd1} problems can be found in Definitions \ref{weaksolutionpnf}-\ref{weaksolutionppd}, respectively.
	In Theorem~\ref{epslevel} we provide uniform bounds with respect to $\eps \in I_{*}$, with $I_{*} \doteq (0,\varepsilon_{*})$, for weak solutions of the prescribed net flux problem \eqref{nsstokes1} under two additional assumptions: 
	\begin{itemize}[leftmargin=4mm]
	\item[-] the constant vectors $\bs{\mu}^{\varepsilon}_{1},...,\bs{\mu}^{\varepsilon}_{N(\varepsilon)} \in \mathbb{R}^{3}$ satisfy the following uniform bound on the kinetic energy:
	\begin{equation} \label{kineticuni}
		\mathcal{S}_{*} \doteq \sqrt{ \sup_{\varepsilon \in (0,\varepsilon_{*})} \dfrac{1}{N(\varepsilon)} \sum_{n=1}^{N(\varepsilon)} \left| \bs{\mu}^{\varepsilon}_{n} \right|^{2} } < +\infty \, ,
	\end{equation}
	see also \cite[Assumption (A1)]{Hillairet2018} or \cite[Hypothesis (H3)]{mecherbet2020estimates}.
	\item[-] there are no particles in the vicinity of the inlet $\Gamma_{I}$ and outlet $\Gamma_{O}$ walls, see condition \eqref{extrahyp}. We refer to Remark \ref{rem1} for more details.
	\end{itemize}
	Accordingly, $\eps$-uniform bounds for the weak solutions of the prescribed pressure drop problem \eqref{nsstokespd1} are furnished, under the sole assumption \eqref{kineticuni}, in Theorem~\ref{epslevelpd}. 
	
	\medskip
	
	These uniform bounds justify the main goal of the present article: the investigation of the asymptotic behavior of the solutions of problems \eqref{nsstokes1}-\eqref{nsstokespd1} as $\varepsilon \to 0^{+}$. The derivation of such effective equations is ensured provided that the \emph{Stokes capacity densities} admit a limit, which, in turn, enables to define the Brinkman force. For that, given $\varepsilon \in I_{*}$ and $n \in \{1,\ldots,N(\eps)\}$, denote by $\mathcal{G}_{n}\in \R^{3\times 3}$ the \textit{Stokes resistance matrix} associated to the Stokes problem in the exterior of $K_{n}$.  For $\bs{V} \in \R^3$, $ \mathcal{G}_{n} \bs{V}$ is the drag force exerted on the fluid by the particle $K_n$ when moving with velocity $\bs{V}$ relative to the fluid speed at infinity, in an otherwise unperturbed Stokes flow; we refer to \eqref{def_resistance_matrix} in Appendix \ref{appendix_sec} for the precise definition. Then, since the drag force is proportional to the difference of the given particle velocity and the unknown surrounding fluid velocity, we are naturally led to the study of the limiting behavior of
	\begin{equation} \label{empmeasuresdef}
	\left\{
	\begin{aligned}
	& \mathcal{G}_{\eps}(M) \doteq \eps^3 \sum_{n=1}^{N(\eps)} \mathcal{G}_{n} \,  M(\bs{\xi}^{\eps}_{n}) \qquad \forall M \in W^{1,\infty}(\Omega; \R^{3\times 3}) \, ; \\[6pt]
	& \mathcal{J}_{\eps}(\bs{\varphi}) \doteq \eps^3 \sum_{n=1}^{N(\eps)} \mathcal{G}_{n} \bs{\mu}^{\varepsilon}_{n} \cdot \bs{\varphi}(\bs{\xi}^{\eps}_{n}) \qquad \forall \bs{\varphi} \in W^{1,\infty}(\Omega; \R^3) \, ,
	\end{aligned}
	\right.
	\end{equation}
	where the scaling factor $\eps^3$ arises from the fact that the Stokes capacity is linear in the diameter of the particles. We consequently assume the existence of $\mathcal{G} \in L^2(\Omega;\R^{3\times 3})$ and $\bs{\mathcal{J}} \in L^2(\Omega;\R^{ 3})$ such that 
	\begin{equation}\label{hyp_cv_measures}
	\left\{
\begin{aligned}
& \mathcal{G}_{\eps} \rightarrow \mathcal{G} \ \ \ \text{in the strong operator topology of} \ \mathcal{L}(W^{1,\infty}(\Omega; \R^{3\times 3}),\R^{3\times 3}) \ \ \text{as} \ \ \eps \to 0^{+} \, ; \\[6pt]
& \mathcal{J}_{\eps} \rightarrow \bs{\mathcal{J}} \ \ \ *-\text{weakly in} \ (W^{1,\infty}(\Omega; \R^3))^{*} \ \ \text{as} \ \ \eps \to 0^{+} \, ,
	\end{aligned}
	\right.
	\end{equation}
or equivalently, that
	\begin{equation}\label{hyp_cv_measures2}
	\left\{
	\begin{aligned}
		& \lim_{\varepsilon \to 0^{+}} \mathcal{G}_{\eps}(M) = \int_{\Omega} \mathcal{G} M  \qquad \forall M \in W^{1,\infty}(\Omega; \R^{3\times 3}) \, ; \\[6pt]
		& \lim_{\varepsilon \to 0^{+}} \mathcal{J}_{\eps}(\bs{\varphi} ) = \int_{\Omega} \bs{\mathcal{J}} \cdot \bs{\varphi}  \qquad \forall \bs{\varphi} \in W^{1,\infty}(\Omega; \R^{3}) \, .
	\end{aligned}
	\right.
\end{equation}
As we will see in Section \ref{empmeasures}, assumption \eqref{hyp_cv_measures} is always satisfied along a subsequence due to our hypothesis \eqref{perforation} on the separation the particles.
	
For the prescribed net flux problem, our homogenization result reads:
	\begin{theorem}\label{thm_hom}
		Let $\Omega \subset \mathbb{R}^{3}$ be an admissible domain. Given $F \in \mathbb{R}$, $\bs{f} \in L^{2}(\Omega; \mathbb{R}^{3})$ and $(\bs{\mu}^{\varepsilon}_{n})^{N(\eps)}_{n=1} \subset \mathbb{R}^{3}$, let $(\bs{u}_{\varepsilon}, \Phi_{\varepsilon}) \in W^{1,2}(\Omega_{\varepsilon}; \mathbb{R}^{3}) \times L^{2}(\Omega_{\varepsilon}; \mathbb{R})$ be a weak solution of the prescribed net flux problem \eqref{nsstokes1} as described in Theorem \ref{epslevel}. Define the extensions $\{(\widetilde{\bs{u}}_{\eps}, \widetilde{\Phi}_{\eps})\}_{\eps \in I_{*}} \subset W^{1,2}(\Omega; \mathbb{R}^{3}) \times L^{2}(\Omega; \mathbb{R})$ by
		\begin{align}
			\widetilde{\bs{u}}_{\eps} \doteq
			\begin{cases} 
				\bs u_\eps & \quad \text{in } \ \Omega_\eps\\[6pt]
				\bs{\mu}^{\varepsilon}_{n} & \quad \text{in } \ \overline{K_n^\eps} \quad \forall n \in \{1,\ldots,N(\eps)\} \, ,
			\end{cases} 
		\qquad \text{and} \qquad 
			\widetilde{\Phi}_{\eps} \doteq
		\begin{cases} 
			\Phi_{\eps} & \quad \text{in } \ \Omega_\eps\\[6pt]
			0 & \quad \text{in } \ \overline{K_\eps} \, .
		\end{cases} 
		\end{align}
		Suppose that $(\bs{u}, \Phi) \in W^{1,2}(\Omega; \mathbb{R}^{3}) \times L^{2}(\Omega; \mathbb{R})$ is a weak accumulation point of the sequence $\{(\widetilde{\bs{u}}_{\eps}, \widetilde{\Phi}_{\eps})\}_{\eps \in I_{*}}$ in $W^{1,2}(\Omega; \mathbb{R}^{3}) \times L^{2}(\Omega; \mathbb{R})$. Then, under assumptions \eqref{kineticuni}-\eqref{hyp_cv_measures}, $(\bs{u}, \Phi)$ is a weak solution to the following prescribed net flux problem in $\Omega$:
		\begin{equation}\label{homog.eqpnf}
			\left\{
			\begin{aligned}
				& -\Delta \bs{u}+ (\nabla \times \bs{u}) \times \bs{u} + \nabla \Phi + \mathcal{G} \bs{u} - \bs{\mathcal{J}} =\bs{f} \, ,\ \quad  \nabla\cdot \bs{u}=0 \ \ \mbox{in} \ \ \Omega \, ; \\[3pt]
				& \bs{u}=\bs{0} \ \ \mbox{on} \ \ \Gamma_{W} \, ; \\[3pt]
				& \bs{u} \times \bs{\nu} = \bs{0} \, , \quad - \dfrac{\partial}{\partial \bs{\nu}}(\bs{u} \cdot \bs{\nu}) + \Phi = 0 \ \ \mbox{on} \ \ \Gamma_{I} \, ;\\[3pt]
				& \bs{u} \times \bs{\nu} = \bs{0} \, , \quad - \dfrac{\partial}{\partial \bs{\nu}}(\bs{u} \cdot \bs{\nu}) + \Phi = p^{+} \ \ \mbox{on} \ \ \Gamma_{O} \, ;\\[3pt]
				& \int_{\Gamma_{O}} \bs{u} \cdot \bs{\nu} = F \, ,
			\end{aligned}
			\right.
		\end{equation}
	for some constant $p^{+} \in \mathbb{R}$.
	\end{theorem}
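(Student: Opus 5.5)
We follow Tartar's energy method, using the uniform bounds of Theorem~\ref{epslevel} and the Stokes correctors and restriction operator of Theorem~\ref{pro:corrector}. Fix a subsequence with $\widetilde{\bs u}_\eps \wto \bs u$ in $W^{1,2}(\Omega;\R^3)$ and $\widetilde\Phi_\eps \wto \Phi$ in $L^2(\Omega)$. By Rellich, $\widetilde{\bs u}_\eps \to \bs u$ strongly in $L^q$ for $q<6$, and the traces on $\Gamma_I\cup\Gamma_O$ converge strongly in $L^2$. The linear boundary conditions pass to the limit directly:
\begin{itemize}
\item $\bs u=\bs 0$ on $\Gamma_W$;
\item $\bs u\times\bs\nu=\bs 0$ on $\Gamma_I\cup\Gamma_O$;
\item $\int_{\Gamma_O}\bs u\cdot\bs\nu=F$;
\item $\nabla\cdot\bs u=0$ in $\Omega$, since $\nabla\cdot\widetilde{\bs u}_\eps=0$ in $\Omega$, the extension being constant inside each particle and continuous across $\partial K^\eps_n$.
\end{itemize}

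For the momentum equation, take a test function $\bs\varphi\in W^{1,\infty}(\Omega;\R^3)$, smooth up to the boundary, with $\bs\varphi=\bs 0$ on $\Gamma_W$ and $\bs\varphi\times\bs\nu=\bs 0$ on $\Gamma_I\cup\Gamma_O$. We first subtract a multiple of the flux carrier of Lemma~\ref{fluxcarrier} so that $\int_{\Gamma_O}\bs\varphi\cdot\bs\nu=0$; the pressure constant $p^+$ then does not enter. This reduction is later undone by identifying $p^+$ as the limit of $p^+_\eps$, which is bounded by Theorem~\ref{epslevel}. We then apply the restriction operator $R_\eps$ to obtain $\bs\varphi_\eps \doteq R_\eps\bs\varphi$. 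This field vanishes on $\partial K_\eps$ and satisfies
\[
\bs\varphi_\eps\to\bs\varphi \ \text{strongly in } L^2, \qquad \nabla\cdot\bs\varphi_\eps\to\nabla\cdot\bs\varphi \ \text{strongly in } L^2 .
\]
It coincides with $\bs\varphi$ away from $\bigcup_n B(\bs\xi^\eps_n,\delta_1\eps)$, in particular near $\Gamma_I\cup\Gamma_O$ by \eqref{extrahyp}. Moreover $\bs\varphi_\eps=\bs\varphi-\sum_n \bs w^\eps_n$ up to errors vanishing in $W^{1,2}$, where $\bs w^\eps_n$ are truncated Stokes correctors built from $\bs\varphi(\bs\xi^\eps_n)$. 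Testing the weak formulation (Definition~\ref{weaksolutionpnf}) with $\bs\varphi_\eps$ handles each term as follows.
\begin{itemize}
\item The force term converges to $\int\bs f\cdot\bs\varphi$.
\item The convective term $\int(\nabla\times\bs u_\eps)\times\bs u_\eps\cdot\bs\varphi_\eps$ converges by weak--strong convergence: $\nabla\times\widetilde{\bs u}_\eps$ converges weakly in $L^2$, while $\bs u_\eps\otimes\bs\varphi_\eps$ converges strongly in $L^2$ because $\bs\varphi_\eps$ is bounded in $W^{1,2}$ and hence compact in $L^4$.
\item In the pressure term, $\nabla\cdot\bs\varphi_\eps$ converges strongly, so $\int\Phi_\eps\nabla\cdot\bs\varphi_\eps\to\int\Phi\nabla\cdot\bs\varphi$, using $\widetilde\Phi_\eps=0$ in $K_\eps$.
\item The boundary terms on $\Gamma_I,\Gamma_O$ are unchanged, since there $\bs\varphi_\eps=\bs\varphi$.
\end{itemize}

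The main obstacle is the viscous term $\int\nabla\bs u_\eps:\nabla\bs\varphi_\eps$, which must produce the Brinkman term $\int(\mathcal G\bs u-\bs{\mathcal J})\cdot\bs\varphi$. We write
\[
\int\nabla\bs u_\eps:\nabla\bs\varphi_\eps=\int\nabla\widetilde{\bs u}_\eps:\nabla\bs\varphi-\sum_n\int\nabla\bs u_\eps:\nabla\bs w^\eps_n .
\]
For each $n$, we integrate by parts against the corrector equation (Stokes with pressure $\pi^\eps_n$) on the annulus $B(\bs\xi^\eps_n,\delta_1\eps)\setminus K^\eps_n$. The contribution from $\partial K^\eps_n$, where $\bs u_\eps=\bs\mu^\eps_n$, gives $-\eps^3\mathcal G_n\bs\varphi(\bs\xi^\eps_n)\cdot\bs\mu^\eps_n$ plus vanishing errors, from the exterior Stokes problem of Appendix~\ref{appendix_sec}. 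The contribution from the cutoff region, where the corrector is $O(\eps^3/|x|)$ and its stress is $O(\eps^3/|x|^2)$, gives $\eps^3\mathcal G_n\bs\varphi(\bs\xi^\eps_n)\cdot\langle\bs u_\eps\rangle_n$, a local average of $\bs u_\eps$ over a shell of radius of order $\eps$. The averages $\langle\bs u_\eps\rangle_n$ are then replaced by $\bs u(\bs\xi^\eps_n)$. This uses the Poincar\'e estimate on shells together with the strong $L^2$ convergence and the bound $\sum_n\eps^3\le C$ from \eqref{count}, and requires controlling
\[
\sum_n\eps^3|\langle\bs u_\eps\rangle_n-\langle\bs u\rangle_n|\lesssim\|\widetilde{\bs u}_\eps-\bs u\|_{L^2}+\eps\|\nabla(\widetilde{\bs u}_\eps-\bs u)\|_{L^2}\to0 .
\]
Since $\bs u$ is only $W^{1,2}$, \eqref{hyp_cv_measures2} is first applied with $M=\bs\varphi\otimes\bs u_\delta$ for a smooth approximation $\bs u_\delta$, and then $\delta\to0$ using the uniform bound $|\mathcal G_\eps(M)|\lesssim\|M\|_{L^1}$-type estimates, which is where Lemma~\ref{refcapacitypro} enters. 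The $\mu$-terms converge to $\int\bs{\mathcal J}\cdot\bs\varphi$ directly by \eqref{hyp_cv_measures2}, with \eqref{kineticuni} supplying the uniform control. Collecting all terms yields the weak formulation of \eqref{homog.eqpnf}.
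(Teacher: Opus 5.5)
Your proposal is correct and follows essentially the paper's route: you test the $\eps$-problem, including the Bernoulli pressure, against $\mathcal R_\eps(\bs\varphi)$ and obtain the viscous and pressure limits exactly as in the proof of \eqref{W_eps_resistance}, then handle the convective and force terms via \eqref{propreps0}$_4$. In the viscous term the drag on $\partial K^\eps_n$ produces $\mathcal J_\eps(\bs\varphi)$, the outer stress flux produces $\eps^3\mathcal G_n\bs\varphi(\bs\xi^\eps_n)\cdot\langle\bs u_\eps\rangle_n$ with Poincar\'e control of the fluctuations, and strong $L^2$ convergence plus smooth approximation of $\bs u$ finishes the identification.

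Two small corrections. First, the uniform bound you need in the $\delta\to0$ step holds for the shell-averaged form (from $|\mathcal G_n|\le C_*$ in \eqref{bound_R_n} and \eqref{count}), not for the point-evaluation functional $\mathcal G_\eps$, and it does not come from Lemma~\ref{refcapacitypro}. Second, you never need \eqref{extrahyp}, which Theorem~\ref{thm_hom} does not assume: $\mathcal R_\eps(\bs\varphi)=\bs\varphi$ on $\partial\Omega$ already by \eqref{perforation}$_3$, and $p^+_\eps$ stays bounded along your weakly convergent subsequence by \eqref{epsbyparts6}.
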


Similarly, for the prescribed pressure drop problem, our homogenization result reads:

	\begin{theorem}\label{thm_hom_pd}
		Let $\Omega \subset \mathbb{R}^{3}$ be an admissible domain. Given $p^{\pm} \in \mathbb{R}$, $\bs{f} \in L^{2}(\Omega; \mathbb{R}^{3})$ and $(\bs{\mu}^{\varepsilon}_{n})^{N(\eps)}_{n=1} \subset \mathbb{R}^{3}$, let $(\bs{u}_{\varepsilon}, \Phi_{\varepsilon}) \in W^{1,2}(\Omega_{\varepsilon}; \mathbb{R}^{3}) \times L^{2}(\Omega_{\varepsilon}; \mathbb{R})$ be a weak solution of the prescribed pressure drop problem \eqref{nsstokespd1} as described in Theorem \ref{epslevelpd}.  Define the extensions $\{(\widetilde{\bs{u}}_{\eps}, \widetilde{\Phi}_{\eps})\}_{\eps \in I_{*}} \subset W^{1,2}(\Omega; \mathbb{R}^{3}) \times L^{2}(\Omega; \mathbb{R})$ by
		\begin{align}
			\widetilde{\bs{u}}_{\eps} \doteq
			\begin{cases} 
				\bs u_\eps & \quad \text{in } \ \Omega_\eps\\[6pt]
				\bs{\mu}^{\varepsilon}_{n} & \quad \text{in } \ \overline{K_n^\eps} \quad \forall n \in \{1,\ldots,N(\eps)\} \, ,
			\end{cases} 
			\qquad \text{and} \qquad 
			\widetilde{\Phi}_{\eps} \doteq
			\begin{cases} 
				\Phi_{\eps} & \quad \text{in } \ \Omega_\eps\\[6pt]
				0 & \quad \text{in } \ \overline{K_\eps} \, .
			\end{cases} 
		\end{align}
		Suppose that $(\bs{u}, \Phi) \in W^{1,2}(\Omega; \mathbb{R}^{3}) \times L^{2}(\Omega; \mathbb{R})$ is a weak accumulation point of the sequence $\{(\widetilde{\bs{u}}_{\eps}, \widetilde{\Phi}_{\eps})\}_{\eps \in I_{*}}$ in $W^{1,2}(\Omega; \mathbb{R}^{3}) \times L^{2}(\Omega; \mathbb{R})$. Then, under assumptions \eqref{kineticuni}-\eqref{hyp_cv_measures}, $(\bs{u}, \Phi)$ is a weak solution to the following prescribed pressure drop problem in $\Omega$:
		\begin{equation}\label{homog.eqppd}
			\left\{
			\begin{aligned}
				& -\Delta \bs{u}+ (\nabla \times \bs{u}) \times \bs{u} + \nabla \Phi + \mathcal{G} \bs{u} - \bs{\mathcal{J}} =\bs{f} \, ,\ \quad  \nabla\cdot \bs{u}=0 \ \ \mbox{in} \ \ \Omega \, ; \\[3pt]
				& \bs{u}=\bs{0} \ \ \mbox{on} \ \ \Gamma_{W} \, ; \\[3pt]
				& \bs{u} \times \bs{\nu} = \bs{0} \, , \quad - \dfrac{\partial}{\partial \bs{\nu}}(\bs{u} \cdot \bs{\nu}) + \Phi = p^- \ \ \mbox{on} \ \ \Gamma_{I} \, ;\\[3pt]
				& \bs{u} \times \bs{\nu} = \bs{0} \, , \quad - \dfrac{\partial}{\partial \bs{\nu}}(\bs{u} \cdot \bs{\nu}) + \Phi = p^{+} \ \ \mbox{on} \ \ \Gamma_{O} \, .
			\end{aligned}
			\right.
		\end{equation}
	\end{theorem}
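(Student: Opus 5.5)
The argument follows step (c) of the energy method: pass to the limit in the weak formulation of \eqref{nsstokespd1} tested against restricted test functions, exactly as for Theorem~\ref{thm_hom}. The pressure-drop case is simpler because $p^{\pm}$ are prescribed and do not depend on $\eps$, so no limit of unknown constants has to be extracted.

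We fix a test field $\bs{\varphi}$ on $\Omega$ that is smooth, satisfies $\bs{\varphi}=\bs{0}$ on $\Gamma_W$ and $\bs{\varphi}\times\bs{\nu}=\bs{0}$ on $\Gamma_I\cup\Gamma_O$, and is divergence free. We apply the restriction operator $R_\eps$ of Theorem~\ref{pro:corrector}, built from the Stokes correctors, to obtain $R_\eps\bs{\varphi}$. This field vanishes on each $K_n^\eps$, keeps the boundary conditions on $\partial\Omega$, is divergence free (or has controlled divergence), and converges to $\bs{\varphi}$ weakly in $W^{1,2}$ and strongly in $L^q$ for $q<6$. Testing the weak formulation (Definition~\ref{weaksolutionppd}) with $R_\eps\bs{\varphi}$ leaves only the boundary terms $p^{\pm}\int_{\Gamma_{I/O}} R_\eps\bs{\varphi}\cdot\bs{\nu}$. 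Since $R_\eps\bs{\varphi}=\bs{\varphi}$ near $\Gamma_I\cup\Gamma_O$, or at least has the same fluxes, these terms are independent of $\eps$. The pressure $\Phi_\eps$ does not appear, because the test function is solenoidal.

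We then pass to the limit term by term using the uniform bounds of Theorem~\ref{epslevelpd}. The forcing and boundary terms converge trivially. The nonlinear term $\int (\nabla\times\widetilde{\bs u}_\eps)\times\widetilde{\bs u}_\eps\cdot R_\eps\bs\varphi$ converges by Rellich compactness, since $\widetilde{\bs u}_\eps\to\bs u$ strongly in $L^4$, combined with the weak convergence of the gradients and the strong convergence of $R_\eps\bs\varphi$ in $L^4$.

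The main obstacle is the viscous term $\int_{\Omega_\eps}\nabla \bs u_\eps:\nabla R_\eps\bs\varphi$. We will write $R_\eps\bs\varphi=\bs\varphi-\sum_n \bs w_n^\eps$, where $\bs w_n^\eps$ is the localized Stokes corrector of $K_n^\eps$ carrying the value $\bs\varphi(\bs\xi_n^\eps)$ plus a Lipschitz error. Integrating by parts near each particle, using the Stokes equation satisfied by the corrector, produces the surface term $\bs u_\eps$ times the corrector stress on $\partial B(\bs\xi_n^\eps,\delta_1\eps)$. On $\partial K_n^\eps$ we have $\bs u_\eps=\bs\mu_n^\eps$, which gives $\eps^3\mathcal G_n(\text{local average of }\bs u_\eps-\bs\mu_n^\eps)\cdot\bs\varphi(\bs\xi_n^\eps)$ up to errors vanishing with $\eps$. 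Here the Appendix computation of the resistance matrix, the capacity estimate of Lemma~\ref{refcapacitypro}, and the bound \eqref{kineticuni} control the remainders. We then pass the averages of $\bs u_\eps$ on the annuli to $\bs u$ via strong $L^2$ convergence and a Poincaré-type estimate on cells. Assumption \eqref{hyp_cv_measures2}, applied with $M=\bs\varphi\otimes\bs u$ after approximating $\bs u$ by $W^{1,\infty}$ fields (possible because $\mathcal G_\eps$ is uniformly bounded on $L^1$-type norms by separation), yields $\int\mathcal G\bs u\cdot\bs\varphi-\int\bs{\mathcal J}\cdot\bs\varphi$.

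Finally, we recover the Bernoulli pressure: $\widetilde\Phi_\eps\wto\Phi$, and testing with non-solenoidal $\bs\varphi$ through the Bogovskii-type corrected restriction identifies $\Phi$ as the pressure in the limit equation, with the prescribed boundary values $p^{\pm}$. The conditions $\nabla\cdot\bs u=0$ and $\bs u\times\bs\nu=\bs 0$ pass to the limit by weak continuity of traces.
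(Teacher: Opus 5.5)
Your argument is essentially the paper's (the proof of Theorem~\ref{thm_hom}, which the authors say carries over with minor changes). You test with $\mathcal{R}_{\eps}(\bs{\varphi})$, which coincides with $\bs{\varphi}$ on $\partial\Omega$ so the $p^{\pm}$ boundary terms are unchanged, and pass the convective and forcing terms via \eqref{propreps0}$_4$. You then handle the viscous term by comparing with the constant-datum corrector, integrating by parts against $\widetilde{\bs{u}}_{\eps}$ to get $\eps^{3}\mathcal{G}_{n}(\bar{\bs{u}}^{\eps}_{n}-\bs{\mu}^{\eps}_{n})\cdot\bs{\varphi}(\bs{\xi}^{\eps}_{n})$, and close with annular averages and \eqref{hyp_cv_measures}.

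Two minor differences. The paper skips your separate solenoidal step, because $\mathcal{R}_{\eps}$ acts on non-solenoidal $\bs{\varphi}$ and \eqref{W_eps_resistance} already includes the pressure term; your ``Bogovskii-type'' correction is just the annular corrector $\bs{\omega}^{\eps}_{n,2}$, which makes $\nabla\cdot\mathcal{R}_{\eps}(\bs{\varphi})\to\nabla\cdot\bs{\varphi}$ in $L^{2}$. Also, the uniform bound that justifies your density step holds for the annulus-averaged functional, not for the point-evaluation $\mathcal{G}_{\eps}$ itself.
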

	
	\begin{remark}
 Theorem~\ref{thm_hom} dictates that any weak accumulation point of a sequence of solutions to the prescribed net flux problem \eqref{nsstokes1} obeys the Brinkman law \eqref{homog.eqpnf} under assumptions \eqref{kineticuni}-\eqref{hyp_cv_measures}. The existence of (at least) one such accumulation point follows directly from Theorem~\ref{epslevel}, supposing the further condition \eqref{extrahyp}. Similarly, Theorem~\ref{epslevelpd} ensures that the sequence of solutions to the prescribed pressure drop problem \eqref{nsstokespd1} admits a weak accumulation point. 
	\end{remark}

	\section{Preliminary results} \label{section_preliminary_results}
	\subsection{Capacity potential, divergence operator and construction of a flux carrier}\label{epslevelsecpre}
	Let $\Omega \subset \mathbb{R}^{3}$ be an admissible domain in the sense of Definition \ref{addomain3}, and let $\varepsilon \in I_{*}$ be a fixed parameter. Many of the results contained in the present article exploit the concept of \textit{relative capacity} of $K_{\varepsilon}$ with respect to $\Omega$, defined as 
	\begin{equation} \label{relcap}
		\mbox{Cap}_{\Omega}(K_{\varepsilon}) \doteq \min_{v\in W^{1,2}_{0}(\Omega;\mathbb{R})} \left\{  \int_{\Omega} |\nabla v|^2  \ \Big| \ v=1 \  \text{ in } \ \overline{K_{\varepsilon}} \, \right\} \, .
	\end{equation}
	The \textit{relative capacity potential} of $K_{\varepsilon}$ with respect to $\Omega$, that is, the scalar function $\phi_{\varepsilon} \in W_{0}^{1,2}(\Omega;\mathbb{R})$ achieving the minimum in \eqref{relcap}, satisfies
	\begin{equation} \label{cap1}
		\Delta \phi_{\varepsilon}=0 \ \ \text{in} \ \ \Omega_{\varepsilon} \, , \qquad \phi_{\varepsilon} = 0 \ \ \text{on} \ \ \partial \Omega\, , \qquad
		\phi_{\varepsilon} = 1 \ \ \text{in} \ \ \overline{K_{\varepsilon}} \, , \qquad\mbox{Cap}_{\Omega}(K_{\varepsilon})=\|\nabla \phi_{\varepsilon} \|^2_{L^{2}(\Omega)} \, ,
	\end{equation}
	see \cite[Chapter 2]{maz2013sobolev} for more details. Further essential properties of the relative capacity potential are collected in the following result, in the spirit of  \cite[Proposition~4.3]{allaire3} and the examples of \cite[Section 2]{cioranescu2018strange}:
	\begin{lemma} \label{refcapacitypro}
		Let $\Omega \subset \mathbb{R}^{3}$ be an admissible domain and $\phi_{\varepsilon} \in W_{0}^{1,2}(\Omega;\mathbb{R})$ be the function satisfying \eqref{cap1}. Then, $\phi_{\varepsilon} \in W^{2,2}(\Omega_{\varepsilon};\mathbb{R})$ and the following estimate holds:
		\begin{equation} \label{cap2}
			\|\phi_{\varepsilon} \|_{L^{\infty}(\Omega_{\varepsilon})} + \|\nabla \phi_{\varepsilon} \|_{L^{2}(\Omega_{\varepsilon})} \leq C_{*} \, ,
		\end{equation}
		for some constant $C_{*} > 0$ that depends on $\Omega$ and $\{ \delta_{i} \}^{2}_{i=0}$, but is independent of $\varepsilon \in I_{*}$.
	\end{lemma}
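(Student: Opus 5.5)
The $L^\infty$ part is immediate from the maximum principle. Since $\phi_\varepsilon$ minimizes the Dirichlet energy, truncation does not increase energy, so $\min\{\max\{\phi_\varepsilon,0\},1\}$ is also admissible. By uniqueness of the minimizer (strict convexity), $0\le\phi_\varepsilon\le 1$ and hence $\|\phi_\varepsilon\|_{L^\infty(\Omega_\varepsilon)}\le 1$.

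For the regularity $\phi_\varepsilon\in W^{2,2}(\Omega_\varepsilon)$, we note that $\phi_\varepsilon$ is harmonic in $\Omega_\varepsilon$. Near each $\partial K^\varepsilon_n$, which is $\mathcal C^2$ and lies at positive distance from $\partial\Omega$ by \eqref{perforation}, it has constant Dirichlet data $1$, so standard elliptic boundary regularity gives $W^{2,2}$ there. Near $\partial\Omega$, which is only piecewise $\mathcal C^2$, the fact that no particle touches $\partial\Omega$ gives a neighbourhood where $\phi_\varepsilon=\psi$ with $\psi$ independent of $\varepsilon$-structure. Actually the cleanest route is to work away from $\partial\Omega$ entirely: the function $1-\phi_\varepsilon$ near the particles is what matters. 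Global $W^{2,2}$ up to the edges of $\partial\Omega$ I would obtain by reflection/standard results for the Dirichlet Laplacian in the admissible geometry, since the edges are the junctions of the flat inlet/outlet with the $\mathcal C^2$ lateral wall, meeting at right angles. I expect this regularity up to those edges to be the only delicate point; reflection across the flat pieces $\Gamma_I,\Gamma_O$ handles it. Only $\varepsilon$-independent bounds are claimed for the gradient, not for the $W^{2,2}$ norm.

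The main estimate is the $\varepsilon$-uniform bound on $\|\nabla\phi_\varepsilon\|_{L^2}$, obtained by exhibiting a competitor in \eqref{relcap}. I fix a radial cutoff $\chi\in C_c^\infty(B(\bs 0,1))$ with $\chi=1$ on $B(\bs 0,1/2)$. I would use the explicit capacity profile of a ball. Define $v_n(\bs x)=w_n(|\bs x-\bs\xi^\varepsilon_n|)$, where
$$w_n(r)=1 \text{ for } r\le\delta_0\varepsilon^3,\qquad w_n(r)=\frac{\delta_0\varepsilon^3}{r}\,\chi\!\left(\frac{r}{\delta_1\varepsilon}\right) \text{ for } r>\delta_0\varepsilon^3.$$
This function is Lipschitz, equals $1$ on $K^\varepsilon_n$, and is supported in $B(\bs\xi^\varepsilon_n,\delta_1\varepsilon)$. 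By \eqref{perforation} these balls are pairwise disjoint and compactly contained in $\Omega$, so $v=\sum_n v_n\in W^{1,2}_0(\Omega)$ is admissible. To compute its energy, write $a=\delta_0\varepsilon^3$ and $b=\delta_1\varepsilon$. The contribution of $\nabla(a/r)$ is at most $4\pi a^2\int_a^\infty r^{-2}\,dr=4\pi a$. The cutoff term is at most $C(a/b)^2 b^{-2}b^3=Ca^2/b\le Ca$. Therefore
$$\int_\Omega|\nabla v|^2\le C\,N(\varepsilon)\,\delta_0\varepsilon^3\le C\,\frac{3|\Omega|\,\delta_0}{4\pi\delta_2^3},$$
where the last step uses \eqref{count}. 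Minimality then gives $\|\nabla\phi_\varepsilon\|_{L^2}^2\le\int_\Omega|\nabla v|^2$, with the constant depending only on $\Omega$ and $\delta_0,\delta_1,\delta_2$. This is exactly the critical scaling $\alpha=3$: $N\sim\varepsilon^{-3}$ particles each of capacity $\sim\varepsilon^3$.
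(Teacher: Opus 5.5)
Your proposal is correct and is essentially the paper's argument: the maximum principle gives the $L^\infty$ bound, standard elliptic regularity at fixed $\varepsilon$ gives $W^{2,2}$, and a particle-wise test function of energy $O(\varepsilon^3)$ is summed against $N(\varepsilon)\lesssim\varepsilon^{-3}$. The paper phrases the last step through subadditivity and monotonicity of the capacity with a smooth cutoff at scale $\lambda\varepsilon^3$, while you use one competitor with disjoint supports and the $1/r$ profile, which only improves the constant.

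One small repair is needed. Continuity of $w_n$ at $r=\delta_0\varepsilon^3$ requires $\delta_0\varepsilon^3\le\delta_1\varepsilon/2$, and \eqref{perforation} does not guarantee this for every $\varepsilon\in I_*$. Instead, cut off at scale $\delta_2\varepsilon$ with $\chi\equiv1$ on $B(\bs 0,\delta_1/\delta_2)$; this works because the balls $B(\bs\xi^\varepsilon_n,\delta_2\varepsilon)$ are pairwise disjoint and compactly contained in $\Omega$.
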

	\noindent
	\begin{proof}
		In what follows, $C > 0$ will always denote a generic constant that depends on $\Omega$ and $\{ \delta_{i} \}^{2}_{i=0}$ (independently of $\varepsilon \in I_{*}$), but that may change from line to line.
		
		Since $K^{\varepsilon}_{n}$ has a boundary of class $\mathcal{C}^{2}$ for every $n \in \{1,...,N(\varepsilon)\}$, the domains $\Omega_{1}$ and $\Omega_{2}$ are cylinders, and the lateral boundary $\Gamma_{W}$ is smooth, standard elliptic regularity arguments show that $\phi_{\varepsilon} \in W^{2,2}(\Omega_{\varepsilon}; \mathbb{R})$. The first estimate in \eqref{cap2} follows directly from the Maximum Principle. Concerning the second estimate in \eqref{cap2}, choose $\lambda > \delta_{0}$ (independent of $\varepsilon \in I_{*}$) such that
		$$
		\overline{K^{\varepsilon}_{n}} \subset B(\bs{\xi}^{\varepsilon}_{n}, \delta_{0} \varepsilon^{3}) \subset B(\bs{\xi}^{\varepsilon}_{n}, \lambda \varepsilon^{3}) \subset B(\bs{\xi}^{\varepsilon}_{n}, \delta_{1} \varepsilon)  \qquad \forall n \in \{1,...,N(\varepsilon)\} \, .
		$$
		Given $n \in \{1,...,N(\varepsilon)\}$, take a cutoff function $\varphi^{\varepsilon}_{n} \in \mathcal{C}_{0}^{\infty}(B(\bs{\xi}^{\varepsilon}_{n}, \lambda \varepsilon^{3});[0,1])$ such that
		\begin{equation} \label{forlater}
			\varphi^{\varepsilon}_{n} \equiv 1 \ \ \text{in} \ \ \overline{B(\bs{\xi}^{\varepsilon}_{n}, \delta_{0} \varepsilon^{3})} \, ; \qquad | \nabla \varphi^{\varepsilon}_{n}(\bs{\xi}) | \leq \dfrac{C}{\varepsilon^{3}} \quad \text{and} \quad | \nabla^{2} \varphi^{\varepsilon}_{n}(\bs{\xi}) | \leq \dfrac{C}{\varepsilon^{6}} \qquad \forall \bs{\xi} \in B(\bs{\xi}^{\varepsilon}_{n}, \lambda \varepsilon^{3}) \, ,
		\end{equation}
		see \cite[Chapter I, Section 1.4]{hormander1998analysis}. Since the relative capacity is an outer measure and is increasing with respect to domain inclusion (see \cite[Section 2.2]{maz2013sobolev}), from \eqref{perforation}-\eqref{count}-\eqref{forlater} we get
		$$
		\| \nabla \phi_{\varepsilon} \|^{2}_{L^{2}(\Omega)} = \mbox{Cap}_{\Omega}(K_{\varepsilon}) \leq \sum_{n=1}^{N(\varepsilon)} \mbox{Cap}_{\Omega}(K^{\varepsilon}_{n}) \leq \sum_{n=1}^{N(\varepsilon)} \mbox{Cap}_{\Omega}(B(\bs{\xi}^{\varepsilon}_{n}, \delta_{0} \varepsilon^{3})) \leq \sum_{n=1}^{N(\varepsilon)} \left( \int_{\Omega} |\nabla \varphi^{\varepsilon}_{n}|^2 \right) \leq C \, .
		$$
	This concludes the proof.
	\end{proof}
	
	In the sequel, an important role will be played by a \textit{suitably small} divergence-free vector field having a localized support around each of the particles $K^{\varepsilon}_{n}$, for $n \in \{1,...,N(\varepsilon)\}$. More precisely, following the ideas contained in \cite[Lemma~6]{hillairet2019effect} , we prove:
	
	\begin{lemma} \label{smallfield}
		Let $\Omega \subset \mathbb{R}^{3}$ be an admissible domain and $\bs{\mu}^{\varepsilon}_{1},...,\bs{\mu}^{\varepsilon}_{N(\varepsilon)} \in \mathbb{R}^{3}$. There exists a vector field $\bs{A}_{\varepsilon} \in \mathcal{C}^{\infty}(\overline{\Omega}; \mathbb{R}^{3})$ such that
		\begin{equation} \label{smallfieldprop}
			\nabla \cdot \bs{A}_{\varepsilon} =0 \ \ \mbox{in} \ \ \Omega \, ; \qquad \bs{A}_{\varepsilon} = \bs{0} \ \ \mbox{on} \ \ \partial \Omega \, ; \qquad \bs{A}_{\varepsilon} = \bs{\mu}^{\varepsilon}_{n} \ \ \mbox{in} \ \ \overline{K^{\varepsilon}_{n}} \qquad \forall n \in \{1,...,N(\varepsilon)\} \, .
		\end{equation}
		Moreover, under assumption \eqref{kineticuni}, there hold the bounds
		\begin{equation} \label{smallfieldpropbounds}
			\|\nabla \bs{A}_{\varepsilon}  \|_{L^{2}(\Omega)} \leq C_{*} \mathcal{S}_{*} \qquad \text{and} \qquad \|\bs{A}_{\varepsilon}  \|_{L^{4}(\Omega)} \leq C_{*} \mathcal{S}_{*} \, \varepsilon^{3/4} \, ,
		\end{equation}
		for some constant $C_{*} > 0$ that depends on $\Omega$ and $\{ \delta_{i} \}^{2}_{i=0}$, but is independent of $\varepsilon \in I_{*}$.
	\end{lemma}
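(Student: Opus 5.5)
We build $\bs{A}_{\varepsilon}$ as a sum of localized, divergence-free pieces, one per particle, with the same cutoffs $\varphi^{\varepsilon}_{n}$ used in the proof of Lemma \ref{refcapacitypro}. For each $n$ we fix $\lambda$ with $\delta_0<\lambda$ and $\lambda\varepsilon^3<\delta_1\varepsilon$, and we take $\varphi^{\varepsilon}_{n}\in\mathcal{C}^\infty_0(B(\bs{\xi}^{\varepsilon}_{n},\lambda\varepsilon^3);[0,1])$ satisfying \eqref{forlater}. A constant vector is a curl:
\[
\bs{\mu}^{\varepsilon}_{n}=\tfrac12\nabla\times\big(\bs{\mu}^{\varepsilon}_{n}\times(\bs{x}-\bs{\xi}^{\varepsilon}_{n})\big).
\]
We therefore set
\[
\bs{A}^{\varepsilon}_{n}\doteq\tfrac12\nabla\times\Big(\varphi^{\varepsilon}_{n}(\bs{x})\,\bs{\mu}^{\varepsilon}_{n}\times(\bs{x}-\bs{\xi}^{\varepsilon}_{n})\Big),\qquad \bs{A}_{\varepsilon}\doteq\sum_{n=1}^{N(\varepsilon)}\bs{A}^{\varepsilon}_{n}.
\]
Each $\bs{A}^{\varepsilon}_{n}$ is smooth and divergence-free, being a curl, and it is supported in $B(\bs{\xi}^{\varepsilon}_{n},\lambda\varepsilon^3)$. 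Since $\varphi^{\varepsilon}_{n}\equiv1$ on $B(\bs{\xi}^{\varepsilon}_{n},\delta_0\varepsilon^3)\supset\overline{K^{\varepsilon}_{n}}$, it equals $\bs{\mu}^{\varepsilon}_{n}$ on $\overline{K^{\varepsilon}_{n}}$. By \eqref{perforation} the supports are pairwise disjoint and lie compactly inside $\Omega$, so the sum is smooth on $\overline{\Omega}$, vanishes on $\partial\Omega$, and satisfies all of \eqref{smallfieldprop}.

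For the bounds we estimate a single piece first. Expanding the curl gives
\[
\bs{A}^{\varepsilon}_{n}=\varphi^{\varepsilon}_{n}\bs{\mu}^{\varepsilon}_{n}+\tfrac12\nabla\varphi^{\varepsilon}_{n}\times\big(\bs{\mu}^{\varepsilon}_{n}\times(\bs{x}-\bs{\xi}^{\varepsilon}_{n})\big).
\]
On the support $|\bs{x}-\bs{\xi}^{\varepsilon}_{n}|\le\lambda\varepsilon^3$, so by \eqref{forlater} we get $|\bs{A}^{\varepsilon}_{n}|\le C|\bs{\mu}^{\varepsilon}_{n}|$ and $|\nabla\bs{A}^{\varepsilon}_{n}|\le C\varepsilon^{-3}|\bs{\mu}^{\varepsilon}_{n}|$. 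The support has volume of order $\varepsilon^9$, which yields
\[
\|\nabla\bs{A}^{\varepsilon}_{n}\|^2_{L^2}\le C\varepsilon^{3}|\bs{\mu}^{\varepsilon}_{n}|^2,\qquad \|\bs{A}^{\varepsilon}_{n}\|^4_{L^4}\le C\varepsilon^{9}|\bs{\mu}^{\varepsilon}_{n}|^4.
\]
The supports are disjoint, so these norms add. Combining \eqref{count} with \eqref{kineticuni} gives
\[
\|\nabla\bs{A}_\varepsilon\|^2_{L^2}\le C\varepsilon^3N(\varepsilon)\cdot\frac1{N(\varepsilon)}\sum_n|\bs{\mu}^{\varepsilon}_{n}|^2\le C\mathcal{S}_*^2,
\]
which is the first bound.

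The $L^4$ bound is the delicate step, because \eqref{kineticuni} only controls the average of $|\bs{\mu}^{\varepsilon}_{n}|^2$, not of $|\bs{\mu}^{\varepsilon}_{n}|^4$. We use the trivial inequality $\sum_n|\bs{\mu}^{\varepsilon}_{n}|^4\le\big(\sum_n|\bs{\mu}^{\varepsilon}_{n}|^2\big)^2\le N(\varepsilon)^2\mathcal{S}_*^4$. Then
\[
\|\bs{A}_\varepsilon\|^4_{L^4}\le C\varepsilon^9N(\varepsilon)^2\mathcal{S}_*^4\le C\varepsilon^3\mathcal{S}_*^4,
\]
so $\|\bs{A}_\varepsilon\|_{L^4}\le C\mathcal{S}_*\varepsilon^{3/4}$. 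This is exactly the claimed rate, which suggests that this crude inequality is the intended route.

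The constants depend only on $\lambda$, $\delta_0$, $\delta_2$ and $|\Omega|$, so they are independent of $\varepsilon$.
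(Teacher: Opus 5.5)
Your proposal is correct and coincides with the paper's proof. It uses the same field $\tfrac12\sum_n \nabla\times\big(\varphi^{\varepsilon}_{n}\,\bs{\mu}^{\varepsilon}_{n}\times(\bs{x}-\bs{\xi}^{\varepsilon}_{n})\big)$ built from the cutoffs in \eqref{forlater}, the same pointwise bounds on disjoint supports of volume $O(\varepsilon^9)$, and the same inequality $\sum_n|\bs{\mu}^{\varepsilon}_{n}|^4\le\big(\sum_n|\bs{\mu}^{\varepsilon}_{n}|^2\big)^2$ combined with \eqref{count} for the $\varepsilon^{3/4}$ rate. Your intermediate bound $\|\nabla\bs{A}_\varepsilon\|_{L^2}^2\le C\mathcal{S}_*^2$ is in fact the correct form of the paper's displayed estimate, which has $\mathcal{S}_*$ in place of $\mathcal{S}_*^2$.
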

	\noindent
	\begin{proof}
		In what follows, $C > 0$ will always denote a generic constant that depends on $\Omega$ and $\{ \delta_{i} \}^{2}_{i=0}$ (independently of $\varepsilon \in I_{*}$), but that may change from line to line.
		
		Let $\varphi^{\varepsilon}_{n} \in \mathcal{C}_{0}^{\infty}(B(\bs{\xi}^{\varepsilon}_{n}, \lambda \varepsilon^{3});[0,1])$ be the cutoff function observing \eqref{forlater}, for any $n \in \{1,...,N(\varepsilon)\}$. Define
		$$
		\bs{A}_{\varepsilon} (\bs{\xi}) \doteq \dfrac{1}{2} \sum_{n=1}^{N(\varepsilon)} \nabla \times ( \varphi^{\varepsilon}_{n}(\bs{\xi}) (\bs{\mu}^{\varepsilon}_{n} \times (\bs{\xi} - \bs{\xi}^{\varepsilon}_{n}))) \qquad \forall \bs{\xi} \in \overline{\Omega} \, .
		$$
		Clearly, $\bs{A}_{\varepsilon} \in \mathcal{C}^{\infty}(\overline{\Omega}; \mathbb{R}^{3})$ is a divergence-free vector field such that, due to \eqref{perforation}-\eqref{forlater}, vanishes on $\partial \Omega$. Given any $j \in \{1,...,N(\varepsilon)\}$ and $\bs{\xi} \in B(\bs{\xi}^{\varepsilon}_{j}, \delta_{0} \varepsilon^{3})$, from \eqref{forlater}$_1$ we infer
		$$
		\bs{A}_{\varepsilon} (\bs{\xi}) = \dfrac{1}{2} \nabla \times ( \bs{\mu}^{\varepsilon}_{j} \times (\bs{\xi} - \bs{\xi}^{\varepsilon}_{j})) = \bs{\mu}^{\varepsilon}_{j} \qquad \forall \bs{\xi} \in B(\bs{\xi}^{\varepsilon}_{j}, \delta_{0} \varepsilon^{3}) \, , 
		$$
		and therefore, $\bs{A}_{\varepsilon} = \bs{\mu}^{\varepsilon}_{j}$ in $\overline{K^{\varepsilon}_{j}}$, for every $j \in \{1,...,N(\varepsilon)\}$. Concerning the estimates in \eqref{smallfieldpropbounds}, recall that $\varphi^{\varepsilon}_{j} \in \mathcal{C}_{0}^{\infty}(B(\bs{\xi}^{\varepsilon}_{j}, \lambda \varepsilon^{3});[0,1])$ for every $j \in \{1,...,N(\varepsilon)\}$, and thus 
		\begin{equation} \label{smallfieldprop1}
			\int_{\Omega} | \nabla \bs{A}_{\varepsilon} |^{2} = \sum_{j=1}^{N(\varepsilon)} \left( \int_{B(\bs{\xi}^{\varepsilon}_{j}, \lambda \varepsilon^{3})} | \nabla \bs{A}_{\varepsilon} |^{2} \right) \, , \qquad \int_{\Omega} | \bs{A}_{\varepsilon} |^{4} = \sum_{j=1}^{N(\varepsilon)} \left( \int_{B(\bs{\xi}^{\varepsilon}_{j}, \lambda \varepsilon^{3})} | \bs{A}_{\varepsilon} |^{4} \right) \, .
		\end{equation}
		Given any $j \in \{1,...,N(\varepsilon)\}$, we have
		\begin{equation} \label{smallfieldprop2}
			\bs{A}_{\varepsilon} (\bs{\xi}) = \dfrac{1}{2} \nabla \times ( \varphi^{\varepsilon}_{j}(\bs{\xi}) (\bs{\mu}^{\varepsilon}_{j} \times (\bs{\xi} - \bs{\xi}^{\varepsilon}_{j}))) = \varphi^{\varepsilon}_{j}(\bs{\xi}) \bs{\mu}^{\varepsilon}_{j} + \dfrac{1}{2} \nabla \varphi^{\varepsilon}_{j}(\bs{\xi}) \times ( \bs{\mu}^{\varepsilon}_{j} \times (\bs{\xi} - \bs{\xi}^{\varepsilon}_{j})) \qquad \forall \bs{\xi} \in B(\bs{\xi}^{\varepsilon}_{j}, \lambda \varepsilon^{3}) \, ,
		\end{equation}
		thereby implying, in virtue of \eqref{forlater}$_2$-\eqref{forlater}$_3$,
		\begin{equation} \label{smallfieldprop3}
			| \nabla \bs{A}_{\varepsilon}(\bs{\xi}) | \leq \dfrac{C}{\varepsilon^{3}} | \bs{\mu}^{\varepsilon}_{j} | \quad \text{and} \quad | \bs{A}_{\varepsilon}(\bs{\xi}) | \leq C | \bs{\mu}^{\varepsilon}_{j} | \qquad \forall \bs{\xi} \in B(\bs{\xi}^{\varepsilon}_{j}, \lambda \varepsilon^{3}) \, , \quad \forall j \in \{1,...,N(\varepsilon)\} \, .
		\end{equation}
		Inserting \eqref{smallfieldprop3} into the right-hand side of both identities \eqref{smallfieldprop1}, and then enforcing \eqref{count}-\eqref{kineticuni}, entails
		$$
		\int_{\Omega} | \nabla \bs{A}_{\varepsilon} |^{2} \leq C \varepsilon^{3} \sum_{n=1}^{N(\varepsilon)} \left| \bs{\mu}^{\varepsilon}_{n} \right|^{2} \leq C \mathcal{S}_{*} \, ,
		$$
		as well as
		$$
		\int_{\Omega} | \bs{A}_{\varepsilon} |^{4} \leq C \varepsilon^{9} \sum_{n=1}^{N(\varepsilon)} \left| \bs{\mu}^{\varepsilon}_{n} \right|^{4} \leq C \varepsilon^{3} \left( \varepsilon^{3} \sum_{n=1}^{N(\varepsilon)} \left| \bs{\mu}^{\varepsilon}_{n} \right|^{2} \right)^{2} \leq C \mathcal{S}_{*}^{4} \varepsilon^{3} \, ,
		$$
		from where we directly obtain \eqref{smallfieldpropbounds}. This concludes the proof.
	\end{proof}
	
	Let $D \subset \mathbb{R}^{3}$ be any bounded Lipschitz domain, and consider the space of square-integrable functions in $D$ having zero mean value:
	\begin{equation} \label{l02}
		L^2_0(D; \mathbb{R}) \doteq \left\{ g \in L^2(D; \mathbb{R}) \ \Big | \ \int_Q g = 0 \right\} \, .
	\end{equation}
	It is well-known (see \cite[Section III.3]{galdi2011introduction} and Lemma~\ref{bogtype} below) that this space plays a fundamental role in the inversion of the divergence operator on $L^2_0(D;\mathbb{R})$, and therefore, also for the obtainment of pressure estimates in the equations of hydrodynamics in general. Recall that, in \eqref{nsstokes0}, the scalar pressure was modified by a suitable additive constant, in order to yield the homogeneous boundary condition \eqref{nsstokes1}$_3$. As a consequence, in contrast with the standard case of Dirichlet boundary conditions, we can no longer assume that the Bernoulli pressure is an element of  $L_{0}^{2}(\Omega_{\varepsilon};\mathbb{R})$, see \eqref{l02}.
	Another essential preliminary result concerns, therefore, the construction of an $\varepsilon$-uniform inverse of the divergence operator on the space $L^{2}(\Omega_{\varepsilon}; \mathbb{R})$, which exploits the corresponding uniform operator on $L_{0}^{2}(\Omega_{\varepsilon}; \mathbb{R})$, built in \cite{diening2017inverse}, and the particular geometry of our setting. Inspired by \cite[Lemma~4.2]{korobkov2020solvability}, we prove:
	\begin{lemma} \label{bogtype}
		Let $\Omega \subset \mathbb{R}^{3}$ be an admissible domain and $q \in L^{2}(\Omega_{\varepsilon}; \mathbb{R})$. There exists a vector field $\bs{J}_{\! \varepsilon} \in W^{1,2}(\Omega_{\varepsilon}; \mathbb{R}^3)$ such that
		\begin{equation} \label{vecje}
			\left\{
			\begin{aligned}
				& \nabla \cdot \bs{J}_{\! \varepsilon} = q \ \ \mbox{in} \ \ \Omega_{\varepsilon} \, ; \qquad \bs{J}_{\! \varepsilon} \times \bs{\nu} = \bs{0} \ \ \mbox{on} \ \ \Gamma_{I} \, ; \qquad \bs{J}_{\! \varepsilon}=\bs{0} \ \ \mbox{on} \ \ \Gamma_{W} \cup \partial K_{\varepsilon} \cup \Gamma_{O} \, ; \\[6pt]
				& \| \nabla \bs{J}_{\! \varepsilon} \|_{L^{2}(\Omega_{\varepsilon})} \leq C_{*} \| q \|_{L^{2}(\Omega_{\varepsilon})} \, , 
			\end{aligned}
			\right.
		\end{equation}
		for some constant $C_{*} > 0$ that depends on $\Omega$ and $\{ \delta_{i} \}^{2}_{i=0}$, but is independent of $\varepsilon \in I_{*}$.
	\end{lemma}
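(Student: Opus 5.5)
The plan is to reduce to the zero-mean case, where the $\eps$-uniform Bogovskii-type operator of \cite{diening2017inverse} is available on the perforated domain, by first removing the mean of $q$ with a fixed flux-type field that uses the inlet $\Gamma_I$. Write $\bar q \doteq \int_{\Omega_\eps} q$ and note $|\bar q| \le |\Omega|^{1/2}\|q\|_{L^2(\Omega_\eps)}$.

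\textbf{Step 1.} Build a vector field $\bs{h} \in W^{1,2}(\Omega;\R^3)$, independent of $\eps$, that vanishes on the particles and satisfies
\begin{equation*}
\bs{h}\times\bs\nu=\bs 0 \ \text{on} \ \Gamma_I, \qquad \bs h=\bs 0 \ \text{on} \ \Gamma_W\cup\Gamma_O, \qquad \int_{\Omega}\nabla\cdot\bs h = -\int_{\Gamma_I}\bs h\cdot(-\bs e)\neq 0 .
\end{equation*}
In the coordinates of $\Omega_1=\Theta_1\times(0,\ell_1)$, I take $\bs h(x',x_3)=\eta(x_3)\,\psi(x')\,\bs e_3$. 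Here $\psi\in\mathcal C^\infty_0(\Theta_1)$ is a fixed bump with $\int_{\Theta_1}\psi=1$, and $\eta\in\mathcal C^\infty_0([0,\ell_1/2))$ has $\eta(0)=1$ and is supported near $x_3=0$. Then $\bs h\times\bs\nu=\bs 0$ on $\Gamma_I$, $\bs h$ vanishes on the lateral wall and away from the inlet, and $\int_\Omega\nabla\cdot\bs h=\int_{\partial\Omega}\bs h\cdot\bs\nu=-1$.

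The difficulty is that particles may lie in the support of $\bs h$, since no condition like \eqref{extrahyp} is assumed here. I fix this by multiplying by $(1-\phi_\eps)$, with $\phi_\eps$ the capacity potential of Lemma~\ref{refcapacitypro}, localized as in its proof so that it is supported in $\bigcup_n B(\bs\xi^\eps_n,\lambda\eps^3)$. Equivalently, I use the cutoffs $\varphi^\eps_n$ from \eqref{forlater} and set $\bs h_\eps=\bs h\,\bigl(1-\sum_n\varphi^\eps_n\bigr)$. Since $\bs h$ is smooth and bounded, the computation in the proof of Lemma~\ref{refcapacitypro} gives
\begin{equation*}
\|\nabla\bs h_\eps\|_{L^2}\le C\bigl(1+\|\nabla\textstyle\sum_n\varphi^\eps_n\|_{L^2}\bigr)\le C .
\end{equation*}
Moreover $\int_{\Omega_\eps}\nabla\cdot\bs h_\eps=\int_{\Gamma_I}\bs h\cdot\bs\nu=-1$, because $\bs h_\eps$ vanishes on $\partial K_\eps$, $\Gamma_W$ and $\Gamma_O$. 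I also need $\|\nabla\cdot\bs h_\eps\|_{L^2}\le C$, which follows from the gradient bound.

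\textbf{Step 2.} Set $\bs J^1_\eps=-\bar q\,\bs h_\eps$ and $q_0=q-\nabla\cdot\bs J^1_\eps$. Then $\int_{\Omega_\eps}q_0=\bar q-\bar q=0$, and
\begin{equation*}
\|q_0\|_{L^2(\Omega_\eps)}\le (1+C|\Omega|^{1/2})\,\|q\|_{L^2(\Omega_\eps)} .
\end{equation*}
Now I apply the uniform inverse divergence of \cite{diening2017inverse}, which is valid for perforated domains with particles of size $\eps^3$ at distance $\eps$ inside a John or Lipschitz domain with uniform John constants. It gives $\bs J^2_\eps\in W^{1,2}_0(\Omega_\eps;\R^3)$ with $\nabla\cdot\bs J^2_\eps=q_0$ and $\|\nabla\bs J^2_\eps\|_{L^2}\le C\|q_0\|_{L^2}$, where $C$ does not depend on $\eps$. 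Then $\bs J_\eps=\bs J^1_\eps+\bs J^2_\eps$ satisfies \eqref{vecje}. The zero trace of $\bs J^2_\eps$ on all of $\partial\Omega_\eps$ preserves the boundary conditions of $\bs J^1_\eps$.

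\textbf{Main obstacle.} The delicate point is the applicability of \cite{diening2017inverse} with an $\eps$-independent constant, because $\Omega$ here is only piecewise $\mathcal C^2$ with edges where $\Gamma_I,\Gamma_O$ meet $\Gamma_W$. The particles also have variable shapes, controlled only by a uniform John constant. I would cite their result in the form: for $\Omega$ a bounded John domain and holes satisfying \eqref{perforation} with uniformly John-regular $K_n$, the Bogovskii operator on $L^2_0(\Omega_\eps)$ into $W^{1,2}_0$ has a bound independent of $\eps$, valid in the critical regime $\alpha=3$ in dimension $3$. Admissible domains are Lipschitz and hence John, so this applies. A secondary check is that the cutoff construction in Step 1 really keeps $\|\nabla\bs h_\eps\|_{L^2}$ bounded. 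This uses exactly the critical scaling: $N(\eps)\cdot\eps^3=O(1)$ particles' worth of capacity, as in Lemma~\ref{refcapacitypro}.
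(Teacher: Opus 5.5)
Your proof is correct and follows essentially the same route as the paper. You remove the mean of $q$ with an $\eps$-uniformly bounded field that carries unit flux through $\Gamma_I$ (normal there, vanishing on $\Gamma_W\cup\Gamma_O$ and on the particles), and then invert the divergence of the zero-mean remainder with the uniform operator of \cite{diening2017inverse}. The only difference is the flux field: the paper uses $-(1-\phi_\eps)(1-\zeta)\bs Z$, with $\bs Z$ a Stokes solution with Hagen--Poiseuille data whose boundedness relies on its $W^{2,2}$-regularity, whereas your explicit bump $\eta(x_3)\psi(x')\bs e_3$ times $1-\sum_n\varphi^\eps_n$ is trivially bounded and slightly more elementary.
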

	\noindent
	\begin{proof}
		In what follows, $C > 0$ will always denote a generic constant that depends on $\Omega$ and $\{ \delta_{i} \}^{2}_{i=0}$ (independently of $\varepsilon \in I_{*}$), but that may change from line to line.
		
		For $j \in \{1,2\}$, let $\bs{V}^{(j)}_{\! 1} \in W^{2,2}(\Omega_{j};\mathbb{R}^3)$ be as in \eqref{poi23dd}, with $F =1$, so that $\bs{V}^{(1)}_{\! 1} \times \bs{\nu} = \bs{0}$ on $\Gamma_{I}$. Alongside, consider the unique weak solution $(\bs{Z}, \Pi) \in W^{1,2}(\Omega;\mathbb{R}^3) \times L_{0}^{2}(\Omega;\mathbb{R})$ to the following Stokes system in $\Omega$:
		\begin{equation} \label{hagen00}
			\left\{
			\begin{aligned}
				& -\Delta \bs{Z} + \nabla \Pi  = \bs{0} \, , \quad  \nabla\cdot \bs{Z} = 0 \ \ \mbox{in} \ \ \Omega \, , \\[5pt]
				& \bs{Z}=\bs{V}^{(1)}_{\! 1} \ \ \mbox{on} \ \ \Gamma_{I}  \, , \quad \bs{Z}=\bs{V}^{(2)}_{\! 1} \ \ \mbox{on} \ \ \Gamma_{O} \, , \\[5pt]
				& \bs{Z}=\bs{0} \ \ \mbox{on} \ \ \Gamma_{W} \, .
			\end{aligned}
			\right.
		\end{equation}
		Therefore, 	
		\begin{equation} \label{hagen1}
			-\int_{\Gamma_{I}} \bs{Z} \cdot \bs{\nu} = 1 \qquad \text{and} \qquad \bs{Z} \times \bs{\nu} = \bs{0} \ \ \mbox{on} \ \ \Gamma_{I} \, . 
		\end{equation}
		Now, let $B \subset \mathbb{R}^{3}$ be an open ball such that
		$$
		\overline{\Omega_{2}} \subsetneq B \qquad \text{and} \qquad \overline{\Omega_{1}} \subsetneq \mathbb{R}^3 \setminus \overline{B} \, .
		$$
		Accordingly, take a cutoff function $\zeta \in \mathcal{C}^{\infty}_{0}(B; [0,1])$ such that $\zeta \equiv 1$ in $\overline{\Omega_{2}}$; in particular,
		\begin{equation} \label{firstcutoff}
		\zeta = 0 \ \ \text{on} \ \ \Gamma_{I} \qquad \text{and} \qquad \zeta = 1 \ \ \text{on} \ \ \Gamma_{O} \, .
	\end{equation}
		Additionally, as in the proof of Lemma~\ref{refcapacitypro}, let $\phi_{\varepsilon} \in W^{2,2}(\Omega_{\varepsilon};  \mathbb{R}) \cap W_{0}^{1,2}(\Omega; \mathbb{R})$ be the relative capacity potential of $K_{\varepsilon}$ with respect to $\Omega$. Then, introduce the vector field
		\begin{equation} \label{bogtypevec}
			\bs{Q}_{\varepsilon} \doteq - \left( \int_{\Omega_{\varepsilon}} q \right)  (1 - \phi_{\varepsilon}) (1-\zeta) \bs{Z} \quad \text{in} \ \ \Omega_{\varepsilon} \, ,
		\end{equation}
		which, owing to \eqref{cap1}-\eqref{hagen00}-\eqref{hagen1}-\eqref{firstcutoff}, is an element of $W^{1,2}(\Omega_{\varepsilon};\mathbb{R}^3)$ such that 
		\begin{equation} \label{pointwise2pre}
			\bs{Q}_{\! \varepsilon} \times \bs{\nu} = \bs{0} \ \ \mbox{on} \ \ \Gamma_{I} \, ; \qquad \bs{Q}_{\! \varepsilon}=\bs{0} \ \ \mbox{on} \ \ \Gamma_{W} \cup \partial K_{\varepsilon} \cup \Gamma_{O} \, .
		\end{equation}
		Moreover, from \eqref{cap2} we readily infer the estimate
		\begin{equation} \label{pointwise22q}
			\| \nabla \bs{Q}_{\varepsilon} \|_{L^{2}(\Omega_{\varepsilon})} \leq C \| q \|_{L^{2}(\Omega_{\varepsilon})} \, .
		\end{equation}
		On the other hand, from the Divergence Theorem and \eqref{hagen1}-\eqref{pointwise2pre} we obtain
		$$
		\begin{aligned}
			\int_{\Omega_{\varepsilon}} \nabla \cdot \bs{Q}_{\varepsilon} = \int_{\Gamma_{I}} \bs{Q}_{\varepsilon} \cdot \bs{\nu} = - \left( \int_{\Omega_{\varepsilon}} q \right) \left( \int_{\Gamma_{I}} \bs{Z} \cdot \bs{\nu} \right) = \int_{\Omega_{\varepsilon}} q \, ,
		\end{aligned}
		$$
		so that $q - \nabla \cdot \bs{Q}_{\varepsilon} \in L^{2}_{0}(\Omega_{\varepsilon};\mathbb{R})$, see \eqref{l02}. Then, standard results such as \cite[Lemma~1]{pileckas1983spaces} (consult also \cite[Lemma~1.8.13]{korobkov2024steady}) ensure the existence of another vector field $\bs{Y}_{\! \! \varepsilon} \in W^{1,2}_{0}(\Omega_{\varepsilon};\mathbb{R}^3)$ such that
		\begin{equation} \label{bogtypevec2}
			\nabla \cdot \bs{Y}_{\! \! \varepsilon} = q - \nabla \cdot \bs{Q}_{\varepsilon} \ \ \text{in} \ \ \Omega_{\varepsilon} \qquad \text{and} \qquad  \| \nabla \bs{Y}_{\! \! \varepsilon} \|_{L^{2}(\Omega_{\varepsilon})} \leq C(\Omega_{\varepsilon}) \| q - \nabla \cdot \bs{Q}_{\varepsilon} \|_{L^{2}(\Omega_{\varepsilon})}  \, .
		\end{equation}
		From \cite[Theorem~2.3]{diening2017inverse} we know that $C(\Omega_{\varepsilon})$ (the continuity constant of the inverse of the divergence operator in $\Omega_{\varepsilon}$) admits the uniform bound 
		\begin{equation} \label{boguni0}
			C(\Omega_{\varepsilon}) \leq C \, .
		\end{equation}
We emphasize that our assumption on the uniform John constant of the sets $(K_{n})_{n \in \mathbb{N}}$ (see Section \ref{perfdomassumptions}) is essential for \eqref{boguni0} to hold, as it ensures the validity of \cite[Lemma 3.1]{diening2017inverse}, in light of \cite[Theorem 2.6]{acosta2017divergence}.

We set $\bs{J}_{\! \varepsilon} \doteq \bs{Q}_{\varepsilon} + \bs{Y}_{\! \! \varepsilon}$ which, in view of \eqref{pointwise2pre}-\eqref{pointwise22q}-\eqref{bogtypevec2}-\eqref{boguni0}, is an element of $W^{1,2}(\Omega_{\varepsilon}; \mathbb{R}^{3})$ observing the properties stated in \eqref{vecje}.
	\end{proof}
	
Let $\Omega \subset \mathbb{R}^{3}$ be an admissible domain. For $j \in \{1,2\}$, define the horizontal cylinder $\mathcal{P}_{j} \doteq \Theta_{j} \times (0,\ell_{j})$, and denote by $\bs{\nu} \in \mathbb{R}^{3}$ its outward unit normal. Let $u_{j} \in \mathcal{C}^{2}(\overline{\Theta_{j}}; \mathbb{R})$ be the unique classical solution to the following torsion problem in $\Theta_{j}$:
	$$
	-\Delta u_{j} = 1 \ \ \mbox{in} \ \ \Theta_{j} \, , \qquad u_{j}=0 \ \ \mbox{on} \ \ \partial \Theta_{j} \, ,
	$$
	and put
	$$
	\varrho_{j} \doteq \int_{\Theta_{j}} u_{j} = \int_{\Theta_{j}} | \nabla u_{j} |^{2} > 0 \, .
	$$ 
	The steady-state Hagen-Poiseuille flow in $\mathcal{P}_{j}$, having unit kinematic viscosity and flow rate $F \in \mathbb{R}$, is characterized by the velocity field $\bs{U}^{(j)}_{\! F} : \overline{\mathcal{P}_{j}} \longrightarrow \mathbb{R}^3$ and scalar pressure $Q^{(j)}_{F} : \overline{\mathcal{P}_{j}} \longrightarrow \mathbb{R}$ given by
	\begin{equation}\label{poi23d}
		\bs{U}^{(j)}_{\! F}(x,y,z) \doteq \dfrac{F}{\varrho_{j}} (0,0, u_{j}(x,y) ) \qquad \text{and} \qquad Q^{(j)}_{F}(x,y,z) \doteq -\dfrac{F}{\varrho_{j}} \left( z - \ell_{j} \right) \qquad \forall (x,y,z) \in \overline{\mathcal{P}_{j}} \, .
	\end{equation}
	Now, given the geometry of $\Omega_{j}$, there exists a point $(x_{j}, y_{j},z_{j}) \in \mathbb{R}^3$ and a rotation matrix $\mathcal{Q}_{j} \in \text{SO}(3)$ such that the set $\overline{\Omega_{j}}$ can be mapped onto $\overline{\mathcal{P}_{j}}$ through the following rigid transformation: 
	$$
	\mathbf{T}_{j}(x,y,z) = (x_{j}, y_{j},z_{j}) + \mathcal{Q}_{j} \, (x,y,z) \qquad \forall (x,y,z) \in \overline{\Omega_{j}} \, .  
	$$
	Then, the steady-state Hagen-Poiseuille flow associated to $\Omega_{j}$, having unit kinematic viscosity and flow rate $F$, can be defined as
	\begin{equation}\label{poi23dd}
		\bs{V}^{(j)}_{\! F}(x,y,z) \doteq \mathcal{Q}_{j}^{\top} \bs{U}^{(j)}_{\! F}(\mathbf{T}_{j}(x,y,z)) \quad \text{and} \quad
		P_{F}^{(j)}(x,y,z) \doteq Q^{(j)}_{\! F}(\mathbf{T}_{j}(x,y,z)) \qquad \forall (x,y,z) \in \overline{\Omega_{j}} \, ,
	\end{equation}
	so that $(\bs{V}^{(j)}_{\! F}, Q^{(j)}_{\! F}) \in \mathcal{C}^{2}(\overline{\Omega_{j}}; \mathbb{R}^{3}) \times \mathcal{C}^{1}(\overline{\Omega_{j}}; \mathbb{R})$ solve the following Stokes system in $\Omega_{j}$:
	\begin{equation}\label{flux03d}
		\left\{
		\begin{aligned}
			\begin{aligned}
				& -\Delta \bs{V}^{(j)}_{\! F} + \nabla P^{(j)}_{F}  = \bs{0} \, , \quad  \nabla\cdot \bs{V}^{(j)}_{\! F} = 0 \ \ \mbox{in} \ \ \Omega_{j} \, , \\[5pt]
				& \bs{V}^{(j)}_{\! F}=\bs{0} \ \ \mbox{on} \ \ \Gamma_{W} \cap \partial \Omega_{j} \, , \\[5pt]
				& \int_{\Sigma} \bs{V}^{(j)}_{\! F} \cdot \bs{\nu} = F \quad \text{for any inner cross-section $\Sigma \subset \overline{\Omega_{j}}$ of $\Omega_{j}$} \, .
			\end{aligned}
		\end{aligned}
		\right.
	\end{equation}
	
	With respect to the boundary-value problem \eqref{nsstokes1}, as an extension of \cite[Theorem~3.1]{sperone2022}, in the next lemma we build a sufficiently smooth \emph{flux carrier}.
	
	\begin{lemma} \label{fluxcarrier}
		Let $\Omega \subset \mathbb{R}^{3}$ be an admissible domain, $F \in \mathbb{R}$ and $\bs{\mu}^{\varepsilon}_{1},...,\bs{\mu}^{\varepsilon}_{N(\varepsilon)} \in \mathbb{R}^{3}$. There exists a vector field $\bs{\Psi}_{\! \varepsilon} \in W^{2,2}(\Omega_{\varepsilon}; \mathbb{R}^{3})$ such that
		\begin{equation} \label{vecpsi}
			\left\{
			\begin{aligned}
				& \nabla \cdot \bs{\Psi}_{\! \varepsilon} =0 \ \ \mbox{in} \ \ \Omega_{\varepsilon} \, ; \qquad \bs{\Psi}_{\! \varepsilon} \times \bs{\nu} = \bs{0} \ \ \mbox{on} \ \ \Gamma_{I} \cup \Gamma_{O} \, ; \qquad \bs{\Psi}_{\! \varepsilon} = \bs{0} \ \ \mbox{on} \ \ \Gamma_{W} \, ; \\[3pt]
				& \bs{\Psi}_{\! \varepsilon} = \bs{\mu}^{\varepsilon}_{n} \ \ \mbox{on} \ \ \partial K^{\varepsilon}_{n} \qquad \forall n \in \{1,...,N(\varepsilon)\} \, \, ; \qquad \int_{\Gamma_{O}} \bs{\Psi}_{\! \varepsilon} \cdot \bs{\nu} = F \, . 
			\end{aligned}
			\right.
		\end{equation}
		Moreover, under assumption \eqref{kineticuni}, there holds the bound
		\begin{equation} \label{vecpsibound}
			\| \nabla \bs{\Psi}_{\! \varepsilon} \|_{L^{2}(\Omega_{\varepsilon})} \leq C_{*} \left( |F| + \mathcal{S}_{*} \right) \, ,
		\end{equation}
		for some constant $C_{*} > 0$ that depends on $\Omega$ and $\{ \delta_{i} \}^{2}_{i=0}$, but is independent of $\varepsilon \in I_{*}$.
	\end{lemma}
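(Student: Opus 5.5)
The plan is to build $\bs{\Psi}_{\! \varepsilon}$ as the sum of a flux-carrying part, which ignores the particles, and the small field $\bs{A}_\varepsilon$ of Lemma~\ref{smallfield}, which supplies the particle velocities. The first step is a divergence-free field $\bs{W}_{\! F}$ on $\Omega$ with $\bs{W}_{\! F}\times\bs{\nu}=\bs 0$ on $\Gamma_I\cup\Gamma_O$, $\bs{W}_{\! F}=\bs 0$ on $\Gamma_W$, and $\int_{\Gamma_O}\bs{W}_{\! F}\cdot\bs\nu=F$. Following \cite[Theorem~3.1]{sperone2022}, take $\bs{W}_{\! F}\doteq F\bs Z$, where $\bs Z$ is the Stokes solution of \eqref{hagen00}. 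It is divergence free and vanishes on $\Gamma_W$. On $\Gamma_I$ and $\Gamma_O$ it equals the Hagen–Poiseuille profiles $\bs V^{(j)}_{\! 1}$, which are parallel to the axis, so the tangential components vanish. By \eqref{hagen1} and the divergence theorem its outflow through $\Gamma_O$ is $1$.

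The next point is regularity, since $\bs\Psi_\varepsilon$ must lie in $W^{2,2}(\Omega_\varepsilon)$. Stokes regularity near the edges $\partial\Gamma_I$, $\partial\Gamma_O$ is delicate, so I would instead glue the Poiseuille flows smoothly. On $\Omega_1$ and $\Omega_2$, use $\bs V^{(j)}_{\! F}$, which is $\mathcal C^2$. In between, connect them with a smooth solenoidal field in a neighbourhood of $\Omega_0$ carrying flux $F$ and vanishing on $\Gamma_W$. This can be done as a curl of a smooth vector potential built via a cutoff, as in the standard Leray–Hopf flux carrier construction. The result is a field $\bs{W}_{\! F}\in\mathcal C^2(\overline\Omega)$ that coincides with $\bs V^{(j)}_{\! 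F}$ near the inlet and outlet and satisfies $\|\nabla \bs W_{\! F}\|_{L^2(\Omega)}\le C|F|$, with $C$ depending only on $\Omega$ by linearity in $F$.

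The particles are then handled by cancelling $\bs W_{\! F}$ near each particle and adding $\bs A_\varepsilon$. Simply multiplying $\bs W_{\! F}$ by $(1-\phi_\varepsilon)$ destroys the divergence-free property. I would therefore use localized curls instead: write $\bs W_{\! F}=\nabla\times\bs H$ locally and subtract $\sum_n \nabla\times(\varphi^\varepsilon_n \bs H_n)$. Here $\bs H_n$ is a local vector potential of $\bs W_{\! F}$ on $B(\bs\xi^\varepsilon_n,\lambda\varepsilon^3)$, normalized to vanish at $\bs\xi^\varepsilon_n$ (e.g. $\bs H_n(\bs\xi)=\int_0^1 t\,\bs W_{\! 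F}(\bs\xi^\varepsilon_n+t(\bs\xi-\bs\xi^\varepsilon_n))\times(\bs\xi-\bs\xi^\varepsilon_n)\,dt$), so that $|\bs H_n|\le C|F|\varepsilon^3$ on the ball. With this normalization each correction has gradient bounded by $C|F|$ on a ball of volume $\varepsilon^9$. Summing over at most $C\varepsilon^{-3}$ particles gives an $L^2$ gradient bound $C|F|\varepsilon^3$, which is harmless.

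Finally set
$$
\bs\Psi_{\! \varepsilon}\doteq \bs W_{\! F}-\sum_{n=1}^{N(\varepsilon)}\nabla\times(\varphi^\varepsilon_n\bs H_n)+\bs A_\varepsilon .
$$
This field is divergence free. It equals $\bs\mu^\varepsilon_n$ on $\overline{K^\varepsilon_n}$, because $\varphi^\varepsilon_n\equiv1$ on $B(\bs\xi^\varepsilon_n,\delta_0\varepsilon^3)$ and the curl of $\bs H_n$ there reproduces $\bs W_{\! F}$. Since the particle supports lie strictly inside $\Omega$ by \eqref{perforation}, it keeps the boundary behaviour of $\bs W_{\! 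F}$ on $\partial\Omega$, and in particular the flux $F$ through $\Gamma_O$. The bound \eqref{vecpsibound} then follows from the estimate on $\bs W_{\! F}$, the estimate on the corrections, and Lemma~\ref{smallfield}. Note that Lemma~\ref{smallfield} as stated gives $\|\nabla\bs A_\varepsilon\|_{L^2}\le C\mathcal S_*$. Its proof actually shows $\int|\nabla \bs A_\varepsilon|^2\le C\mathcal S_*^2$, since \eqref{kineticuni} is a squared quantity, so the bound is linear in $\mathcal S_*$ as needed.

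The main obstacle I expect is the smooth $W^{2,2}$ flux carrier near the corners where $\Gamma_W$ meets $\Gamma_I$ and $\Gamma_O$. This is exactly why I rely on the explicit Poiseuille profiles in the cylindrical ends rather than on elliptic regularity there.
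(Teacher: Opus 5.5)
Your proof is correct in outline, but it takes a genuinely different route from the paper.

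\textbf{Comparison with the paper.} The paper starts from the Stokes solution $\bs Z_F$ on $\Omega$ with Poiseuille data. It forms the non-solenoidal field $\bs X_\varepsilon=(1-\phi_\varepsilon)\bs Z_F+\sum_n\varphi^\varepsilon_n\bs\mu^\varepsilon_n$ and then solves a Stokes problem in $\Omega_\varepsilon$ with boundary datum $\bs X_\varepsilon$. There, $W^{2,2}$ regularity comes from elliptic regularity in a domain with edges and many holes. The bound $\|\nabla\bs\Psi_\varepsilon\|\le C\|\nabla\bs X_\varepsilon\|$ needs the $\varepsilon$-uniform inverse divergence of Diening--Feireisl--Lu to control the pressure, and hence the uniform John condition.

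You instead write down an explicitly solenoidal field. Your localized curls $\nabla\times(\varphi^\varepsilon_n\bs H_n)=\varphi^\varepsilon_n\bs W_F+\nabla\varphi^\varepsilon_n\times\bs H_n$ play the role of the paper's cutoff $\phi_\varepsilon\bs Z_F$, in the same way that $\bs A_\varepsilon$ is the divergence-free version of $\sum_n\varphi^\varepsilon_n\bs\mu^\varepsilon_n$. This gains you several things:
\begin{itemize}
\item no PDE has to be solved in $\Omega_\varepsilon$;
\item $W^{2,2}(\Omega_\varepsilon)$ regularity is inherited directly from $\bs W_F$;
\item neither the uniform Bogovskii operator nor the John assumption is needed for this lemma;
\item the estimate is purely local, particle by particle.
\end{itemize}
The price is that you must build, once, a flux carrier $\bs W_F\in W^{2,2}(\Omega)$ with $\bs W_F$ and $\nabla\bs W_F$ bounded up to $\partial\Omega$ by $C|F|$. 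Interior smoothness is not enough, because particles may sit at distance $O(\varepsilon)$ from $\Gamma_W$ and your bounds on $\bs H_n$ and $\nabla\bs H_n$ use these sup norms. Your gluing step is only sketched, but it is a standard construction. It corresponds to the point the paper settles by citing Cattabriga and Conca--Murat--Pironneau.

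\textbf{A scaling error.} The corrections are not $O(\varepsilon^3)$ in $L^2$. Pointwise you have $|\nabla\varphi^\varepsilon_n|\,|\bs W_F|\sim|F|\varepsilon^{-3}$ and $|\nabla^2\varphi^\varepsilon_n|\,|\bs H_n|\lesssim\varepsilon^{-6}\cdot|F|\varepsilon^{3}$. So the gradient of each correction is of order $|F|\varepsilon^{-3}$, not $|F|$, on a ball of volume $\sim\varepsilon^9$. The squared $L^2$ norm per particle is then $\sim|F|^2\varepsilon^3$. Summing over $N(\varepsilon)\lesssim\varepsilon^{-3}$ particles gives $\|\nabla\sum_n\nabla\times(\varphi^\varepsilon_n\bs H_n)\|_{L^2(\Omega)}\le C|F|$. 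This is the same critical-capacity scaling as in Lemma~\ref{refcapacitypro}: the correction is of order one, not small, and that is exactly the mechanism behind the Brinkman term. Since $C|F|$ is still enough for \eqref{vecpsibound}, your conclusion stands once this estimate is corrected.
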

	\noindent
	\begin{proof}
		In what follows, $C > 0$ will always denote a generic constant that depends on $\Omega$ and $\{ \delta_{i} \}^{2}_{i=0}$ (independently of $\varepsilon \in I_{*}$), but that may change from line to line.
		
		For $j \in \{1,2\}$, let $\bs{V}^{(j)}_{\! F} \in W^{2,2}(\Omega_{j};\mathbb{R}^3)$ be as in \eqref{poi23dd}, so that $\bs{V}^{(1)}_{\! F} \times \bs{\nu} = \bs{0}$ on $\Gamma_{I}$ and $\bs{V}^{(2)}_{\! F} \times \bs{\nu} = \bs{0}$ on $\Gamma_{O}$. Also, consider the unique weak solution $(\bs{Z}_{\! F}, \Pi_{F}) \in W^{1,2}(\Omega;\mathbb{R}^3) \times L_{0}^{2}(\Omega;\mathbb{R})$ to the following Stokes system in $\Omega$:
		\begin{equation} \label{hagen00F}
			\left\{
			\begin{aligned}
				& -\Delta \bs{Z}_{\! F} + \nabla \Pi_{F}  = \bs{0} \, , \quad  \nabla\cdot \bs{Z}_{\! F} = 0 \ \ \mbox{in} \ \ \Omega \, , \\[5pt]
				& \bs{Z}_{\! F}=\bs{V}^{(1)}_{\! F} \ \ \mbox{on} \ \ \Gamma_{I}  \, , \quad \bs{Z}_{\! F}=\bs{V}^{(2)}_{\! F} \ \ \mbox{on} \ \ \Gamma_{O} \, , \\[5pt]
				& \bs{Z}_{\! F}=\bs{0} \ \ \mbox{on} \ \ \Gamma_{W} \, .
			\end{aligned}
			\right.
		\end{equation}
		Therefore, 
		\begin{equation} \label{hagen1F}
			\int_{\Gamma_{I}} \bs{Z}_{\! F} \cdot \bs{\nu} = -F \, , \qquad	\int_{\Gamma_{O}} \bs{Z}_{\! F} \cdot \bs{\nu} = F \, , \qquad  \bs{Z}_{\! F} \times \bs{\nu} = \bs{0} \ \ \mbox{on} \ \ \Gamma_{I} \cup \Gamma_{O} \, .
		\end{equation}
		Since the domains $\Omega_{1}$ and $\Omega_{2}$ are cylinders, and the lateral boundary $\Gamma_{W}$ is smooth, by merging the well-known regularity results for the solutions of the steady-state Stokes equations under non-homogeneous Dirichlet boundary conditions (see \cite[Teorema, page 311]{cattabriga1961problema}) with a localization argument through a partition of unity (as in \cite[Theorem~A.1]{concaenglish}), we can establish that $(\bs{Z}_{\! F}, \Pi_{F}) \in W^{2,2}(\Omega;\mathbb{R}^3) \times W^{1,2}(\Omega;\mathbb{R})$ together with the estimate
		\begin{equation} \label{hagen1FF}
			\| \bs{Z}_{\! F} \|_{W^{2,2}(\Omega)} + \| \Pi_{F} \|_{W^{1,2}(\Omega)} \leq C \left(  \| \bs{V}^{(1)}_{\! F} \|_{W^{3/2,2}(\Gamma_{I})} +  \| \bs{V}^{(2)}_{\! F} \|_{W^{3/2,2}(\Gamma_{O})} \right) \leq C |F| \, ,
		\end{equation}
		where the last inequality in \eqref{hagen1FF} follows directly by the explicit formula \eqref{poi23dd}.
		
		On the other hand, as in the proof of Lemma~\ref{refcapacitypro}, let $\phi_{\varepsilon} \in W^{2,2}(\Omega_{\varepsilon};  \mathbb{R}) \cap W_{0}^{1,2}(\Omega; \mathbb{R})$ be the relative capacity potential of $K_{\varepsilon}$ with respect to $\Omega$, and $\varphi^{\varepsilon}_{n} \in \mathcal{C}_{0}^{\infty}(B(\bs{\xi}^{\varepsilon}_{n}, \lambda \varepsilon^{3});[0,1])$ the cutoff function observing \eqref{forlater}, for every $n \in \{1,...,N(\varepsilon)\}$. Then, introduce the vector field
		\begin{equation} \label{bogtypevecF}
			\bs{X}_{\! \varepsilon} \doteq (1 - \phi_{\varepsilon}) \bs{Z}_{\! F} +  \sum_{n=1}^{N(\varepsilon)} \varphi^{\varepsilon}_{n} \, \bs{\mu}^{\varepsilon}_{n}  \quad \text{in} \ \ \Omega_{\varepsilon} \, ,
		\end{equation}
		which, due to \eqref{cap1}-\eqref{forlater}-\eqref{hagen00F}-\eqref{hagen1F}, is an element of $W^{2,2}(\Omega_{\varepsilon};\mathbb{R}^3)$ such that 
		\begin{equation} \label{pointwise2preF}
			\begin{aligned}
				&	\bs{X}_{\! \varepsilon} \times \bs{\nu} = \bs{0} \ \ \mbox{on} \ \ \Gamma_{I} \cup \Gamma_{O} \, ; \quad \bs{X}_{\! \varepsilon}=\bs{0} \ \ \mbox{on} \ \ \Gamma_{W} \, ; \quad \bs{X}_{\! \varepsilon}=\bs{\mu}^{\varepsilon}_{n} \ \ \mbox{on} \ \ \partial K^{\varepsilon}_{n} \quad \forall n \in \{1,...,N(\varepsilon)\} \, ; \\[6pt]
				& \hspace{4.2cm} \int_{\Gamma_{I}} \bs{X}_{\! \varepsilon} \cdot \bs{\nu} = -F \, ; \qquad	\int_{\Gamma_{O}} \bs{X}_{\! \varepsilon} \cdot \bs{\nu} = F \, .
			\end{aligned}
		\end{equation}
		Owing to \eqref{pointwise2preF} and  the Divergence Theorem, the restriction of $\bs{X}_{\! \varepsilon}$ to $\partial \Omega_{\varepsilon}$ is in $W^{3/2,2}(\partial \Omega_{\varepsilon};\mathbb{R}^3)$ with zero total flux across $\partial \Omega_{\varepsilon}$, that is, 
		\begin{equation} \label{compstokesF}
			\int_{\partial \Omega_{\varepsilon}} \bs{X}_{\! \varepsilon} \cdot \bs{\nu} = \int_{\Gamma_{I}} \bs{X}_{\! \varepsilon} \cdot \bs{\nu} + \sum_{n=1}^{N(\varepsilon)} \left( \int_{\partial K^{\varepsilon}_{n}} \bs{X}_{\! \varepsilon} \cdot \bs{\nu} \right) + \int_{\Gamma_{O}} \bs{X}_{\! \varepsilon} \cdot \bs{\nu} = -F + 0 + F = 0 \, .
		\end{equation}
		In view of the zero flux condition \eqref{compstokesF}, \cite[Theorem~IV.1.1]{galdi2011introduction} ensures the existence of a unique weak solution $(\bs{\Psi}_{\! \varepsilon}, \Pi_{\varepsilon}) \in W^{1,2}(\Omega_{\varepsilon};\mathbb{R}^3) \times L_{0}^{2}( \Omega_{\varepsilon};\mathbb{R})$ to the following Stokes problem in $\Omega_{\varepsilon}$:
		\begin{equation} \label{stokespsi}
			\left\{
			\begin{aligned}
				& -\Delta \bs{\Psi}_{\! \varepsilon} + \nabla \Pi_{\varepsilon} = \bs{0} \, , \quad  \nabla\cdot \bs{\Psi}_{\! \varepsilon} = 0 \ \ \mbox{in} \ \ \Omega_{\varepsilon} \, , \\[4pt]
				& \bs{\Psi}_{\! \varepsilon} = \bs{X}_{\! \varepsilon} \ \ \mbox{on} \ \ \partial \Omega_{\varepsilon} \, ,
			\end{aligned}
			\right.
		\end{equation}
		that is,
		\begin{equation} \label{stokespsi22}
			\int_{\Omega_{\varepsilon}} \nabla \bs{\Psi}_{\! \varepsilon} : \nabla \bs{\varphi} - \int_{\Omega_{\varepsilon}} \Pi_{\varepsilon} (\nabla \cdot \bs{\varphi}) = 0 \qquad \forall \bs{\varphi} \in W_{0}^{1,2}(\Omega_{\varepsilon};\mathbb{R}^3) \, .
		\end{equation}
		Since $K^{\varepsilon}_{n}$ has a boundary of class $\mathcal{C}^{2}$ for every $n \in \{1,...,N(\varepsilon)\}$ and $\bs{X}_{\! \varepsilon} \in W^{3/2,2}(\partial \Omega_{\varepsilon};\mathbb{R}^3)$, we can invoke the previous elliptic regularity argument to infer that $(\bs{\Psi}_{\! \varepsilon}, \Pi_{*}) \in W^{2,2}(\Omega_{\varepsilon};\mathbb{R}^3) \times W^{1,2}( \Omega_{\varepsilon};\mathbb{R})$. Therefore, in virtue of \eqref{pointwise2preF}-\eqref{stokespsi} the vector field $\bs{\Psi}_{\! \varepsilon} \in W^{2,2}(\Omega_{\varepsilon};\mathbb{R}^3)$ satisfies the properties stated in \eqref{vecpsi}. 
		
		In order to obtain the bound \eqref{vecpsibound}, define the vector fields $\bs{X}^{(1)}_{\! \varepsilon}, \bs{X}^{(2)}_{\! \varepsilon} \in W^{2,2}(\Omega_{\varepsilon};\mathbb{R}^3)$ by
		$$
		\bs{X}^{(1)}_{\! \varepsilon} \doteq (1 - \phi_{\varepsilon}) \bs{Z}_{\! F} \qquad \text{and} \qquad \bs{X}^{(2)}_{\! \varepsilon} \doteq \sum_{n=1}^{N(\varepsilon)} \varphi^{\varepsilon}_{n} \, \bs{\mu}^{\varepsilon}_{n} \quad \text{in} \ \ \Omega_{\varepsilon} \, .
		$$
		From \eqref{cap2}-\eqref{hagen1FF} we readily infer the estimate
		\begin{equation} \label{pointwise22}
			\| \nabla \bs{X}^{(1)}_{\! \varepsilon} \|_{L^{2}(\Omega_{\varepsilon})} \leq C | F | \, .
		\end{equation}
		Instead, from \eqref{count}-\eqref{kineticuni}-\eqref{forlater} we deduce the bound
		\begin{equation} \label{pointwise33}
			\begin{aligned}
				\| \nabla \bs{X}^{(2)}_{\! \varepsilon} \|^{2}_{L^{2}(\Omega_{\varepsilon})} & = \int_{\Omega_{\varepsilon}} \left| \sum_{n=1}^{N(\varepsilon)} \nabla \varphi^{\varepsilon}_{n} \otimes \bs{\mu}^{\varepsilon}_{n} \right|^{2} = \sum_{n=1}^{N(\varepsilon)} \left( \int_{\Omega} \left| \nabla \varphi^{\varepsilon}_{n} \otimes \bs{\mu}^{\varepsilon}_{n} \right|^{2}  \right) \\[6pt]
				& \leq \sum_{n=1}^{N(\varepsilon)} \left( \int_{\Omega} \left| \nabla \varphi^{\varepsilon}_{n} \right|^{2}  \right) \left| \bs{\mu}^{\varepsilon}_{n} \right|^{2} \leq C \varepsilon^{3} \sum_{n=1}^{N(\varepsilon)} \left| \bs{\mu}^{\varepsilon}_{n} \right|^{2} \leq C \mathcal{S}^{2}_{*} \, .
			\end{aligned}
		\end{equation}
		Combining \eqref{pointwise22}-\eqref{pointwise33} gives us
		\begin{equation} \label{vecpsibound00}
			\| \nabla \bs{X}_{\! \varepsilon} \|_{L^{2}(\Omega_{\varepsilon})} \leq C \left( |F| + \mathcal{S}_{*} \right) \, .
		\end{equation}
		Recall that $\Pi_{\varepsilon} \in L_{0}^{2}( \Omega_{\varepsilon};\mathbb{R})$, thereby implying the existence of a vector field $\bs{Y}_{\!\!  \varepsilon} \in W^{1,2}_{0}(\Omega_{\varepsilon};\mathbb{R}^3)$ such that
		\begin{equation} \label{bogtypevec2F}
			\nabla \cdot \bs{Y}_{\! \! \varepsilon} = \Pi_{\varepsilon} \ \ \text{in} \ \ \Omega_{\varepsilon} \qquad \text{and} \qquad  \| \nabla \bs{Y}_{\! \! \varepsilon} \|_{L^{2}(\Omega_{\varepsilon})} \leq C \| \Pi_{\varepsilon} \|_{L^{2}(\Omega_{\varepsilon})}  \, ,
		\end{equation} 
		see again \eqref{bogtypevec2}-\eqref{boguni0}. Taking $\bs{\varphi} = \bs{Y}_{\! \! \varepsilon}$ as a test function in \eqref{stokespsi22} entails, due to \eqref{bogtypevec2F} and to the H\"older inequality,
		$$
		\int_{\Omega_{\varepsilon}} | \Pi_{\varepsilon} |^{2} = \int_{\Omega_{\varepsilon}} \nabla \bs{\Psi}_{\! \varepsilon} : \nabla \bs{Y}_{\! \! \varepsilon} \leq \| \nabla \bs{\Psi}_{\! \varepsilon} \|_{L^{2}(\Omega_{\varepsilon})} \| \nabla \bs{Y}_{\! \! \varepsilon} \|_{L^{2}(\Omega_{\varepsilon})} \leq C \| \nabla \bs{\Psi}_{\! \varepsilon} \|_{L^{2}(\Omega_{\varepsilon})} \| \Pi_{\varepsilon} \|_{L^{2}(\Omega_{\varepsilon})} \, ,
		$$
		that is,
		\begin{equation} \label{preseps0}
			\| \Pi_{\varepsilon} \|_{L^{2}(\Omega_{\varepsilon})} \leq C \| \nabla \bs{\Psi}_{\! \varepsilon} \|_{L^{2}(\Omega_{\varepsilon})}  \, .
		\end{equation}
		In the same fashion, taking $\bs{\varphi} = \bs{\Psi}_{\! \varepsilon} - \bs{X}_{\! \varepsilon}$ as a test function in \eqref{stokespsi22} gives us, as a consequence of \eqref{preseps0} and the H\"older inequality,
		$$
		\begin{aligned}
			\int_{\Omega_{\varepsilon}} | \nabla \bs{\Psi}_{\! \varepsilon} |^{2} = \int_{\Omega_{\varepsilon}} \nabla \bs{\Psi}_{\! \varepsilon} : \nabla \bs{X}_{\! \varepsilon} - \int_{\Omega_{\varepsilon}} \Pi_{\varepsilon} (\nabla \cdot \bs{X}_{\! \varepsilon} ) \leq C \| \nabla \bs{\Psi}_{\! \varepsilon} \|_{L^{2}(\Omega_{\varepsilon})} \| \nabla \bs{X}_{\! \varepsilon} \|_{L^{2}(\Omega_{\varepsilon})} \, ,
		\end{aligned}
		$$
		that is,
		\begin{equation} \label{preseps1}
			\| \nabla \bs{\Psi}_{\! \varepsilon} \|_{L^{2}(\Omega_{\varepsilon})} \leq C \| \nabla \bs{X}_{\! \varepsilon} \|_{L^{2}(\Omega_{\varepsilon})}  \, .
		\end{equation}
		The estimate \eqref{vecpsibound} follows directly from \eqref{vecpsibound00}-\eqref{preseps1}. This concludes the proof.
	\end{proof}
	
\begin{remark}
In the proof of Lemma \ref{fluxcarrier} one might wonder why, instead of solving the Stokes system \eqref{stokespsi}, we do not employ an inverse divergence operator to annihilate the divergence of $\bs{X}_{\! \varepsilon}$ in $\Omega_{\eps}$. The reason is that, due to the corners of the pipes, such operator does not necessarily provide the needed $W^{2,2}(\Omega_{\eps})$-regularity. As far as we know, this han been proved only for $\mathcal{C}^{1,1}$-domains in \cite{farwig1994generalized}.
\end{remark}
	 
\subsection{Stokes correctors and the restriction operator}
Let $\Omega \subset \mathbb{R}^{3}$ be an admissible domain. Given $\varepsilon \in I_{*}$ and $n \in \{1,\ldots,N(\varepsilon)\}$, we introduce the sets
	$$  
	C_{n}^{\eps} \doteq B(\bs \xi_n^\eps, \delta_1 \eps) \setminus \overline{K_{n}^{\eps}} \, , \qquad D_{n}^{\eps} \doteq B(\bs \xi_n^\eps, \delta_2 \eps) \setminus \overline{B(\bs \xi_n^\eps, \delta_1 \eps)} \, , \qquad \widetilde{\Omega}_{\varepsilon} \doteq \Omega_{\varepsilon} \setminus \bigcup^{N(\varepsilon)}_{n=1} \overline{B(\bs \xi_n^\eps, \delta_2 \eps)} \, ,
	$$
so that
$$
\Omega_{\varepsilon} = \widetilde{\Omega}_{\varepsilon} \cup \bigcup^{N(\varepsilon)}_{n=1} \left( C_{n}^{\eps} \cup \overline{D_{n}^{\eps}} \right) \, .
$$
For any vector field $\bs{\varphi} \in W^{1,2}(\Omega; \mathbb{R}^{3})$, let $(\bs{\omega}^{\varepsilon}_{n,1}, p^{\varepsilon}_{n,1}) \in D^{1,2}(\R^3 \setminus \overline{K_{n}^{\eps}}; \R^{3}) \times L^{2}(\R^3 \setminus \overline{K_{n}^{\eps}}; \R)$ be the unique generalized solution to the following Stokes problem in the exterior of $K_{n}^{\eps}$:
\begin{equation}\label{extstokes00sc}
	\left\{
	\begin{aligned}
		& -\Delta \bs{\omega}^{\varepsilon}_{n,1} + \nabla p^{\varepsilon}_{n,1}  = \bs{0} \, , \quad  \nabla\cdot \bs{\omega}^{\varepsilon}_{n,1}  = 0 \ \ \mbox{in} \ \ \R^3 \setminus \overline{K_{n}^{\eps}} \, , \\[5pt]
		& \bs{\omega}^{\varepsilon}_{n,1} = \bs{\varphi} \ \ \mbox{on} \ \ \partial K_{n}^{\eps}  \, , \\[5pt]
		& \bs{\omega}^{\varepsilon}_{n,1} \to \bs{0} \ \ \mbox{as} \ \ | \bs{\xi} | \to + \infty \, ,
	\end{aligned}
	\right.
\end{equation}
see the Appendix \ref{appendix_sec}. Here, for any open Lipschitz set $U \subset \R^3$,  $D^{1,2}(U;\R^3)$ represents the homogeneous Sobolev space
\begin{equation} \label{D^1,2}
D^{1,2}(U;\R^3) \doteq \{\bs{\varphi} \in L^2_{\text{loc}}(U;\R^3) \ | \ \nabla \bs{\varphi} \in L^2(U;\R^{3\times 3}) \, \} \, .
\end{equation}
The equality
$$
\int\limits_{\partial B(\bs \xi_\eps^n,\delta_1 \eps)} \bs{\omega}^{\varepsilon}_{n,1} \cdot \bs{\nu} = \int\limits_{K^{\eps}_{n}} \nabla \cdot \bs{\varphi} \, ,
$$
enables us to define the unique weak solution $(\bs{\omega}^{\varepsilon}_{n,2}, p^{\varepsilon}_{n,2}) \in W^{1,2}(D_{n}^{\eps}; \R^{3}) \times L_{0}^{2}(D_{n}^{\eps}; \R)$ to the following Stokes problem in the annulus $D_{n}^{\eps}$:
\begin{equation}\label{extstokes00scann}
	\left\{
	\begin{aligned}
		& -\Delta \bs{\omega}^{\varepsilon}_{n,2} + \nabla p^{\varepsilon}_{n,2}  = \bs{0} \, , \quad  \nabla\cdot \bs{\omega}^{\varepsilon}_{n,2}  = -\dfrac{1}{| D_{n}^{\eps} |} \int_{K^{\eps}_{n}} \nabla \cdot \bs{\varphi} \ \ \mbox{in} \ \ D_{n}^{\eps} \, , \\[5pt]
		& \bs{\omega}^{\varepsilon}_{n,2} = \bs{\omega}^{\varepsilon}_{n,1} \ \ \mbox{on} \ \ \partial B(\bs \xi_n^\eps, \delta_1 \eps)  \, , \\[5pt]
		& \bs{\omega}^{\varepsilon}_{n,2} = \bs{0} \ \ \mbox{on} \ \ \partial B(\bs \xi_n^\eps, \delta_2 \eps)  \, .
	\end{aligned}
	\right.
\end{equation}
Henceforth, we will refer to the pair $(\bs{\omega}^{\varepsilon}_{n,1}, \bs{\omega}^{\varepsilon}_{n,2}) \in D^{1,2}(\R^3 \setminus \overline{K_{n}^{\eps}}; \R^{3}) \times W^{1,2}(D_{n}^{\eps}; \R^{3})$ as the \textit{Stokes correctors}. We then introduce the \textit{restriction operator} $\mathcal{R}_{\eps}(\bs{\varphi}) : W^{1,2}(\Omega; \R^{3}) \longrightarrow W^{1,2}(\Omega; \R^{3})$ as the following linear application:
	\begin{equation}\label{def_w_eps^k}
		\mathcal{R}_{\eps}(\bs{\varphi}) \doteq \left\{
		\begin{array}{lcl}
			\bs{0} & \text{ in } & \overline{K_{n}^{\eps}} \quad \forall n \in \{1,\ldots,N(\varepsilon)\} \, , \\[6pt]
			\bs \varphi- \bs \omega^{\eps}_{n,1}& \text{ in }& C^{\eps}_{n} \quad \forall n \in \{1,\ldots,N(\varepsilon)\} \, , \\[6pt]
			\bs \varphi-\bs \omega^{\eps}_{n,2}&\text{ in }& D^{\eps}_{n} \quad \forall n \in \{1,\ldots,N(\varepsilon)\} \, ,\\[6pt]
			\bs \varphi & \text{ in } & \widetilde{\Omega}_{\eps} \, .
		\end{array}
		\right.
	\end{equation}
The most relevant properties of the restriction operator \eqref{def_w_eps^k} are collected in the next result:
	\begin{theorem} \label{pro:corrector}
	Let $\Omega \subset \mathbb{R}^{3}$ be an admissible domain. There exists $\varepsilon_{\star} \in I_{*}$ such that, for any $\varepsilon \in (0,\varepsilon_{\star})$ and any $\bs \varphi \in W^{1,2}(\Omega; \R^{3}) \cap L^{\infty}(\Omega; \R^{3})$, the vector field $\mathcal{R}_{\eps}(\bs{\varphi}) \in W^{1,2}(\Omega; \R^{3})$, defined in \eqref{def_w_eps^k}, satisfies the following properties:
	\begin{equation} \label{propreps0}
		\left\{
		\begin{aligned}
		& \mathcal{R}_{\eps}(\bs{\varphi}) = \bs{0} \ \ \mbox{in} \ \ \overline{K_{\eps}} \, ; \quad \mathcal{R}_{\eps}(\bs{\varphi}) = \bs{\varphi} \ \ \text{if} \ \bs{\varphi} = \bs{0} \ \ \mbox{in} \ \ \overline{K_{\eps}}\, ; \\[6pt]
		& \nabla \cdot \mathcal{R}_{\eps}(\bs{\varphi}) = 0 \ \ \mbox{in} \ \ \Omega \, , \ \ \text{if} \ \bs{\varphi} \ \text{is divergence-free in } \Omega \, ; \\[6pt]
		& \|\nabla \mathcal{R}_{\eps}(\bs{\varphi})\|_{L^2(\Omega)} \leq C_{*} \left( \| \bs\varphi\|_{L^{\infty}(\Omega)} + \|\nabla \bs \varphi\|_{L^2(\Omega)} \right) \, ; \\[6pt]
		& \|\mathcal{R}_{\eps}(\bs{\varphi}) - \bs \varphi\|_{L^{4}(\Omega)} \leq C_{*} \eps^{1/4} \left( \| \bs\varphi\|_{L^{\infty}(\Omega)} + \|\nabla \bs \varphi\|_{L^2(\Omega)} \right) \, ;  
		\end{aligned}
		\right.
		\end{equation}
		for some constant $C_{*} > 0$ that depends on $\Omega$ and $\{ \delta_{i} \}^{2}_{i=0}$, but is independent of $\varepsilon \in (0,\varepsilon_{\star})$.
		\newline
		\noindent
		Moreover, consider $\bs{\mu}^{\varepsilon}_{1},...,\bs{\mu}^{\varepsilon}_{N(\varepsilon)} \in \mathbb{R}^{3}$ and a sequence $\{(\bs{v}_\eps, \Lambda_{\eps})\}_{\varepsilon \in I_{*}} \subset W^{1,2}(\Omega; \mathbb{R}^{3}) \times L^{2}(\Omega; \mathbb{R})$ such that 
			\begin{equation} \label{sequenceps}
			\left\{
			\begin{aligned}
				& \nabla \cdot \bs{v}_\eps = 0 \ \ \mbox{in} \ \ \Omega \, ; \quad  \bs{v}_\eps = \bs{\mu}^{\varepsilon}_{n} \ \ \text{and} \ \ \Lambda_{\eps} = 0 \ \ \mbox{in} \ \ \overline{K_{n}^{\eps}} \qquad \forall n \in \{1,\ldots,N(\varepsilon)\} \, ;\\[6pt] 
				& (\exists \bs{v} \in W^{1,2}(\Omega; \mathbb{R}^{3})) \ \bs{v}_\eps  \wto \bs{v} \ \  \text{weakly in} \ \ W^{1,2}(\Omega; \mathbb{R}^{3}) \ \ \text{as} \ \ \varepsilon \to 0^{+}  \, ;\\[6pt] 
				& (\exists \Lambda \in L^{2}(\Omega; \mathbb{R})) \ \Lambda_\eps  \wto \Lambda \ \  \text{weakly in} \ \ L^{2}(\Omega; \mathbb{R}) \ \ \text{as} \ \ \varepsilon \to 0^{+}  \, .
			\end{aligned}
			\right.
		\end{equation}
		Then, under assumptions \eqref{kineticuni}-\eqref{hyp_cv_measures}, for every vector field $\bs \varphi \in W^{1,\infty}(\Omega; \mathbb{R}^{3})$ there holds
		\begin{align}\label{W_eps_resistance}
			\lim_{\varepsilon \to 0^{+}} \left( \int_\Omega  \nabla \bs{v}_\eps : \nabla \mathcal{R}_{\eps}(\bs{\varphi}) - \int_\Omega  \Lambda_\eps (\nabla \cdot \mathcal{R}_{\eps}(\bs{\varphi})) \right)  = \int_\Omega \nabla \bs v : \nabla \bs \varphi + \int_{\Omega} (\mathcal{G} \bs{v} - \bs{\mathcal{J}}) \cdot \bs{\varphi} - \int_\Omega  \Lambda (\nabla \cdot \bs{\varphi}) \, .
		\end{align}
	\end{theorem}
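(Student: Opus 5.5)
The proof splits into the structural properties in \eqref{propreps0} and the limit \eqref{W_eps_resistance}.

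\textbf{Structural properties.} The first two items of \eqref{propreps0} come directly from the definition \eqref{def_w_eps^k}. Since $\bs\omega^\eps_{n,1}=\bs\varphi$ on $\partial K^\eps_n$, we have $\mathcal R_\eps(\bs\varphi)=\bs 0$ there, so the extension by $\bs 0$ inside $\overline{K^\eps_n}$ lies in $W^{1,2}$. Traces also match across $\partial B(\bs\xi^\eps_n,\delta_1\eps)$ and $\partial B(\bs\xi^\eps_n,\delta_2\eps)$, which gives $\mathcal R_\eps(\bs\varphi)\in W^{1,2}(\Omega)$. If $\bs\varphi=\bs 0$ on $\overline{K_\eps}$, uniqueness in \eqref{extstokes00sc} gives $\bs\omega^\eps_{n,1}=\bs 0$, and then \eqref{extstokes00scann} gives $\bs\omega^\eps_{n,2}=\bs 0$. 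If $\nabla\cdot\bs\varphi=0$, the prescribed divergence in \eqref{extstokes00scann} vanishes, so every piece is divergence-free and traces match, hence $\nabla\cdot\mathcal R_\eps(\bs\varphi)=0$ in $\Omega$.

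\textbf{Quantitative bounds.} The gradient and $L^4$ bounds rest on single-particle estimates from the Appendix, obtained by rescaling to size $\eps^3$. Decompose $\bs\varphi=\bs\varphi(\bs\xi^\eps_n)+(\bs\varphi-\bs\varphi(\bs\xi^\eps_n))$ on $\partial K^\eps_n$.
\begin{itemize}
\item The constant part generates the Stokeslet-type solution. Its energy is $\eps^3\,\mathcal G_n\bs\varphi(\bs\xi^\eps_n)\cdot\bs\varphi(\bs\xi^\eps_n)\le C\eps^3\|\bs\varphi\|_\infty^2$, and it decays like $\eps^3/|\bs\xi-\bs\xi^\eps_n|$.
\item The oscillating part is controlled by $\|\nabla\bs\varphi\|_{L^2(B(\bs\xi^\eps_n,\lambda\eps^3))}$ through a trace and Poincaré argument on the rescaled particle. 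Here the uniform John and $\mathcal C^2$ properties of $K_n$ give uniform constants, via a Bogovskii-type extension of the boundary data.
\item For the annulus corrector $\bs\omega^\eps_{n,2}$, rescaling $D^\eps_n$ to a fixed annulus gives $\|\nabla\bs\omega^\eps_{n,2}\|_{L^2(D_n^\eps)}\le C\eps^{-1}\|\bs\omega^\eps_{n,1}\|_{L^2\text{-avg on }\partial B(\delta_1\eps)}\cdot\eps^{3/2}+\dots$, which is of order $\eps^{2}\|\bs\varphi\|_\infty+\|\nabla\bs\varphi\|_{L^2(\text{near})}$.
\end{itemize}
Summing over $N(\eps)\lesssim\eps^{-3}$ particles yields the third item. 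For the fourth, $\|\bs\omega^\eps_{n,1}\|_{L^4(C^\eps_n)}^4\lesssim\|\bs\varphi\|_\infty^4\int_{|x|<\delta_1\eps}\eps^{12}/(|x|+\eps^3)^4\,dx\sim\eps^{9}$, plus similar terms. The sum is then $\lesssim\eps^{6}$, which gives a power at least $\eps^{1/4}$ after accounting for the gradient part via Sobolev embedding on balls. The threshold $\eps_\star$ is chosen so that $\delta_0\eps^3<\lambda\eps^3<\delta_1\eps$ uniformly.

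\textbf{The limit \eqref{W_eps_resistance}.} Write $\mathcal R_\eps(\bs\varphi)=\bs\varphi-\bs w_\eps$. The term $\int\nabla\bs v_\eps:\nabla\bs\varphi-\int\Lambda_\eps\nabla\cdot\bs\varphi$ converges by weak convergence. The remainder is
$$-\sum_n\Big(\int\nabla\bs v_\eps:\nabla\bs\omega^\eps_{n,i}-\int\Lambda_\eps\nabla\cdot\bs\omega^\eps_{n,i}\Big).$$
On $C^\eps_n$ we have $\nabla\cdot\bs\omega^\eps_{n,1}=0$. Integrating by parts using the Stokes equation for $(\bs\omega^\eps_{n,1},p^\eps_{n,1})$ converts the $C^\eps_n$ contribution into boundary terms.
\begin{itemize}
\item On $\partial K^\eps_n$, the stress $(\nabla\bs\omega-p\,I)\bs\nu$ is tested against $\bs v_\eps=\bs\mu^\eps_n$. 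This gives $\eps^3\mathcal G_n\bs\varphi(\bs\xi^\eps_n)\cdot\bs\mu^\eps_n$, and summing yields $\mathcal J_\eps(\bs\varphi)\to\int\bs{\mathcal J}\cdot\bs\varphi$; the error from nonconstant $\bs\varphi$ is of order $\eps^3\|\nabla\bs\varphi\|_\infty$ per particle.
\item On $\partial B(\bs\xi^\eps_n,\delta_1\eps)$, the stress is approximately the far-field Stokeslet, with total force $-\eps^3\mathcal G_n\bs\varphi(\bs\xi^\eps_n)$. Tested against $\bs v_\eps$, this should produce $-\eps^3\sum_n\mathcal G_n\bs\varphi(\bs\xi^\eps_n)\cdot(\bs v_\eps)_{\partial B}$ in the limit, with the annulus term $\bs\omega^\eps_{n,2}$ contributing only $o(1)$ because its energy is $O(\eps^{2})$ per unit volume.
\item The average of $\bs v_\eps$ on spheres of radius $\delta_1\eps$ converges strongly to $\bs v$ (weak $W^{1,2}$ plus Poincaré on shells). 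Combined with \eqref{hyp_cv_measures} applied to $M=\bs\varphi\otimes\bs v$, after mollification since $\bs v$ is only $W^{1,2}$ and using symmetry of $\mathcal G_n$, this gives the $\int\mathcal G\bs v\cdot\bs\varphi$ term with the correct sign.
\end{itemize}
The pressure term on $C^\eps_n$ vanishes since $\Lambda_\eps=0$ on $K^\eps_n$. On $D^\eps_n$ it is bounded by $\|\Lambda_\eps\|_2\,\|\nabla\cdot\bs\omega^\eps_{n,2}\|_{L^2(\cup D_n)}\lesssim\eps^{3/2}$, which tends to zero.

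The main obstacle is this last step: rigorously replacing $\bs v_\eps$ by $\bs v$ at the particle scale using only weak convergence. I would handle it with a Stokeslet-plus-remainder expansion of $\bs\omega^\eps_{n,1}$ on $\partial B(\delta_1\eps)$, showing the remainder's stress is $O(\eps^6/\eps^3)$, and by approximating $\bs v$ by smooth fields and controlling the error with the uniform bound $\eps^3\sum_n|\mathcal G_n|\le C$.
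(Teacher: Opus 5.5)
Your architecture matches the paper's. The structural properties come from the definition and the gradient bound from summed single-particle estimates. For \eqref{W_eps_resistance} the corrector is integrated by parts on $C^\eps_n$: the force on $\partial K^\eps_n$ tested against $\bs\mu^\eps_n$ produces $\mathcal J_\eps(\bs\varphi)$, the opposite force on $\partial B(\bs\xi^\eps_n,\delta_1\eps)$ tested against a local average of $\bs v_\eps$ produces the friction term, and the $\bs\omega^\eps_{n,2}$ and $D^\eps_n$-pressure contributions are negligible.

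The gap is exactly at the step you flag, and the tools you name there do not close it. The traction on $\partial B(\bs\xi^\eps_n,\delta_1\eps)$ is not constant along the sphere; already for a pure Stokeslet it has a $(\bs\nu\cdot\bs F)\bs\nu$ component. So testing it against $\bs v_\eps$ does not give $-\eps^3\mathcal G_n\bs\varphi(\bs\xi^\eps_n)\cdot(\bs v_\eps)_{\partial B}$, and no Stokeslet-plus-remainder expansion changes that. You need an estimate, summable over $n$ and using only $\sup_\eps\|\bs v_\eps\|_{W^{1,2}}<\infty$, of one of two things:
\begin{itemize}
\item the fluctuation $\int_{\partial B(\bs\xi^\eps_n,\delta_1\eps)}(\partial_{\bs\nu}\bs z^\eps_{n,1}-\pi^\eps_{n,1}\bs\nu)\cdot(\bs v_\eps-\bar{\bs v}^\eps_n)$. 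The paper does this with the divergence-free extension $\bs X^\eps_n$ on $A^\eps_n$ and $\|\nabla\bs z^\eps_{n,1}\|^2_{L^2(A^\eps_n)}\le C\eps^5\|\bs\varphi\|^2_{L^\infty(\Omega)}$, which gives $O(\eps)$ after Cauchy--Schwarz over $n$; see \eqref{restop0}--\eqref{restop3}.
\item the error made by replacing $\bs v_\eps$ by a smooth field inside that integral. This needs the pointwise traction bound $\lesssim\eps\|\bs\varphi\|_{L^\infty}$ from \eqref{decay_stokes_1}, together with the summed trace inequality $\sum_n\|\bs w\|_{L^1(\partial B(\bs\xi^\eps_n,\delta_1\eps))}\le C(\eps^{-1}\|\bs w\|_{L^1(\Omega)}+\|\nabla\bs w\|_{L^1(\Omega)})$.
\end{itemize}
The bound $\eps^3\sum_n|\mathcal G_n|\le C$ is no substitute. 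It controls the error only by the supremum over $n$ of local averages of $\bs v_\eps-\bs v$ (or $\bs v-\bs v_\delta$) on balls of radius $\sim\eps$, and these need not stay bounded for $W^{1,2}$ fields. Likewise, identifying $\int_\Omega\mathcal G\bs v\cdot\bs\varphi$ requires the uniform estimate $|\mathcal F_\eps(\bs\varphi,\bs w)|\le C\|\bs\varphi\|_{L^\infty(\Omega)}\|\bs w\|_{L^2(\Omega)}$ for the particle-average form. This follows from Cauchy--Schwarz over $n$, Jensen on each annulus, \eqref{count} and \eqref{bound_R_n} (see \eqref{bilinear1}). Combined with Rellich, density of smooth fields and $\mathcal G\in L^2$, it gives \eqref{W_eps_resistanceapp}. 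With sphere averages instead of annulus averages you also need a scaled trace inequality.

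Smaller points:
\begin{itemize}
\item For $\bs\varphi\in W^{1,2}\cap L^\infty$ the value $\bs\varphi(\bs\xi^\eps_n)$ is undefined. No splitting is needed anyway, since in \eqref{energy_stokes_1} one has $\eps^{-3}\|\bs\varphi\|_{L^2(B(\bs\xi^\eps_n,\delta_0\eps^3))}\le C\eps^{3/2}\|\bs\varphi\|_{L^\infty(\Omega)}$.
\item The pointwise bound $|\bs\omega^\eps_{n,1}(\bs\xi)|\lesssim\|\bs\varphi\|_{L^\infty}\eps^3/(|\bs\xi-\bs\xi^\eps_n|+\eps^3)$ is not available near $K^\eps_n$ with uniform constants. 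Only a uniform John constant is assumed, \eqref{decay_stokes_1} holds only outside $B(\bs\xi^\eps_n,2\delta_0\eps^3)$, and the oscillating part of the data has no pointwise control. Take the $L^4$ estimate directly from $\mathcal R_\eps(\bs\varphi)-\bs\varphi\in W^{1,2}_0(B(\bs\xi^\eps_n,\delta_2\eps))$ and a scaled Poincar\'e/Sobolev inequality, as in \eqref{rext12}--\eqref{rext14}.
\item Integrating by parts with $\bs\omega^\eps_{n,1}$ itself, whose boundary data is only Lipschitz, makes the traction on $\partial K^\eps_n$ merely distributional. The paper first passes to $\bs z^\eps_{n,1}$, at the cost $\|\nabla(\bs\omega^\eps_{n,1}-\bs z^\eps_{n,1})\|^2_{L^2(C^\eps_n)}\le C\eps^9\|\nabla\bs\varphi\|^2_{L^\infty(\Omega)}$; see \eqref{brink7}.
\item If you keep your route, the force error from non-constant data must be $O(\eps^6\|\nabla\bs\varphi\|_{L^\infty}|\bs\mu^\eps_n|)$ per particle: a data error of $\eps^3$ times a resistance of $\eps^3$. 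An error of order $\eps^3$ per particle, as literally written, sums against $|\bs\mu^\eps_n|$ to $O(\mathcal S_*)$ and does not vanish.
\end{itemize}
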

\noindent
	\begin{proof}
	In what follows, $C > 0$ will always denote a generic constant that depends on $\Omega$ and $\{ \delta_{i} \}^{2}_{i=0}$ (independently of $\varepsilon \in I_{*}$), but that may change from line to line.
	
	Let $\varepsilon \in I_{*}$ and $\bs \varphi \in W^{1,2}(\Omega; \R^{3}) \cap L^{\infty}(\Omega; \R^{3})$.
	\newline
	\underline{Proof of \eqref{propreps0}$_1$-\eqref{propreps0}$_2$}.
	Properties \eqref{propreps0}$_1$-\eqref{propreps0}$_2$ readily follow from the definition \eqref{def_w_eps^k}. In fact, if $\bs{\varphi}=\bs{0}$ in $\overline{K_{\eps}}$, it clearly follows from \eqref{extstokes00sc}-\eqref{extstokes00scann} that $\bs{\omega}^{\eps}_{n,1} = \bs{\omega}^{\eps}_{n,2} \equiv \bs{0}$.
	\newline
	\underline{Proof of \eqref{propreps0}$_3$}.
		According to \eqref{def_w_eps^k}, we have
\begin{equation} \label{rext0}
	\bs{\varphi} - \mathcal{R}_{\eps}(\bs{\varphi}) = \left\{
	\begin{array}{lcl}
		\bs{\varphi} & \text{ in } & \overline{K_{n}^{\eps}} \quad \forall n \in \{1,\ldots,N(\varepsilon)\} \, , \\[6pt]
		\bs \omega^{\eps}_{n,1}& \text{ in }& C^{\eps}_{n} \quad \forall n \in \{1,\ldots,N(\varepsilon)\} \, , \\[6pt]
		\bs{\omega}^{\eps}_{n,2}&\text{ in }& D^{\eps}_{n} \quad \forall n \in \{1,\ldots,N(\varepsilon)\} \, ,\\[6pt]
		\bs{0} & \text{ in } & \widetilde{\Omega}_{\eps} \, .
	\end{array}
	\right.
\end{equation}
It follows, from \eqref{rext0}, 
	\begin{equation} \label{rext1}
		\|\nabla (\mathcal{R}_{\eps}(\bs{\varphi}) - \bs{\varphi})\|_{L^2(\Omega)}^{2} = \sum_{n=1}^{N(\eps)} \left( \|\nabla \bs{\omega}^{\eps}_{n,1} \|_{L^2(C^{\eps}_{n})}^2+\|\nabla \bs{\omega}^{\eps}_{n,2}\|_{L^2(D^{\eps}_{n})}^2+\|\nabla \bs \varphi\|_{L^2(K_{n}^{\eps})}^2 \right) \, .
		\end{equation}
We estimate, separately, each of the three addends appearing on the right-hand side of \eqref{rext1}.

Firstly, from the energy bound \eqref{energy_stokes_1} we readily derive
		\begin{equation}\label{rext2}
\|\nabla \bs{\omega}^{\eps}_{n,1} \|_{L^2(C^{\eps}_{n})}^2 \leq C \left( \eps^{3} \|\bs  \varphi\|^{2}_{L^{\infty}(\Omega)} + \|\nabla \bs \varphi\|^{2}_{L^2(B(\bs{\xi}_{n}^{\eps},\delta_0 \eps^3))} \right) \qquad \forall n \in \{1,\ldots,N(\varepsilon)\} \, ,
		\end{equation}
thereby yielding, recalling \eqref{count}, 
			\begin{equation}\label{rext4}
	\sum_{n=1}^{N(\eps)} \|\nabla \bs{\omega}^{\eps}_{n,1} \|_{L^2(C^{\eps}_{n})}^2 \leq C \left(  \|\bs  \varphi\|^{2}_{L^{\infty}(\Omega)} + \sum_{n=1}^{N(\eps)} \|\nabla \bs \varphi\|^{2}_{L^2(B(\bs{\xi}_{n}^{\eps},\delta_0 \eps^3))} \right) \leq C \left( \|\bs  \varphi\|^{2}_{L^{\infty}(\Omega)} + \|\nabla \bs \varphi\|^{2}_{L^2(\Omega)} \right) \, .
	\end{equation}	
	
Secondly, let us put $\lambda \doteq \frac{\delta_{2}}{\delta_{1}} > 1$,
$$
\bs{\omega}_{n,2} (\bs{\xi}) \doteq \bs{\omega}^{\eps}_{n,2} (\bs{\xi}^{\varepsilon}_{n} + \delta_{1} \varepsilon \bs{\xi}) \qquad \text{and} \qquad p_{n,2} (\bs{\xi}) \doteq \delta_{1} \varepsilon \, p^{\varepsilon}_{n,2}(\bs{\xi}^{\varepsilon}_{n} + \delta_{1} \varepsilon \bs{\xi}) \qquad \forall \bs{\xi} \in B( \bs{0}, \lambda) \setminus \overline{B(\bs{0}, 1)} \, ,
$$
so that, in view of \eqref{extstokes00scann}, $(\bs{\omega}_{n,2}, p_{n,2}) \in W^{1,2}(B( \bs{0}, \lambda) \setminus \overline{B(\bs{0}, 1)}; \R^{3}) \times L_{0}^{2}(B( \bs{0}, \lambda) \setminus \overline{B(\bs{0}, 1)}; \R)$ is the unique weak solution to the following Stokes problem in $B( \bs{0}, \lambda) \setminus \overline{B(\bs{0}, 1)}$:
\begin{equation}\label{annstokes}
	\left\{
	\begin{aligned}
		& -\Delta \bs{\omega}_{n,2} + \nabla p_{n,2}  = \bs{0} \, , \quad  \nabla\cdot \bs{\omega}_{n,2}  = -\dfrac{ \delta_{1} \varepsilon}{| D_{n}^{\eps} |} \int_{K^{\eps}_{n}} \nabla \cdot \bs{\varphi} \ \ \mbox{in} \ \ B( \bs{0}, \lambda) \setminus \overline{B(\bs{0}, 1)} \, , \\[5pt]
		& \bs{\omega}_{n,2} = \bs{\omega}_{n,1} \ \ \mbox{on} \ \ \partial B(\bs{0}, 1)  \, , \\[5pt]
		& \bs{\omega}_{n,2} = \bs{0} \ \ \mbox{on} \ \ \partial B( \bs{0}, \lambda)\, ,
	\end{aligned}
	\right.
\end{equation}
where $\bs{\omega}_{n,1} \in D^{1,2}( \mathbb{R}^{3}; \mathbb{R}^{3})$ is given by
$$
\bs{\omega}_{n,1} (\bs{\xi}) \doteq \bs{\omega}^{\eps}_{n,1} (\bs{\xi}^{\varepsilon}_{n} + \delta_{1} \varepsilon \bs{\xi}) \qquad \forall \bs{\xi} \in  \mathbb{R}^{3} \, .
$$
We appeal then to \cite[Theorem~IV.1.1]{galdi2011introduction} (see also \cite[Lemma~B.1]{GiuntiHoefer21}), to the Poincaré and trace inequalities (both applied in $B( \bs{0}, \lambda) \setminus \overline{B(\bs{0}, 1)}$) in order to derive the bound
\begin{equation}\label{rext5}
\|\nabla \bs{\omega}_{n,2} \|_{L^2(B( \bs{0}, \lambda) \setminus \overline{B(\bs{0}, 1)})}^2 \leq C \left( \|\bs{\omega}_{n,1} \|_{W^{1,2}(B( \bs{0}, \lambda) \setminus \overline{B(\bs{0}, 1)})}^2 + \eps^2 \|\nabla \bs \varphi\|^{2}_{L^2(K^{\eps}_{n})} \right) \, .
\end{equation}
Applying the change of variables
$$
\bs{\xi} \in B( \bs{0}, \lambda) \setminus \overline{B(\bs{0}, 1)} \ \ \longleftrightarrow \ \ \bs{\xi}^{\varepsilon}_{n} + \delta_{1} \varepsilon \bs{\xi} \in D^{\eps}_{n} \, ,
$$
we deduce, as in the proof of Lemma~\ref{lemma_Abis} (see \eqref{divext4}), 
\begin{equation} \label{rext6}
	\begin{aligned}
		& \|\nabla \bs{\omega}_{n,2} \|_{L^2(B( \bs{0}, \lambda) \setminus \overline{B(\bs{0}, 1)})}^2 = \dfrac{1}{\delta_{1} \eps}	\|\nabla  \bs{\omega}^{\eps}_{n,2} \|^{2}_{L^2(D^{\eps}_{n})} \, , \\[6pt]
		& \| \bs{\omega}_{n,1} \|_{L^{2}(B( \bs{0}, \lambda) \setminus \overline{B(\bs{0}, 1)}))}^{2} = \dfrac{1}{\delta_{1}^{3} \eps^{3}} \|\bs{\omega}^{\eps}_{n,1} \|_{L^{2}(D^{\eps}_{n})}^{2} \, , \qquad \|\nabla \bs{\omega}_{n,1} \|_{L^2(B( \bs{0}, \lambda) \setminus \overline{B(\bs{0}, 1)})}^2 = \dfrac{1}{\delta_{1} \eps}	\|\nabla  \bs{\omega}^{\eps}_{n,1} \|^{2}_{L^2(D^{\eps}_{n})}  \, .
	\end{aligned}
\end{equation}
Upon insertion of \eqref{rext6} into \eqref{rext5} we get
\begin{equation}\label{rext7}
\|\nabla  \bs{\omega}^{\eps}_{n,2} \|^{2}_{L^2(D^{\eps}_{n})} \leq C \left( \dfrac{1}{\eps^{2}} \|\bs{\omega}^{\eps}_{n,1} \|_{L^{2}(D^{\eps}_{n})}^{2} + \|\nabla  \bs{\omega}^{\eps}_{n,1} \|^{2}_{L^2(D^{\eps}_{n})} + \eps^2 \|\nabla \bs \varphi\|^{2}_{L^2(K^{\eps}_{n})}  \right) \, .
\end{equation}
Then, select a sufficiently small $\varepsilon_{\star} \in I_{*}$ such that $D^{\eps}_{n} \subset \mathbb{R}^{3} \setminus \overline{B(\bs{\xi}_{n}^{\eps},2\delta_0 \eps^3)}$ for every $\eps \in (0, \varepsilon_{\star})$. The decay estimate \eqref{decay_stokes_1} can be invoked to infer that
\begin{equation}
\left\{
\begin{aligned}
& \|\bs{\omega}^{\eps}_{n,1} \|_{L^{2}(D_{n}^{\eps})}^{2} \leq C \eps^{4} \left( \eps^{3} \|\bs  \varphi\|^{2}_{L^{\infty}(\Omega)} + \|\nabla \bs \varphi\|^{2}_{L^2(B(\bs{\xi}_{n}^{\eps},\delta_0 \eps^3))} \right) \, , \\[6pt]
& \|\nabla \bs{\omega}^{\eps}_{n,1} \|_{L^{2}(D_{n}^{\eps})}^{2} \leq C \eps^{2} \left( \eps^{3} \|\bs  \varphi\|^{2}_{L^{\infty}(\Omega)} + \|\nabla \bs \varphi\|^{2}_{L^2(B(\bs{\xi}_{n}^{\eps},\delta_0 \eps^3))} \right) \, ,
\end{aligned}
\right.
\end{equation}
which, once inserted into the right-hand side of \eqref{rext7}, furnishes
\begin{equation}\label{rext8}
	\begin{aligned}
		\|\nabla  \bs{\omega}^{\eps}_{n,2} \|^{2}_{L^2(D^{\eps}_{n})} \leq C \eps^{2} \left( \eps^{3} \|\bs  \varphi\|^{2}_{L^{\infty}(\Omega)} + \|\nabla \bs \varphi\|^{2}_{L^2(B(\bs{\xi}_{n}^{\eps},\delta_0 \eps^3))} \right) \qquad \forall n \in \{1,\ldots,N(\varepsilon)\} \, .
	\end{aligned}
\end{equation}
Exactly as in \eqref{rext4}, from \eqref{rext8} we then derive
\begin{equation}\label{rext9}
	\sum_{n=1}^{N(\eps)} \|\nabla \bs{\omega}^{\eps}_{n,2} \|_{L^2(D^{\eps}_{n})}^2 \leq C \eps^{2} \left( \|\bs  \varphi\|^{2}_{L^{\infty}(\Omega)} + \|\nabla \bs \varphi\|^{2}_{L^2(\Omega)} \right) \, .
\end{equation}	

Thirdly, since
\begin{equation}\label{rext10}
	\sum_{n=1}^{N(\eps)} \|\nabla \bs{\varphi} \|_{L^2(K^{\eps}_{n})}^2 \leq \|\nabla \bs \varphi\|^{2}_{L^2(\Omega)} \, ,
\end{equation}	
we can insert \eqref{rext4}-\eqref{rext9}-\eqref{rext10} into the right-hand side of \eqref{rext1}, establishing the bound
	\begin{equation} \label{rext11}
	\|\nabla (\mathcal{R}_{\eps}(\bs{\varphi}) - \bs{\varphi})\|_{L^2(\Omega)} \leq C \left( \|\bs  \varphi\|_{L^{\infty}(\Omega)} + \|\nabla \bs \varphi\|_{L^2(\Omega)} \right)  \, ,
\end{equation}
from where \eqref{propreps0}$_3$ is easily obtained.
\newline
\underline{Proof of \eqref{propreps0}$_4$}. As a consequence of \eqref{rext0}, there holds $\bs{\varphi} - \mathcal{R}_{\eps}(\bs{\varphi}) \in W^{1,2}_{0}(B(\bs{\xi}_{n}^{\eps},\delta_2 \eps); \mathbb{R}^{3})$ for every $n \in \{1,\ldots,N(\varepsilon)\}$, so that the following (explicit) Poincaré inequality holds:
\begin{equation} \label{rext12}
\|\mathcal{R}_{\eps}(\bs{\varphi}) - \bs{\varphi}\|_{L^2(B(\bs{\xi}_{n}^{\eps},\delta_2 \eps))} \leq \dfrac{\delta_2 \eps}{\pi} \|\nabla (\mathcal{R}_{\eps}(\bs{\varphi}) - \bs{\varphi})\|_{L^2(B(\bs{\xi}_{n}^{\eps},\delta_2 \eps))}  \, .
\end{equation}
Combining \eqref{rext12} with the Ladyzhenskaya inequality \cite[Chapter 1, Lemma~2]{ladyzhenskaya1969mathematical}, which reads
$$
\| g \|^{4}_{L^4(\mathbb{R}^{3})} \leq 4 \, \| g \|_{L^2(\mathbb{R}^{3})} \| \nabla g \|^{3}_{L^2(\mathbb{R}^{3})} \qquad \forall g \in \mathcal{C}_{0}^{\infty}(\mathbb{R}^{3}; \mathbb{R}) \, ,
$$
we then deduce
\begin{equation} \label{rext13}
	\|\mathcal{R}_{\eps}(\bs{\varphi}) - \bs{\varphi}\|^{4}_{L^4(B(\bs{\xi}_{n}^{\eps},\delta_2 \eps))} \leq \dfrac{4 \delta_2 \eps}{\pi}  \|\nabla (\mathcal{R}_{\eps}(\bs{\varphi}) - \bs{\varphi})\|^{4}_{L^2(B(\bs{\xi}_{n}^{\eps},\delta_2 \eps))}  \, .
\end{equation}
The bounds in \eqref{rext11}-\eqref{rext13} allow us to write
\begin{equation} \label{rext14}
\begin{aligned}
\|\mathcal{R}_{\eps}(\bs{\varphi}) - \bs{\varphi}\|^{4}_{L^4(\Omega)} & = \sum_{n=1}^{N(\eps)}	\|\mathcal{R}_{\eps}(\bs{\varphi}) - \bs{\varphi}\|^{4}_{L^4(B(\bs{\xi}_{n}^{\eps},\delta_2 \eps))} \leq C \eps \sum_{n=1}^{N(\eps)} \|\nabla (\mathcal{R}_{\eps}(\bs{\varphi}) - \bs{\varphi})\|^{4}_{L^2(B(\bs{\xi}_{n}^{\eps},\delta_2 \eps))} \\[6pt]
& \leq  C \eps \left( \sum_{n=1}^{N(\eps)} \|\nabla (\mathcal{R}_{\eps}(\bs{\varphi}) - \bs{\varphi})\|^{2}_{L^2(B(\bs{\xi}_{n}^{\eps},\delta_2 \eps))} \right)^{2} \leq C \eps \left( \|\bs  \varphi\|_{L^{\infty}(\Omega)} + \|\nabla \bs \varphi\|_{L^2(\Omega)} \right)^{4}  \, ,
\end{aligned}
\end{equation}
thereby yielding \eqref{propreps0}$_4$.
\newline		
\underline{Proof of \eqref{W_eps_resistance}.} Consider $\bs{\mu}^{\varepsilon}_{1},...,\bs{\mu}^{\varepsilon}_{N(\varepsilon)} \in \mathbb{R}^{3}$ satisfying \eqref{kineticuni}, and a sequence $(\bs{v}_\eps)_{\varepsilon \in I_{*}} \subset W^{1,2}(\Omega; \mathbb{R}^{3})$ satisfying the properties in \eqref{sequenceps}. Choose a sufficiently small $\varepsilon_{\triangledown} \in I_{*}$ such that $A^{\eps}_{n} \subset \mathbb{R}^{3} \setminus \overline{B(\bs{\xi}_{n}^{\eps},2\delta_0 \eps^3)}$ for every $\eps \in (0, \varepsilon_{\triangledown})$, with
\begin{equation} \label{anothereps}
A^{\eps}_{n} \doteq B(\bs{\xi}_{n}^{\eps},\delta_1 \eps) \setminus  \overline{B\left( \bs{\xi}_{n}^{\eps}, \dfrac{\delta_1 \eps}{2} \right)} \, . 
\end{equation}
Given a vector field $\bs \varphi \in W^{1,\infty}(\Omega; \mathbb{R}^{3})$ and $\eps \in (0, \min \{ \eps_{\star},\varepsilon_{\triangledown} \})$, from \eqref{rext0} we deduce
\begin{equation} \label{brink0}
\begin{aligned}
& \int_{\Omega} \nabla \mathcal{R}_{\eps}(\bs{\varphi}) : \nabla \bs{v}_\eps - \int_{\Omega} \Lambda_{\eps} (\nabla \cdot \mathcal{R}_{\eps}(\bs{\varphi})) \\[6pt]
& \hspace{-4mm} = \int_{\Omega} \nabla \bs \varphi : \nabla \bs{v}_\eps - \int_{\Omega} \nabla (\bs \varphi - \mathcal{R}_{\eps}(\bs{\varphi})) : \nabla \bs{v}_\eps - \int_{\Omega} \Lambda_{\eps} (\nabla \cdot \bs{\varphi}) + \int_{\Omega} \Lambda_{\eps} (\nabla \cdot (\bs{\varphi} - \mathcal{R}_{\eps}(\bs{\varphi})))   \\[6pt]
& \hspace{-4mm} = \int_{\Omega} \nabla \bs \varphi : \nabla \bs{v}_\eps - \int_{\Omega} \Lambda_{\eps} (\nabla \cdot \bs{\varphi}) \\[6pt]
& \hspace{1mm} - \sum_{n=1}^{N(\eps)} \left[ \int_{D^{\eps}_{n}} \nabla \bs{\omega}^{\eps}_{n,2} : \nabla \bs{v}_\eps + \int_{C_{n}^{\eps}} \nabla \bs{\omega}^{\eps}_{n,1} : \nabla \bs{v}_\eps - \dfrac{1}{|D^{\eps}_{n}|} \left( \int_{K^{\eps}_{n}} \nabla \cdot \bs{\varphi} \right) \! \! \left( \int_{D^{\eps}_{n}} \Lambda_{\eps} \right) \right] \, ,
\end{aligned}
\end{equation}
where we have used that $\Lambda_{\eps} = 0$ in $K^{\eps}_{n}$ for all $1\leq n \leq N(\eps)$.
We estimate, separately, each of the five addends appearing on the right-hand side of \eqref{brink0}.

Firstly, owing to \eqref{sequenceps}$_2$-\eqref{sequenceps}$_3$, there holds 
\begin{equation} \label{brink1}
\lim_{\varepsilon \to 0^{+}} \left( \int_{\Omega} \nabla \bs \varphi : \nabla \bs{v}_\eps - \int_{\Omega} \Lambda_{\eps} (\nabla \cdot \bs{\varphi}) \right) = \int_{\Omega} \nabla \bs \varphi : \nabla \bs{v} - \int_{\Omega} \Lambda (\nabla \cdot \bs{\varphi}) \, .
\end{equation}

Secondly, recalling that the sequence $(\bs{v}_\eps)_{\varepsilon \in I_{*}}$ is uniformly bounded in $W^{1,2}(\Omega; \mathbb{R}^{3})$, an application of the H\"older and Cauchy-Schwarz inequalities, together with \eqref{rext9}, entails
\begin{equation} \label{brink2}
\begin{aligned}
\sum_{n=1}^{N(\eps)} \left| \int_{D^{\eps}_{n}} \nabla \bs{\omega}^{\eps}_{n,2} : \nabla \bs{v}_\eps \right| & 
\leq \sum_{n=1}^{N(\eps)} \|\nabla \bs{\omega}^{\eps}_{n,2} \|_{L^2(D^{\eps}_{n})} \|\nabla \bs{v}_{\eps} \|_{L^2(D^{\eps}_{n})} \\[6pt]
& \leq \sqrt{ \sum_{n=1}^{N(\eps)} \|\nabla \bs{\omega}^{\eps}_{n,2} \|^{2}_{L^2(D^{\eps}_{n})}} \, \sqrt{ \sum_{n=1}^{N(\eps)} \|\nabla \bs{v}_{\eps} \|^{2}_{L^2(B(\bs{\xi}_{n}^{\eps},\delta_2 \eps))}} \\[6pt]
& \leq C \eps \sqrt{\|\bs  \varphi\|^{2}_{L^{\infty}(\Omega)} + \|\nabla \bs \varphi\|^{2}_{L^2(\Omega)}} \, ,
\end{aligned}
\end{equation}
thus,
\begin{equation} \label{brink3}
	\lim_{\varepsilon \to 0^{+}} \sum_{n=1}^{N(\eps)} \left( \int_{D^{\eps}_{n}} \nabla \bs{\omega}^{\eps}_{n,2} : \nabla \bs{v}_\eps \right) = 0 \, .
\end{equation}

Thirdly, let $(\bs{z}^{\varepsilon}_{n,1}, \pi^{\varepsilon}_{n,1}) \in D^{1,2}(\R^3 \setminus \overline{K_{n}^{\eps}}; \R^{3}) \times L^{2}(\R^3 \setminus \overline{K_{n}^{\eps}}; \R)$ be the unique generalized solution to the following Stokes problem in the exterior of $K_{n}^{\eps}$:
\begin{equation}\label{brink4}
	\left\{
	\begin{aligned}
		& -\Delta \bs{z}^{\varepsilon}_{n,1} + \nabla \pi^{\varepsilon}_{n,1}  = \bs{0} \, , \quad  \nabla\cdot \bs{z}^{\varepsilon}_{n,1}  = 0 \ \ \mbox{in} \ \ \R^3 \setminus \overline{K_{n}^{\eps}} \, , \\[5pt]
		& \bs{z}^{\varepsilon}_{n,1} = \bs{\varphi}(\bs{\xi}_{n}^{\eps}) \ \ \mbox{on} \ \ \partial K_{n}^{\eps}  \, , \\[5pt]
		& \bs{z}^{\varepsilon}_{n,1} \to \bs{0} \ \ \mbox{as} \ \ | \bs{\xi} | \to + \infty \, ,
	\end{aligned}
	\right.
\end{equation}		
so that $(\bs{z}^{\varepsilon}_{n,1}, \pi^{\varepsilon}_{n,1}) \in \mathcal{C}^{2}(\R^3 \setminus K_{n}^{\eps}; \R^{3}) \times \mathcal{C}^{1}(\R^3 \setminus K_{n}^{\eps}; \R)$ (see \eqref{stokeslet0} in the Appendix \ref{appendix_sec}) and
\begin{equation}\label{brink5}
\int_{C_{n}^{\eps}} \nabla \bs{\omega}^{\eps}_{n,1} : \nabla \bs{v}_\eps = \int_{C_{n}^{\eps}} \nabla \left (\bs{\omega}^{\eps}_{n,1} - \bs{z}^{\eps}_{n,1} \right) : \nabla \bs{v}_\eps + \int_{C_{n}^{\eps}} \nabla \bs{z}^{\eps}_{n,1} : \nabla \bs{v}_\eps \qquad \forall n \in \{1,\ldots,N(\varepsilon)\} \, .
\end{equation}
Since $\bs{\varphi} \in W^{1,\infty}(\Omega; \mathbb{R}^{3})$ is Lipschitz-continuous, it can be easily seen that
\begin{equation}\label{brink6}
\| \bs{\varphi} - \bs{\varphi}(\bs{\xi}_{n}^{\eps}) \|^{2}_{L^2(B(\bs{\xi}_{n}^{\eps},\delta_{0} \eps^{3}))} \leq C \eps^{15} \| \nabla \bs{\varphi} \|^{2}_{L^{\infty}(\Omega)} \qquad \forall n \in \{1,\ldots,N(\varepsilon)\} \, .
\end{equation}
Therefore, we can invoke the energy estimate \eqref{energy_stokes_1}, as well as \eqref{brink6}, in order to infer
\begin{equation}\label{brink7}
\| \nabla \left (\bs{\omega}^{\eps}_{n,1} - \bs{z}^{\eps}_{n,1} \right) \|^{2}_{L^2(C^{\eps}_{n})} \leq C \eps^{9} \| \nabla \bs{\varphi} \|^{2}_{L^{\infty}(\Omega)} \qquad \forall n \in \{1,\ldots,N(\varepsilon)\} \, ,
\end{equation}
and subsequently, by arguing as in \eqref{brink2}-\eqref{brink3}, that
\begin{equation} \label{brink8}
	\lim_{\varepsilon \to 0^{+}} \sum_{n=1}^{N(\eps)} \left( \int_{C_{n}^{\eps}} \nabla \left(\bs{\omega}^{\eps}_{n,1} - \bs{z}^{\eps}_{n,1} \right) \right) = 0 \, .
\end{equation}
Now, multiplying the Stokes equation \eqref{brink4}$_1$ by a constant vector $\bs{V} \in \mathbb{R}^{3}$, and then integrating by parts in $C^{\eps}_{n}$, we readily notice that, for every $n \in \{1,\ldots,N(\varepsilon)\}$,
\begin{equation} \label{brink88}
\int\limits_{\partial B(\bs{\xi}_{n}^{\eps},\delta_{1} \eps)} \left( \dfrac{\partial \bs{z}^{\eps}_{n,1}}{\partial \bs{\nu}} - \pi^{\varepsilon}_{n,1} \bs{\nu} \right) \cdot \bs{V} = - \int\limits_{\partial K_{n}^{\eps}} \left( \dfrac{\partial \bs{z}^{\eps}_{n,1}}{\partial \bs{\nu}} - \pi^{\varepsilon}_{n,1} \bs{\nu} \right) \cdot \bs{V} = - \eps^3 \mathcal{G}_{n} \bs{\varphi}(\bs{\xi}_{n}^{\eps}) \cdot \bs{V} \qquad \forall \bs{V} \in \mathbb{R}^{3} \, ,
\end{equation}
where $\mathcal{G}_{n} \in \mathbb{R}^{3 \times 3}$ is the Stokes resistance matrix of $K_n$ (see \eqref{def_resistance_matrix}), and $\bs{\nu} \in \mathbb{R}^{3}$ is the outward unit normal to $C^{\eps}_{n}$, which is directed towards the interior of $K_{n}^{\eps}$. Similarly, multiplying the Stokes equation \eqref{brink4}$_1$ by $\bs{v}_{\eps}$ and integrating by parts in $C^{\eps}_{n}$ (recalling \eqref{sequenceps}$_1$-\eqref{brink88}), we deduce, for every $n \in \{1,\ldots,N(\varepsilon)\}$,
\begin{equation} \label{brink9}
\begin{aligned}
\int_{C_{n}^{\eps}} \nabla \bs{z}^{\eps}_{n,1} : \nabla \bs{v}_\eps & = \int\limits_{\partial K_{n}^{\eps}} \left( \dfrac{\partial \bs{z}^{\eps}_{n,1}}{\partial \bs{\nu}} - \pi^{\varepsilon}_{n,1} \bs{\nu} \right) \cdot \bs{\mu}_n^\eps + \int\limits_{\partial B(\bs{\xi}_{n}^{\eps},\delta_{1} \eps)} \left( \dfrac{\partial \bs{z}^{\eps}_{n,1}}{\partial \bs{\nu}} - \pi^{\varepsilon}_{n,1} \bs{\nu} \right) \cdot \bs{v}_\eps \\[6pt]
&=\eps^3 \mathcal{G}_{n} \bs{\varphi}(\bs{\xi}_{n}^{\eps}) \cdot \bs \mu_n^\eps - \eps^3 \mathcal{G}_{n} \bs{\varphi}(\bs{\xi}_{n}^{\eps}) \cdot \bar{\bs{v}}^{\eps}_{n} + \int\limits_{\partial B(\bs{\xi}_{n}^{\eps},\delta_{1} \eps)} \left( \dfrac{\partial \bs{z}^{\eps}_{n,1}}{\partial \bs{\nu}} - \pi^{\varepsilon}_{n,1} \bs{\nu} \right) \cdot \left( \bs{v}_\eps - \bar{\bs{v}}^{\eps}_{n} \right) \, ,
\end{aligned}
\end{equation}
where 
$$
\bar{\bs{v}}^{\eps}_{n} \doteq \dfrac{1}{| A^{\eps}_{n}|} \int_{A^{\eps}_{n}} \bs{v}_\eps \qquad \forall n \in \{1,\ldots,N(\varepsilon)\} \, ,
$$
see \eqref{anothereps}. In virtue of \cite[Exercise III.3.5]{galdi2011introduction}, by a translation argument we can build a divergence-free vector field $\bs{X}_{\! n}^{\eps} \in W^{1,2}(A^{\eps}_{n};\mathbb{R}^{3})$ such that
\begin{equation} \label{restop0}
	\left\{
	\begin{aligned}
		& \bs{X}_{\! n}^{\eps} = \bs{0} \ \ \mbox{on} \ \ \partial B \! \left( \bs{\xi}_{n}^{\eps}, \dfrac{\delta_1 \eps}{2} \right)  \, ; \qquad \bs{X}_{\! n}^{\eps} = \bs{v}_\eps - \bar{\bs{v}}^{\eps}_{n}  \ \ \mbox{on} \ \ \partial B(\bs{\xi}_{n}^{\eps},\delta_1 \eps)   \, ; \\[6pt]
		& \|\nabla \bs{X}_{\! n}^{\eps} \|_{L^2(A^{\eps}_{n})} \leq C \| \bs{v}_\eps - \bar{\bs{v}}^{\eps}_{n} \|_{W^{1/2,2}(\partial  A^{\eps}_{n})} \leq C \| \nabla \bs{v}_\eps \|_{L^{2}(A^{\eps}_{n})}  \, ,
	\end{aligned} 
	\right.
\end{equation}
where the trace and Poincaré inequalities (in $A^{\eps}_{n}$) have been employed in \eqref{restop0}$_3$. As before, multiplying \eqref{brink4}$_1$ by $\bs{X}_{\! n}^{\eps}$ and integrating by parts in $A^{\eps}_{n}$ (recalling \eqref{restop0}), entails
\begin{equation} \label{restop1}
\int\limits_{\partial B(\bs{\xi}_{n}^{\eps},\delta_{1} \eps)} \left( \dfrac{\partial \bs{z}^{\eps}_{n,1}}{\partial \bs{\nu}} - \pi^{\varepsilon}_{n,1} \bs{\nu} \right) \cdot \left( \bs{v}_\eps - \bar{\bs{v}}^{\eps}_{n} \right) = \int_{A_{n}^{\eps}} \nabla \bs{z}^{\eps}_{n,1} : \nabla \bs{X}_{\! n}^{\eps} \qquad \forall n \in \{1,\ldots,N(\varepsilon)\} \, .
\end{equation}
The decay estimate \eqref{decay_stokes_1} can be again applied to deduce
\begin{equation} \label{restop2}
\|\nabla \bs{z}^{\eps}_{n,1} \|_{L^{2}(A_{n}^{\eps})}^{2} \leq C \eps^{5}  \|\bs  \varphi\|^{2}_{L^{\infty}(\Omega)} \qquad \forall n \in \{1,\ldots,N(\varepsilon)\} \, ,
\end{equation}
Arguing as in \eqref{brink2}, we then invoke \eqref{restop0}$_3$- \eqref{restop1}-\eqref{restop2} to obtain the bound
\begin{equation} \label{restop3}
	\begin{aligned}
		\sum_{n=1}^{N(\eps)} \left| \, \int\limits_{\partial B(\bs{\xi}_{n}^{\eps},\delta_{1} \eps)} \left( \dfrac{\partial \bs{z}^{\eps}_{n,1}}{\partial \bs{\nu}} - \pi^{\varepsilon}_{n,1} \bs{\nu} \right) \cdot \left( \bs{v}_\eps - \bar{\bs{v}}^{\eps}_{n} \right) \right| \leq C \eps \|\bs  \varphi\|_{L^{\infty}(\Omega)} \, .
	\end{aligned}
\end{equation}
We then sum, over $n \in \{1,\ldots,N(\varepsilon)\}$, on both sides of identity \eqref{brink9}; owing to \eqref{hyp_cv_measures}$_2$-\eqref{brink5}-\eqref{brink8}-\eqref{restop3}-\eqref{W_eps_resistanceapp}, this leads to
\begin{equation} \label{restop4}
	\lim_{\varepsilon \to 0^{+}} \sum_{n=1}^{N(\eps)} \left( \int_{C_{n}^{\eps}} \nabla \bs{\omega}^{\eps}_{n,1} : \nabla \bs{v}_\eps \right) = \int_{\Omega} (\bs{\mathcal{J}} - \mathcal{G} \bs{v}) \cdot \bs{\varphi} \, .
\end{equation}

Fourthly, since the sequence $(\Lambda_\eps)_{\varepsilon \in I_{*}}$ is uniformly bounded in $L^{1}(\Omega; \mathbb{R})$, an application of the H\"older inequality furnishes
\begin{equation} \label{brink100}
	\begin{aligned}
		\sum_{n=1}^{N(\eps)} \left| \dfrac{1}{|D^{\eps}_{n}|} \left( \int_{K^{\eps}_{n}} \nabla \cdot \bs{\varphi} \right) \! \! \left( \int_{D^{\eps}_{n}} \Lambda_{\eps} \right) \right| 
		\leq 3 \|\nabla \bs{\varphi} \|_{L^{\infty}(\Omega)} \sum_{n=1}^{N(\eps)} \dfrac{|K^{\eps}_{n}|}{|D^{\eps}_{n}|}  \| \Lambda_{\eps} \|_{L^1(D^{\eps}_{n})} \leq C \eps^{6} \| \nabla \bs{\varphi} \|_{L^{\infty}(\Omega)} \|\Lambda_{\eps} \|_{L^1(\Omega)} \, ,
	\end{aligned}
\end{equation}
thus,
\begin{equation} \label{brink101}
	\lim_{\varepsilon \to 0^{+}} \sum_{n=1}^{N(\eps)} \dfrac{1}{|D^{\eps}_{n}|} \left( \int_{K^{\eps}_{n}} \nabla \cdot \bs{\varphi} \right) \left( \int_{D^{\eps}_{n}} \Lambda_{\eps} \right) = 0 \, .
\end{equation}

The limit in \eqref{W_eps_resistance} then follows from \eqref{brink0}-\eqref{brink1}-\eqref{brink3}-\eqref{restop4}-\eqref{brink101}.
\end{proof}
	
\section{Uniform bounds and proof of the homogenization result} \label{epslevelsec}
	
	\subsection{Boundary-value problems at the \texorpdfstring{$\eps$}{\eps}-level: uniform bounds}
Let $\Omega \subset \mathbb{R}^{3}$ be an admissible domain. As in \cite[Section 2]{sperone2023homogenization}, we introduce the following functional spaces:
	$$
	\mathcal{U}(\Omega_{\varepsilon}) \doteq \left\lbrace \bs{v} \in W^{1,2}(\Omega_{\varepsilon}; \mathbb{R}^{3}) \ \Bigg\rvert \ 
	\begin{aligned}
		& \nabla \cdot \bs{v}=0 \ \ \mbox{in} \ \ \Omega_{\varepsilon} \, ; \qquad \bs{v} \times \bs{\nu} = \bs{0} \ \ \mbox{on} \ \ \Gamma_{I} \cup \Gamma_{O} \, ; \, \\[3pt]
		& \bs{v} = \bs{0} \ \ \mbox{on} \ \ \Gamma_{W} \cup \partial K_{\varepsilon} \, ; \qquad \int_{\Gamma_{O}} \bs{v} \cdot \bs{\nu} = 0 \
	\end{aligned}
	\right\rbrace 
	$$
	and
	$$
	\begin{aligned}
		& \mathcal{W}(\Omega_{\varepsilon}) \doteq \left\lbrace \bs{v} \in W^{1,2}(\Omega_{\varepsilon}; \mathbb{R}^{3}) \ | \ \nabla \cdot \bs{v}=0 \ \ \mbox{in} \ \ \Omega_{\varepsilon} \, ; \quad \bs{v} \times \bs{\nu} = \bs{0} \ \ \mbox{on} \ \ \Gamma_{I} \cup \Gamma_{O} \, ; \quad \bs{v} = \bs{0} \ \ \mbox{on} \ \ \Gamma_{W} \cup \partial K_{\varepsilon} \, \right\rbrace \, ,	\\[6pt]
		& \mathcal{V}(\Omega_{\varepsilon}) \doteq \left\lbrace \bs{v} \in W^{1,2}(\Omega_{\varepsilon}; \mathbb{R}^{3}) \ | \ \nabla \cdot \bs{v}=0 \ \ \mbox{in} \ \ \Omega_{\varepsilon} \, ; \quad \bs{v} \times \bs{\nu} = \bs{0} \ \ \mbox{on} \ \ \Gamma_{I} \cup \Gamma_{O} \, ; \quad \bs{v} = \bs{0} \ \ \mbox{on} \ \ \Gamma_{W} \, \right\rbrace \, .
	\end{aligned}
	$$
	Since the Poincaré inequality holds in $\mathcal{U}(\Omega_{\varepsilon})$, $\mathcal{W}(\Omega_{\varepsilon})$  and $\mathcal{V}(\Omega_{\varepsilon})$, they all constitute Hilbert spaces under the Dirichlet scalar product of the gradients. We are now in position to give the following definition for the weak solutions of problems \eqref{nsstokes1}-\eqref{nsstokespd1} (equivalently, of problems \eqref{nsstokes0}-\eqref{nsstokespd}):
	\begin{definition}\label{weaksolutionpnf}
		Let $\Omega \subset \mathbb{R}^{3}$ be an admissible domain, $F \in \mathbb{R}$, $\bs{f} \in L^{2}(\Omega; \mathbb{R}^{3})$,  $\bs{\mu}^{\varepsilon}_{1},...,\bs{\mu}^{\varepsilon}_{N(\varepsilon)} \in \mathbb{R}^{3}$ and $\bs{\Psi}_{\! \varepsilon} \in W^{2,2}(\Omega_{\varepsilon}; \mathbb{R}^{3})$ the vector field arising from Lemma~\ref{fluxcarrier}. A vector field $\bs{u} \in \mathcal{V}(\Omega_{\varepsilon})$ is called a \textbf{weak solution} of the prescribed net flux problem \eqref{nsstokes1} if $\bs{u} - \bs{\Psi}_{\! \varepsilon} \in \mathcal{U}(\Omega_{\varepsilon})$ and
		\begin{equation} \label{nstokesdebil}
			\int_{\Omega_{\varepsilon}} \nabla \bs{u} : \nabla \bs{\varphi} + \int_{\Omega_{\varepsilon}} ((\nabla \times \bs{u}) \times \bs{u}) \cdot \bs{\varphi}  = \int_{\Omega_{\varepsilon}} \bs{f} \cdot \bs{\varphi} \qquad \forall \bs{\varphi} \in \mathcal{U}(\Omega_{\varepsilon}) \, .
		\end{equation}
	\end{definition}
	
	\begin{definition}\label{weaksolutionppd}
		Let $\Omega \subset \mathbb{R}^{3}$ be an admissible domain, $p^{\pm} \in \mathbb{R}$, $\bs{f} \in L^{2}(\Omega; \mathbb{R}^{3})$, $\bs{\mu}^{\varepsilon}_{1},...,\bs{\mu}^{\varepsilon}_{N(\varepsilon)} \in \mathbb{R}^{3}$ and $\bs{A}_{\varepsilon} \in \mathcal{C}^{\infty}(\overline{\Omega}; \mathbb{R}^{3})$ the vector field arising from Lemma~\ref{smallfield}. A vector field $\bs{u} \in \mathcal{V}(\Omega_{\varepsilon})$ is called a \textbf{weak solution} of the prescribed pressure drop problem \eqref{nsstokespd1} if $\bs{u} - \bs{A}_{\varepsilon} \in \mathcal{W}(\Omega_{\varepsilon})$ and
		\begin{equation} \label{nstokesdebilpd}
			\int_{\Omega_{\varepsilon}} \nabla \bs{u} : \nabla \bs{\varphi} + \int_{\Omega_{\varepsilon}} ((\nabla \times \bs{u}) \times \bs{u}) \cdot \bs{\varphi} + (p^{+}-p^{-}) \int_{\Gamma_{O}} \bs{\varphi} \cdot \bs{\nu} = \int_{\Omega_{\varepsilon}} \bs{f} \cdot \bs{\varphi} \qquad \forall \bs{\varphi} \in \mathcal{W}(\Omega_{\varepsilon}) \, .
		\end{equation}
	\end{definition}
	
	We refer to \cite[Section 4]{heywood1996artificial} and \cite[Section 2]{korobkov2020solvability} for an explanation of the fact that the boundary conditions involving the Bernoulli pressure in \eqref{nsstokes1}-\eqref{nsstokespd1} are implicitly contained in the variational formulations \eqref{nstokesdebil}-\eqref{nstokesdebilpd} of Definitions \ref{weaksolutionpnf}-\ref{weaksolutionppd}.
	
	The first main result of this section provides uniform bounds (with respect to $\varepsilon \in I_{*}$) for the solutions of the prescribed flux problem \eqref{nsstokes1}.
	
	\begin{theorem} \label{epslevel}
		Let $\Omega \subset \mathbb{R}^{3}$ be an admissible domain, $F \in \mathbb{R}$, $\bs{f} \in L^{2}(\Omega; \mathbb{R}^{3})$ and $\bs{\mu}^{\varepsilon}_{1},...,\bs{\mu}^{\varepsilon}_{N(\varepsilon)} \in \mathbb{R}^{3}$. There exists (at least) one weak solution $\bs{u}_{\varepsilon} \in W^{2,2}(\Omega_{\varepsilon}; \mathbb{R}^{3}) \cap \mathcal{V}(\Omega_{\varepsilon})$ of the prescribed net flux problem \eqref{nsstokes1} and an associated Bernoulli pressure $\Phi_{\varepsilon} \in W^{1,2}(\Omega_{\varepsilon}; \mathbb{R})$ such that the pair $(\bs{u}_{\varepsilon},\Phi_{\varepsilon})$ satisfies \eqref{nsstokes1} in strong form for some constant $p_{\varepsilon}^{+} \in \mathbb{R}$.
		
		Furthermore, suppose there exists $\delta_{3} \in (0,\min \{ \ell_{1}, \ell_{2} \})$ such that, for every $\varepsilon \in I_{*} $, 
		\begin{equation} \label{extrahyp}
			\begin{aligned}
				&	\partial B \! \left(\bs{\xi}^{\varepsilon}_{n}, \delta_{2} \, \varepsilon \right) \cap \left\lbrace \bs{\xi} \in \overline{\Omega_{1}} \ \vert \ \textup{dist}( \bs{\xi} , \Gamma_{I}) \leq \delta_{3} \right\rbrace = \emptyset \qquad \forall n \in \{1,...,N(\varepsilon)\} \, , \\[6pt]
				& \partial B \! \left(\bs{\xi}^{\varepsilon}_{n}, \delta_{2} \, \varepsilon \right) \cap \left\lbrace \bs{\xi} \in \overline{\Omega_{2}} \ \vert \ \textup{dist}( \bs{\xi} , \Gamma_{O}) \leq \delta_{3} \right\rbrace = \emptyset \qquad \forall n \in \{1,...,N(\varepsilon)\} \, .
			\end{aligned}
		\end{equation}
		Then, under assumption \eqref{kineticuni}, the uniform bound
		\begin{equation} \label{uboundpf}
			\sup_{\varepsilon \in I_{*}} \left( \| \nabla \bs{u}_{\varepsilon} \|_{L^{2}(\Omega_{\varepsilon})} + \| \Phi_{\varepsilon} \|_{L^{2}(\Omega_{\varepsilon})} + |p_{\varepsilon}^{+}| \right) \leq C_{*} \, ,
		\end{equation}
		holds for some constant $C_{*} > 0$ that depends on $\Omega$, $\bs{f}$, $F$, $\mathcal{S}_{*}$ and $\{ \delta_{i} \}^{3}_{i=0}$.
	\end{theorem}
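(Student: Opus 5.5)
We write $\bs{u}_\varepsilon = \bs{w}_\varepsilon + \bs{\Psi}_{\!\varepsilon}$, with $\bs{\Psi}_{\!\varepsilon}$ the flux carrier of Lemma~\ref{fluxcarrier}, and look for $\bs{w}_\varepsilon \in \mathcal{U}(\Omega_\varepsilon)$.

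\emph{Existence for fixed $\varepsilon$.} We recast \eqref{nstokesdebil} as a fixed point problem $\bs{w} = \mathcal{T}_\varepsilon(\bs{w})$ in the Hilbert space $\mathcal{U}(\Omega_\varepsilon)$, by means of the Riesz representation of the nonlinear and forcing functionals. Compactness of $\mathcal{T}_\varepsilon$ follows from the compact embedding $W^{1,2}\hookrightarrow L^4$ in the bounded domain $\Omega_\varepsilon$. By the Leray--Schauder principle, it suffices to bound, uniformly in $\lambda\in[0,1]$, all solutions of $\bs{w}=\lambda\mathcal{T}_\varepsilon(\bs{w})$. Regularity $W^{2,2}\times W^{1,2}$ and the strong form with some constant $p^+_\varepsilon$ then follow as in \cite{korobkov2020solvability}. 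The pressure is recovered via de Rham, and the boundary constants come from testing with fields carrying flux; elliptic regularity near $\partial K_\varepsilon$, $\Gamma_W$ and the cylindrical ends follows as in Lemma~\ref{fluxcarrier}. The a priori bound for fixed $\varepsilon$ is obtained by the same contradiction argument as the uniform bound below, so we concentrate on the latter.

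\emph{Uniform bound by contradiction.} Suppose there are $\varepsilon_k$, $\lambda_k$ and solutions with $J_k \doteq \|\nabla \bs{w}_k\|_{L^2}\to\infty$. (If $\varepsilon_k$ stays away from $0$, this is the fixed-$\varepsilon$ argument.) Testing with $\bs{w}_k$ and using $((\nabla\times\bs{u})\times\bs{u})\cdot\bs{u}=0$ gives the energy identity
\[
J_k^2 = -\lambda_k\int((\nabla\times\bs{w}_k)\times\bs{\Psi}_k)\cdot\bs{w}_k-\lambda_k\int((\nabla\times\bs{\Psi}_k)\times \bs{u}_k)\cdot\bs{w}_k+\text{(linear terms)}.
\]
Since $\|\nabla\bs{\Psi}_k\|\le C(|F|+\mathcal{S}_*)$, we normalize $\hat{\bs{w}}_k=\bs{w}_k/J_k$. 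Then $\hat{\bs{w}}_k$, extended by zero on the particles, converges weakly in $W^{1,2}(\Omega)$ to some $\bs{v}$ with $\|\nabla\bs{v}\|\le1$. The key point is that $\bs{\Psi}_k$, extended by the $\bs{\mu}^\varepsilon_n$, converges weakly to a limit flux carrier $\bs{\Psi}$ carrying flux $F$, while the particle contributions vanish strongly in $L^4$; this is in the spirit of Lemma~\ref{smallfield}, but it requires care, since $\bs{\Psi}_k$ is a Stokes solution and not the explicit field. Dividing the energy identity by $J_k^2$ and passing to the limit (using strong $L^4$ convergence of $\hat{\bs{w}}_k$) yields
\[
1 = -\lambda\int((\nabla\times\bs{v})\times\bs{\Psi})\cdot\bs{v} .
\]
Next we pass to the limit in the weak formulation divided by $J_k^2$, tested with $\mathcal{R}_{\varepsilon_k}(\bs{\varphi})$ from Theorem~\ref{pro:corrector}. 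The $\mathcal{G}$, $\bs{\mathcal{J}}$ and viscous terms are $O(J_k^{-1})$ and disappear, so $\bs{v}$ solves the stationary Euler equations $(\nabla\times\bs{v})\times\bs{v}+\nabla\Phi=0$ in $\Omega$. Here the normalized Bernoulli pressure $\Phi_k/J_k^2$ is controlled through Lemma~\ref{bogtype}. The boundary conditions are $\bs{v}=0$ on $\Gamma_W$, $\bs{v}\times\bs{\nu}=0$ on $\Gamma_I\cup\Gamma_O$, and zero flux.

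\emph{Deriving the contradiction.} By Bernoulli's law for Sobolev solutions of Euler \cite{korobkov2011bernoulli}, $\Phi$ is constant on $\Gamma_W$, and it is constant on $\Gamma_I$ and $\Gamma_O$. Hypothesis \eqref{extrahyp} is used here: it makes the boundary layers near $\Gamma_I,\Gamma_O$ free of particles. Consequently the limiting boundary relations $\Phi=\text{const}$ there (normalized constants $p_k^\pm/J_k^2$ converging) can be derived with test functions supported in the particle-free collars. The identity above can then be rewritten, by integrating by parts with $\bs{\Psi}$ and Euler, as
\[
1=\lambda\!\int_{\Gamma_O}\!\Phi\,\bs{\Psi}\cdot\bs{\nu}+\lambda\!\int_{\Gamma_I}\!\Phi\,\bs{\Psi}\cdot\bs{\nu}=\lambda\,F\,(\Phi|_{\Gamma_O}-\Phi|_{\Gamma_I}).
\]
Showing that the Bernoulli constants coincide (because $\Gamma_W$ connects $\Gamma_I$ and $\Gamma_O$, and $\Phi$ is constant on the whole connected wall, as in \cite{sperone2023homogenization}) forces the right-hand side to vanish, a contradiction. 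This gives the bound on $\|\nabla\bs{u}_\varepsilon\|$. The bounds on $\|\Phi_\varepsilon\|_{L^2}$ and $|p^+_\varepsilon|$ then follow by testing with $\bs{J}_\varepsilon$ from Lemma~\ref{bogtype} (taking $q=\Phi_\varepsilon$) and with $\bs{\Psi}$-type flux fields. I expect the main obstacle to be the identification of the limits of $\bs{\Psi}_k$ and of the normalized pressures in the perforated setting, and justifying Bernoulli's law on the full boundary when $\partial\Omega$ may be disconnected.
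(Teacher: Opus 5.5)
Your architecture is the paper's: normalize by the blowing-up Dirichlet norm, pass to a stationary Euler limit $(\bs{v},\Phi)$ using $\mathcal{R}_{\eps}$, use Bernoulli's law on $\Gamma_W$ and the particle-free collars to pin down $\Phi$ on $\Gamma_I,\Gamma_O$, and contradict an identity of the form $1=-F\,(c_O-c_I)$. Here $c_I,c_W,c_O$ are the constant values of $\Phi$ on $\Gamma_I,\Gamma_W,\Gamma_O$.

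Your variations are legitimate. You test with $\bs{w}_k=\bs{u}_k-\bs{\Psi}_k$ and convert $\int((\nabla\times\bs{v})\times\bs{v})\cdot\bs{\Psi}$ into boundary terms through Euler. The paper instead tests with $\bs{u}_\eps-\bs{A}_\eps$, so the Bernoulli boundary conditions produce $F\,p^+_\eps$ directly, and the nonlinear term disappears because $\|\bs{A}_\eps\|_{L^4}\to 0$.

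The difficulty you foresee with $\bs{\Psi}_k$ is not there. You need no identification of its limit $\bs{\Psi}$, and the particle data need not drop out of it. You only use that $\bs{\Psi}$ is divergence-free in $\Omega$, vanishes on $\Gamma_W$, satisfies $\bs{\Psi}\times\bs{\nu}=\bs{0}$ on $\Gamma_I\cup\Gamma_O$ and carries flux $F$. All of these survive weak $W^{1,2}$ limits, and strong $L^4$ convergence comes from compactness.

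Reading off $c_I=0$ and $c_O=\lim p^+_k/J_k^2$ by testing the pressure-including formulation with non-solenoidal fields supported in the collars is lighter than the paper's local $W^{2,3/2}$ regularity plus trace compactness. With $\mathcal{R}_\eps(\bs{\varphi})$ as test field, it even appears to make \eqref{extrahyp} unnecessary.

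The genuine gap is the final step. Bernoulli's law and connectedness of the wall give one constant $c_W$ on $\Gamma_W$. The contradiction, however, rests entirely on $c_I=c_W=c_O$, i.e.\ on matching the values of $\Phi$ across the edges $\partial\Gamma_I$ and $\partial\Gamma_O$, and you offer no argument for this.

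The paper, following \cite{sperone2023homogenization}, invokes the trace $\Phi|_{\partial\Omega}\in W^{1/3,3/2}(\partial\Omega)$, but that alone cannot do it. Since $sp=1/2<1$, this space tolerates jumps across a curve: $\int_{\Gamma_I}\int_{\Gamma_W}|\bs{\xi}_1-\bs{\xi}_2|^{-5/2}$ is finite, and the angular coordinate around the edge is a $W^{1,3/2}$ function.

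What works is the Euler structure $|\nabla\Phi|=|(\nabla\times\bs{v})\times\bs{v}|\le|\nabla\times\bs{v}|\,|\bs{v}|$, combined with $\bs{v}=\bs{0}$ on $\Gamma_W$. Set $U_R\doteq\{\bs{\xi}\in\Omega_1 \mid \textup{dist}(\bs{\xi},\Gamma_W)+\textup{dist}(\bs{\xi},\Gamma_I)<R\}$. Integrate $\nabla\Phi$ along the segments joining $\Gamma_I$ to $\Gamma_W$ inside $U_R$, and use Poincaré's inequality in the direction normal to $\Gamma_W$:
\[ |c_I-c_W|\,R\le C\int_{U_R}|\nabla\times\bs{v}|\,|\bs{v}|\le C\,\|\nabla\bs{v}\|_{L^2(U_R)}\,\|\bs{v}\|_{L^2(U_R)}\le C\,R\,\|\nabla\bs{v}\|^2_{L^2(U_R)} . \]
Dividing by $R$ and letting $R\to 0$ gives $c_I=c_W$, and likewise at $\partial\Gamma_O$.

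Your worry about a disconnected $\partial\Omega$ then only amounts to requiring that the wall components adjacent to $\partial\Gamma_I$ and $\partial\Gamma_O$ coincide, since Bernoulli's law gives one constant per component.
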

	\noindent
	\begin{proof}
		In what follows, $C > 0$ will always denote a generic constant that depends on $\Omega$, $F$, $\mathcal{S}_{*}$ and $\{ \delta_{i} \}^{2}_{i=0}$ (independently of $\varepsilon \in I_{*}$), but that may change from line to line. 
		
		Given any $F \in \mathbb{R}$, $\bs{f} \in L^{2}(\Omega; \mathbb{R}^{3})$,  $\bs{\mu}^{\varepsilon}_{1},...,\bs{\mu}^{\varepsilon}_{N(\varepsilon)} \in \mathbb{R}^{3}$ and $\varepsilon \in I_{*}$, a direct extension of \cite[Theorem~3.3]{sperone2022} ensures the existence of at least one weak solution $\bs{u}_{\varepsilon} \in \mathcal{V}(\Omega_{\varepsilon})$ of problem \eqref{nsstokes1}. Afterwards, \cite[Theorem~3.2]{sperone2022} guarantees that $\bs{u}_{\varepsilon} \in W^{2,2}(\Omega_{\varepsilon}; \mathbb{R}^{3})$ and the existence of an associated Bernoulli pressure $\Phi_{\varepsilon} \in W^{1,2}(\Omega_{\varepsilon}; \mathbb{R})$ satisfying
		\begin{equation}\label{nsstokes1aprox}
			\left\{
			\begin{aligned}
				& -\Delta \bs{u}_{\varepsilon}+ (\nabla \times \bs{u}_{\varepsilon}) \times \bs{u}_{\varepsilon} + \nabla \Phi_{\varepsilon}=\bs{f} \, ,\ \quad  \nabla\cdot \bs{u}_{\varepsilon}=0 \ \ \mbox{in} \ \ \Omega_{\varepsilon} \, ; \\[3pt]
				& \bs{u}_{\varepsilon}=\bs{0} \ \ \mbox{on} \ \ \Gamma_{W} \, ; \quad \bs{u}_{\varepsilon}=\bs{\mu}^{\varepsilon}_{n} \ \ \mbox{on} \ \ \partial K^{\varepsilon}_{n} \qquad \forall n \in \{1,...,N(\varepsilon)\} \, ; \\[3pt]
				& \bs{u}_{\varepsilon} \times \bs{\nu} = \bs{0} \, , \quad - \dfrac{\partial}{\partial \bs{\nu}}(\bs{u}_{\varepsilon} \cdot \bs{\nu}) + \Phi_{\varepsilon} = 0 \ \ \mbox{on} \ \ \Gamma_{I} \, ;\\[3pt]
				& \bs{u}_{\varepsilon} \times \bs{\nu} = \bs{0} \, , \quad - \dfrac{\partial}{\partial \bs{\nu}}(\bs{u}_{\varepsilon} \cdot \bs{\nu}) + \Phi_{\varepsilon} = p_{\varepsilon}^{+} \ \ \mbox{on} \ \ \Gamma_{O} \, ;\\[3pt]
				& \int_{\Gamma_{O}} \bs{u}_{\varepsilon} \cdot \bs{\nu} = F 
			\end{aligned}
			\right.
		\end{equation}
		in strong form, for some (unknown) constant $p_{\varepsilon}^{+} \in \mathbb{R}$. We now extend these functions to the interior of the perforations according to the expressions
		\begin{equation} \label{extension}
			\widetilde{\bs{u}}_{\varepsilon} \doteq 
			\begin{cases}
				\bs{u}_{\varepsilon} & \quad \text{in} \ \ \Omega_{\varepsilon} \, ,\\[3pt]
				\bs{\mu}^{\varepsilon}_{n} & \quad  \text{in} \ \ \overline{K^{\varepsilon}_{n}} \qquad \forall n \in \{1,...,N(\varepsilon)\} \, ,
			\end{cases}
			\qquad
			\text{and}
			\qquad 
			\widetilde{\Phi}_{\varepsilon} \doteq 
			\begin{cases}
				\Phi_{\varepsilon} & \quad \text{in} \ \ \Omega_{\varepsilon} \, ,\\[3pt]
				0 & \quad  \text{in} \ \ \overline{K_{\varepsilon}}  \, .
			\end{cases}
		\end{equation}
		Notice, therefore, that $\widetilde{\Phi}_{\varepsilon} \in L^{2}(\Omega; \mathbb{R})$ and $\widetilde{\bs{u}}_{\varepsilon} \in \mathcal{V}(\Omega)$, where we have introduced
		\begin{equation} \label{espaciosstar}
			\mathcal{V}(\Omega) \doteq \left\lbrace \bs{v} \in W^{1,2}(\Omega; \mathbb{R}^{3}) \ | \ \nabla \cdot \bs{v}=0 \ \ \mbox{in} \ \ \Omega\, ; \quad \bs{v} \times \bs{\nu} = \bs{0} \ \ \mbox{on} \ \ \Gamma_{I} \cup \Gamma_{O} \, ; \quad \bs{v} = \bs{0} \ \ \mbox{on} \ \ \Gamma_{W} \, \right\rbrace \, ,
		\end{equation}
		which is a closed subspace of $W^{1,2}(\Omega; \mathbb{R}^{3})$ under the Dirichlet scalar product of the gradients. Moreover,
		\begin{equation}\label{extvel}
			\| \nabla \widetilde{\bs{u}}_{\varepsilon} \|_{L^{2}(\Omega)} = \| \nabla \bs{u}_{\varepsilon} \|_{L^{2}(\Omega_{\varepsilon})} \qquad \text{and} \qquad \| \widetilde{\Phi}_{\varepsilon} \|_{L^{2}(\Omega)} = \| \Phi_{\varepsilon} \|_{L^{2}(\Omega_{\varepsilon})} \, .
		\end{equation}
		
		Firstly, we must estimate the constant $p_{\varepsilon}^{+} \in \mathbb{R}$ in terms of the Dirichlet norm of $\bs{u}_{\varepsilon}$ in $\Omega_{\varepsilon}$. For this, in virtue of Lemma~\ref{fluxcarrier}, consider a flux carrier  $\bs{\Upsilon}_{\! \varepsilon} \in W^{2,2}(\Omega_{\varepsilon}; \mathbb{R}^{3})$ such that
		\begin{equation} \label{vecpsiunit}
			\left\{
			\begin{aligned}
				& \nabla \cdot \bs{\Upsilon}_{\! \varepsilon} =0 \ \ \mbox{in} \ \ \Omega_{\varepsilon} \, ; \qquad \bs{\Upsilon}_{\! \varepsilon} \times \bs{\nu} = \bs{0} \ \ \mbox{on} \ \ \Gamma_{I} \cup \Gamma_{O} \, ; \qquad  \bs{\Upsilon}_{\! \varepsilon} = \bs{0} \ \ \mbox{on} \ \ \Gamma_{W} \cup \partial K_{\varepsilon} \, ; \\[3pt]
				& \int_{\Gamma_{O}} \bs{\Upsilon}_{\! \varepsilon} \cdot \bs{\nu} = 1 \, ; \qquad \| \nabla \bs{\Upsilon}_{\! \varepsilon} \|_{L^{2}(\Omega_{\varepsilon})} \leq C \, .
			\end{aligned}
			\right.
		\end{equation}
		Multiply the first identity in \eqref{nsstokes1aprox}$_1$ by $\bs{\Upsilon}_{\! \varepsilon}$ and integrate by parts in $\Omega_{\varepsilon}$, each term separately, to obtain
		\begin{equation}\label{epsbyparts0}
				- \int_{\Omega_{\varepsilon}} \Delta \bs{u}_{\varepsilon} \cdot \bs{\Upsilon}_{\! \varepsilon}  = \int_{\Omega_{\varepsilon}} \nabla \bs{u}_{\varepsilon} : \nabla \bs{\Upsilon}_{\! \varepsilon} -  \int_{\Gamma_{I}} \dfrac{\partial \bs{u}_{\varepsilon}}{\partial \bs{\nu}}  \cdot \bs{\Upsilon}_{\! \varepsilon} - \int_{\Gamma_{O}} \dfrac{\partial \bs{u}_{\varepsilon}}{\partial \bs{\nu}}  \cdot \bs{\Upsilon}_{\! \varepsilon}  \, .
		\end{equation}
		Regarding the pressure term, from \eqref{nsstokes1aprox}$_3$-\eqref{nsstokes1aprox}$_4$-\eqref{vecpsiunit} we infer
		\begin{equation} \label{epsbyparts2}
			\int_{\Omega_{\varepsilon}} \nabla \Phi_{\varepsilon} \cdot \bs{\Upsilon}_{\! \varepsilon} =  \int_{\partial \Omega_{\varepsilon}} \Phi_{\varepsilon} (\bs{\Upsilon}_{\! \varepsilon} \cdot \bs{\nu}) = \int_{\Gamma_{I}} (\bs{\Upsilon}_{\! \varepsilon} \cdot \bs{\nu}) \dfrac{\partial}{\partial \bs{\nu}}(\bs{u}_{\varepsilon} \cdot \bs{\nu}) + \int_{\Gamma_{O}} (\bs{\Upsilon}_{\! \varepsilon} \cdot \bs{\nu}) \dfrac{\partial}{\partial \bs{\nu}}(\bs{u}_{\varepsilon} \cdot \bs{\nu}) + p^{+}_{\varepsilon} \, .
		\end{equation}
		By adding the identities \eqref{epsbyparts0}-\eqref{epsbyparts2}, and recalling \eqref{firstidentity0} in Remark \ref{firstidentity}, we get
		\begin{equation} \label{epsbyparts5}
			\int_{\Omega_{\varepsilon} } \nabla \bs{u}_{\varepsilon} : \nabla \bs{\Upsilon}_{\! \varepsilon} + \int_{\Omega_{\varepsilon}} ((\nabla \times \bs{u}_{\varepsilon}) \times \bs{u}_{\varepsilon}) \cdot \bs{\Upsilon}_{\! \varepsilon} + p^{+}_{\varepsilon} = \int_{\Omega_{\varepsilon}} \bs{f} \cdot \bs{\Upsilon}_{\! \varepsilon} \, .
		\end{equation}
		Define $\bs{\widetilde{\Upsilon}}_{\! \varepsilon} \in W^{1,2}(\Omega; \mathbb{R}^{3})$ as in \eqref{extension}$_2$, and observe that $\| \nabla \bs{\widetilde{\Upsilon}}_{\! \varepsilon} \|_{L^{2}(\Omega)}=\| \nabla \bs{\Upsilon}_{\! \varepsilon} \|_{L^{2}(\Omega_{\varepsilon})}$.
		Applying the H\"older, Sobolev and Poincaré inequalities (in $\Omega$) in \eqref{epsbyparts5}, as well as \eqref{extvel}-\eqref{vecpsiunit}, entails  
		\begin{equation} \label{epsbyparts6}
			\begin{aligned}
				| p^{+}_{\varepsilon} | & = \left| \int_{\Omega} \nabla \bs{u}_{\varepsilon} : \nabla \bs{\Upsilon}_{\! \varepsilon} + \int_{\Omega} ((\nabla \times \widetilde{\bs{u}}_{\varepsilon} ) \times \widetilde{\bs{u}}_{\varepsilon} ) \cdot \bs{\widetilde{\Upsilon}}_{\! \varepsilon} - \int_{\Omega} \bs{f} \cdot \bs{\widetilde{\Upsilon}}_{\! \varepsilon} \right| \\[6pt]
				& \leq \| \nabla \bs{u}_{\varepsilon} \|_{L^{2}(\Omega_{\varepsilon})} \| \nabla \bs{\Upsilon}_{\! \varepsilon} \|_{L^{2}(\Omega_{\varepsilon})} + \| \nabla \widetilde{\bs{u}}_{\varepsilon} \|_{L^{2}(\Omega)} \| \widetilde{\bs{u}}_{\varepsilon} \|_{L^{4}(\Omega)} \| \bs{\widetilde{\Upsilon}}_{\! \varepsilon} \|_{L^{4}(\Omega)} + \| \bs{f} \|_{L^{2}(\Omega)} \| \bs{\widetilde{\Upsilon}}_{\! \varepsilon} \|_{L^{2}(\Omega)} \\[6pt]
				& \leq \| \nabla \bs{u}_{\varepsilon} \|_{L^{2}(\Omega_{\varepsilon})} \| \nabla \bs{\Upsilon}_{\! \varepsilon} \|_{L^{2}(\Omega_{\varepsilon})} + C \| \nabla \widetilde{\bs{u}}_{\varepsilon} \|^{2}_{L^{2}(\Omega)} \| \nabla \bs{\widetilde{\Upsilon}}_{\! \varepsilon} \|_{L^{2}(\Omega)} + C \| \bs{f} \|_{L^{2}(\Omega)} \| \nabla \bs{\widetilde{\Upsilon}}_{\! \varepsilon} \|_{L^{2}(\Omega)} \\[6pt]
				& \leq C \left( \| \nabla \bs{u}_{\varepsilon}  \|^{2}_{L^{2}(\Omega_{\varepsilon})} + \| \nabla \bs{u}_{\varepsilon}  \|_{L^{2}(\Omega_{\varepsilon})} + \| \bs{f}  \|_{L^{2}(\Omega)} \right) \, .
			\end{aligned}
		\end{equation}
		
		Secondly, our goal is to derive an analogous estimate to \eqref{epsbyparts6} for the $L^{2}(\Omega_{\varepsilon})$-norm of the Bernoulli pressure. In order to do so, owing to Lemma~\ref{bogtype}, consider a vector field $\bs{J}_{\! \varepsilon} \in W^{1,2}(\Omega_{\varepsilon}; \mathbb{R}^3)$ such that
		\begin{equation} \label{vecjeeps}
			\left\{
			\begin{aligned}
				& \nabla \cdot \bs{J}_{\! \varepsilon} = \Phi_{\varepsilon} \ \ \mbox{in} \ \ \Omega_{\varepsilon} \, ; \qquad \bs{J}_{\! \varepsilon} \times \bs{\nu} = \bs{0} \ \ \mbox{on} \ \ \Gamma_{I} \, ; \qquad \bs{J}_{\! \varepsilon}=\bs{0} \ \ \mbox{on} \ \ \Gamma_{W} \cup \partial K_{\varepsilon} \cup \Gamma_{O} \, ; \\[6pt]
				& \| \nabla \bs{J}_{\! \varepsilon} \|_{L^{2}(\Omega_{\varepsilon})} \leq C \| \Phi_{\varepsilon} \|_{L^{2}(\Omega_{\varepsilon})} \, . 
			\end{aligned}
			\right.
		\end{equation}
		Multiplying the first identity in \eqref{nsstokes1aprox}$_1$ by $\bs{J}_{\! \varepsilon}$, integrating by parts in $\Omega_{\varepsilon}$ as in \eqref{epsbyparts0}-\eqref{epsbyparts2} and enforcing \eqref{vecjeeps}, yields
		\begin{equation} \label{epsbyparts7}
			\int_{\Omega_{\varepsilon}} \nabla \bs{u}_{\varepsilon} : \nabla \bs{J}_{\! \varepsilon} + \int_{\Omega_{\varepsilon}} ((\nabla \times \bs{u}_{\varepsilon}) \times \bs{u}_{\varepsilon}) \cdot \bs{J}_{\! \varepsilon} - \| \Phi_{\varepsilon} \|^{2}_{L^{2}(\Omega_{\varepsilon})} = \int_{\Omega_{\varepsilon}} \bs{f} \cdot \bs{J}_{\! \varepsilon} \, .
		\end{equation}
		Define $\bs{\widetilde{J}}_{\! \varepsilon} \in W^{1,2}(\Omega; \mathbb{R}^{3})$ as in \eqref{extension}$_2$, and observe that $\| \nabla \bs{\widetilde{J}}_{\! \varepsilon} \|_{L^{2}(\Omega)}=\| \nabla \bs{J}_{\! \varepsilon} \|_{L^{2}(\Omega_{\varepsilon})}$. We subsequently apply the H\"older, Sobolev and Poincaré inequalities (in $\Omega$) in \eqref{epsbyparts7}, as well as \eqref{extvel}-\eqref{vecjeeps}, to write
		\begin{equation} \label{bypartsje3}
			\begin{aligned}
				\| \Phi_{\varepsilon} \|^{2}_{L^{2}(\Omega_{\varepsilon})} & = \int_{\Omega_{\varepsilon}} \nabla \bs{u}_{\varepsilon} : \nabla \bs{J}_{\! \varepsilon} + \int_{\Omega} ((\nabla \times \widetilde{\bs{u}}_{\varepsilon}) \times \widetilde{\bs{u}}_{\varepsilon}) \cdot \bs{\widetilde{J}}_{\! \varepsilon} - \int_{\Omega} \bs{f} \cdot \bs{\widetilde{J}}_{\! \varepsilon} \\[6pt]
				& \leq \| \nabla \bs{u}_{\varepsilon} \|_{L^{2}(\Omega_{\varepsilon})} \| \nabla \bs{J}_{\! \varepsilon} \|_{L^{2}(\Omega_{\varepsilon})} + \| \nabla \widetilde{\bs{u}}_{\varepsilon} \|_{L^{2}(\Omega)} \| \widetilde{\bs{u}}_{\varepsilon} \|_{L^{4}(\Omega)} \| \bs{\widetilde{J}}_{\! \varepsilon} \|_{L^{4}(\Omega)} + \| \bs{f} \|_{L^{2}(\Omega)} \| \bs{\widetilde{J}}_{\! \varepsilon} \|_{L^{2}(\Omega)} \\[6pt]
				& \leq \| \nabla \bs{u}_{\varepsilon} \|_{L^{2}(\Omega_{\varepsilon})} \| \nabla \bs{J}_{\! \varepsilon} \|_{L^{2}(\Omega_{\varepsilon})} + C \| \nabla \widetilde{\bs{u}}_{\varepsilon} \|^{2}_{L^{2}(\Omega)} \| \nabla \bs{\widetilde{J}}_{\! \varepsilon} \|_{L^{2}(\Omega)} + C \| \bs{f} \|_{L^{2}(\Omega)} \| \nabla \bs{\widetilde{J}}_{\! \varepsilon} \|_{L^{2}(\Omega)} \\[6pt]
				& \leq C \left( \| \nabla \bs{u}_{\varepsilon} \|^{2}_{L^{2}(\Omega_{\varepsilon})} + \| \nabla \bs{u}_{\varepsilon} \|_{L^{2}(\Omega_{\varepsilon})} + \| \bs{f} \|_{L^{2}(\Omega)} \right) \| \Phi_{\varepsilon} \|_{L^{2}(\Omega_{\varepsilon})} \, ,
			\end{aligned}
		\end{equation}
		thereby yielding
		\begin{equation} \label{unipress2}
			\| \Phi_{\varepsilon} \|_{L^{2}(\Omega_{\varepsilon})} \leq C \left( \| \nabla \bs{u}_{\varepsilon} \|^{2}_{L^{2}(\Omega_{\varepsilon})} + \| \nabla \bs{u}_{\varepsilon} \|_{L^{2}(\Omega_{\varepsilon})} + \| \bs{f} \|_{L^{2}(\Omega)} \right) \, .
		\end{equation}
		
		By contradiction, suppose now that the norms $\| \nabla \bs{u}_{\varepsilon} \|_{L^{2}(\Omega_{\varepsilon})}$ are \textit{not} uniformly bounded with respect to $\varepsilon \in I_{*}$. Then, there must exists a sub-sequence (not being relabeled) such that
		\begin{equation} \label{divergent}
			\lim_{\varepsilon \to 0^{+}} \mathcal{Z}_{\varepsilon} = + \infty  \quad \text{with} \ \ \mathcal{Z}_{\varepsilon} \doteq \| \nabla \bs{u}_{\varepsilon} \|_{L^{2}(\Omega_{\varepsilon})} \quad \forall \varepsilon \in I_{*} \, .
		\end{equation}
		The estimates in \eqref{epsbyparts6}-\eqref{unipress2} (see also \eqref{extvel}) enable us to establish that, along this divergent sub-sequence \eqref{divergent}, the following sequences are all uniformly bounded with respect to $\varepsilon \in I_{*}$:
		$$
		( \widehat{\bs{u}}_{\varepsilon} )_{\varepsilon \in I_{*}} \doteq \left( \dfrac{\widetilde{\bs{u}}_{\varepsilon}}{\mathcal{Z}_{\varepsilon}} \right)_{\! \! \varepsilon \in I_{*}} \subset \mathcal{V}(\Omega)  \, ; \qquad ( \widehat{\Phi}_{\varepsilon} )_{\varepsilon \in I_{*}} \doteq  \left( \dfrac{\widetilde{\Phi}_{\varepsilon}}{\mathcal{Z}^{2}_{\varepsilon}} \right)_{\! \! \varepsilon \in I_{*}} \subset L^{2}(\Omega; \mathbb{R}) \, ; \qquad (\widehat{p}_{\varepsilon})_{\varepsilon \in I_{*}} \doteq \left( \dfrac{p_{\varepsilon}^{+}}{\mathcal{Z}^{2}_{\varepsilon}} \right)_{\! \! \varepsilon \in I_{*}} \subset \mathbb{R} \, .
		$$
		There exist $\widehat{\bs{u}} \in \mathcal{V}(\Omega)$, $\widehat{\Phi} \in L^{2}(\Omega; \mathbb{R})$ and $\widehat{p} \in \mathbb{R}$ such that the following convergences hold as $\varepsilon \to 0^{+}$:
		\begin{equation} \label{convergencesn11}
			\begin{aligned} 
				& \widehat{\bs{u}}_\varepsilon \rightharpoonup \widehat{\bs{u}} \ \ \ \text{weakly in} \ W^{1,2}(\Omega; \mathbb{R}^3) \, ;& \qquad &\widehat{\bs{u}}_\varepsilon \to \widehat{\bs{u}} \ \ \ \text{strongly in} \ L^{4}(\Omega; \mathbb{R}^3) \, ;  \\[3pt]
				& \widehat{\Phi}_\varepsilon \rightharpoonup \widehat{\Phi} \ \ \ \text{weakly in} \ L^{2}(\Omega; \mathbb{R}) \, ;& \qquad &\widehat{p}_\varepsilon \to \widehat{p} \ \ \text{in} \ \ \mathbb{R} \, ,
			\end{aligned}
		\end{equation}
		along sub-sequences that are not being relabeled.
		
		Let $\bs{A}_{\varepsilon} \in \mathcal{C}^{\infty}(\overline{\Omega}; \mathbb{R}^{3})$ the vector field arising from Lemma~\ref{smallfield}. Multiplying the first identity in \eqref{nsstokes1aprox}$_1$ by $\bs{u}_{\varepsilon} - \bs{A}_{\varepsilon}$, integrating by parts in $\Omega_{\varepsilon}$ and dividing the resulting identity by $\mathcal{Z}^{2}_{\varepsilon}$, furnishes
		\begin{equation} \label{epsbyparts8}
			1 - \dfrac{1}{\mathcal{Z}_{\varepsilon}} \int_{\Omega} \nabla \widehat{\bs{u}}_{\varepsilon} : \nabla \bs{A}_{\varepsilon} + \int_{\Omega} ((\nabla \times \widehat{\bs{u}}_{\varepsilon}) \times \widehat{\bs{u}}_{\varepsilon}) \cdot \bs{A}_{\varepsilon} + F \, \widehat{p}_{\varepsilon} = \dfrac{1}{\mathcal{Z}_{\varepsilon}} \int_{\Omega} \bs{f} \cdot \left(\widehat{\bs{u}}_{\varepsilon} - \dfrac{\bs{A}_{\varepsilon}}{\mathcal{Z}_{\varepsilon}} \right) \, .
		\end{equation}
		Applying the H\"older, Sobolev and Poincaré inequalities (in $\Omega$), as well as \eqref{smallfieldpropbounds}, we can estimate:
		\begin{equation} \label{absurdum1}
			\begin{aligned}
				& \left| \dfrac{1}{\mathcal{Z}_{\varepsilon}} \int_{\Omega} \nabla \widehat{\bs{u}}_{\varepsilon} : \nabla \bs{A}_{\varepsilon} \right| \leq \dfrac{1}{\mathcal{Z}_{\varepsilon}} \| \nabla \widehat{\bs{u}}_{\varepsilon} \|_{L^{2}(\Omega)} \| \nabla \bs{A}_{\varepsilon} \|_{L^{2}(\Omega)}  \leq \dfrac{C}{\mathcal{Z}_{\varepsilon}} \, , \\[6pt]
				& \left| \int_{\Omega} ((\nabla \times \widehat{\bs{u}}_{\varepsilon}) \times \widehat{\bs{u}}_{\varepsilon}) \cdot \bs{A}_{\varepsilon} \right| \leq \| \nabla \times \widehat{\bs{u}}_{\varepsilon} \|_{L^{2}(\Omega)} \| \widehat{\bs{u}}_{\varepsilon} \|_{L^{4}(\Omega)} \| \bs{A}_{\varepsilon} \|_{L^{4}(\Omega)} \leq C \varepsilon^{3/4} \, , \\[6pt]
				&\left| \dfrac{1}{\mathcal{Z}_{\varepsilon}} \int_{\Omega} \bs{f} \cdot \left(\widehat{\bs{u}}_{\varepsilon} - \dfrac{\bs{A}_{\varepsilon}}{\mathcal{Z}_{\varepsilon}} \right)  \right| \leq \dfrac{C}{\mathcal{Z}_{\varepsilon}} \| \bs{f}  \|_{L^{2}(\Omega)} \left\| \nabla \left(\widehat{\bs{u}}_{\varepsilon} - \dfrac{\bs{A}_{\varepsilon}}{\mathcal{Z}_{\varepsilon}} \right) \right\|_{L^{2}(\Omega)} \leq \dfrac{C}{\mathcal{Z}_{\varepsilon}} \| \bs{f}  \|_{L^{2}(\Omega)} \, .
			\end{aligned}
		\end{equation}
		Consequently, owing to \eqref{absurdum1}, we let $\varepsilon \to 0^{+}$ in \eqref{epsbyparts8} (along the sub-sequences \eqref{convergencesn11}) to deduce 
		\begin{equation} \label{vlambdalimn}
			1 = -F \, \widehat{p} \, .
		\end{equation}
		A contradiction will be reached in \eqref{vlambdalimn} after proving that $\widehat{p} = 0$.
		
		Let $\phi \in \mathcal{C}_{0}^{\infty}(\Omega; \mathbb{R})$ be a scalar function.  Denoting by $\{\widehat{\bs{e}}_{1},\widehat{\bs{e}}_{2},\widehat{\bs{e}}_{3}\}$ the canonical basis in $\R^3$, we put
		$$
		\bs{\omega}_{\! j}^{\eps} \doteq \mathcal{R}_{\eps}(\widehat{\bs{e}}_{j}) \in W^{1,2}(\Omega; \mathbb{R}^3) \qquad \forall j \in \{1,2,3\} \, .
		$$
		Theorem \ref{pro:corrector} entails that the following properties hold:
			\begin{equation} \label{stokescorr}
				\begin{aligned}
					& \nabla \cdot \bs{\omega}_{\! j}^{\eps} =0 \ \ \mbox{in} \ \ \Omega \, ; \qquad \bs{\omega}_{\! j}^{\eps} = \bs{0} \ \ \mbox{in} \ \ \overline{K^{\varepsilon}_{n}} \qquad \forall n \in \{1,...,N(\varepsilon)\} \,  ; \\[6pt]
					& \sup_{\varepsilon \in (0, \varepsilon_{\star})} \| \nabla \bs{\omega}_{\! j}^{\eps}\|_{L^{2}(\Omega)} \leq C \, ; \qquad \bs{\omega}_{\! j}^{\eps} \to \widehat{\bs{e}}_{j} \ \ \ \text{strongly in} \ L^{4}(\Omega; \mathbb{R}^3) \quad \text{as} \quad \varepsilon \to 0^{+} \, .
				\end{aligned}
			\end{equation}
			We multiply both sides of the first identity in \eqref{nsstokes1aprox}$_1$ by $\phi \, \bs{\omega}_{\! j}^{\eps} \in W_{0}^{1,2}(\Omega_{\varepsilon}; \mathbb{R}^{3})$, and integrate by parts in $\Omega_{\varepsilon}$, each term separately, in the following way:
			\begin{equation} \label{bypartsn111}
				\begin{aligned}
					- \int_{\Omega_{\varepsilon}} \Delta \bs{u}_{\varepsilon} \cdot \phi \, \bs{\omega}_{\! j}^{\eps} = \int_{\Omega_{\varepsilon}} \nabla \bs{u}_{\varepsilon} \cdot ( \nabla \phi \otimes \bs{\omega}_{\! j}^{\eps} + \phi \nabla \bs{\omega}_{\! j}^{\eps}) = \int_{\Omega} \nabla \widetilde{\bs{u}}_{\varepsilon} \cdot ( \nabla \phi \otimes \bs{\omega}_{\! j}^{\eps} + \phi \nabla \bs{\omega}_{\! j}^{\eps}) \, .
				\end{aligned}
			\end{equation}
			Concerning the nonlinear term, we simply put
			\begin{equation} \label{bypartsn222}
				\int_{\Omega_{\varepsilon}} ((\nabla \times \bs{u}_{\varepsilon}) \times \bs{u}_{\varepsilon}) \cdot \phi \, \bs{\omega}_{\! j}^{\eps} = \int_{\Omega} ((\nabla \times \widetilde{\bs{u}}_{\varepsilon}) \times \widetilde{\bs{u}}_{\varepsilon}) \cdot \phi \, \bs{\omega}_{\! j}^{\eps} \, .
			\end{equation}
			Regarding the pressure term, from \eqref{stokescorr}$_1$ we infer
			\begin{equation} \label{bypartsn333}
				\begin{aligned}
					\int_{\Omega_{\varepsilon}} \nabla \Phi_{\varepsilon} \cdot \phi \, \bs{\omega}_{\! j}^{\eps} = - \int_{\Omega_{\varepsilon}} \Phi_{\varepsilon} (\nabla \phi \cdot \bs{\omega}_{\! j}^{\eps} ) = - \int_{\Omega} \widetilde{\Phi}_{\varepsilon} (\nabla \phi \cdot \bs{\omega}_{\! j}^{\eps} ) \, .
				\end{aligned}
			\end{equation}
			By adding the identities \eqref{bypartsn111}-\eqref{bypartsn222}-\eqref{bypartsn333}, and then dividing the result by $\mathcal{Z}^{2}_{\varepsilon}$, we obtain
			\begin{equation} \label{bypartsn444}
				\dfrac{1}{\mathcal{Z}_{\varepsilon}} \int_{\Omega} \nabla \widehat{\bs{u}}_{\varepsilon} \cdot ( \nabla \phi \otimes \bs{\omega}_{\! j}^{\eps} + \phi \nabla \bs{\omega}_{\! j}^{\eps}) + \int_{\Omega} ((\nabla \times \widehat{\bs{u}}_{\varepsilon}) \times \widehat{\bs{u}}_{\varepsilon}) \cdot \phi \, \bs{\omega}_{\! j}^{\eps} - \int_{\Omega} \widehat{\Phi}_{\varepsilon} (\nabla \phi \cdot \bs{\omega}_{\! j}^{\eps} ) = \dfrac{1}{\mathcal{Z}_{\varepsilon}^{2}} \int_{\Omega_{\varepsilon}} \bs{f} \cdot \phi \, \bs{\omega}_{\! j}^{\eps} \, , 
			\end{equation} 
			for every $\varepsilon \in I_{*}$, along the sub-sequences \eqref{convergencesn11}. From the properties stated in \eqref{stokescorr} we deduce
			\begin{equation} \label{nsstokeslambda0}
				\lim_{\varepsilon \to 0^{+}} \dfrac{1}{\mathcal{Z}_{\varepsilon}} \int_{\Omega} \nabla \widehat{\bs{u}}_{\varepsilon} \cdot ( \nabla \phi \otimes \bs{\omega}_{\! j}^{\eps} + \phi \nabla \bs{\omega}_{\! j}^{\eps}) = \lim_{\varepsilon \to 0^{+}} \dfrac{1}{\mathcal{Z}_{\varepsilon}^{2}} \int_{\Omega_{\varepsilon}} \bs{f} \cdot \phi \, \bs{\omega}_{\! j}^{\eps} = 0 \, .
			\end{equation}
			In order to handle the remaining integrals appearing in \eqref{bypartsn444}, we write 
			\begin{equation} \label{nsstokeslambdak3}
				\begin{aligned}	
					\int_{\Omega} ((\nabla \times \widehat{\bs{u}}_{\varepsilon}) \times \widehat{\bs{u}}_{\varepsilon}) \cdot \phi \, \bs{\omega}_{\! j}^{\eps} & = \int_{\Omega} ((\nabla \times \widehat{\bs{u}}_{\varepsilon}) \times \widehat{\bs{u}}) \cdot \phi \, \widehat{\bs{e}}_{j} + \int_{\Omega} ((\nabla \times \widehat{\bs{u}}_{\varepsilon}) \times \widehat{\bs{u}}) \cdot \phi (\bs{\omega}_{\! j}^{\eps} - \widehat{\bs{e}}_{j}) \\[6pt]
					& \hspace{4mm} +  \int_{\Omega} ((\nabla \times \widehat{\bs{u}}_{\varepsilon}) \times  (\widehat{\bs{u}}_{\varepsilon} - \widehat{\bs{u}} )) \cdot \phi \, \bs{\omega}_{\! j}^{\eps}  \, ,
				\end{aligned}
			\end{equation}
			alongside with
			\begin{equation} \label{nsstokeslambdak4}
				\int_{\Omega} \widehat{\Phi}_{\varepsilon} (\nabla \phi \cdot \bs{\omega}_{\! j}^{\eps} ) = \int_{\Omega} \widehat{\Phi}_{\varepsilon} (\nabla \phi \cdot \widehat{\bs{e}}_{j} ) + \int_{\Omega} \widehat{\Phi}_{\varepsilon} (\nabla \phi \cdot (\bs{\omega}_{\! j}^{\eps} - \widehat{\bs{e}}_{j} )) \, ,
			\end{equation}
			for every $\varepsilon \in I_{*}$. Consequently, the convergences in \eqref{convergencesn11}-\eqref{stokescorr}$_2$,  entail
			\begin{equation} \label{nsstokeslambdak5}
				\begin{aligned}	
					& \lim_{\varepsilon \to 0^{+}} \int_{\Omega} ((\nabla \times \widehat{\bs{u}}_{\varepsilon}) \times \widehat{\bs{u}}_{\varepsilon}) \cdot \phi \, \bs{\omega}_{\! j}^{\eps} = \int_{\Omega} ((\nabla \times \widehat{\bs{u}}) \times \widehat{\bs{u}}) \cdot \phi \, \widehat{\bs{e}}_{j} \, , \\[6pt]
					& \lim_{\varepsilon \to 0^{+}} \int_{\Omega}\widehat{\Phi}_{\varepsilon} (\nabla \phi \cdot \bs{\omega}_{\! j}^{\eps} ) = \int_{\Omega} \widehat{\Phi} (\nabla \phi \cdot \widehat{\bs{e}}_{j} ) = \int_{\Omega} \widehat{\Phi} (\nabla \cdot ( \phi \, \widehat{\bs{e}}_{j} )) \, .
				\end{aligned}
			\end{equation}
			Observing \eqref{nsstokeslambda0}-\eqref{nsstokeslambdak5}, one can take the limit as $\varepsilon \to 0^{+}$ in \eqref{bypartsn444} to deduce that 
			$$
			\int_{\Omega} ((\nabla \times \widehat{\bs{u}}) \times \widehat{\bs{u}}) \cdot \phi \, \widehat{\bs{e}}_{j} - \int_{\Omega} \widehat{\Phi} (\nabla \cdot ( \phi \, \widehat{\bs{e}}_{j} )) = 0 \qquad \forall \phi \in \mathcal{C}^{\infty}_{0}(\Omega; \mathbb{R}) \, , \quad \forall j \in \{1,2,3\} \, ;
			$$
			this last equality shows that the pair $(\widehat{\bs{u}}, \widehat{\Phi}) \in \mathcal{V}(\Omega) \times L^{2}(\Omega; \mathbb{R})$ satisfies, in distributional form, the following Euler-type equation in $\Omega$:
			\begin{equation} \label{limiteulerlip}
				(\nabla \times \widehat{\bs{u}}) \times \widehat{\bs{u}} + \nabla \widehat{\Phi} = \bs{0} \, , \quad  \nabla\cdot \widehat{\bs{u}}=0 \ \ \mbox{in} \ \ \Omega \, .
			\end{equation}
			Since $(\nabla \times \widehat{\bs{u}}) \times \widehat{\bs{u}} \in L^{3/2}(\Omega; \mathbb{R}^{3})$ (due to the Sobolev embedding $W^{1,2}(\Omega; \mathbb{R}^{3}) \subset L^{6}(\Omega; \mathbb{R}^{3})$), identity \eqref{limiteulerlip} proves that actually $\widehat{\Phi} \in W^{1,3/2}(\Omega; \mathbb{R})$. Recalling \eqref{vectorcalculus}, we set 
			$$
			\widehat{\Phi}_{*} \doteq \widehat{\Phi} - \dfrac{1}{2}|\widehat{\bs{u}}|^{2} \ \ \text{in} \ \ \Omega \, ,
			$$ 
			and thus the pair $(\widehat{\bs{u}}, \widehat{\Phi}_{*}) \in \mathcal{V}(\Omega) \times W^{1,3/2}(\Omega; \mathbb{R})$ satisfies, in strong form, the Euler equation in $\Omega$:
			$$
			(\widehat{\bs{u}} \cdot \nabla) \widehat{\bs{u}} + \nabla \widehat{\Phi}_{*} = \bs{0} \, , \quad  \nabla\cdot \widehat{\bs{u}}=0 \ \ \mbox{in} \ \ \Omega \, .
			$$
			Since $\widehat{\bs{u}} = \bs{0}$ on $\Gamma_{W}$, the Bernoulli Law \cite[Lemma~4]{kapitanskii1983spaces} (see \cite[Theorem~2.2]{amick1984existence} and \cite[Theorem~1]{korobkov2011bernoulli} as well) states that $\widehat{\Phi}_{*}$ must be constant on $\Gamma_{W}$. Then, there exists $\widehat{p}_{*} \in \mathbb{R}$ such that 
			\begin{equation} \label{bernoulli1}
				\widehat{\Phi}= \widehat{p}_{*} \ \ \text{almost everywhere on} \ \ \Gamma_{W} \, .
			\end{equation}
			Now, the embedding $W^{1,2}(\Omega_{\varepsilon}; \mathbb{R}^{3}) \subset L^{6}(\Omega_{\varepsilon}; \mathbb{R}^{3})$ implies $\bs{f} - (\nabla \times \bs{u}_{\varepsilon}) \times \bs{u}_{\varepsilon} \in L^{3/2}(\Omega_{\varepsilon}; \mathbb{R}^{3})$, so from the H\"older and Sobolev inequalities (applied in $\Omega$) we estimate 
			\begin{equation} \label{catta1}
				\begin{aligned}
					\| \bs{f} - (\nabla \times \bs{u}_{\varepsilon}) \times \bs{u}_{\varepsilon} \|_{L^{3/2}(\Omega_{\varepsilon})} & \leq \| \bs{f} \|_{L^{3/2}(\Omega)} + \| (\nabla \times \widetilde{\bs{u}}_{\varepsilon}) \times \widetilde{\bs{u}}_{\varepsilon} \|_{L^{3/2}(\Omega)} \\[3pt]
					& \leq \| \bs{f} \|_{L^{3/2}(\Omega)} + \| \nabla \times \widetilde{\bs{u}}_{\varepsilon} \|_{L^{2}(\Omega)} \| \widetilde{\bs{u}}_{\varepsilon} \|_{L^{6}(\Omega)} \\[3pt]
					& \leq C \left( \| \bs{f} \|_{L^{3/2}(\Omega)} + \| \nabla \bs{u}_{\varepsilon} \|^{2}_{L^{2}(\Omega_{\varepsilon})} \right) \, .
				\end{aligned}
			\end{equation}
			We introduce the following subdomains of $\Omega$:
			$$
			\Omega_{I} \doteq \left\lbrace \bs{\xi} \in \Omega_{1} \ \vert \ \textup{dist}( \bs{\xi} , \Gamma_{I}) < \delta_{3} \right\rbrace \qquad \text{and} \qquad \Omega_{O} \doteq \left\lbrace \bs{\xi} \in \Omega_{2} \ \vert \ \textup{dist}( \bs{\xi} , \Gamma_{O}) < \delta_{3} \right\rbrace \, .
			$$
			Observe that the pair $(\bs{u}_{\varepsilon}, \Phi_{\varepsilon}) \in W^{2,3/2}(\Omega_{\varepsilon}; \mathbb{R}^{3}) \times W^{1,3/2}(\Omega_{\varepsilon}; \mathbb{R})$ is also a strong solution to the Stokes system \eqref{nsstokes1aprox}$_1$ in $\Omega_{\varepsilon}$, with a right-hand side given by $\bs{f} - (\nabla \times \bs{u}_{\varepsilon}) \times \bs{u}_{\varepsilon} $. If we apply the same extension argument of \cite[Theorem~3.2]{sperone2022} (see also \cite[Remark 5.4.1]{panasenko2024multiscale}), we can then invoke the usual local regularity results for the Stokes equations (see \cite[Teorema, page 311]{cattabriga1961problema} or \cite[Theorem~IV.4.1]{galdi2011introduction}) and the estimates \eqref{unipress2}-\eqref{catta1} to furnish
			\begin{equation} \label{regularity1}
				\begin{aligned}
					& \| \bs{u}_{\varepsilon} \|_{W^{2,3/2}(\Omega_{I})} + \| \Phi_{\varepsilon} \|_{W^{1,3/2}(\Omega_{I})} \\[3pt]
					& \hspace{-4mm} \leq C \left( \| \bs{f} - (\nabla \times \bs{u}_{\varepsilon}) \times \bs{u}_{\varepsilon} \|_{L^{3/2}(\Omega_{\varepsilon})} + \| \nabla \bs{u}_{\varepsilon} \|_{L^{3/2}(\Omega_{\varepsilon})} + \| \Phi_{\varepsilon} \|_{L^{3/2}(\Omega_{\varepsilon})} \right) \\[3pt]
					& \hspace{-4mm} \leq C \left(1 + \| \bs{f} \|_{L^{2}(\Omega)} + \| \nabla \bs{u}_{\varepsilon} \|^{2}_{L^{2}(\Omega_{\varepsilon})} + \| \nabla \bs{u}_{\varepsilon} \|_{L^{2}(\Omega_{\varepsilon})} \right) \, .
				\end{aligned}
			\end{equation}
			In the same way we derive
			\begin{equation} \label{regularity2}
				\| \bs{u}_{\varepsilon} \|_{W^{2,3/2}(\Omega_{O})} + \| \Phi_{\varepsilon} \|_{W^{1,3/2}(\Omega_{O})} \leq C \left(1 + \| \bs{f} \|_{L^{2}(\Omega)} + \| \nabla \bs{u}_{\varepsilon} \|^{2}_{L^{2}(\Omega_{\varepsilon})} + \| \nabla \bs{u}_{\varepsilon} \|_{L^{2}(\Omega_{\varepsilon})} \right) \, .
			\end{equation}
			We emphasize that, in view of assumption \eqref{extrahyp}, the constant $C>0$ entering \eqref{regularity1}-\eqref{regularity2} is independent of $\varepsilon \in I_{*}$, since the strips $\Omega_{I}$ and $\Omega_{O}$ do not contain any particles. 
			
			Along the weakly convergent sub-sequence \eqref{convergencesn11}$_2$, and in the light of \eqref{regularity1}-\eqref{regularity2}, we infer that the following sequences are uniformly bounded with respect to $\varepsilon \in I_{*}$:
			$$
			\begin{aligned}
				& (\widehat{\bs{v}}_{\varepsilon})_{\varepsilon \in I_{*}} \doteq \left( \dfrac{\widetilde{\bs{u}}_{\varepsilon}}{\mathcal{Z}^{2}_{\varepsilon}} \right)_{\! \! \varepsilon \in I_{*}} \subset W^{2,3/2}(\Omega_{I}; \mathbb{R}^{3}) \, ; \qquad (\widehat{\bs{v}}_{\varepsilon})_{\varepsilon \in I_{*}} \doteq \left( \dfrac{\widetilde{\bs{u}}_{\varepsilon}}{\mathcal{Z}^{2}_{\varepsilon}} \right)_{\! \! \varepsilon \in I_{*}} \subset W^{2,3/2}(\Omega_{O}; \mathbb{R}^{3}) \, ; \\[6pt]
				& (\widehat{\Phi}_{\varepsilon})_{\varepsilon \in I_{*}} \subset W^{1,3/2}(\Omega_{I}; \mathbb{R}) \, ; \qquad (\widehat{\Phi}_{\varepsilon})_{\varepsilon \in I_{*}} \subset W^{1,3/2}(\Omega_{O}; \mathbb{R}) \, .
			\end{aligned}
			$$  
			Thus, there exist $\widehat{\bs{v}}_{I} \in W^{2,3/2}(\Omega_{I}; \mathbb{R}^{3})$, $\widehat{\bs{v}}_{O} \in W^{2,3/2}(\Omega_{O}; \mathbb{R}^{3})$, $\widehat{\Phi}_{I} \in W^{1,3/2}(\Omega_{I}; \mathbb{R})$ and $\widehat{\Phi}_{O} \in W^{1,3/2}(\Omega_{O}; \mathbb{R})$ such that the following convergences hold as $\varepsilon \to 0^{+}$:
			\begin{equation} \label{convergencesiol}
				\begin{aligned} 
					& \widehat{\bs{v}}_{\varepsilon} \to \widehat{\bs{v}}_{I} \ \ \ \text{strongly in} \ W^{1,3/2}(\Omega_{I};\mathbb{R}^3) \, ;& \qquad &\dfrac{\partial \widehat{\bs{v}}_{\varepsilon}}{\partial \bs{\nu}} \to \dfrac{\partial \widehat{\bs{v}}_{I}}{\partial \bs{\nu}} \ \ \ \text{strongly in} \ L^{1}(\partial \Omega_{I};\mathbb{R}^3) \, ; \\[6pt]
					& \widehat{\bs{v}}_{\varepsilon} \to \widehat{\bs{v}}_{O} \ \ \ \text{strongly in} \ W^{1,3/2}(\Omega_{O};\mathbb{R}^3) \, ;& \qquad &\dfrac{\partial \widehat{\bs{v}}_{\varepsilon}}{\partial \bs{\nu}} \to \dfrac{\partial \widehat{\bs{v}}_{O}}{\partial \bs{\nu}} \ \ \ \text{strongly in} \ L^{1}(\partial \Omega_{O};\mathbb{R}^3) \, ; \\[6pt]
					& \widehat{\Phi}_{\varepsilon} \rightharpoonup \widehat{\Phi}_{I} \ \ \ \text{weakly in} \ W^{1,3/2}(\Omega_{I}; \mathbb{R}) \, ;& \qquad &\widehat{\Phi}_{\varepsilon} \rightharpoonup \widehat{\Phi}_{O} \ \ \ \text{weakly in} \ W^{1,3/2}(\Omega_{O}; \mathbb{R}) \, ; \\[6pt]
					& \widehat{\Phi}_{\varepsilon} \to \widehat{\Phi}_I \ \ \ \text{strongly in} \ L^{2}(\Omega_{I}; \mathbb{R}) \, ;& \qquad & \widehat{\Phi}_{\varepsilon} \to \widehat{\Phi}_O \ \ \ \text{strongly in} \ L^{2}(\Omega_{O}; \mathbb{R}) \, ; \\[6pt]
					& \widehat{\Phi}_{\varepsilon} \to \widehat{\Phi}_I \ \ \ \text{strongly in} \ L^{1}(\partial \Omega_{I}; \mathbb{R}) \, ;& \qquad & \widehat{\Phi}_{\varepsilon} \to \widehat{\Phi}_O \ \ \ \text{strongly in} \ L^{1}(\partial \Omega_{O}; \mathbb{R}) \, ,
				\end{aligned}
			\end{equation}
			along sub-sequences that are not being relabeled, see also \cite[Theorem~6.2]{necas2011direct}. An application of the H\"older inequality (in $\Omega_{I}$ and $\Omega_{O}$) readily implies that 
			\begin{equation} \label{convergencesiol00}
				\widehat{\bs{v}}_{I} \equiv \bs{0} \ \ \mbox{in} \ \ \Omega_{I} \, ; \qquad \widehat{\bs{v}}_{O} \equiv \bs{0} \ \ \mbox{in} \ \ \Omega_{O} \, ,
			\end{equation}
			since, in virtue of \eqref{convergencesn11}$_1$, $\| \nabla \widehat{\bs{v}}_{\varepsilon} \|_{L^{2}(\Omega)} \to 0$ as $\varepsilon \to 0^{+}$. Furthermore, in view of \eqref{nsstokes1aprox}$_3$-\eqref{nsstokes1aprox}$_4$-\eqref{convergencesiol00}, the strong convergences in \eqref{convergencesiol}$_4$- \eqref{convergencesiol}$_5$ imply that $\widehat{\Phi}_I= 0$ on $\Gamma_{I}$ and $\widehat{\Phi}_O= \widehat{p}$ on $\Gamma_{O}$. But, given that $\widehat{\Phi}_\varepsilon \rightharpoonup \widehat{\Phi}_{I}$ weakly in $L^{2}(\Omega_{I}; \mathbb{R})$ and $\widehat{\Phi}_\varepsilon \rightharpoonup \widehat{\Phi}_{O}$ weakly in $L^{2}(\Omega_{O}; \mathbb{R})$ as $\varepsilon \to 0^{+}$, by uniqueness of the weak limit there must hold $\widehat{\Phi} = \widehat{\Phi}_I$ in $\Omega_{I}$ and $\widehat{\Phi} = \widehat{\Phi}_O$ in $\Omega_{O}$. Consequently, since $\widehat{\Phi} \in W^{1,3/2}(\Omega; \mathbb{R})$,
			\begin{equation} \label{berlawlip0}
				\widehat{\Phi}= 0 \ \ \mbox{ on } \ \ \Gamma_{I} \, ; \qquad \widehat{\Phi}= \widehat{p} \ \ \mbox{ on } \ \ \Gamma_{O} \, .
			\end{equation}
			
			Now, since $\widehat{\Phi} \in W^{1,3/2}(\Omega;\mathbb{R})$, its restriction to $\partial \Omega$ belongs to $W^{\frac{1}{3},\frac{3}{2}}(\partial \Omega;\mathbb{R})$, meaning that
			\begin{equation} \label{contphi}
				\int_{\partial \Omega} \int_{\partial \Omega} \dfrac{| \widehat{\Phi}(\bs{\xi}_{1}) - \widehat{\Phi}(\bs{\xi}_{2}) |^{3/2}}{| \bs{\xi}_{1} - \bs{\xi}_{2} |^{5/2}} \, d\bs{\xi}_{1} \, d\bs{\xi}_{2} < \infty \, .
			\end{equation}
			We return to \eqref{bernoulli1}. If, by contradiction, we would have $\widehat{p}_{*} \neq 0$, then 
			$$
			\int_{\partial \Omega} \int_{\partial \Omega} \dfrac{| \widehat{\Phi}(\bs{\xi}_{1}) - \widehat{\Phi}(\bs{\xi}_{2}) |^{3/2}}{| \bs{\xi}_{1} - \bs{\xi}_{2} |^{5/2}} \, d\bs{\xi}_{1} \, d\bs{\xi}_{2} \geq | \widehat{p}_{*} |^{3/2} \int_{\Gamma_{I}} \int_{\Gamma_{W}} \dfrac{1}{| \bs{\xi}_{1} - \bs{\xi}_{2} |^{5/2}} \, d\bs{\xi}_{1} \, d\bs{\xi}_{2} = + \infty \, ,
			$$
			which obviously disputes \eqref{contphi}. Thus, $\widehat{p}_{*} = 0$, and since there also holds
			$$
			\int_{\partial \Omega} \int_{\partial \Omega} \dfrac{| \widehat{\Phi}(\bs{\xi}_{1}) - \widehat{\Phi}(\bs{\xi}_{2}) |^{3/2}}{| \bs{\xi}_{1} - \bs{\xi}_{2} |^{5/2}} \, d\bs{\xi}_{1} \, d\bs{\xi}_{2} \geq | \widehat{p} \, |^{3/2} \int_{\Gamma_{O}} \int_{\Gamma_{W}} \dfrac{1}{| \bs{\xi}_{1} - \bs{\xi}_{2} |^{5/2}} \, d\bs{\xi}_{1} \, d\bs{\xi}_{2} \, ,
			$$
			by the exact same argument we deduce that
			\begin{equation} \label{finalp}
				\widehat{p} = 0 \, .
			\end{equation} 
			A contradiction is reached between \eqref{vlambdalimn}-\eqref{finalp}, and consequently, the norms $\| \nabla \bs{u}_{\varepsilon} \|_{L^{2}(\Omega_{\varepsilon})}$ must be uniformly bounded with respect to $\varepsilon \in I_{*}$. Combined with \eqref{epsbyparts6}-\eqref{unipress2}, this concludes the proof.
		\end{proof}
		
		\begin{remark} \label{rem1}
			The additional assumption \eqref{extrahyp}, which requires the existence of thin strips near $\Gamma_{I}$ and $\Gamma_{O}$ that are never perforated in the process as $\varepsilon \to 0^{+}$, is only invoked to specify the boundary values of the limit Bernoulli pressure $\widehat{\Phi} \in W^{1,3/2}(\Omega)$ on $\Gamma_{I}$ and $\Gamma_{O}$. Such information cannot be directly extracted from the weak convergence $\widehat{\Phi}_\varepsilon \rightharpoonup \widehat{\Phi}$ in $L^{2}(\Omega; \mathbb{R})$ as $\varepsilon \to 0^{+}$, while it does not seem straightforward to build a $W^{1,3/2}(\Omega; \mathbb{R})$-uniform extension for the Bernoulli pressure $\Phi_{\varepsilon}$ inside the holes; in fact, following the approach of \cite[Lemma~1.7]{masmoudi2002homogenization} (based on local regularity estimates for the Stokes problem) one obtains
			$$
			\| \bs{u}_{\varepsilon} \|_{W^{2,3/2}(\Omega_{\varepsilon})} + \| \Phi_{\varepsilon} \|_{W^{1,3/2}(\Omega_{\varepsilon})} \leq \dfrac{C}{\sqrt{\varepsilon}} \left(1 + \| \bs{f} \|_{L^{2}(\Omega)} + \| \nabla \bs{u}_{\varepsilon} \|^{2}_{L^{2}(\Omega_{\varepsilon})} + \| \nabla \bs{u}_{\varepsilon} \|_{L^{2}(\Omega_{\varepsilon})} \right) \qquad \forall \varepsilon \in I_{*} \, , 
			$$
			for some constant $C>0$ independent of $\varepsilon \in I_{*}$. It is left open the possibility of recovering the result of Theorem~\ref{epslevel} without hypothesis \eqref{extrahyp}.
		\end{remark}
	
		The second main result of this section provides uniform bounds (with respect to $\varepsilon \in I_{*}$) for the solutions of the prescribed pressure drop problem \eqref{nsstokespd1}. Compared to Theorem~\ref{epslevel}, in this case the additional assumption \eqref{extrahyp} is not required and the corresponding proof is considerably simpler.
		
		\begin{theorem} \label{epslevelpd}
			Let $\Omega \subset \mathbb{R}^{3}$ be an admissible domain, $p^{\pm} \in \mathbb{R}$, $\bs{f} \in L^{2}(\Omega; \mathbb{R}^{3})$ and $\bs{\mu}^{\varepsilon}_{1},...,\bs{\mu}^{\varepsilon}_{N(\varepsilon)} \in \mathbb{R}^{3}$. There exists (at least) one weak solution $\bs{u}_{\varepsilon} \in W^{2,2}(\Omega_{\varepsilon}; \mathbb{R}^{3}) \cap \mathcal{V}(\Omega_{\varepsilon})$ of the prescribed pressure drop problem \eqref{nsstokespd1} and an associated Bernoulli pressure $\Phi_{\varepsilon} \in W^{1,2}(\Omega_{\varepsilon}; \mathbb{R}) \cap L^{2}_{0}(\Omega_{\varepsilon}; \mathbb{R})$ such that the pair $(\bs{u}_{\varepsilon},\Phi_{\varepsilon})$ satisfies \eqref{nsstokespd1} in strong form. Furthermore, under assumption \eqref{kineticuni}, there exists $\varepsilon_{\circ} \in I_{*}$ for which the uniform bound
			\begin{equation} \label{uboundpd}
				\sup_{\varepsilon \in (0, \varepsilon_{\circ})} \left( \| \nabla \bs{u}_{\varepsilon} \|_{L^{2}(\Omega_{\varepsilon})} + \| \Phi_{\varepsilon} \|_{L^{2}(\Omega_{\varepsilon})} \right) \leq C_{*} \, ,
			\end{equation}
			holds for some constant $C_{*} > 0$ that depends on $\Omega$, $\bs{f}$, $p^{\pm}$, $\mathcal{S}_{*}$ and $\{ \delta_{i} \}^{2}_{i=0}$.
		\end{theorem}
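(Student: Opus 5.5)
Existence and regularity come first. I would take them from the analogous results for the pressure-drop problem, namely the extension of \cite[Theorem~3.3]{sperone2022} to the space $\mathcal{W}(\Omega_\eps)$ with the lift $\bs{A}_\eps$ of Lemma~\ref{smallfield}, followed by \cite[Theorem~3.2]{sperone2022}. These give $\bs{u}_\eps\in W^{2,2}(\Omega_\eps;\R^3)$ and a Bernoulli pressure $\Phi_\eps\in W^{1,2}(\Omega_\eps;\R)$ solving \eqref{nsstokespd1} in strong form. The normalization $\Phi_\eps\in L^2_0(\Omega_\eps;\R)$ is stated, so I would handle the additive constant on that basis, but this needs care because the boundary conditions fix $\Phi_\eps$ absolutely.

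The central point is that no contradiction argument should be needed, since the energy estimate closes directly. I would test \eqref{nstokesdebilpd} with $\bs{\varphi}=\bs{u}_\eps-\bs{A}_\eps\in\mathcal{W}(\Omega_\eps)$. The key identity is $((\nabla\times\bs{u}_\eps)\times\bs{u}_\eps)\cdot\bs{u}_\eps=0$, so the convective term reduces to $-\int_\Omega((\nabla\times\widetilde{\bs{u}}_\eps)\times\widetilde{\bs{u}}_\eps)\cdot\bs{A}_\eps$ after extending by \eqref{extension}. This term is bounded by $C\|\nabla\widetilde{\bs{u}}_\eps\|_{L^2}^2\|\bs{A}_\eps\|_{L^4}\le C\mathcal{S}_*\eps^{3/4}\|\nabla\bs{u}_\eps\|^2_{L^2}$ by \eqref{smallfieldpropbounds}, Sobolev and Poincaré. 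For $\eps<\eps_\circ$ it can therefore be absorbed into the left side. The flux term is $(p^+-p^-)\int_{\Gamma_O}(\bs{u}_\eps-\bs{A}_\eps)\cdot\bs{\nu}=(p^+-p^-)\int_{\Gamma_O}\bs{u}_\eps\cdot\bs\nu$, because $\bs{A}_\eps$ vanishes on $\partial\Omega$. It is bounded by $C|p^+-p^-|\,\|\nabla\widetilde{\bs{u}}_\eps\|_{L^2}$ via the trace inequality in $\Omega$ for the extension $\widetilde{\bs{u}}_\eps\in\mathcal{V}(\Omega)$. The remaining terms, $\int\nabla\bs{u}_\eps:\nabla\bs{A}_\eps$ and $\int\bs{f}\cdot(\bs{u}_\eps-\bs{A}_\eps)$, are bounded by $C\mathcal{S}_*\|\nabla\bs{u}_\eps\|$ and $C\|\bs{f}\|(\|\nabla\bs{u}_\eps\|+\mathcal{S}_*)$. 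Young's inequality then gives $\|\nabla\bs{u}_\eps\|_{L^2(\Omega_\eps)}\le C$ uniformly for $\eps\in(0,\eps_\circ)$. The choice of $\eps_\circ$, making $C\mathcal{S}_*\eps_\circ^{3/4}\le 1/2$, is exactly where the restriction to small $\eps$ in the statement enters.

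Next comes the pressure. I would multiply the strong equation by $\bs{J}_\eps$ from Lemma~\ref{bogtype} with $q=\Phi_\eps$. Since $\bs{J}_\eps\times\bs\nu=\bs{0}$ on $\Gamma_I$ and $\bs{J}_\eps=\bs{0}$ on $\Gamma_W\cup\partial K_\eps\cup\Gamma_O$, the integrations by parts proceed as in \eqref{epsbyparts0}–\eqref{epsbyparts7}, using Remark~\ref{firstidentity}. The inlet boundary terms leave only $p^-\int_{\Gamma_I}\bs{J}_\eps\cdot\bs\nu=-p^-\int_{\Omega_\eps}\Phi_\eps$, and this vanishes by the zero-mean normalization. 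If one does not want to rely on the normalization, it is bounded by $C|p^-|\|\Phi_\eps\|_{L^2}$ anyway. Exactly as in \eqref{bypartsje3} this gives $\|\Phi_\eps\|_{L^2}\le C(\|\nabla\bs{u}_\eps\|^2+\|\nabla\bs{u}_\eps\|+\|\bs f\|+|p^-|)$, and combining with the velocity bound proves \eqref{uboundpd}.

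I expect the main obstacle to be bookkeeping rather than analysis. One must verify carefully that the mixed boundary terms produce exactly the pressure-drop term and the $p^-$ contribution with the correct signs, and that the pressure normalization is consistent with the boundary conditions. I would first try to fix the normalization by relying directly on Lemma~\ref{bogtype}, which needs no zero mean.
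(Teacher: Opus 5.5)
Your velocity estimate is the paper's argument up to bookkeeping. The paper rewrites \eqref{nstokesdebilpd} for $\bs v_\eps=\bs u_\eps-\bs A_\eps$ and tests with $\bs v_\eps$. The two surviving convective terms, $((\nabla\times\bs v_\eps)\times\bs A_\eps)\cdot\bs v_\eps$ and $((\nabla\times\bs A_\eps)\times\bs A_\eps)\cdot\bs v_\eps$, add up exactly to your $-((\nabla\times\bs u_\eps)\times\bs u_\eps)\cdot\bs A_\eps$. The $\eps^{3/4}$ absorption that defines $\eps_\circ$ is the same. For existence, the paper invokes an extension of \cite[Theorem~3.2]{korobkov2020solvability}, the pressure-drop result, rather than \cite[Theorem~3.3]{sperone2022}, which concerns the flux problem.

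The genuine difference is the pressure bound. The paper takes a Bogovskii field $\bs X_\eps\in W^{1,2}_0(\Omega_\eps;\R^3)$ with $\nabla\cdot\bs X_\eps=\Phi_\eps$ from \cite{diening2017inverse}, so every boundary term disappears. This is cheaper, but it requires $\Phi_\eps\in L^2_0(\Omega_\eps;\R)$. You use $\bs J_\eps$ from Lemma~\ref{bogtype}, which needs no zero mean. It leaves only the inlet term $p^-\int_{\Gamma_I}\bs J_\eps\cdot\bs\nu=p^-\int_{\Omega_\eps}\Phi_\eps$, bounded by $C|p^-|\,\|\Phi_\eps\|_{L^2(\Omega_\eps)}$. (The sign there is $+$, not $-$, which is immaterial.)

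Your worry about the normalization is justified, and your route is the one that actually works for the pressure appearing in \eqref{nsstokespd1}. The weak formulation only sees $p^+-p^-$, whereas adding a constant to $\Phi_\eps$ shifts both $p^\pm$. So once $p^\pm$ are prescribed, the mean of $\Phi_\eps$ is determined and in general nonzero. For instance, $\bs f=\bs 0$, $\bs\mu^\eps_n=\bs 0$, $p^+=p^-=1$ forces $\bs u_\eps=\bs 0$ and hence $\Phi_\eps\equiv 1$.

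Applied to that pressure, the paper's $W^{1,2}_0$ argument only controls $\Phi_\eps$ minus its mean. Yours controls the full $L^2$ norm, with a constant depending on $p^-$, which the statement permits. What neither argument can deliver is the simultaneous claim that $\Phi_\eps\in L^2_0(\Omega_\eps;\R)$ and that the strong boundary conditions hold with the given $p^\pm$. You should drop that normalization rather than rely on it, exactly as your fallback does.
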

		\noindent
		\begin{proof}
			In what follows, $C > 0$ will always denote a generic constant that depends on $\Omega$, $p^{\pm}$, $\mathcal{S}_{*}$ and $\{ \delta_{i} \}^{2}_{i=0}$ (independently of $\varepsilon \in I_{*}$), but that may change from line to line. 
			\par
			Given any $p^{\pm} \in \mathbb{R}$, $\bs{f} \in L^{2}(\Omega; \mathbb{R}^{3})$,  $\bs{\mu}^{\varepsilon}_{1},...,\bs{\mu}^{\varepsilon}_{N(\varepsilon)} \in \mathbb{R}^{3}$ and $\varepsilon \in I_{*}$, a direct extension of \cite[Theorem~3.2]{korobkov2020solvability} ensures the existence of at least one weak solution $\bs{u}_{\varepsilon} \in \mathcal{V}(\Omega_{\varepsilon})$ of problem \eqref{nsstokespd1}. Then, the same argument of \cite[Theorem~3.2]{sperone2022} can be applied to deduce $\bs{u}_{\varepsilon} \in W^{2,2}(\Omega_{\varepsilon}; \mathbb{R}^{3})$ and the existence of an associated pressure $\Phi_{\varepsilon} \in W^{1,2}(\Omega_{\varepsilon}; \mathbb{R}) \cap L^{2}_{0}(\Omega_{\varepsilon}; \mathbb{R})$ satisfying
			\begin{equation}\label{nsstokes1aproxpd}
				\left\{
				\begin{aligned}
					& -\Delta \bs{u}_{\varepsilon}+ (\nabla \times \bs{u}_{\varepsilon}) \times \bs{u}_{\varepsilon} + \nabla \Phi_{\varepsilon}=\bs{f} \, ,\ \quad  \nabla\cdot \bs{u}_{\varepsilon}=0 \ \ \mbox{in} \ \ \Omega_{\varepsilon} \, ; \\[3pt]
					& \bs{u}_{\varepsilon}=\bs{0} \ \ \mbox{on} \ \ \Gamma_{W} \, ; \quad \bs{u}_{\varepsilon}=\bs{\mu}^{\varepsilon}_{n} \ \ \mbox{on} \ \ \partial K^{\varepsilon}_{n} \qquad \forall n \in \{1,...,N(\varepsilon)\} \, ; \\[3pt]
					& \bs{u}_{\varepsilon} \times \bs{\nu} = \bs{0} \, , \quad - \dfrac{\partial}{\partial \bs{\nu}}(\bs{u}_{\varepsilon} \cdot \bs{\nu}) + \Phi_{\varepsilon} = p^{-} \ \ \mbox{on} \ \ \Gamma_{I} \, ;\\[3pt]
					& \bs{u}_{\varepsilon} \times \bs{\nu} = \bs{0} \, , \quad - \dfrac{\partial}{\partial \bs{\nu}}(\bs{u}_{\varepsilon} \cdot \bs{\nu}) + \Phi_{\varepsilon} = p^{+} \ \ \mbox{on} \ \ \Gamma_{O} \, ,
				\end{aligned}
				\right.
			\end{equation}
			in strong form, see also \cite[Section 2]{korobkov2020solvability}. Let $\bs{A}_{\varepsilon} \in \mathcal{C}^{\infty}(\overline{\Omega}; \mathbb{R}^{3})$ be the vector field arising from Lemma~\ref{smallfield} and define $\bs{v}_{\varepsilon} \doteq \bs{u}_{\varepsilon} - \bs{A}_{\varepsilon}$ in $\Omega_{\varepsilon}$, so that $\bs{v}_{\varepsilon} \in \mathcal{W}(\Omega_{\varepsilon})$. In view of \eqref{smallfieldpropbounds}$_1$, it suffices to prove the following uniform bound:
			\begin{equation} \label{uboundpdshift}
				\sup_{\varepsilon \in I_{*}} \left( \| \nabla \bs{v}_{\varepsilon} \|_{L^{2}(\Omega_{\varepsilon})} + \| \Phi_{\varepsilon} \|_{L^{2}(\Omega_{\varepsilon})} \right) < + \infty \, .
			\end{equation}
			As a consequence of \eqref{nstokesdebilpd}, we notice that $\bs{v}_{\varepsilon}$ satisfies the following weak formulation:
			\begin{equation} \label{nstokesdebilpdshift}
				\begin{aligned}
					& \int_{\Omega_{\varepsilon}} \nabla \bs{v}_{\varepsilon} : \nabla \bs{\varphi} + \int_{\Omega_{\varepsilon}} \nabla \bs{A}_{\varepsilon} : \nabla \bs{\varphi} + \int_{\Omega_{\varepsilon}} ((\nabla \times \bs{v}_{\varepsilon}) \times \bs{v}_{\varepsilon}) \cdot \bs{\varphi}+ \int_{\Omega_{\varepsilon}} ((\nabla \times \bs{v}_{\varepsilon}) \times \bs{A}_{\varepsilon}) \cdot \bs{\varphi}   \\[6pt]
					& \hspace{-4mm} + \int_{\Omega_{\varepsilon}} ((\nabla \times \bs{A}_{\varepsilon}) \times \bs{v}_{\varepsilon}) \cdot \bs{\varphi} + \int_{\Omega_{\varepsilon}} ((\nabla \times \bs{A}_{\varepsilon}) \times \bs{A}_{\varepsilon}) \cdot \bs{\varphi} + (p^{+}-p^{-}) \int_{\Gamma_{O}} \bs{\varphi} \cdot \bs{\nu} = \int_{\Omega_{\varepsilon}} \bs{f} \cdot \bs{\varphi} \, ,
				\end{aligned}
			\end{equation}
			for every $\bs{\varphi} \in \mathcal{W}(\Omega_{\varepsilon})$. As in the proof of Theorem~\ref{epslevel}, we put
			\begin{equation} \label{extensionpd}
				\widetilde{\bs{v}}_{\varepsilon} \doteq 
				\begin{cases}
					\bs{v}_{\varepsilon} & \quad \text{in} \ \ \Omega_{\varepsilon} \, ,\\[3pt]
					\bs{0} & \quad  \text{in} \ \ \overline{K_{\varepsilon}}  \, ,
				\end{cases}
				\qquad
				\text{and}
				\qquad 
				\widetilde{\Phi}_{\varepsilon} \doteq 
				\begin{cases}
					\Phi_{\varepsilon} & \quad \text{in} \ \ \Omega_{\varepsilon} \, ,\\[3pt]
					0 & \quad  \text{in} \ \ \overline{K_{\varepsilon}}  \, ,
				\end{cases}
			\end{equation}
			and notice that $\widetilde{\Phi}_{\varepsilon} \in L_{0}^{2}(\Omega; \mathbb{R})$ and $\widetilde{\bs{v}}_{\varepsilon} \in \mathcal{V}(\Omega)$, see \eqref{espaciosstar}. By taking $\bs{\varphi} = \bs{v}_{\varepsilon}$ in the weak formulation \eqref{nstokesdebilpdshift} we obtain
			\begin{equation} \label{pd1}
				\begin{aligned}
					& \| \nabla \bs{v}_{\varepsilon}  \|^{2}_{L^{2}(\Omega_{\varepsilon})} + \int_{\Omega_{\varepsilon}} \nabla \bs{A}_{\varepsilon} : \nabla \bs{v}_{\varepsilon} + \int_{\Omega} ((\nabla \times \widetilde{\bs{v}}_{\varepsilon}) \times \bs{A}_{\varepsilon}) \cdot \widetilde{\bs{v}}_{\varepsilon}   \\[6pt]
					& \hspace{-4mm} + \int_{\Omega} ((\nabla \times \bs{A}_{\varepsilon}) \times \bs{A}_{\varepsilon}) \cdot \widetilde{\bs{v}}_{\varepsilon} + (p^{+}-p^{-}) \int_{\Gamma_{O}} \bs{v}_{\varepsilon} \cdot \bs{\nu} = \int_{\Omega} \bs{f} \cdot \widetilde{\bs{v}}_{\varepsilon} \, .
				\end{aligned}
			\end{equation}
			Applying the H\"older, Sobolev and Poincaré inequalities (in $\Omega$), as well as \eqref{smallfieldpropbounds}, we can estimate:
			\begin{equation} \label{ppdbound0}
				\begin{aligned}
					& \left| \int_{\Omega_{\varepsilon}} \nabla \bs{A}_{\varepsilon} : \nabla \bs{v}_{\varepsilon} \right| \leq C \| \nabla \bs{v}_{\varepsilon}  \|_{L^{2}(\Omega_{\varepsilon})} \, , \qquad \left| \int_{\Omega} \bs{f} \cdot \widetilde{\bs{v}}_{\varepsilon} \right| \leq C \| \bs{f}  \|_{L^{2}(\Omega)} \| \nabla \bs{v}_{\varepsilon}  \|_{L^{2}(\Omega_{\varepsilon})} \, , \\[6pt]
					& \left| \int_{\Omega} ((\nabla \times \widetilde{\bs{v}}_{\varepsilon}) \times \bs{A}_{\varepsilon}) \cdot \widetilde{\bs{v}}_{\varepsilon}  \right| \leq \| \nabla \times \widetilde{\bs{v}}_{\varepsilon} \|_{L^{2}(\Omega)} \| \widetilde{\bs{v}}_{\varepsilon} \|_{L^{4}(\Omega)} \| \bs{A}_{\varepsilon} \|_{L^{4}(\Omega)} \leq C \varepsilon^{3/4}  \| \nabla \bs{v}_{\varepsilon}  \|^{2}_{L^{2}(\Omega_{\varepsilon})} \, , \\[6pt]
					& \left| \int_{\Omega} ((\nabla \times \bs{A}_{\varepsilon}) \times \bs{A}_{\varepsilon}) \cdot \widetilde{\bs{v}}_{\varepsilon}  \right| \leq \| \nabla \times \bs{A}_{\varepsilon} \|_{L^{2}(\Omega)}  \| \bs{A}_{\varepsilon} \|_{L^{4}(\Omega)} \| \widetilde{\bs{v}}_{\varepsilon} \|_{L^{4}(\Omega)} \leq C \| \nabla \bs{v}_{\varepsilon}  \|_{L^{2}(\Omega_{\varepsilon})} \, .
				\end{aligned}
			\end{equation}
			On the other hand, the trace inequality (applied in $\Omega$) gives us
			\begin{equation} \label{pd3}
				\left| \int_{\Gamma_{O}} \bs{v}_{\varepsilon} \cdot \bs{\nu} \right| \leq \| \widetilde{\bs{v}}_{\varepsilon}  \|_{L^{1}(\Gamma_{O})} \leq C \| \widetilde{\bs{v}}_{\varepsilon}  \|_{L^{2}(\Gamma_{O})} \leq C \| \nabla \widetilde{\bs{v}}_{\varepsilon} \|_{L^{2}(\Omega_{\varepsilon})} \, .
			\end{equation}
			Then, select a sufficiently small $\varepsilon_{\circ} \in I_{*}$ such that
			\begin{equation} \label{pd4}
				\varepsilon^{3/4} < \dfrac{1}{C} \qquad \forall \varepsilon \in (0, \varepsilon_{\circ}) \, ,
			\end{equation}
			with $C>0$ being the constant entering the right-hand side of \eqref{ppdbound0}$_2$. After plugging \eqref{ppdbound0}-\eqref{pd3}-\eqref{pd4} into \eqref{pd1} we obtain
			\begin{equation} \label{pdu}
				\| \nabla \bs{v}_{\varepsilon} \|_{L^{2}(\Omega_{\varepsilon})} \leq C \left(\| \bs{f} \|_{L^{2}(\Omega)} + 1 \right) \qquad \forall \varepsilon \in (0, \varepsilon_{\circ}) \, .
			\end{equation}
			Concerning the pressure, since $\Phi_{\varepsilon} \in L_{0}^{2}(\Omega_{\varepsilon}; \mathbb{R})$, there exists a vector field $\bs{X}_{\! \varepsilon} \in W^{1,2}_{0}(\Omega_{\varepsilon}; \mathbb{R}^{3})$ satisfying
			$$
			\nabla \cdot \bs{X}_{\! \varepsilon} = \Phi_{\varepsilon} \quad \text{in} \quad \Omega_{\varepsilon} \qquad \text{and} \qquad  \| \nabla \bs{X}_{\! \varepsilon} \|_{L^{2}(\Omega_{\varepsilon})} \leq C \| \Phi_{\varepsilon} \|_{L^{2}(\Omega_{\varepsilon})}  \, ,
			$$
			see \cite[Theorem~2.3]{diening2017inverse} again. If we multiply the first identity in \eqref{nsstokes1aproxpd}$_1$ by $\bs{X}_{\! \varepsilon}$ and integrate by parts in $\Omega_{\varepsilon}$, arguing exactly as in \eqref{epsbyparts7}-\eqref{bypartsje3}-\eqref{unipress2} we derive the bound
			\begin{equation} \label{unipresspd}
				\| \Phi_{\varepsilon} \|_{L^{2}(\Omega_{\varepsilon})} \leq C \left( \| \nabla \bs{u}_{\varepsilon} \|^{2}_{L^{2}(\Omega_{\varepsilon})} + \| \nabla \bs{u}_{\varepsilon} \|_{L^{2}(\Omega_{\varepsilon})} + \| \bs{f} \|_{L^{2}(\Omega)} \right) \, .
			\end{equation}
			A combination of \eqref{uboundpdshift}-\eqref{pdu}-\eqref{unipresspd} finishes the proof.
		\end{proof}

\subsection{Proof of Theorem \ref{thm_hom}}		
We conclude this section with the proof of Theorem \ref{thm_hom} (the corresponding proof of Theorem \ref{thm_hom_pd} is omitted for the sake of brevity, since it resembles the proof of Theorem~\ref{thm_hom} with minor modifications).
\newline 
\newline
\noindent
\begin{proof}
$$
\mathcal{U}_{*}(\Omega_{\eps}) \doteq \left\lbrace \bs{v} \in W^{1,2}(\Omega_{\eps}; \mathbb{R}^{3}) \ \Bigg\rvert \ \bs{v} \times \bs{\nu} = \bs{0} \ \ \mbox{on} \ \ \Gamma_{I} \cup \Gamma_{O} \, ; \quad \bs{v} = \bs{0} \ \ \mbox{on} \ \ \Gamma_{W} \cup \partial K_{\eps} \, ; \quad \int_{\Gamma_{O}} \bs{v} \cdot \bs{\nu} = 0 \ \right\rbrace \, ,
$$
$$
\mathcal{U}_{*}(\Omega) \doteq \left\lbrace \bs{v} \in W^{1,2}(\Omega; \mathbb{R}^{3}) \ \Bigg\rvert \ \bs{v} \times \bs{\nu} = \bs{0} \ \ \mbox{on} \ \ \Gamma_{I} \cup \Gamma_{O} \, ; \quad \bs{v} = \bs{0} \ \ \mbox{on} \ \ \Gamma_{W} \, ; \quad \int_{\Gamma_{O}} \bs{v} \cdot \bs{\nu} = 0 \ \right\rbrace \, ,
$$
and, as in \eqref{espaciosstar},
$$
\mathcal{V}(\Omega) \doteq \left\lbrace \bs{v} \in W^{1,2}(\Omega; \mathbb{R}^{3}) \ | \ \nabla \cdot \bs{v}=0 \ \ \mbox{in} \ \ \Omega \, ; \quad \bs{v} \times \bs{\nu} = \bs{0} \ \ \mbox{on} \ \ \Gamma_{I} \cup \Gamma_{O} \, ; \quad \bs{v} = \bs{0} \ \ \mbox{on} \ \ \Gamma_{W} \, \right\rbrace \, ,
$$			
Thus, $\mathcal{U}(\Omega)$ and $\mathcal{V}(\Omega)$ are closed subspaces of $W^{1,2}(\Omega; \mathbb{R}^{3})$, with $(\widetilde{\bs{u}}_{\eps})_{\eps \in I_{*}} \subset \mathcal{V}(\Omega)$. If $\bs{u} \in W^{1,2}(\Omega; \mathbb{R}^{3})$ is a weak accumulation point of $(\widetilde{\bs{u}}_{\eps})_{\eps \in I_{*}}$ in $W^{1,2}(\Omega; \mathbb{R}^{3})$, then necessarily $\bs{u} \in \mathcal{V}(\Omega)$; the boundary conditions for $\bs{u}$ in \eqref{homog.eqpnf}$_2$-\eqref{homog.eqpnf}$_3$-\eqref{homog.eqpnf}$_4$ are thus verified. Furthermore, by the compactness of the embeddings $W^{1,2}(\Omega; \mathbb{R}^{3}) \subset L^{4}(\Omega; \mathbb{R}^{3})$ and $W^{1,2}(\Omega; \mathbb{R}^{3}) \subset L^{1}(\partial \Omega; \mathbb{R}^{3})$, we may assume that the following convergences hold as $\eps \to 0^{+}$:
\begin{equation} \label{convergencesn22}
\widetilde{\bs{u}}_{\eps} \to \bs{u} \ \ \ \text{strongly in} \ L^{4}(\Omega; \mathbb{R}^{3}) \, ;  \qquad \widetilde{\bs{u}}_{\eps} \to \bs{u} \ \ \ \text{strongly in} \ L^{1}(\partial \Omega; \mathbb{R}^{3}) \, .
\end{equation}
Recall, from \eqref{nsstokes1}$_5$, that
$$
\int_{\Gamma_{O}} \widetilde{\bs{u}}_{\eps} \cdot \bs{\nu} = F \qquad \forall \varepsilon \in I_{*} \, ,
$$
so that \eqref{convergencesn22}$_2$ gives us
$$
\left| \int_{\Gamma_{O}} \bs{u} \cdot \bs{\nu} - F \right| = \left| \int_{\Gamma_{O}} (\bs{u} - \widetilde{\bs{u}}_{\eps}) \cdot \bs{\nu} \right| \leq \| \widetilde{\bs{u}}_{\eps} - \bs{u} \|_{L^{1}(\Gamma_{I})} \leq \| \widetilde{\bs{u}}_{\eps} - \bs{u} \|_{L^{1}(\partial \Omega)} \to 0 \quad \text{as} \ \ \varepsilon \to 0^{+} \, .
$$
Therefore, $\bs{u}$ verifies the flux condition \eqref{homog.eqpnf}$_5$.

Now, take any vector field $\bs{\varphi} \in \mathcal{U}_{*}(\Omega) \cap \mathcal{C}^\infty(\overline{\Omega}; \mathbb{R}^{3})$; owing to Theorem \ref{pro:corrector}, $\mathcal R_\eps(\bs \varphi) \in \mathcal{U}_{*}(\Omega_\eps)$, since $\mathcal R_\eps(\bs \varphi) = \bs{\varphi}$ on $\partial \Omega$. Consequently, the weak formulation of \eqref{nsstokes1} including the Bernoulli pressure (see \cite[Section 4]{korobkov2020solvability}) reads,
\begin{equation} \label{finlimit0}
\int_{\Omega} \nabla \widetilde{\bs{u}}_{\eps} : \nabla \mathcal R_\eps(\bs \varphi) + \int_{\Omega} ((\nabla \times \widetilde{\bs{u}}_{\eps}) \times \widetilde{\bs{u}}_{\eps}) \cdot \mathcal{R}_{\eps}(\bs{\varphi}) - \int_{\Omega} \widetilde{\Phi}_{\eps} ( \nabla \cdot \mathcal R_\eps(\bs \varphi)) = \int_{\Omega} \bs{f} \cdot \mathcal R_\eps(\bs \varphi) \, ,
\end{equation}
for every $\varepsilon \in I_{*}$. It follows from \eqref{W_eps_resistance} that
\begin{equation} \label{finlimit1}
\lim_{\varepsilon \to 0^{+}} \left( \int_{\Omega} \nabla \widetilde{\bs{u}}_{\eps} : \nabla \mathcal{R}_{\eps}(\bs{\varphi}) - \int_{\Omega} \widetilde{\Phi}_{\eps} ( \nabla \cdot \mathcal R_\eps(\bs \varphi)) \right) = \int_\Omega \nabla \bs u : \nabla \bs \varphi + \int_{\Omega} (\mathcal{G} \bs{u} - \bs{\mathcal{J}}) \cdot \bs{\varphi} - \int_{\Omega} \Phi ( \nabla \cdot \bs \varphi) \, ,
\end{equation}
while \eqref{propreps0}$_4$ entails that $R_\eps(\bs \varphi) \to \bs{\varphi}$ strongly in $L^{2}(\Omega; \mathbb{R}^{3})$ as $\varepsilon \to 0^{+}$, so that
\begin{equation} \label{finlimit2}
	\lim_{\varepsilon \to 0^{+}} \int_{\Omega} \bs{f} \cdot \mathcal R_\eps(\bs \varphi) = \int_{\Omega} \bs{f} \cdot \bs{\varphi} \, .
\end{equation}
Additionally, arguing as in \eqref{nsstokeslambdak3}, we can invoke the strong convergences in \eqref{propreps0}$_4$-\eqref{convergencesn22}$_1$ to deduce
\begin{equation} \label{finlimit3}
\lim_{\varepsilon \to 0^{+}} \int_{\Omega} ((\nabla \times \widetilde{\bs{u}}_{\eps}) \times \widetilde{\bs{u}}_{\eps}) \cdot \mathcal{R}_{\eps}(\bs{\varphi}) = \int_{\Omega} ((\nabla \times \bs{u}) \times \bs{u})  \cdot  \bs{\varphi}.
\end{equation}
Therefore, appealing to \eqref{finlimit1}-\eqref{finlimit2}-\eqref{finlimit3}, we can take the limit as $\varepsilon \to 0^{+}$ on both sides of identity \eqref{finlimit0} to deduce, for every $\bs{\varphi} \in \mathcal{U}_{*}(\Omega) \cap \mathcal{C}^\infty(\overline{\Omega}; \mathbb{R}^{3})$,
\begin{equation}
\int_{\Omega} \nabla \bs{u} : \nabla\bs{\varphi} + \int_{\Omega} ((\nabla \times \bs{u}) \times \bs{u}) \cdot  \bs{\varphi}  + \int_{\Omega} (\mathcal{G} \bs{u} - \bs{\mathcal{J}}) \cdot \bs{\varphi} - \int_{\Omega} \Phi ( \nabla \cdot \bs \varphi) = \int_{\Omega} \bs{f} \cdot   \bs{\varphi} \, .
\end{equation}
By density, this shows that $(\bs{u}, \Phi)$ is a weak solution to the prescribed net flux problem \eqref{homog.eqpnf} in $\Omega$.
\end{proof}
		
		\appendix
		\section{Appendix}\label{appendix_whole}

		\subsection{The Stokes problem in the exterior of a single particle}\label{appendix_sec}
		Let $\Omega \subset \mathbb{R}^{3}$ be an admissible domain. Given $\varepsilon \in I_{*}$, $n \in \{1,\ldots,N(\varepsilon)\}$ and $\bs{\varphi} \in W^{1,2}(\Omega; \mathbb{R}^{3})$, consider the unique generalized solution $(\bs{\psi}^{\varepsilon}_{n}, p^{\varepsilon}_{n}) \in D^{1,2}(\R^3 \setminus \overline{K_{n}^{\eps}}; \R^{3}) \times L^{2}(\R^3 \setminus \overline{K_{n}^{\eps}}; \R)$ (in the sense of \cite[Definition~V.1.1]{galdi2011introduction}, and where $D^{1,2}(\R^3 \setminus \overline{K_{n}^{\eps}}; \R^{3})$ is defined in \eqref{D^1,2}) to the following Stokes problem in the exterior of $K_{n}^{\eps}$:
		\begin{equation}\label{extstokes00}
			\left\{
			\begin{aligned}
				& -\Delta \bs{\psi}^{\varepsilon}_{n} + \nabla p^{\varepsilon}_{n}  = \bs{0} \, , \quad  \nabla\cdot \bs{\psi}^{\varepsilon}_{n}  = 0 \ \ \mbox{in} \ \ \R^3 \setminus \overline{K_{n}^{\eps}} \, , \\[5pt]
				& \bs{\psi}^{\varepsilon}_{n} = \bs{\varphi} \ \ \mbox{on} \ \ \partial K_{n}^{\eps}  \, , \\[5pt]
				& \bs{\psi}^{\varepsilon}_{n} \to \bs{0} \ \ \mbox{as} \ \ | \bs{\xi} | \to + \infty \, ,
			\end{aligned}
			\right.
		\end{equation}
		whose existence can be guaranteed by standard methods such as \cite[Theorem~V.2.1]{galdi2011introduction} or \cite[Theorem~7.1]{maremonti1999stokes}. Moreover, the velocity component of this generalized solution can be characterized by the property
		\begin{equation} \label{weakexteenergy}
		\int\limits_{\R^3 \setminus \overline{K_{n}^{\eps}}} | \nabla \bs{\psi}^{\varepsilon}_{n} |^{2} = \inf_{\bs{v} \in D^{1,2}(\R^3 \setminus \overline{K_{n}^{\eps}}; \R^{3})} \left\{ \, \int\limits_{\R^3 \setminus \overline{K_{n}^{\eps}}} | \nabla \bs{v} |^{2} \ \Bigg| \ \nabla \cdot \bs{v} = 0 \ \ \text{in} \ \ \R^3 \setminus \overline{K_{n}^{\eps}} \, ; \ \ \bs{v} = \bs{\varphi} \ \ \mbox{on} \ \ \partial K_{n}^{\eps} \, \right\} \, ,
		\end{equation}
		see, for example, \cite[Theorem~2]{hillairet2019effect}. In the next result we provide energy bounds and decay estimates for the velocity field $\bs{\psi}^{\varepsilon}_{n}$ which are \textit{independent} of the shape of the particle $K_{n}^{\eps}$ (that is, independent of $n \in \{1,\ldots,N(\varepsilon)\}$).
		
		\begin{lemma}\label{lemma_Abis}
		Let $\Omega \subset \mathbb{R}^{3}$ be an admissible domain. Given $\varepsilon \in I_{*}$, $n \in \{1,\ldots,N(\varepsilon)\}$ and a vector field $\bs{\varphi} \in W^{1,2}(\Omega; \mathbb{R}^{3})$, let $(\bs{\psi}^{\varepsilon}_{n}, p^{\varepsilon}_{n}) \in D^{1,2}(\R^3 \setminus \overline{K_{n}^{\eps}}; \R^{3}) \times L^{2}(\R^3 \setminus \overline{K_{n}^{\eps}}; \R)$ be the unique weak solution to the exterior problem \eqref{extstokes00}. There holds the energy bound
			\begin{equation} \label{energy_stokes_1}
				\|\nabla \bs{\psi}^{\varepsilon}_{n}\|_{L^2(\R^3\setminus \overline{K_{n}^{\eps}})} \leq C_{*} \left( \dfrac{1}{\eps^{3}} \|\bs  \varphi\|_{L^2(B(\bs{\xi}_{n}^{\eps},\delta_0 \eps^3))} + \|\nabla \bs \varphi\|_{L^2(B(\bs{\xi}_{n}^{\eps},\delta_0 \eps^3))} \right) \, , 
			\end{equation}
		as well as the decay estimate
			\begin{equation}\label{decay_stokes_1}
				|\bs{\psi}^{\varepsilon}_{n}(\bs{\xi})| +|\bs{\xi}-\bs{\xi}_{n}^{\eps}| \left( |\nabla \bs{\psi}^{\varepsilon}_{n}(\bs{\xi})| + |p^{\varepsilon}_{n}(\bs{\xi})| \right) \leq \dfrac{C_{*} \, \eps^{3/2}}{|\bs{\xi}-\bs{\xi}_{n}^{\eps}|} \left( \dfrac{1}{\eps^{3}}\| \bs \varphi\|_{L^2(B(\bs{\xi}_{n}^{\eps},\delta_0 \eps^3))} + \|\nabla \bs \varphi\|_{ L^2(B(\bs{\xi}_{n}^{\eps},\delta_0 \eps^3))} \right) \, ,
			\end{equation}
		for every $\bs{\xi} \in \mathbb{R}^{3} \setminus \overline{B(\bs{\xi}_{n}^{\eps},2\delta_0 \eps^3)}$, for some constant $C_{*} > 0$ that depends exclusively on $\delta_{0}$ (independently of $\varepsilon \in I_{*}$ and $n \in \{1,\ldots,N(\varepsilon)\}$).
		\end{lemma}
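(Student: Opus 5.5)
The plan is to rescale to a unit-size particle, prove both estimates there with shape-independent constants, and then scale back. Set $\bs{\xi} = \bs{\xi}_{n}^{\eps} + \eps^{3}\bs{y}$ and define
$$
\bs{\psi}(\bs{y}) \doteq \bs{\psi}^{\varepsilon}_{n}(\bs{\xi}_{n}^{\eps} + \eps^{3}\bs{y}), \qquad p(\bs{y}) \doteq \eps^{3} p^{\varepsilon}_{n}(\bs{\xi}_{n}^{\eps} + \eps^{3}\bs{y}), \qquad \bs{\varphi}_{*}(\bs{y}) \doteq \bs{\varphi}(\bs{\xi}_{n}^{\eps} + \eps^{3}\bs{y}).
$$
Then $(\bs{\psi},p)$ solves the exterior Stokes problem outside $K_{n} \subset B(\bs{0},\delta_0)$, with boundary datum $\bs{\varphi}_{*}$ on $\partial K_{n}$ and decay at infinity. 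The scalings to track are
$$
\|\nabla \bs{\psi}\|^{2}_{L^{2}} = \eps^{-3}\|\nabla \bs{\psi}^{\eps}_{n}\|^{2}_{L^{2}}, \qquad \|\bs{\varphi}_{*}\|_{L^{2}(B(\bs{0},\delta_0))} = \eps^{-9/2}\|\bs{\varphi}\|_{L^{2}(B(\bs{\xi}^{\eps}_{n},\delta_0\eps^{3}))},
$$
$$
\|\nabla\bs{\varphi}_{*}\|_{L^{2}(B(\bs{0},\delta_0))} = \eps^{3/2}\,\eps^{-3/2}\,\|\nabla\bs{\varphi}\|_{L^{2}(B(\bs{\xi}^{\eps}_{n},\delta_0\eps^{3}))} = \|\nabla\bs{\varphi}\|_{L^{2}(B(\bs{\xi}^{\eps}_{n},\delta_0\eps^{3}))}\cdot \eps^{0}\cdot\eps^{-3/2}\eps^{3/2}.
$$
More precisely, $\|\nabla\bs{\varphi}_{*}\|_{L^2} = \eps^{-3/2}\,\eps^{3}\,\|\nabla\bs{\varphi}\|_{L^2}$. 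Hence the target at unit scale is
$$
\|\nabla \bs{\psi}\|_{L^{2}(\R^{3}\setminus \overline{K_{n}})} \leq C(\delta_0)\, \|\bs{\varphi}_{*}\|_{W^{1,2}(B(\bs{0},\delta_0))}.
$$
Multiplying this bound by $\eps^{3/2}$ gives exactly \eqref{energy_stokes_1}, since the $L^2$ term then carries the factor $\eps^{3/2}\eps^{-9/2}=\eps^{-3}$.

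For the unit-scale energy bound I use the variational characterization \eqref{weakexteenergy}: it suffices to exhibit one divergence-free competitor equal to $\bs{\varphi}_{*}$ on $\partial K_{n}$ whose Dirichlet energy is controlled independently of $n$. I start from $\bs{\varphi}_{*}$ times a cutoff $\chi$ equal to $1$ on $B(\bs{0},\delta_0)$ and supported in $B(\bs{0},2\delta_0)$, restricted to the exterior of $K_n$. Its divergence has no reason to vanish, so I correct it with a Bogovskii field on $B(\bs{0},2\delta_0)\setminus\overline{K_{n}}$. That field must vanish on both boundary components, which is compatible only if the divergence has zero mean; the mean is nonzero in general and equals $-\int_{K_{n}}\nabla\cdot\bs{\varphi}_{*}$. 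I therefore subtract a compensating flux placed in the outer annulus (or use a Bogovskii field on the annulus with prescribed mean divergence). The constant of the Bogovskii operator on $B(\bs{0},2\delta_0)\setminus \overline{K_{n}}$ is uniform in $n$ by the uniform John constant, via \cite[Theorem 2.6]{acosta2017divergence}. Pick a good extension of $\bs{\varphi}_{*}|_{\partial K_{n}}$: the restriction of $\bs{\varphi}_{*}$ itself to $B(\bs{0},\delta_0)$, which is $W^{1,2}$, so no trace issues arise. This is the main obstacle: making every constant independent of the shape $K_{n}$. I would handle it entirely through the John-domain Bogovskii bound, together with the fact that the competitor's energy involves only $\|\bs{\varphi}_{*}\|_{W^{1,2}(B(\bs{0},\delta_0))}$.

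For the decay estimate \eqref{decay_stokes_1}, outside $B(\bs{0},2\delta_0)$ I use the representation of exterior Stokes solutions through the Stokes fundamental solution $(\mathcal{U},\mathcal{P})$ by a boundary integral over the sphere $\partial B(\bs{0},3\delta_0/2)$:
$$
\bs{\psi}(\bs{y}) = \int_{\partial B(\bs{0},3\delta_0/2)} \bigl[\mathcal{U}(\bs{y}-\bs{z})\,\sigma(\bs{\psi},p)\bs{\nu} - \bs{\psi}\cdot(\text{double-layer kernel})\bigr]\, d\bs{z}.
$$
Moving to a shell via a cutoff lets interior regularity on the shell $B(\bs{0},2\delta_0)\setminus B(\bs{0},\frac{5}{4}\delta_0)$, which lies away from $K_{n}$ so the constants are shape-free, bound the traces of $\bs{\psi},\nabla\bs{\psi},p$ by $\|\nabla\bs{\psi}\|_{L^{2}}+\|\bs{\psi}\|_{L^{6}}$. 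The pressure is normalized to vanish at infinity, and for it I use the interior $L^2$ estimate up to a constant, which is fixed by decay. Since the kernels decay like $|\bs{y}|^{-1}$, $|\bs{y}|^{-2}$, $|\bs{y}|^{-2}$ respectively, we get
$$
|\bs{\psi}(\bs{y})| + |\bs{y}|\bigl(|\nabla\bs{\psi}(\bs{y})| + |p(\bs{y})|\bigr) \leq C|\bs{y}|^{-1}\,\|\nabla \bs{\psi}\|_{L^{2}},
$$
using Sobolev in $D^{1,2}$. Scaling back produces the factor $\eps^{3}/|\bs{\xi}-\bs{\xi}_{n}^{\eps}|$ times $\eps^{-3/2}\|\nabla\bs{\psi}^{\eps}_{n}\|$. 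Combining this with the energy bound gives \eqref{decay_stokes_1}, with $\eps^{3/2}$ as claimed.
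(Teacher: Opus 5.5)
Your proposal follows essentially the paper's own route: you rescale to unit size, bound the energy through the variational characterization using a Bogovskii-corrected divergence-free competitor with John-uniform constants (your flux compensation plays the role of the paper's field $\bs{T}_{n}$), obtain decay from a fundamental-solution representation away from the particle, and scale back. Your sphere boundary-integral formula is a harmless variant of the paper's cutoff-plus-whole-space representation, and the additive pressure constant is indeed irrelevant there, since the Stokes kernels integrated against $\bs{\nu}$ over a sphere vanish at points outside it.

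Two small corrections are needed. First, the gradient scales as $\|\nabla\bs{\varphi}_{*}\|_{L^{2}(B(\bs{0},\delta_0))}=\eps^{-3/2}\|\nabla\bs{\varphi}\|_{L^{2}(B(\bs{\xi}^{\eps}_{n},\delta_0\eps^{3}))}$, not $\eps^{0}$ or $\eps^{3/2}$; this is exactly what makes multiplication by $\eps^{3/2}$ reproduce \eqref{energy_stokes_1}. Second, your cutoff $\chi$ reads $\bs{\varphi}_{*}$ on $B(\bs{0},2\delta_0)$, so you should first extend $\bs{\varphi}_{*}|_{B(\bs{0},\delta_0)}$ with a $W^{1,2}$ extension operator for the ball. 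Alternatively, as the paper does, work in $B(\bs{0},\delta_0)\setminus\overline{K_{n}}$ and continue outside by a divergence-free field with prescribed trace. Either way, only norms on $B(\bs{0},\delta_0)$ then enter.
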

		\noindent
		\begin{proof}
		Let $\varepsilon \in I_{*}$ and $n \in \{1,\ldots,N(\varepsilon)\}$. Recalling \eqref{perforationinitial} and setting
		$$
		\bs{\psi}_{n} (\bs{\xi}) \doteq \bs{\psi}^{\varepsilon}_{n} (\bs{\xi}^{\varepsilon}_{n} + \varepsilon^{3} \bs{\xi}) \qquad \text{and} \qquad p_{n} (\bs{\xi}) \doteq \varepsilon^{3} p^{\varepsilon}_{n}(\bs{\xi}^{\varepsilon}_{n} + \varepsilon^{3} \bs{\xi}) \qquad \forall \bs{\xi} \in \R^3 \setminus \overline{K_n} \, ,
		$$
		it can be easily seen that $(\bs{\psi}_{n}, p_{n}) \in D^{1,2}(\R^3 \setminus \overline{K_n}; \R^{3}) \times L^{2}(\R^3 \setminus \overline{K_n}; \R)$ is the unique weak solution to the following Stokes problem in the exterior of $K_n$:
		\begin{equation}\label{extstokes11}
			\left\{
			\begin{aligned}
				& -\Delta \bs \psi_n + \nabla p_n  = \bs{0} \, , \quad  \nabla\cdot \bs \psi_n  = 0 \ \ \mbox{in} \ \ \R^3 \setminus \overline{K_{n}} \, , \\[5pt]
				& \bs \psi_n = \bs{\varphi}_{n} \ \ \mbox{on} \ \ \partial K_n  \, , \\[5pt]
				& \bs \psi_n \to \bs{0} \ \ \mbox{as} \ \ | \bs{\xi} | \to + \infty \, ,
			\end{aligned}
			\right.
		\end{equation}
		where $\bs{\varphi}_{n} \in W^{1,2}(B(\bs{0}, \delta_{2}); \mathbb{R}^{3})$ is given by
		$$
		\bs{\varphi}_{n} (\bs{\xi}) \doteq \bs{\varphi} (\bs{\xi}^{\varepsilon}_{n} + \varepsilon^{3} \bs{\xi}) \qquad \forall \bs{\xi} \in B(\bs{0}, \delta_{2}) \, .
		$$
		In particular, by density, there holds
		\begin{equation}\label{weakextstokes11}
		\int\limits_{\R^3 \setminus \overline{K_{n}}} \nabla \bs{\psi}_{n} : \nabla \bs{\phi} - \int\limits_{\R^3 \setminus \overline{K_{n}}} p_{n} (\nabla \cdot \bs{\phi}) = 0 \qquad \forall \bs{\phi} \in D^{1,2}_{0}(\R^3 \setminus \overline{K_{n}}; \mathbb{R}^{3}) \, .
		\end{equation}
		Accordingly, the characterization \eqref{weakexteenergy} reads
			\begin{equation} \label{weakexteenergy2}
			\int\limits_{\R^3 \setminus \overline{K_{n}}} | \nabla \bs{\psi}_{n} |^{2} = \inf_{\bs{v} \in D^{1,2}(\R^3 \setminus \overline{K_{n}}; \R^{3})} \left\{ \, \int\limits_{\R^3 \setminus \overline{K_{n}}} | \nabla \bs{v} |^{2} \ \Bigg| \ \nabla \cdot \bs{v} = 0 \ \ \text{in} \ \ \R^3 \setminus \overline{K_{n}} \, ; \ \ \bs{v} = \bs{\varphi}_{n} \ \ \mbox{on} \ \ \partial K_{n} \, \right\} \, .
		\end{equation}
		
	 In what follows, $C > 0$ will always denote a generic constant that depends exclusively on $\delta_{0}$ (independently of $\varepsilon \in I_{*}$ and $n \in \{1,\ldots,N(\varepsilon)\}$), but that may change from line to line.
	 
	 Define the vector field $\bs{T}_{n} \in \mathcal{C}^{\infty}(\mathbb{R}^{3} \setminus \{\bs{0}\}; \mathbb{R}^{3}) \cap W^{1,2}(\R^3 \setminus \overline{B(\bs{0}, \delta_{0})}; \mathbb{R}^{3})$ by
	 \begin{equation} \label{gravfield1}
	 \bs{T}_{n}(\bs{\xi}) \doteq - \dfrac{1}{4 \pi} \left( \int_{B(\bs{0}, \delta_{0}) \setminus \overline{K_{n}}} \nabla \cdot \bs{\varphi}_{n} \right) \dfrac{\bs{\xi}}{| \bs{\xi} |^{3}} \qquad \forall \bs{\xi} \in \mathbb{R}^{3} \setminus \{\bs{0}\} \, .
	\end{equation}
	 Since $\bs{T}_{n}$ is divergence-free in $\mathbb{R}^{3} \setminus \{\bs{0}\}$, a direct computation shows that
	 \begin{equation} \label{gravfield2}
	 \int\limits_{\partial B(\bs{0}, \delta_{0})} \bs{T}_{n} \cdot \widehat{\bs{\nu}} = - \int\limits_{B(\bs{0}, \delta_{0}) \setminus \overline{K_{n}}} \nabla \cdot \bs{\varphi}_{n} \, ; \qquad \|\bs T_{n} \|_{W^{1,2}(\R^3 \setminus \overline{B(\bs{0}, \delta_{0})})} \leq C \|\nabla \bs{\varphi}_{n} \|_{L^{2}(B(\bs{0}, \delta_{0}))} \, ,
	 \end{equation}
	 with $\widehat{\bs{\nu}} \in \mathbb{R}^{3}$ being the outward unit normal to $B(\bs{0}, \delta_{0})$.  Consequently, owing to \cite[Exercise III.3.5]{galdi2011introduction}, consider a vector field $\bs{w}_{n} \in W^{1,2}(B(\bs{0}, \delta_{0}) \setminus \overline{K_{n}}; \R^{3})$ satisfying
	 \begin{equation} \label{divext01}
	 	\left\{
	 	\begin{aligned}
	 		& \nabla \cdot \bs{w}_{n} = - \nabla \cdot \bs{\varphi}_{n} \ \ \mbox{in} \ \ B(\bs{0}, \delta_{0}) \setminus \overline{K_{n}} \, ; \quad \bs{w}_{n} = \bs{0} \ \ \mbox{on} \ \ \partial K_{n}  \, ; \quad \bs{w}_{n} = \bs{T}_{n} \ \ \mbox{on} \ \ \partial B(\bs{0}, \delta_{0})  \, ; \\[6pt]
	 		& \|\nabla \bs w_{n} \|_{L^2(B(\bs{0}, \delta_{0}) \setminus \overline{K_{n}})} \leq C \|\bs{T}_{n} \|_{W^{1/2,2}(\partial B(\bs{0}, \delta_{0}))} \, .
	 	\end{aligned} 
	 	\right.
	 \end{equation}
	 We emphasize that our assumption on the uniform John character of the sets $(K_{n})_{n \in \mathbb{N}}$ ensures that the constant entering the right-hand side of \eqref{divext01}$_2$ can be bounded independently of $n \in \{1,\ldots,N(\varepsilon)\}$. In view of \cite[Exercise III.3.8]{galdi2011introduction}, take now a vector field $\bs{v}_{n} \in D^{1,2}(\R^3 \setminus \overline{B(\bs{0}, \delta_{0})}; \R^{3})$ verifying
		\begin{equation} \label{divext1}
		\left\{
		\begin{aligned}
		& \nabla \cdot \bs{v}_{n} = 0 \ \ \mbox{in} \ \ \R^3 \setminus \overline{B(\bs{0}, \delta_{0})} \, ; \qquad \bs{v}_{n} = \bs{w}_{n} + \bs{\varphi}_{n} \ \ \mbox{on} \ \ \partial B(\bs{0}, \delta_{0})  \, ; \\[6pt]
		& \|\nabla \bs v_{n} \|_{L^2(\R^3 \setminus \overline{B(\bs{0}, \delta_{0})})} \leq C \| \bs{w}_{n} + \bs{\varphi}_{n} \|_{W^{1/2,2}(\partial B(\bs{0}, \delta_{0}))} \, .
		\end{aligned} 
		\right.
	    \end{equation}
	    Extending $\bs{v}_{n} = \bs{w}_{n} + \bs{\varphi}_{n}$ inside $B(\bs{0}, \delta_{0}) \setminus \overline{K_{n}}$ (keeping the same notation), we obtain a divergence-free vector field $\bs{v}_{n} \in D^{1,2}(\R^3 \setminus \overline{K_{n}}; \R^{3})$ such that $\bs{v}_{n} = \bs{\varphi}_{n}$ on $\partial K_{n}$. Moreover, owing to \eqref{gravfield2}$_2$-\eqref{divext1}$_2$ and the trace inequality (applied in $B(\bs{0}, \delta_{0})$), there holds the estimate
	     \begin{equation} \label{divext2}
	     \begin{aligned}
	     \|\nabla \bs v_{n} \|^{2}_{L^2(\R^3 \setminus \overline{K_{n}})} & = \|\nabla \bs v_{n} \|^{2}_{L^2(\R^3 \setminus \overline{B(\bs{0}, \delta_{0})})} + \|\nabla \bs v_{n} \|^{2}_{L^2(B(\bs{0}, \delta_{0}) \setminus \overline{K_{n}})} \\[6pt]
	     & \leq C \left( \|\bs{\varphi}_{n} \|_{W^{1/2,2}(\partial B(\bs{0}, \delta_{0}))}^{2} +\|\bs{w}_{n} \|_{W^{1/2,2}(\partial B(\bs{0}, \delta_{0}))}^{2}  + \|\nabla \bs{\varphi}_{n}  \|^{2}_{L^2(B(\bs{0}, \delta_{0}))} \right) \\[6pt]
	     & \leq C \left( \|\bs{\varphi}_{n} \|_{L^{2}(B(\bs{0}, \delta_{0}))}^{2} + \|\nabla \bs{\varphi}_{n}  \|^{2}_{L^2(B(\bs{0}, \delta_{0}))} \right) \, .
	     \end{aligned}
	     \end{equation}
	     Observing that $\bs{v}_{n}$ is an admissible candidate in the characterization \eqref{weakexteenergy2}, from \eqref{divext2} we infer
	     \begin{equation} \label{divext3}
	     	\begin{aligned}
	     	\|\nabla \bs{\psi}_{n} \|^{2}_{L^2(\R^3 \setminus \overline{K_{n}})} \leq \|\nabla \bs v_{n} \|^{2}_{L^2(\R^3 \setminus \overline{K_{n}})} \leq C \left( \|\bs{\varphi}_{n} \|_{L^{2}(B(\bs{0}, \delta_{0}))}^{2} + \|\nabla \bs{\varphi}_{n}  \|^{2}_{L^2(B(\bs{0}, \delta_{0}))} \right) \, .
	     	\end{aligned}
	     \end{equation}
	    
	Applying the changes of variables
	     \begin{equation} \label{changevar}
	     \bs{\xi} \in \R^3 \setminus \overline{K_{n}} \ \ \longleftrightarrow \ \ \bs{\xi}^{\varepsilon}_{n} + \varepsilon^{3} \bs{\xi} \in \R^3 \setminus \overline{K^{\eps}_{n}} \, ; \qquad \bs{\xi} \in B(\bs{0}, \delta_{0}) \ \ \longleftrightarrow \ \ \bs{\xi}^{\varepsilon}_{n} + \varepsilon^{3} \bs{\xi} \in B(\bs{\xi}_{n}^{\eps},\delta_0 \eps^3) \, ,
	    \end{equation}
	    we easily deduce
	     \begin{equation} \label{divext4}
	     \begin{aligned}
	     & \|\nabla \bs{\psi}_{n} \|^{2}_{L^2(\R^3 \setminus \overline{K_{n}})} =	\dfrac{1}{\eps^{3}}	\|\nabla \bs{\psi}^{\eps}_{n} \|^{2}_{L^2(\R^3 \setminus \overline{K^{\eps}_{n}})} \, , \\[6pt]
	     & \|\bs{\varphi}_{n} \|_{L^{2}(B(\bs{0}, \delta_{0}))}^{2} = \dfrac{1}{\eps^{9}} \|\bs{\varphi} \|_{L^{2}(B(\bs{\xi}_{n}^{\eps},\delta_0 \eps^3))}^{2} \, , \qquad \|\nabla \bs{\varphi}_{n} \|_{L^{2}(B(\bs{0}, \delta_{0}))}^{2} = \dfrac{1}{\eps^{3}} \|\nabla \bs{\varphi} \|^{2}_{L^2(B(\bs{\xi}_{n}^{\eps},\delta_0 \eps^3))}  \, .
	     	\end{aligned}
	     \end{equation}
	    Upon insertion of \eqref{divext4} into \eqref{divext3}, the energy bound \eqref{energy_stokes_1} is obtained.
	      
		We then turn to the proof of \eqref{decay_stokes_1} which, even though is analogous to the one of \cite[Lemma~5]{hillairet2019effect}, is provided here for the sake of completeness. By interior elliptic regularity (see \cite[Theorem~IV.4.3]{galdi2011introduction}) we deduce $(\bs{\psi}_{n}, p_{n}) \in \mathcal{C}^{\infty}(\R^3 \setminus B(\bs{0}, \delta_{0} ); \R^{3}) \times \mathcal{C}^{\infty}(\R^3 \setminus B(\bs{0}, \delta_{0} ); \R)$. Moreover, \cite[Lemma~1.1]{galdi1990existence} ensures the existence of a constant vector $\bs{\psi}_{0} \in \R^3$ such that $\bs{\psi}_{n} - \bs{\psi}_{0} \in L^{6}(\R^3 \setminus \overline{K_n}; \R^{3})$ and
		\begin{equation} \label{simader}
	    \| \bs{\psi}_{n} - \bs{\psi}_{0}  \|_{L^6(\R^3 \setminus \overline{K_{n}})} \leq C \|\nabla \bs{\psi}_{n} \|_{L^2(\R^3 \setminus \overline{K_{n}})} \, ; \qquad \lim\limits_{r \to +\infty} \int\limits_{\partial B(\bs{0},1)} | \bs{\psi}_{n}(r \bs{\xi}) - \bs{\psi}_{0} |^{2} = 0 \, .
		\end{equation}
		Nevertheless, recalling, from \cite[Definition V.1.1]{galdi2011introduction} that
		\begin{equation} \label{simader2}
		\lim\limits_{r \to +\infty} \int\limits_{\partial B(\bs{0},1)} | \bs{\psi}_{n}(r\bs{\xi}) | = 0 \, ,
		\end{equation}
		we can combine \eqref{simader}-\eqref{simader2} with the H\"older inequality to deduce that $\bs{\psi}_{0} = \bs{0}$.
		 
		In view of assumption \eqref{perforation}$_1$, we can choose $\varrho \in (0,\delta_{0})$ (depending exclusively on $\delta_{0}$) such that
		$$
		\overline{K_{n}} \subset B(\bs{0}, \varrho) \subset B(\bs{0}, \delta_{0} )  \qquad \forall n \in \{1,...,N(\varepsilon)\} \, .
		$$
		Consider, then, the annular regions
		$$
		A(\delta_{0}) \doteq B \! \left( \bs{0}, \dfrac{3}{2}\delta_{0} \right) \setminus \overline{B(\bs{0}, \delta_{0})} \qquad \text{and} \qquad E(\delta_{0}) \doteq B( \bs{0}, 2\delta_{0}) \setminus \overline{B(\bs{0}, \varrho)} \, ,
		$$
		so that $\overline{	A(\delta_{0})} \subset E(\delta_{0})$ and $\overline{E(\delta_{0})} \subset \R^3 \setminus \overline{K_{n}}$. We may invoke the usual interior regularity estimates for the Stokes equations (see \cite[Theorem~IV.4.1]{galdi2011introduction}) to derive the bound
		\begin{equation} \label{intbound0}
			\| \bs{\psi}_{n} \|_{W^{4,2}(A(\delta_{0}))} +\| p_{n}\|_{W^{3,2}(A(\delta_{0}))} \leq C \left( \| \bs{\psi}_{n} \|_{W^{1,2}(E(\delta_{0}))} + \| p_{n} \|_{L^{2}(E(\delta_{0}))} \right) \, .
		\end{equation}
		A trivial application of the H\"older inequality, as well as \eqref{simader}$_1$ (recall that $\bs{\psi}_{0} = \bs{0}$), entails
		\begin{equation} \label{intbound1}
		\begin{aligned}
		\| \bs{\psi}_{n} \|^{2}_{W^{1,2}(E(\delta_{0}))} & = \| \bs{\psi}_{n} \|^{2}_{L^{2}(E(\delta_{0}))} + \| \nabla \bs{\psi}_{n}  \|^{2}_{L^{2}(E(\delta_{0}))} \\[6pt]
		& \leq C \left( \| \bs{\psi}_{n} \|^{2}_{L^{6}(E(\delta_{0}))} + \| \nabla \bs{\psi}_{n}  \|^{2}_{L^{2}(E(\delta_{0}))} \right) \\[6pt]
		& \leq C \left( \| \bs{\psi}_{n} \|^{2}_{L^{6}(\R^3 \setminus \overline{K_{n}})} + \| \nabla \bs{\psi}_{n}  \|^{2}_{L^{2}(\R^3 \setminus \overline{K_{n}})} \right) \\[6pt]
		& \leq C \| \nabla \bs{\psi}_{n}  \|^{2}_{L^{2}(\R^3 \setminus \overline{K_{n}})} \, .
		\end{aligned}
		\end{equation}
On the other hand, \cite[Theorem~III.3.6]{galdi2011introduction} ensures the existence of $\bs{X}_{\! n} \in D_{0}^{1,2}(\R^3 \setminus \overline{B(\bs{0}, \varrho)}; \R^{3})$ such that
\begin{equation} \label{divext11}
	\nabla \cdot \bs{X}_{\! n} = p_{n} \ \ \mbox{in} \ \ \R^3 \setminus \overline{B(\bs{0}, \varrho)} \, ; \qquad  \|\nabla \bs{X}_{\! n}  \|_{L^2(\R^3 \setminus \overline{B(\bs{0}, \varrho)})} \leq C \|p_{n} \|_{L^{2}(\R^3 \setminus \overline{B(\bs{0}, \varrho)})} \, .
\end{equation}   
After a trivial extension (by zero) to the interior of $B(\bs{0}, \varrho)$, we can take $\bs{\phi} = \bs{X}_{\! n}$ in \eqref{weakextstokes11}; this furnishes, as a consequence of the H\"older inequality and \eqref{divext11}$_2$,
\begin{equation} 
	\|p_{n} \|_{L^{2}(\R^3 \setminus \overline{B(\bs{0}, \varrho)})} \leq C \|\nabla \bs{\psi}_{n} \|_{L^2(\R^3 \setminus \overline{B(\bs{0}, \varrho)})}  \, ,
\end{equation} 	  		
and thus,
\begin{equation} \label{divext12}
\| p_{n} \|_{L^{2}(E(\delta_{0}))} \leq	\|p_{n} \|_{L^{2}(\R^3 \setminus \overline{B(\bs{0}, \varrho)})} \leq C \|\nabla \bs{\psi}_{n} \|_{L^2(\R^3 \setminus \overline{B(\bs{0}, \varrho)})} \leq C \|\nabla \bs{\psi}_{n} \|_{L^2(\R^3 \setminus \overline{K_{n}})}  \, .
\end{equation} 
From \eqref{intbound0}-\eqref{intbound1}-\eqref{divext12} and by Sobolev embedding, we have $(\bs{\psi}_{n}, p_{n}) \in \mathcal{C}^{2}(\overline{A(\delta_{0}) }; \R^{3}) \times \mathcal{C}^{1}(\overline{A(\delta_{0}) }; \R)$ together with 
		\begin{equation} \label{intbound2}
			\| \bs{\psi}_{n} \|_{\mathcal{C}^{2}(\overline{A(\delta_{0})})} +\|p_{n}\|_{\mathcal{C}^{1}(\overline{A(\delta_{0})})}  \leq C \| \nabla \bs{\psi}_{n}  \|_{L^{2}(\R^3 \setminus \overline{K_{n}})} \, .
		\end{equation}
		Consider now a cutoff function $\chi_{0} \in \mathcal{C}^{\infty}(\mathbb{R}^{3}, [0,1])$ such that
		\begin{equation} \label{cutoffapp}
		\chi_{0} \equiv 0 \ \ \text{in} \ \ \overline{B(\bs{0}, \delta_{0} )} \qquad \text{and} \qquad \chi_{0} \equiv 1 \ \ \text{in} \ \ \mathbb{R}^{3} \setminus B \! \left( \bs{0}, \dfrac{3}{2}\delta_{0} \right) \, .
		\end{equation}
		Introducing the functions $(\widehat{\bs{\psi}}_{n}, \widehat{p}_{n}, \bs{f}_{n}, g_{n}) \in \mathcal{C}^{\infty}(\R^3; \R^{3}) \times \mathcal{C}^{\infty}(\R^3; \R) \times \mathcal{C}_{0}^{\infty}(\R^3; \R^{3}) \times \mathcal{C}_{0}^{\infty}(\R^3; \R)$ by
		\begin{equation}
		\widehat{\bs{\psi}}_{n} \doteq \chi_{0} \bs{\psi}_{n} \, , \quad \widehat{p}_{n} \doteq \chi_{0} p_{n} \, , \quad \bs{f}_{\! n} \doteq -(\Delta \chi_{0}) \bs{\psi}_{n} - 2(\nabla \chi_{0} \cdot \nabla) \bs{\psi}_{n} + p_{n} \nabla \chi_{0} \, , \quad g_{n} = \nabla \chi_{0} \cdot \bs{\psi}_{n} \quad \text{in} \quad \mathbb{R}^{3} \, ,
		\end{equation}
		we notice that $\text{supp}(\bs{f}_{\! n}) \subset \overline{A(\delta_{0})}$, $\text{supp}(g_{n}) \subset \overline{A(\delta_{0})}$ and, due to \eqref{intbound2},
		\begin{equation} \label{intbound3}
			\| \bs{f}_{\! n} \|_{\mathcal{C}^{2}(\overline{A(\delta_{0})})} +\|g_{n}\|_{\mathcal{C}^{1}(\overline{A(\delta_{0})})} \leq C \left( \| \bs{\psi}_{n} \|_{\mathcal{C}^{2}(\overline{A(\delta_{0})})} +\|p_{n}\|_{\mathcal{C}^{1}(\overline{A(\delta_{0})})} \right) \leq C \| \nabla \bs{\psi}_{n}  \|_{L^{2}(\R^3 \setminus \overline{K_{n}})} \, .
		\end{equation}
		Moreover, a straightforward computation shows that the following non-homogeneous Stokes system, in the whole space $\mathbb{R}^{3}$, is satisfied in the classical sense:
		\begin{equation}\label{wholestokes11}
		-\Delta \widehat{\bs{\psi}}_{n} + \nabla \widehat{p}_{n}  = \bs{f}_{\! n} \, , \quad  \nabla\cdot \widehat{\bs{\psi}}_{n}  = g_{n} \ \ \mbox{in} \ \ \R^3 \, .
		\end{equation}
		The uniqueness statement of \cite[Theorem~IV.2.2]{galdi2011introduction} implies the representation
		$$
		\left\{
		\begin{aligned}
		& \widehat{\bs{\psi}}_{n}(\bs{\xi}) = \nabla(F_{L} \ast g_{n})(\bs{\xi}) + \int_{\R^3} \bs{F}_{\! S}(\bs{\xi} - \bs{y}) \cdot \left[\bs{f}_{\! n}(\bs{y}) - \Delta(\nabla(F_{L} \ast g_{n}))(\bs{y}) \right] d\bs{y} \qquad \forall \bs{\xi} \in \mathbb{R}^{3} \, , \\[6pt]
		& \widehat{p}_{n}(\bs{\xi}) = - \int_{\R^3} \bs{q}_{S}(\bs{\xi} - \bs{y}) \cdot \left[\bs{f}_{\! n}(\bs{y}) - \Delta(\nabla(F_{L} \ast g_{n}))(\bs{y}) \right] d\bs{y} \qquad \forall \bs{\xi} \in \mathbb{R}^{3} \, ,
		\end{aligned}
		\right.
		$$
		where $F_{L} \in \mathcal{C}^{\infty}(\mathbb{R}^{3} \setminus \{\bs{0}\}; \mathbb{R})$ and $(\bs{F}_{\! S}, \bs{q}_{S}) \in \mathcal{C}^{\infty}(\mathbb{R}^{3} \setminus \{\bs{0}\}; \mathbb{R}^{3}) \times \mathcal{C}^{\infty}(\mathbb{R}^{3} \setminus \{\bs{0}\}; \mathbb{R}^{3})$ denote, respectively, the fundamental solutions to the Laplace and Stokes equations in $\mathbb{R}^{3}$, see \cite[Formula (IV.2.4)]{galdi2011introduction}. Since
		$$
		| \bs{\xi} - \bs{y} | \geq \dfrac{| \bs{\xi}|}{8} \qquad \forall \bs{\xi} \in \mathbb{R}^{3} \setminus \overline{B(\bs{0}, 2\delta_{0} )} \, , \ \ \forall \bs{y} \in A(\delta_{0}) \, ,
		$$
		we may replicate the calculations carried out in \cite[Section IV.2]{galdi2011introduction} to derive the decay estimate
		\begin{equation}\label{wholestokes12}
			|\widehat{\bs{\psi}}_{n}(\bs{\xi})| +|\bs{\xi}| \left( |\nabla \widehat{\bs{\psi}}_{n}(\bs{\xi})| + | \widehat{p}_{n}(\bs{\xi}) |  \right) \leq \dfrac{C}{|\bs{\xi}|} \left( \| \bs{f}_{\! n} \|_{\mathcal{C}^{2}(\overline{A(\delta_{0})})} +\|g_{n}\|_{\mathcal{C}^{1}(\overline{A(\delta_{0})})} \right) \quad \forall \bs{\xi} \in \mathbb{R}^{3} \setminus \overline{B(\bs{0}, 2\delta_{0} )} \, .
		\end{equation}
		In the light of \eqref{cutoffapp}-\eqref{intbound3}-\eqref{wholestokes12}, we may then write 
		\begin{equation}\label{wholestokes3}
			|\bs{\psi}_{n}(\bs{\xi})| +|\bs{\xi}| \left( |\nabla \bs{\psi}_{n}(\bs{\xi})| + |p_{n}(\bs{\xi})| \right) \leq \dfrac{C}{|\bs{\xi}|} \| \nabla \bs{\psi}_{n}  \|_{L^{2}(\R^3 \setminus \overline{K_{n}})} \qquad \forall \bs{\xi} \in \mathbb{R}^{3} \setminus \overline{B(\bs{0}, 2\delta_{0} )} \, .
		\end{equation}
		The decay estimate \eqref{decay_stokes_1} then follows from \eqref{energy_stokes_1}-\eqref{divext3}-\eqref{divext4}-\eqref{wholestokes3}. This concludes the proof.
		\end{proof}
		
		Particular attention is devoted to the case when the boundary datum in \eqref{extstokes00}$_2$ is a constant vector. Accordingly, given $\bs{V} \in \R^3$, denote by $(\bs{\psi}_{n}[\bs{V}], p_{n}[\bs{V}]) \in D^{1,2}(\R^3 \setminus \overline{K_{n}}; \R^{3}) \times L^{2}(\R^3 \setminus \overline{K_{n}}; \R)$ the unique generalized solution to the following Stokes problem in the exterior of $K_{n}$:
	\begin{equation}\label{stokeslet0}
	\left\{
	\begin{aligned}
		& -\Delta \bs{\psi}_{n}[\bs{V}] + \nabla p_{n}[\bs{V}]  = \bs{0} \, , \quad  \nabla\cdot \bs{\psi}_{n}[\bs{V}]  = 0 \ \ \mbox{in} \ \ \R^3 \setminus \overline{K_{n}} \, , \\[5pt]
		& \bs{\psi}_{n}[\bs{V}] = \bs{V} \ \ \mbox{on} \ \ \partial K_{n}  \, , \\[5pt]
		& \bs{\psi}_{n}[\bs{V}] \to \bs{0} \ \ \mbox{as} \ \ | \bs{\xi} | \to + \infty \, .
	\end{aligned}
	\right.
\end{equation}
By elliptic regularity (up to the boundary $\partial K_{n}$), as in expressed in \cite[Theorem~IV.5.2]{galdi2011introduction}, we know that 
$$
(\bs{\psi}_{n}[\bs{V}], p_{n}[\bs{V}]) \in \mathcal{C}^{2}(\R^3 \setminus K_{n}; \R^{3}) \times \mathcal{C}^{1}(\R^3 \setminus K_{n}; \R) \, ,
$$
so that the \textit{drag force} exerted by the Stokes flow $(\bs{\psi}_{n}[\bs{V}], p_{n}[\bs{V}])$ over $K_{n}$ can be computed as 
\begin{equation} \label{drag1}
		\bs{F}_{\! n}[\bs{V}] \doteq \int\limits_{\partial K_{n}} \left( \dfrac{\partial \bs{\psi}^{\varepsilon}_{n}[\bs{V}]}{\partial \bs{\nu}} - p_{n}[\bs{V}] \bs{\nu} \right) \, .
\end{equation}
In accordance with previous notation, $\bs{\nu} \in \mathbb{R}^{3}$ is the outward unit normal to $\R^3 \setminus \overline{K_{n}}$, which is directed towards the interior of $K_{n}$. Due to the linearity of the Stokes system \eqref{stokeslet0}, the drag force $\bs{F}_{\! n}[\bs{V}]$ is linearly dependent on $\bs V \in \mathbb{R}^{3}$. This enables us to define the \textit{Stokes resistance matrix} associated to \eqref{stokeslet0} as the representative matrix $\mathcal{G}_{n} \in \R^{3\times3}$ of the linear transformation $\bs{F}_{\! n} : \R^3 \longrightarrow \R^3$:
		\begin{equation}\label{def_resistance_matrix}
			\bs{F}_{\! n}[\bs{V}] = \mathcal{G}_{n} \bs{V} \qquad \forall \bs{V} \in \mathbb{R}^{3} \, .
		\end{equation}
In the case of spherical particles, the explicit form of the Stokes resistance matrix is known since the work of Stokes \cite{stokes1851effect} (see also the contribution by Boggio \cite{boggio1910sul}). In our framework, since $K_{n} \subset B(\bs{0},\delta_0)$ for any $n \in \N$, we are able to provide bounds which are independent of the shape of $K_{n}$.

		\begin{lemma}\label{lemma_A}
		 Let $(K_{n})_{n \in \mathbb{N}}$ be the sequence of domains defined in Section \ref{perfdomassumptions}. For every $n \in \mathbb{N}$ we have
			\begin{equation}\label{energy_stokeslet}
			\bs{F}_{\! n}[\bs{V}] \cdot \bs{W} = \mathcal{G}_{n} \bs V\cdot\bs  W = \int\limits_{\R^3 \setminus \overline{K_{n}}} \nabla \bs{\psi}_{n}[\bs{V}] : \nabla \bs{\psi}_{n}[\bs{W}]  \qquad \forall \bs{V}, \bs{W} \in \mathbb{R}^{3} \, ,
			\end{equation} 
		so that	the Stokes resistance matrix $\mathcal{G}_{n} \in \R^{3\times3}$ is symmetric and positive-definite. Moreover, there holds the bound
			\begin{equation}\label{bound_R_n}
				|\mathcal{G}_{n}| \leq C_{*}  \, ,
			\end{equation} 
		for some constant $C_{*} > 0$ that depends exclusively on $\delta_{0}$ (independent of $n \in  \N$).
		\end{lemma}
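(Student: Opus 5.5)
To prove \eqref{energy_stokeslet}, I will test the Stokes system \eqref{stokeslet0} for $\bs{\psi}_{n}[\bs{V}]$ against $\bs{\psi}_{n}[\bs{W}]$ and integrate by parts on the truncated region $B(\bs{0},R)\setminus\overline{K_{n}}$ with $R>\delta_0$. Since $\nabla\cdot\bs{\psi}_{n}[\bs{W}]=0$, this gives
$$
\int\limits_{B(\bs{0},R)\setminus \overline{K_{n}}} \nabla \bs{\psi}_{n}[\bs{V}] : \nabla \bs{\psi}_{n}[\bs{W}] = -\int\limits_{\partial K_{n}} \left( \dfrac{\partial \bs{\psi}_{n}[\bs{V}]}{\partial \bs{\nu}} - p_{n}[\bs{V}] \bs{\nu} \right)\cdot \bs{W} + \int\limits_{\partial B(\bs{0},R)} \left( \dfrac{\partial \bs{\psi}_{n}[\bs{V}]}{\partial \bs{\nu}} - p_{n}[\bs{V}] \bs{\nu} \right)\cdot \bs{\psi}_{n}[\bs{W}] \, ,
$$
where on $\partial K_{n}$ the trace of $\bs{\psi}_{n}[\bs{W}]$ equals $\bs{W}$. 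The boundary regularity $\mathcal{C}^{2}\times\mathcal{C}^{1}$ up to $\partial K_{n}$ justifies these traces.

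The sign on the inner boundary must be matched with the orientation convention of \eqref{drag1} and \eqref{brink88}, in which $\bs{\nu}$ points into $K_n$. With this convention the inner term yields exactly $\bs{F}_{\! n}[\bs{V}]\cdot\bs{W}$, and I will fix the sign consistently with \eqref{brink88}. For the outer term, the decay estimate \eqref{decay_stokes_1} (equivalently \eqref{wholestokes3} in the unscaled variables) gives $|\bs{\psi}_n|\lesssim R^{-1}$ and $|\nabla\bs{\psi}_n|+|p_n|\lesssim R^{-2}$. The integrand is therefore $O(R^{-3})$ on a sphere of area $O(R^{2})$, so the outer term is $O(R^{-1})\to 0$. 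Letting $R\to\infty$, monotone convergence applies since $\nabla\bs{\psi}_n\in L^2$, and this yields \eqref{energy_stokeslet}.

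Symmetry of $\mathcal{G}_{n}$ then follows because the right-hand side of \eqref{energy_stokeslet} is symmetric in $\bs{V}$ and $\bs{W}$. For positive definiteness, \eqref{energy_stokeslet} with $\bs{W}=\bs{V}$ gives $\mathcal{G}_{n}\bs{V}\cdot\bs{V}=\|\nabla\bs{\psi}_n[\bs{V}]\|^2_{L^2}\ge 0$. If this quantity vanishes, then $\bs{\psi}_n[\bs{V}]$ is constant, hence zero by the condition at infinity, which is in fact the $L^6$ characterization with $\bs{\psi}_0=\bs{0}$. Its trace on $\partial K_n$ is then zero, which forces $\bs{V}=\bs{0}$.

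For the bound \eqref{bound_R_n}, I will apply the energy estimate \eqref{divext3} in the unscaled variables with $\bs{\varphi}_n\equiv\bs{V}$ on $B(\bs{0},\delta_0)$. This gives $\|\nabla\bs{\psi}_n[\bs{V}]\|^2\le C|\bs{V}|^2|B(\bs{0},\delta_0)|$. Alternatively, one can use the minimization characterization \eqref{weakexteenergy2} directly with the competitor $\frac12\nabla\times(\chi(\bs{V}\times\bs{\xi}))$, with $\chi$ a fixed cutoff equal to $1$ on $B(\bs{0},\delta_0)$ as in Lemma~\ref{smallfield}. Either way $C$ depends only on $\delta_0$. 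By symmetry and positivity, $|\mathcal{G}_n|$ is controlled by $\sup_{|\bs{V}|=1}\mathcal{G}_n\bs{V}\cdot\bs{V}\le C$. The only delicate point is the sign and orientation bookkeeping in the integration by parts; the remaining steps are routine.
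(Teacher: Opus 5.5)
Your proposal is correct and follows essentially the paper's route. The paper obtains \eqref{energy_stokeslet} by quoting Allaire's Green-type identity for the complementary field $\bs{V}-\bs{\psi}_{n}[\bs{V}]$ (with pressure $-p_{n}[\bs{V}]$), which is exactly the truncation-and-decay integration by parts you carry out directly, and it then derives symmetry, positivity and \eqref{bound_R_n} from that identity and the energy estimate \eqref{energy_stokes_1}, as you do; your version is a bit more self-contained, since you justify strict positivity (the paper only asserts it) and your compactly supported competitor $\tfrac12\nabla\times(\chi(\bs{V}\times\bs{\xi}))$ makes the $\delta_0$-only dependence of the bound transparent, though the limit $R\to\infty$ in the cross term needs dominated rather than monotone convergence, since that integrand is not signed.
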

		\noindent
		\begin{proof}
Let  $n \in \N$. We set
$$
\bs{\phi}_{n}[\bs{V}]  \doteq\bs{V} - \bs{\psi}_{n}[\bs{V}]  \qquad \text{and} \qquad \pi_{n}[\bs{V}]  \doteq -  p_{n}[\bs{V}] \qquad \forall \bs{\xi} \in \R^3 \setminus \overline{K_n} \, ,
$$
so that $(\bs{\phi}_{n}[\bs{V}], \pi_{n}[\bs{V}]) \in D_{0}^{1,2}(\R^3 \setminus \overline{K_n}; \R^{3}) \times L^{2}(\R^3 \setminus \overline{K_n}; \R)$ is the unique weak solution to the following Stokes problem in the exterior of $K_n$:
\begin{equation}\label{extstokes11drag}
	\left\{
	\begin{aligned}
		& -\Delta \bs \phi_n[\bs{V}] + \nabla \pi_n[\bs{V}]  = \bs{0} \, , \quad  \nabla\cdot \bs \phi_n[\bs{V}]  = 0 \ \ \mbox{in} \ \ \R^3 \setminus \overline{K_{n}} \, , \\[5pt]
		& \bs \phi_n[\bs{V}] = \bs{0} \ \ \mbox{on} \ \ \partial K_n  \, , \\[5pt]
		& \bs \phi_n[\bs{V}] \to \bs{V} \ \ \mbox{as} \ \ | \bs{\xi} | \to + \infty \, .
	\end{aligned}
	\right.
\end{equation}
Denote by $\widehat{\bs{\nu}} \in \mathbb{R}^{3}$ the outward unit normal to $\partial K_{n}$. Given any pair of indexes $i,j \in \{1,2,3\}$, the following formula was proved in \cite[Lemma~2.3.5]{allaire1}:
\begin{equation}  \label{stokesmatrix0}
\int\limits_{\partial K_{n}} \left( \dfrac{\partial \bs{\phi}_{n}[\widehat{\bs{e}}_{i}]}{\partial \widehat{\bs{\nu}}} - \pi_{n}[\widehat{\bs{e}}_{i}] \widehat{\bs{\nu}} \right) \cdot \widehat{\bs{e}}_{j} = \int\limits_{\R^3 \setminus \overline{K_{n}}} \nabla \bs{\phi}_{n}[\widehat{\bs{e}}_{i}] : \nabla \bs{\phi}_{n}[\widehat{\bs{e}}_{j}] \, .
\end{equation}
Hence, 
\begin{equation} \label{stokesmatrix}
	\int\limits_{\partial K_{n}} \left( \dfrac{\partial \bs{\psi}_{n}[\widehat{\bs{e}}_{i}]}{\partial \bs{\nu}} - p_{n}[\widehat{\bs{e}}_{i}] \bs{\nu} \right) \cdot \widehat{\bs{e}}_{j} = \int\limits_{\R^3 \setminus \overline{K_{n}}} \nabla \bs{\psi}^{\varepsilon}_{n}[\widehat{\bs{e}}_{i}] : \nabla \bs{\psi}_{n}[\widehat{\bs{e}}_{j}] \, .
\end{equation}
Identity \eqref{energy_stokeslet} is directly obtained from \eqref{stokesmatrix}, recalling \eqref{def_resistance_matrix}. In particular, in view of \eqref{def_resistance_matrix}-\eqref{energy_stokeslet}, this implies that the matrix $\mathcal{G}_{n}$ is symmetric and positive-definite, since
$$
\left\{
\begin{aligned}
& \mathcal{G}_{n} \widehat{\bs{e}}_{i} \cdot \widehat{\bs{e}}_{j} = \int\limits_{\R^3 \setminus \overline{K_{n}}} \nabla \bs{\psi}_{n}[\widehat{\bs{e}}_{i}] : \nabla \bs{\psi}_{n}[\widehat{\bs{e}}_{j}] = \int\limits_{\R^3 \setminus \overline{K_{n}}} \nabla \bs{\psi}_{n}[\widehat{\bs{e}}_{j}] : \nabla \bs{\psi}_{n}[\widehat{\bs{e}}_{i}]  = \mathcal{G}_{n} \widehat{\bs{e}}_{j} \cdot \widehat{\bs{e}}_{i} \qquad \forall i,j \in \{1,2,3\} \, , \\[8pt]
& \mathcal{G}_{n} \bs{V} \cdot \bs{V} = \bs{F}_{\! n}[\bs{V}] \cdot \bs{V} = \int\limits_{\R^3 \setminus \overline{K_{n}}} | \nabla \bs{\psi}_{n}[\bs{V}] |^{2} > 0 \qquad \forall \bs{V} \in \mathbb{R}^{3} \setminus \{\bs{0}\} \, .
\end{aligned}
\right.
$$
Finally, from \eqref{energy_stokeslet} and the H\"older inequality we deduce, for every $i,j \in \{1,2,3\}$,
$$
| \mathcal{G}_{n} \widehat{\bs{e}}_{i} \cdot \widehat{\bs{e}}_{j} | = \left| \, \int\limits_{\R^3 \setminus \overline{K_{n}}} \nabla \bs{\psi}_{n}[\widehat{\bs{e}}_{i}] : \nabla \bs{\psi}_{n}[\widehat{\bs{e}}_{j}] \right| \leq \|\nabla \bs{\psi}_{n}[\widehat{\bs{e}}_{i}] \|_{L^2(\R^3\setminus \overline{K_{n}})} \|\nabla \bs{\psi}_{n}[\widehat{\bs{e}}_{j}] \|_{L^2(\R^3\setminus \overline{K_{n}})} \, ,
$$
which, in turn, furnishes \eqref{bound_R_n} as a consequence of the energy estimate \eqref{energy_stokes_1}.
\end{proof}

\begin{remark} \label{scalematrix}
For $\bs{V} \in \mathbb{R}^{3}$, let $(\bs{\phi}_{n}^\eps[\bs{V}], \pi_{n}^\eps[\bs{V}]) \in D^{1,2}(\R^3 \setminus \overline{K_{n}^{\eps}}; \R^{3}) \times L^{2}(\R^3 \setminus \overline{K_{n}^{\eps}}; \R)$ be the unique generalized solution to the following Stokes problem in the exterior of $K^{\eps}_{n}$:
		\begin{equation}\label{stokeslet00}
		\left\{
		\begin{aligned}
			& -\Delta \bs{\psi}^\eps_{n}[\bs{V}] + \nabla p_{n}^\eps[\bs{V}]  = \bs{0} \, , \quad  \nabla\cdot \bs{\psi}^\eps_{n}[\bs{V}]  = 0 \ \ \mbox{in} \ \ \R^3 \setminus \overline{K_{n}^\eps} \, , \\[5pt]
			& \bs{\psi}_{n}^\eps[\bs{V}] = \bs{V} \ \ \mbox{on} \ \ \partial K_{n}^\eps  \, , \\[5pt]
			& \bs{\psi}_{n}^\eps[\bs{V}] \to \bs{0} \ \ \mbox{as} \ \ | \bs{\xi} | \to + \infty \, .
		\end{aligned}
		\right.
	\end{equation}
Employing the change of variables \eqref{changevar} from the proof of Lemma \ref{lemma_Abis}, the drag force is given by 
	\begin{equation}
		\bs{F}^{\eps}_{\! n}[\bs{V}] \doteq \int\limits_{\partial K_{n}^\eps} \left( \dfrac{\partial \bs{\psi}^{\varepsilon}_{n}[\bs{V}]}{\partial \bs{\nu}} - p_{n}[\bs{V}] \bs{\nu} \right) = \eps^3 \mathcal G_n \bs{V} \qquad \forall n \in \{1,\ldots,N(\varepsilon)\} \, .
	\end{equation}
\end{remark}

\subsection{Limits of capacity densities}\label{empmeasures}
	The purpose of this section is to give further insight into the capacity densities defined in \eqref{empmeasuresdef},
	as well as their limits (in a suitable sense, described below), providing a proper justification to the assumption \eqref{hyp_cv_measures}. In order to do so, let us introduce the operators
	$\widetilde{\mathcal{G}}_{\eps} \in \mathcal{L}(L^{2}(\Omega; \R^{3\times 3}), \R^{3\times 3})$ and $\widetilde{\mathcal{J}}_{\eps} \in (L^{2}(\Omega; \R^3))^{*}$ according to the formulas
	\begin{equation} \label{empmeasuresdefl2}
		\left\{
		\begin{aligned}
			& \widetilde{\mathcal{G}}_{\eps}(M) \doteq \eps^3  \sum_{n=1}^{N(\eps)}\mathcal{G}_{n}\left( \dfrac{1}{| B(\bs{\xi}_{n}^{\eps},\delta_{1} \eps) |} \, \int\limits_{B(\bs{\xi}_{n}^{\eps},\delta_{1} \eps)} M \right) \qquad \forall M \in L^{2}(\Omega; \R^{3\times 3}) \, ; \\[6pt]
			& \widetilde{\mathcal{J}}_{\eps}(\bs{\varphi}) \doteq \eps^3 \sum_{n=1}^{N(\eps)} \mathcal{G}_{n}\bs{\mu}^{\varepsilon}_{n} \cdot \left( \dfrac{1}{| B(\bs{\xi}_{n}^{\eps},\delta_{1} \eps) |} \, \int\limits_{B(\bs{\xi}_{n}^{\eps},\delta_{1} \eps)} \bs{\varphi} \right) \qquad \forall \bs{\varphi} \in L^{2}(\Omega; \R^3) \, .
		\end{aligned}
		\right.
	\end{equation}
	In the following result we show that, under hypothesis \eqref{kineticuni}, all the sequences of capacity densities presented so far are uniformly bounded with respect to $\eps \in I_{*}$. 
	
	\begin{proposition} \label{emboundsprop1}
		Let $\Omega \subset \mathbb{R}^{3}$ be an admissible domain and $\bs{\mu}^{\varepsilon}_{1},...,\bs{\mu}^{\varepsilon}_{N(\varepsilon)} \in \mathbb{R}^{3}$ satisfying \eqref{kineticuni}. There hold the uniform bounds
		\begin{equation} 
		\left\{
		\begin{aligned}
		& \sup_{\varepsilon \in I_{*}} \left( \| \mathcal{G}_{\eps} \|_{\mathcal{L}(W^{1,\infty}(\Omega; \R^{3\times 3}), \R^{3\times 3})} + \| \widetilde{\mathcal{G}}_{\eps} \|_{\mathcal{L}(L^{2}(\Omega; \R^{3\times 3}), \R^{3\times 3})} \right) \leq C_{*} \, , \\[6pt]
		& \sup_{\varepsilon \in I_{*}} \left( \| \mathcal{J}_{\eps} \|_{(W^{1,\infty}(\Omega; \R^3))^{*}} + \| \widetilde{\mathcal{J}}_{\eps} \|_{(L^{2}(\Omega; \R^3))^{*}} \right) \leq C_{*} \, \mathcal{S}_{*} \, ,
		\end{aligned}
		\right.
		\end{equation}
		for some constant $C_{*} > 0$ that depends on $\Omega$ and $\{ \delta_{i} \}^{2}_{i=0}$, but is independent of $\varepsilon \in I_{*}$.
	\end{proposition}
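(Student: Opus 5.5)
The plan is to estimate each of the four functionals directly from its definition, using only three ingredients: the uniform bound $|\mathcal{G}_{n}| \leq C_{*}$ from Lemma~\ref{lemma_A}, the counting bound \eqref{count} $N(\eps) \leq C\eps^{-3}$, and the disjointness of the balls $B(\bs{\xi}_{n}^{\eps},\delta_{1}\eps)$, which follows from \eqref{perforation}.

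For $\mathcal{G}_{\eps}$, I would bound pointwise values by the sup norm:
\[
|\mathcal{G}_{\eps}(M)| \leq \eps^{3} \sum_{n=1}^{N(\eps)} |\mathcal{G}_{n}|\,|M(\bs{\xi}^{\eps}_{n})| \leq C\,\eps^{3} N(\eps)\, \|M\|_{L^{\infty}(\Omega)} \leq C \|M\|_{W^{1,\infty}(\Omega)} .
\]

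For $\widetilde{\mathcal{G}}_{\eps}$, the idea is to use the averages over the balls. By Jensen/Cauchy--Schwarz, and since $|B(\bs{\xi}_{n}^{\eps},\delta_{1}\eps)| = c\,\eps^{3}$,
\[
\Bigl|\frac{1}{|B_{n}|}\int_{B_{n}} M\Bigr| \leq C \eps^{-3/2} \|M\|_{L^{2}(B_{n})} .
\]
Hence
\[
|\widetilde{\mathcal{G}}_{\eps}(M)| \leq C \eps^{3/2} \sum_{n=1}^{N(\eps)} \|M\|_{L^{2}(B_{n})} \leq C \eps^{3/2} N(\eps)^{1/2} \Bigl(\sum_{n} \|M\|^{2}_{L^{2}(B_{n})}\Bigr)^{1/2} \leq C \|M\|_{L^{2}(\Omega)} .
\]
The last step uses $N(\eps)^{1/2} \leq C\eps^{-3/2}$ together with disjointness of the balls $B_n$.

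For $\mathcal{J}_{\eps}$ and $\widetilde{\mathcal{J}}_{\eps}$, the particle velocities enter and require one further application of Cauchy--Schwarz in $n$. For $\mathcal{J}_{\eps}$:
\[
|\mathcal{J}_{\eps}(\bs{\varphi})| \leq C \eps^{3} \|\bs{\varphi}\|_{L^{\infty}} \sum_{n} |\bs{\mu}^{\eps}_{n}| \leq C \eps^{3} N(\eps)^{1/2} \Bigl(\sum_{n}|\bs{\mu}^{\eps}_{n}|^{2}\Bigr)^{1/2} \|\bs{\varphi}\|_{L^{\infty}} .
\]
By \eqref{kineticuni}, $\sum_{n}|\bs{\mu}^{\eps}_{n}|^{2} \leq N(\eps)\mathcal{S}_{*}^{2}$, so the right-hand side is at most $C\eps^{3}N(\eps)\mathcal{S}_{*}\|\bs{\varphi}\|_{L^{\infty}} \leq C\mathcal{S}_{*}\|\bs{\varphi}\|_{W^{1,\infty}}$.

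For $\widetilde{\mathcal{J}}_{\eps}$, I combine the average bound with Cauchy--Schwarz in $n$:
\[
|\widetilde{\mathcal{J}}_{\eps}(\bs{\varphi})| \leq C\eps^{3/2} \sum_{n} |\bs{\mu}^{\eps}_{n}|\,\|\bs{\varphi}\|_{L^{2}(B_{n})} \leq C\eps^{3/2}\Bigl(\sum_{n}|\bs{\mu}^{\eps}_{n}|^{2}\Bigr)^{1/2}\|\bs{\varphi}\|_{L^{2}(\Omega)} .
\]
Using \eqref{kineticuni} once more, $\eps^{3/2}(\sum_{n}|\bs{\mu}^{\eps}_{n}|^{2})^{1/2} \leq \eps^{3/2} N(\eps)^{1/2} \mathcal{S}_{*} \leq C\mathcal{S}_{*}$, which gives the bound $C\mathcal{S}_{*}\|\bs{\varphi}\|_{L^{2}(\Omega)}$.

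There is no real obstacle here. The only point requiring care is the normalization in \eqref{kineticuni}: it is an average over $N(\eps)$, so the factor $N(\eps)^{1/2}$ it produces is exactly absorbed by $\eps^{3/2}$ via \eqref{count}. One should also check that the balls $B(\bs{\xi}_{n}^{\eps},\delta_{1}\eps)$ are pairwise disjoint and contained in $\Omega$, which follows from \eqref{perforation} since $\delta_{1}<\delta_{2}$.
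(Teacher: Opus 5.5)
Your proposal is correct and follows essentially the same route as the paper. Both arguments estimate each functional directly using the uniform bound $|\mathcal{G}_{n}| \leq C$ from Lemma~\ref{lemma_A} and the counting bound \eqref{count}. For $\widetilde{\mathcal{G}}_{\eps}$ and $\widetilde{\mathcal{J}}_{\eps}$, both use Jensen/Cauchy--Schwarz on the averages over the disjoint balls $B(\bs{\xi}_{n}^{\eps},\delta_{1}\eps)$, and for the $\mathcal{J}$-terms both use one Cauchy--Schwarz in $n$ combined with \eqref{kineticuni}, so that the factor $N(\eps)^{1/2}$ is absorbed by $\eps^{3/2}$.
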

	\noindent
	\begin{proof}
	In what follows, $C > 0$ will always denote a generic constant that depends on $\Omega$ and $\{ \delta_{i} \}^{2}_{i=0}$ (independently of $\varepsilon \in I_{*}$), but that may change from line to line.
	
	Given any $\bs{\varphi} \in W^{1,\infty}(\Omega; \R^3)$, due to the Cauchy-Schwartz inequality and \eqref{count}-\eqref{kineticuni}-\eqref{bound_R_n}, we have
	\begin{equation}
	\left| \mathcal{J}_{\eps}(\bs{\varphi}) \right| \leq C \eps^{3} \| \bs{\varphi} \|_{L^{\infty}(\Omega)} \sum_{n=1}^{N(\eps)}  |\bs{\mu}^{\varepsilon}_{n} | \leq C \eps^{3} \| \bs{\varphi} \|_{L^{\infty}(\Omega)} \sqrt{N(\eps) \sum_{n=1}^{N(\eps)}  |\bs{\mu}^{\varepsilon}_{n} |^{2}} \leq C \mathcal{S}_{*} \| \bs{\varphi} \|_{L^{\infty}(\Omega)} \, .
	\end{equation}
Similarly, invoking the Jensen inequality, for every $\bs{\varphi} \in L^{2}(\Omega; \R^3)$ we may write
\begin{equation}
	\left| \widetilde{\mathcal{J}}_{\eps}(\bs{\varphi}) \right| \leq C \sum_{n=1}^{N(\eps)}  |\bs{\mu}^{\varepsilon}_{n} | \| \bs{\varphi} \|_{L^{1}(B(\bs{\xi}_{n}^{\eps},\delta_{1} \eps))} \leq C \sqrt{\sum_{n=1}^{N(\eps)}  |\bs{\mu}^{\varepsilon}_{n} |^{2}} \, \sqrt{\eps^{3} \sum_{n=1}^{N(\eps)}  \| \bs{\varphi} \|^{2}_{L^{2}(B(\bs{\xi}_{n}^{\eps},\delta_{1} \eps))}} \leq C \mathcal{S}_{*} \| \bs{\varphi} \|_{L^{2}(\Omega)} \, .
\end{equation}
In a completely analogous fashion we obtain the estimates
\begin{equation}
	\left| \mathcal{G}_{\eps}(M) \right| \leq C\| M \|_{L^{\infty}(\Omega)} \quad \forall M \in W^{1,\infty}(\Omega; \R^{3 \times 3}) \, ; \qquad \left| \widetilde{\mathcal{G}}_{\eps}(M) \right| \leq C \| M \|_{L^{2}(\Omega)} \quad \forall M \in L^{2}(\Omega; \R^{3 \times 3}) \, ,
\end{equation}
and the proof is concluded.
	\end{proof}

Combined with the Riesz Representation Theorem, the result of Proposition~\ref{emboundsprop1} implies the existence of elements 
$$
\mathcal{G} \in \mathcal{L}(W^{1,\infty}(\Omega; \R^{3\times 3}), \R^{3\times 3}) \, , \quad \mathcal{J} \in (W^{1,\infty}(\Omega; \R^3))^{*} \, , \quad \widetilde{\mathcal{G}} \in L^2(\Omega;\R^{3\times 3}) \, , \quad \widetilde{\bs{\mathcal{J}}} \in L^2(\Omega;\R^{ 3}) \, ,
$$
such that the following convergences hold as $\eps \to 0^{+}$:
\begin{equation}\label{hyp_cv_measuresapp}
	\left\{
	\begin{aligned}
		& \mathcal{G}_{\eps} \rightarrow \mathcal{G} \ \ \ \text{in the strong operator topology of} \ \mathcal{L}(W^{1,\infty}(\Omega; \R^{3\times 3}),\R^{3\times 3}) \, ; \\[6pt]
		& \mathcal{J}_{\eps} \rightharpoonup \mathcal{J} \ \ \ *-\text{weakly in} \ (W^{1,\infty}(\Omega; \R^3))^{*} \, ; \\[6pt]
		& \widetilde{\mathcal{G}}_{\eps} \rightharpoonup\widetilde{\mathcal{G}} \ \ \ \text{weakly in} \ L^{2}(\Omega; \R^{3 \times 3}) \, ; \qquad \mathcal{J}_{\eps} \rightharpoonup \widetilde{\bs{\mathcal{J}}} \ \ \ \text{weakly in} \ L^{2}(\Omega; \R^3) \, ,
	\end{aligned}
	\right.
\end{equation}
along sub-sequences that are not being relabeled, see \eqref{hyp_cv_measures2}. Now, for every $\eps \in I_{*}$ we have 
	\begin{equation} \label{empmeasuresdefl22}
	\left\{
	\begin{aligned}
		& \widetilde{\mathcal{G}}_{\eps}(M) - \mathcal{G}_{\eps}(M) = \eps^3 \sum_{n=1}^{N(\eps)}\mathcal{G}_{n}\left( \dfrac{1}{| B(\bs{\xi}_{n}^{\eps},\delta_{1} \eps) |} \, \int\limits_{B(\bs{\xi}_{n}^{\eps},\delta_{1} \eps)} (M - M(\bs{\xi}_{n}^{\eps})) \right) \quad \forall M \in W^{1, \infty}(\Omega; \R^{3\times 3}) \, ; \\[6pt]
		& \widetilde{\mathcal{J}}_{\eps}(\bs{\varphi}) - \mathcal{J}_{\eps}(\bs{\varphi}) = \eps^3  \sum_{n=1}^{N(\eps)}\mathcal{G}_{n}\bs{\mu}_{n}^{\eps} \left( \dfrac{1}{| B(\bs{\xi}_{n}^{\eps},\delta_{1} \eps) |} \, \int\limits_{B(\bs{\xi}_{n}^{\eps},\delta_{1} \eps)} (\bs{\varphi} - \bs{\varphi}(\bs{\xi}_{n}^{\eps})) \right) \quad \forall \bs{\varphi} \in W^{1, \infty}(\Omega; \R^3) \, .
	\end{aligned}
	\right.
\end{equation}		
In what follows, $C > 0$ will always denote a generic constant that depends on $\Omega$ and $\{ \delta_{i} \}^{2}_{i=0}$ (independently of $\varepsilon \in I_{*}$), but that may change from line to line.

Given any $\bs{\varphi} \in W^{1, \infty}(\Omega; \R^3)$, there holds
\begin{equation} \label{empimeasures0}
\int\limits_{B(\bs{\xi}_{n}^{\eps},\delta_{1} \eps)} |\bs{\varphi} - \bs{\varphi}(\bs{\xi}_{n}^{\eps}) | \leq C \eps^{4} \| \nabla \bs{\varphi} \|_{L^{\infty}(\Omega)} \qquad \forall n \in \{1,\ldots,N(\eps)\} \, .
\end{equation}
Therefore, arguing as in the proof of Proposition~\ref{emboundsprop1}, the Cauchy-Schwartz inequality and \eqref{count}-\eqref{kineticuni}-\eqref{bound_R_n}-\eqref{empimeasures0} entail
\begin{equation} \label{empimeasures1}
\begin{aligned}
	\left| \eps^3 \sum_{n=1}^{N(\eps)}\mathcal{G}_{n}\bs{\mu}_{n}^{\eps} \left( \dfrac{1}{| B(\bs{\xi}_{n}^{\eps},\delta_{1} \eps) |} \, \int\limits_{B(\bs{\xi}_{n}^{\eps},\delta_{1} \eps)} (\bs{\varphi} - \bs{\varphi}(\bs{\xi}_{n}^{\eps})) \right) \right| \leq C \mathcal{S}_{*} \eps \| \nabla \bs{\varphi} \|_{L^{\infty}(\Omega)} \qquad \forall \eps \in I_{*} \, .
\end{aligned}
\end{equation}		
Similarly one can derive, for every $M \in W^{1,\infty}(\Omega; \R^{3 \times 3})$,
\begin{equation} \label{empimeasures2}
\begin{aligned}
	\left| \eps^3\sum_{n=1}^{N(\eps)} \mathcal{G}_{n}\left( \dfrac{1}{| B(\bs{\xi}_{n}^{\eps},\delta_{1} \eps) |} \, \int\limits_{B(\bs{\xi}_{n}^{\eps},\delta_{1} \eps)} (M - M(\bs{\xi}_{n}^{\eps})) \right) \right| \leq C \eps \| \nabla M \|_{L^{\infty}(\Omega)} \qquad \forall \eps \in I_{*} \, .
\end{aligned}
\end{equation}
In view of \eqref{empmeasuresdefl22}-\eqref{empimeasures1}-\eqref{empimeasures2}, we obtain that, along the sub-sequences in \eqref{hyp_cv_measuresapp}, there holds
	\begin{equation}\label{empmeasures3}
	\left\{
	\begin{aligned}
		& \mathcal{G}(M) = \int_{\Omega} \widetilde{\mathcal{G}} M  \qquad \forall M \in W^{1,\infty}(\Omega; \R^{3\times 3}) \, ; \\[6pt]
		& \mathcal{J}(\bs{\varphi} ) = \int_{\Omega} \widetilde{\bs{\mathcal{J}}} \cdot \bs{\varphi}  \qquad \forall \bs{\varphi} \in W^{1,\infty}(\Omega; \R^{3}) \, .
	\end{aligned}
	\right.
\end{equation}

\begin{remark}
On the one hand, the result of Proposition~\ref{emboundsprop1} ensures that the sequences of capacity densities defined in \eqref{empmeasuresdef} always have convergent sub-sequences in the sense of \eqref{hyp_cv_measuresapp}. Therefore, assumption \eqref{hyp_cv_measures} merely asserts that the whole sequences converge to such limits. On the other hand, \eqref{empmeasures3} guarantees that these limits belong to suitable Lebesgue spaces, thereby justifying \eqref{hyp_cv_measures2} as well. We refer to \cite[Assumptions (A2)-(A3)]{Hillairet2018} and \cite[Equations (17)-(18)]{hillairet2019effect} for related discussions.
\end{remark}
		
We conclude this section by studying the asymptotic behavior of a particular capacity density that plays an important role in the proof of Theorem \ref{pro:corrector}.		

\begin{lemma}
Let $\Omega \subset \mathbb{R}^{3}$ be an admissible domain. Suppose that $(\bs{v}_\eps)_{\varepsilon \in I_{*}} \subset W^{1,2}(\Omega; \mathbb{R}^{3})$ is a sequence that weakly converges to $\bs{v} \in W^{1,2}(\Omega; \mathbb{R}^{3})$, as $\varepsilon \to 0^{+}$, in $W^{1,2}(\Omega; \mathbb{R}^{3})$. Setting
$$ 
A^{\eps}_{n} \doteq B(\bs{\xi}_{n}^{\eps},\delta_1 \eps) \setminus  \overline{B \! \left( \bs{\xi}_{n}^{\eps}, \dfrac{\delta_1 \eps}{2} \right)} \qquad \forall n \in \{1, \ldots, N(\eps) \} \, ,
$$
there holds, under assumption \eqref{hyp_cv_measures},
\begin{equation}\label{W_eps_resistanceapp}
	\lim_{\varepsilon \to 0^{+}} \eps^3 \sum_{n=1}^{N(\eps)}  \mathcal{G}_{n} \bs{\varphi}(\bs{\xi}_{n}^{\eps}) \cdot \left( \dfrac{1}{| A^{\eps}_{n}|} \int_{A^{\eps}_{n}} \bs{v}_\eps \right) = \int_{\Omega} \mathcal{G} \bs{v} \cdot \bs{\varphi} \qquad \forall \bs{\varphi} \in W^{1,\infty}(\Omega; \R^{3}) \, .
\end{equation}
\end{lemma}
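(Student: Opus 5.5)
The plan is to replace the averages of $\bs v_\eps$ by averages of the weak limit $\bs v$, and then to reduce to the assumed convergence of $\mathcal{G}_\eps$ by approximating $\bs v$ by smooth fields. To this end, set $\bar{\bs v}^\eps_n \doteq |A^\eps_n|^{-1}\int_{A^\eps_n}\bs v_\eps$ and $\bar{\bs v}_n \doteq |A^\eps_n|^{-1}\int_{A^\eps_n}\bs v$. Split the sum as
\[
\eps^3\sum_{n=1}^{N(\eps)}\mathcal{G}_n\bs\varphi(\bs\xi^\eps_n)\cdot(\bar{\bs v}^\eps_n-\bar{\bs v}_n)+\eps^3\sum_{n=1}^{N(\eps)}\mathcal{G}_n\bs\varphi(\bs\xi^\eps_n)\cdot\bar{\bs v}_n \, .
\]

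For the first sum, use \eqref{bound_R_n}, Jensen's inequality and $|A^\eps_n|\simeq\eps^3$. This gives the bound
\[
C\|\bs\varphi\|_{L^\infty}\sum_{n}\|\bs v_\eps-\bs v\|_{L^1(A^\eps_n)}\le C\|\bs\varphi\|_{L^\infty}\|\bs v_\eps-\bs v\|_{L^1(\Omega)} \, ,
\]
where the last step uses that the annuli $A^\eps_n$ are disjoint by \eqref{perforation}. Rellich compactness gives $\bs v_\eps\to\bs v$ strongly in $L^2(\Omega)$, hence in $L^1(\Omega)$, so the first sum tends to $0$.

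For the second sum, fix $\eta>0$ and pick $\bs w\in\mathcal C^\infty(\overline\Omega;\R^3)$ with $\|\bs v-\bs w\|_{L^2(\Omega)}<\eta$. Replacing $\bs v$ by $\bs w$ costs, by Cauchy–Schwarz exactly as in the proof of Proposition~\ref{emboundsprop1}, at most
\[
C\|\bs\varphi\|_{L^\infty}\sum_n\|\bs v-\bs w\|_{L^1(A^\eps_n)}\le C\|\bs\varphi\|_{L^\infty}\eta \, ,
\]
uniformly in $\eps$. Next, since $\bs w$ is Lipschitz and $A^\eps_n\subset B(\bs\xi^\eps_n,\delta_1\eps)$, we have $|\bar{\bs w}_n-\bs w(\bs\xi^\eps_n)|\le C\eps\|\nabla\bs w\|_{L^\infty}$. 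Combined with \eqref{count}, this contributes $O(\eps)$ and vanishes in the limit. What remains is
\[
\eps^3\sum_n\mathcal{G}_n\bs\varphi(\bs\xi^\eps_n)\cdot\bs w(\bs\xi^\eps_n)=\mathcal{G}_\eps(M) \, , \qquad M\doteq\bs w\otimes\bs\varphi\in W^{1,\infty}(\Omega;\R^{3\times3}) \, ,
\]
where the trace pairing is understood with the symmetry of $\mathcal{G}_n$ from Lemma~\ref{lemma_A}. By \eqref{hyp_cv_measures2}$_1$, $\mathcal{G}_\eps(M)$ converges to $\int_\Omega\mathcal{G}\bs w\cdot\bs\varphi$.

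Finally, since $\mathcal{G}\in L^2(\Omega)$ and $\bs\varphi\in L^\infty(\Omega)$, we have $|\int_\Omega\mathcal{G}(\bs w-\bs v)\cdot\bs\varphi|\le\|\mathcal{G}\|_{L^2}\|\bs\varphi\|_{L^\infty}\eta$. Therefore the $\limsup$ of the difference between the left-hand side of \eqref{W_eps_resistanceapp} and $\int_\Omega\mathcal{G}\bs v\cdot\bs\varphi$ is at most $C\eta$. Letting $\eta\to0$ proves \eqref{W_eps_resistanceapp}.

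The main obstacle is the density step. It needs a replacement error that is uniform in $\eps$, which here comes from the $L^2$-control of local averages (this is where $\widetilde{\mathcal G}_\eps$-type estimates, i.e. Proposition~\ref{emboundsprop1}, enter). It also requires interpreting the matrix pairing in \eqref{hyp_cv_measures2} consistently with the symmetric $\mathcal{G}_n$.
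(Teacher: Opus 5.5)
Your proof is correct and follows essentially the paper's argument: pass from $\bs v_\eps$ to $\bs v$ via Rellich compactness and an $\eps$-uniform bound on the averaged sums, reduce to smooth $\bs v$ by $L^2$-density, absorb the averaging error as $O(\eps)$, and conclude from \eqref{hyp_cv_measures}. Two minor refinements over the paper: you use an $L^1$ bound based on the disjointness of the annuli, and you explicitly invoke the symmetry of $\mathcal{G}_n$ and the trace pairing, both of which the paper uses only implicitly when it swaps $\bs\varphi$ and $\bs v$.
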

\noindent
\begin{proof}
In what follows, $C > 0$ will always denote a generic constant that depends on $\Omega$ and $\{ \delta_{i} \}^{2}_{i=0}$ (independently of $\varepsilon \in I_{*}$), but that may change from line to line.

Define the bilinear application $\mathcal{F}_{\eps} : W^{1,\infty}(\Omega; \R^{3}) \times W^{1,2}(\Omega; \R^{3}) \longrightarrow \mathbb{R}$ by the formula
$$
\mathcal{F}_{\eps}(\bs{\varphi}, \bs{w}) \doteq \eps^3 \sum_{n=1}^{N(\eps)} \mathcal{G}_{n}\bs{\varphi}(\bs{\xi}_{n}^{\eps}) \cdot \left( \dfrac{1}{| A^{\eps}_{n}|} \int_{A^{\eps}_{n}} \bs{w} \right) \qquad \forall (\bs{\varphi}, \bs{w}) \in W^{1,\infty}(\Omega; \R^{3}) \times W^{1,2}(\Omega; \R^{3}) \, .
$$
An application of the Cauchy-Schwartz and Jensen inequalities, as well as \eqref{count}-\eqref{bound_R_n}, provide the following bound, for every $(\bs{\varphi}, \bs{w}) \in W^{1,\infty}(\Omega; \R^{3}) \times W^{1,2}(\Omega; \R^{3})$:
\begin{equation} \label{bilinear1}
| \mathcal{F}_{\eps}(\bs{\varphi}, \bs{w}) | \leq C \eps^{3} \| \bs{\varphi} \|_{L^{\infty}(\Omega)} \sqrt{N(\eps) \sum_{n=1}^{N(\eps)} \dfrac{1}{| A^{\eps}_{n}|} \| \bs{w} \|^{2}_{L^{2}(B(\bs{\xi}_{n}^{\eps},\delta_1 \eps))}} \leq C \| \bs{\varphi} \|_{L^{\infty}(\Omega)}   \| \bs{w} \|_{L^{2}(\Omega)} \, ,
\end{equation}
so that $\mathcal{F}_{\eps}$ is a continuous bilinear form on $W^{1,\infty}(\Omega; \R^{3}) \times W^{1,2}(\Omega; \R^{3})$.

Now, fix any vector field $\bs{\varphi} \in W^{1,\infty}(\Omega; \R^{3})$. By linearity, we have
\begin{equation} \label{bilinear11}
\mathcal{F}_{\eps}(\bs{\varphi}, \bs{v}_{\eps}) = \mathcal{F}_{\eps}(\bs{\varphi}, \bs{v}_\eps - \bs{v}) + \mathcal{F}_{\eps}(\bs{\varphi}, \bs{v}) \qquad \forall \varepsilon \in I_{*} \, .
\end{equation}
By the Rellich-Kondrachov Theorem we then have $\bs{v}_{\eps} \rightarrow \bs{v}$ strongly in $L^{2}(\Omega; \mathbb{R}^{3})$, as $\varepsilon \to 0^{+}$. In view of \eqref{bilinear1}, this implies
\begin{equation} \label{bilinear2}
\lim_{\varepsilon \to 0^{+}} \mathcal{F}_{\eps}(\bs{\varphi}, \bs{v}_\eps - \bs{v}) = 0 \, ,
\end{equation}
so that, due to \eqref{bilinear11}, it remains to prove the following limit:
\begin{equation} \label{bilinear3}
	\lim_{\varepsilon \to 0^{+}} \mathcal{F}_{\eps}(\bs{\varphi}, \bs{v}) = \int_{\Omega} \mathcal{G} \bs{v} \cdot \bs{\varphi} \, .
\end{equation}
Since $\mathcal{F}_{\eps}$ is a continuous bilinear form on $W^{1,\infty}(\Omega; \R^{3}) \times W^{1,2}(\Omega; \R^{3})$ and $\mathcal{G}\in L^2(\Omega; \R^3\times \R^3)$, in order to show \eqref{bilinear3} we can suppose, by density, that $\bs{v} \in \mathcal{C}^{\infty}(\overline{\Omega}; \mathbb{R}^{3})$. Denoting by $\langle \cdot , \cdot \rangle_{\infty}$ the duality product between $(W^{1,\infty}(\Omega; \R^{3}))^{*}$ and $W^{1,\infty}(\Omega; \R^{3})$, we have
\begin{equation} \label{bilinear4}
\begin{aligned}
 \mathcal{F}_{\eps}(\bs{\varphi}, \bs{v}) & = \eps^3 \sum_{n=1}^{N(\eps)}\mathcal{G}_{n}\bs{\varphi}(\bs{\xi}_{n}^{\eps}) \cdot \left( \dfrac{1}{| A^{\eps}_{n}|} \int_{A^{\eps}_{n}} (\bs{v} - \bs{v}(\bs{\xi}_{n}^{\eps})) \right) + \eps^3 \sum_{n=1}^{N(\eps)}\mathcal{G}_{n}\bs{v}(\bs{\xi}_{n}^{\eps}) \cdot \bs{\varphi}(\bs{\xi}_{n}^{\eps}) \\[6pt]
 & =\eps^3  \sum_{n=1}^{N(\eps)}\mathcal{G}_{n}\bs{\varphi}(\bs{\xi}_{n}^{\eps}) \cdot \left( \dfrac{1}{| A^{\eps}_{n}|} \int_{A^{\eps}_{n}} (\bs{v} - \bs{v}(\bs{\xi}_{n}^{\eps})) \right) + \left\langle \eps^3 \sum_{n=1}^{N(\eps)} \delta_{\bs{\xi}_{n}^{\eps}} \mathcal{G}_{n}\bs{v}, \bs{\varphi} \right\rangle_{\! \! \! \infty} \, .
\end{aligned}
\end{equation}
Firstly, from the Mean Value Theorem and \eqref{count}-\eqref{bound_R_n} we readily deduce the bound
\begin{equation} \label{bilinear5}
\left|\eps^3  \sum_{n=1}^{N(\eps)}\mathcal{G}_{n}\bs{\varphi}(\bs{\xi}_{n}^{\eps}) \cdot \left( \dfrac{1}{| A^{\eps}_{n}|} \int_{A^{\eps}_{n}} (\bs{v} - \bs{v}(\bs{\xi}_{n}^{\eps})) \right) \right| \leq C \eps \| \bs{\varphi} \|_{L^{\infty}(\Omega)}   \| \nabla \bs{v} \|_{L^{\infty}(\Omega)} \, .
\end{equation}
Secondly, in the light of \eqref{hyp_cv_measures}, we certainly have
\begin{equation} \label{bilinear6}
	\lim_{\varepsilon \to 0^{+}} \left\langle \eps^3 \sum_{n=1}^{N(\eps)} \delta_{\bs{\xi}_{n}^{\eps}} \mathcal{G}_{n}\bs{v}, \bs{\varphi} \right\rangle_{\! \! \! \infty} = \int_{\Omega} \mathcal{G} \bs{v} \cdot \bs{\varphi} \, .
\end{equation}
The limit \eqref{bilinear3} is then obtained from \eqref{bilinear4}-\eqref{bilinear5}-\eqref{bilinear6}. This concludes the proof.
\end{proof}
		
\noindent
{\bf Acknowledgements.} The work of Richard H\"ofer and Amina Mecherbet is supported by the  Project SUSPENSIONS, funded by the DFG (\emph{German Research Foundation}) and ANR (\textit{Agence Nationale de la Recherche}, grants DFG-545373458 and ANR-24-CE92-0028, respectively. The research of Gianmarco Sperone is supported by the \textit{Chilean National Agency for Research and Development} (ANID) through the \textit{Fondecyt Iniciación} grant 11250322. 
\par\smallskip
\noindent
{\bf Data availability statement.} Data sharing not applicable to this article as no datasets were generated or analyzed during the current study.
\par\smallskip
\noindent
{\bf Conflict of interest statement}.  The Authors declare that they have no conflict of interest.

		\phantomsection
		\addcontentsline{toc}{section}{References}
		\bibliographystyle{abbrv}
		\bibliography{references}

\begin{thebibliography}{10}

\bibitem{acosta2017divergence}
G.~Acosta and R.~G. Dur{\'a}n.
\newblock {\em Divergence {O}perator and {R}elated {I}nequalities}.
\newblock Springer, 2017.

\bibitem{allaire1}
G.~Allaire.
\newblock Homogenization of the {N}avier-{S}tokes equations in open sets
  perforated with tiny holes {I}. {A}bstract framework, a volume distribution
  of holes.
\newblock {\em Archive for Rational Mechanics and Analysis}, 113:209--259,
  1991.

\bibitem{allaire2}
G.~Allaire.
\newblock Homogenization of the {N}avier-{S}tokes equations in open sets
  perforated with tiny holes {II}. {N}on-critical sizes of the holes for a
  volume distribution and a surface distribution of holes.
\newblock {\em Archive for Rational Mechanics and Analysis}, 113:261--298,
  1991.

\bibitem{allaire3}
G.~Allaire.
\newblock Homogenization of the {N}avier-{S}tokes equations with a slip
  boundary condition.
\newblock {\em Communications on Pure and Applied Mathematics}, 44(6):605--641,
  1991.

\bibitem{amick1977steady}
C.~J. Amick.
\newblock Steady solutions of the {N}avier-{S}tokes equations in unbounded
  channels and pipes.
\newblock {\em Annali della Scuola Normale Superiore di Pisa - Classe di
  Scienze}, 4(3):473--513, 1977.

\bibitem{amick1984existence}
C.~J. Amick.
\newblock Existence of solutions to the nonhomogeneous steady {N}avier-{S}tokes
  equations.
\newblock {\em Indiana University Mathematics Journal}, 33(6):817--830, 1984.

\bibitem{bear2013dynamics}
J.~Bear.
\newblock {\em Dynamics of {F}luids in {P}orous {M}edia}.
\newblock Courier Corporation, 2013.

\bibitem{boggio1910sul}
T.~Boggio.
\newblock Sul moto stazionario lento di una sfera in un liquido viscoso.
\newblock {\em Rendiconti del Circolo Matematico di Palermo (1884-1940)},
  30(1):65--81, 1910.

\bibitem{bradley1987petroleum}
H.~B. Bradley.
\newblock {\em Petroleum {E}ngineering {H}andbook}.
\newblock Society of Petroleum Engineers, Richardson, Texas, 1987.

\bibitem{bravin2024collective}
M.~Bravin, E.~Feireisl, A.~Roy, and A.~Zarnescu.
\newblock On the collective effect of a large system of heavy particles
  immersed in a {N}ewtonian fluid.
\newblock {\em ArXiv preprint arXiv:2407.08595}, 2024.

\bibitem{brinkman1949calculation}
H.~C. Brinkman.
\newblock A calculation of the viscous force exerted by a flowing fluid on a
  dense swarm of particles.
\newblock {\em Flow, Turbulence and Combustion}, 1(1):27--34, 1949.

\bibitem{CarrapatosoHillairet20}
K.~Carrapatoso and M.~Hillairet.
\newblock On the derivation of a {S}tokes--{B}rinkman problem from {S}tokes
  equations around a random array of moving spheres.
\newblock {\em Communications in Mathematical Physics}, 373(1):265--325, 2020.

\bibitem{cattabriga1961problema}
L.~Cattabriga.
\newblock Su un problema al contorno relativo al sistema di equazioni di
  {S}tokes.
\newblock {\em Rendiconti del Seminario Matematico della Università di
  Padova}, 31:308--340, 1961.

\bibitem{cioranescu2018strange}
D.~Cioranescu and F.~Murat.
\newblock A strange term coming from nowhere.
\newblock In {\em Topics in the Mathematical Modelling of Composite Materials},
  chapter~4, pages 45--93. Springer, 1997.

\bibitem{conca1985application}
C.~Conca.
\newblock On the application of the homogenization theory to a class of
  problems arising in fluid mechanics.
\newblock {\em Journal de Mathématiques Pures et Appliquées}, 64(1):31--75,
  1985.

\bibitem{concaenglish}
C.~Conca, F.~Murat, and O.~Pironneau.
\newblock The {S}tokes and {N}avier - {S}tokes equations with boundary
  conditions involving the pressure.
\newblock {\em Japanese Journal of Mathematics}, 20(2):279--318, 1994.

\bibitem{darcy1856fontaines}
H.~Darcy.
\newblock {\em Les {F}ontaines {P}ubliques de la {V}ille de {D}ijon}.
\newblock Victor Dalmont, Paris, 1856.

\bibitem{DesvillettesGolseRicci08}
L.~Desvillettes, F.~Golse, and V.~Ricci.
\newblock The mean-field limit for solid particles in a {N}avier-{S}tokes flow.
\newblock {\em Journal of Statistical Physics}, 131(5):941--967, 2008.

\bibitem{diening2017inverse}
L.~Diening, E.~Feireisl, and Y.~Lu.
\newblock The inverse of the divergence operator on perforated domains with
  applications to homogenization problems for the compressible
  {N}avier--{S}tokes system.
\newblock {\em ESAIM: Control, Optimisation and Calculus of Variations},
  23(3):851--868, 2017.

\bibitem{ene1973equations}
H.~I. Ene and E.~S{\'a}nchez-Palencia.
\newblock {\'E}quations et conditions aux limites pour un mod{\`e}le de milieu
  poreux.
\newblock {\em Comptes Rendus de l'Acad{\'e}mie des Sciences. S{\'e}rie A.
  Sciences Math{\'e}matique}, 277:A257--A259, 1973.

\bibitem{farwig1994generalized}
R.~Farwig and H.~Sohr.
\newblock Generalized resolvent estimates for the {S}tokes system in bounded
  and unbounded domains.
\newblock {\em Journal of the Mathematical Society of Japan}, 46(4):607--643,
  1994.

\bibitem{feireisl2021homogenization}
E.~Feireisl, Y.~Lu, and Y.~Sun.
\newblock Homogenization of a non-homogeneous heat conducting fluid.
\newblock {\em Asymptotic Analysis}, 125(3-4):327--346, 2021.

\bibitem{feireisl2016homogenization}
E.~Feireisl, Y.~Namlyeyeva, and {\v{S}}.~Ne{\v{c}}asov{\'a}.
\newblock Homogenization of the evolutionary {N}avier--{S}tokes system.
\newblock {\em Manuscripta Mathematica}, 149:251--274, 2016.

\bibitem{galdi2011introduction}
G.~P. Galdi.
\newblock {\em An {I}ntroduction to the {M}athematical {T}heory of the
  {N}avier-{S}tokes {E}quations: {S}teady-{S}tate {P}roblems}.
\newblock Springer Science \& Business Media, 2011.

\bibitem{galdi2008hemodynamical}
G.~P. Galdi, A.~M. Robertson, R.~Rannacher, and S.~Turek.
\newblock {\em Hemodynamical {F}lows: {M}odeling, {A}nalysis and {S}imulation
  ({O}berwolfach {S}eminars)}.
\newblock Springer Science \& Business Media, 2008.

\bibitem{galdi1990existence}
G.~P. Galdi and C.~G. Simader.
\newblock Existence, uniqueness and ${L}^{q}$-estimates for the {S}tokes
  problem in an exterior domain.
\newblock {\em Archive for Rational Mechanics and Analysis}, 112(4):291--318,
  1990.

\bibitem{Giunti21}
A.~Giunti.
\newblock Derivation of {D}arcy’s law in randomly perforated domains.
\newblock {\em Calculus of Variations and Partial Differential Equations},
  60(5):1--30, 2021.

\bibitem{GiuntiHoefer21}
A.~Giunti and R.~M. H\"ofer.
\newblock Homogenisation for the {Stokes} equations in randomly perforated
  domains under almost minimal assumptions on the size of the holes.
\newblock {\em Annales de l'Institut Henri Poincaré C. Analyse Non Linéaire},
  36(7):1829--1868, 2019.

\bibitem{heywood1996artificial}
J.~G. Heywood, R.~Rannacher, and S.~Turek.
\newblock Artificial boundaries and flux and pressure conditions for the
  incompressible {N}avier--{S}tokes equations.
\newblock {\em International Journal for Numerical Methods in Fluids},
  22(5):325--352, 1996.

\bibitem{Hillairet2018}
M.~Hillairet.
\newblock On the homogenization of the {S}tokes problem in a perforated domain.
\newblock {\em Archive for Rational Mechanics and Analysis}, 230(3):1179--1228,
  2018.

\bibitem{mecherbet2020estimates}
M.~Hillairet and A.~Mecherbet.
\newblock ${L}^{p}$ estimates for the homogenization of {S}tokes problem in a
  perforated domain.
\newblock {\em Journal of the Institute of Mathematics of Jussieu},
  19(1):231--258, 2020.

\bibitem{hillairet2019effect}
M.~Hillairet, A.~Moussa, and F.~Sueur.
\newblock On the effect of polydispersity and rotation on the {B}rinkman force
  induced by a cloud of particles on a viscous incompressible flow.
\newblock {\em Kinetic and Related Models}, 12(4):681--701, 2019.

\bibitem{hofer2023homogenization}
R.~M. H{\"o}fer.
\newblock Homogenization of the {N}avier--{S}tokes equations in perforated
  domains in the inviscid limit.
\newblock {\em Nonlinearity}, 36(11):6020--6047, 2023.

\bibitem{HoeferJansen20}
R.~M. H{\"o}fer and J.~Jansen.
\newblock Convergence rates and fluctuations for the {S}tokes--{B}rinkman
  equations as homogenization limit in perforated domains.
\newblock {\em Archive for Rational Mechanics and Analysis}, 248(3):50, 2024.

\bibitem{HMS25}
R.~M. H{\"o}fer, A.~Mecherbet, and R.~Schubert.
\newblock Derivation of the monokinetic vlasov-stokes equations.
\newblock {\em ArXiv preprint: 2511.14612}, 2025.

\bibitem{hofer2024quantitative}
R.~M. H{\"o}fer, {\v{S}}.~Necasov{\'a}, and F.~Oschmann.
\newblock Quantitative homogenization of the compressible {N}avier–{S}tokes
  equations towards {D}arcy’s law.
\newblock {\em Annales de l'Institut Henri Poincaré C. Analyse Non Linéaire},
  43(3):669--709, 2026.

\bibitem{HoeferSchubert25}
R.~M. H{\"o}fer and R.~Schubert.
\newblock Sedimentation of particles with very small inertia {II}: Derivation,
  {C}auchy problem and hydrodynamic limit of the {V}lasov-{S}tokes equation.
\newblock {\em ArXiv preprint: 2311.01891}, 2023.

\bibitem{hormander1998analysis}
L.~H{\"o}rmander.
\newblock {\em The {A}nalysis of {L}inear {P}artial {D}ifferential {O}perators
  {I}: {D}istribution {T}heory and {F}ourier {A}nalysis}, volume 256.
\newblock Springer-Verlag, 2003.

\bibitem{hornung1996homogenization}
U.~Hornung.
\newblock {\em Homogenization and {P}orous {M}edia}, volume~6 of {\em
  Interdisciplinary Applied Mathematics}.
\newblock Springer Science \& Business Media, 1996.

\bibitem{jing2025unified}
W.~Jing, Y.~Lu, and C.~Prange.
\newblock Unified quantitative analysis of the {S}tokes equations in dilute
  perforated domains via layer potentials.
\newblock {\em Multiscale Modeling \& Simulation}, 23(3):1145--1182, 2025.

\bibitem{kapitanskii1983spaces}
L.~V. Kapitanskii and K.~Pileckas.
\newblock Spaces of solenoidal vector fields in boundary value problems for the
  {N}avier-{S}tokes equations in regions with noncompact boundaries.
\newblock {\em Matematicheskii Institut imeni Steklova Trudy}, 159:5--36, 1983.

\bibitem{korobkov2011bernoulli}
M.~V. Korobkov.
\newblock Bernoulli law under minimal smoothness assumptions.
\newblock {\em Doklady Mathematics}, 83(1):107--110, 2011.

\bibitem{korobkov2015solution}
M.~V. Korobkov, K.~Pileckas, and R.~Russo.
\newblock Solution of {L}eray's problem for stationary {N}avier-{S}tokes
  equations in plane and axially symmetric spatial domains.
\newblock {\em Annals of Mathematics}, 181(2):769--807, 2015.

\bibitem{korobkov2020solvability}
M.~V. Korobkov, K.~Pileckas, and R.~Russo.
\newblock Solvability in a finite pipe of steady-state {N}avier--{S}tokes
  equations with boundary conditions involving {B}ernoulli pressure.
\newblock {\em Calculus of Variations and Partial Differential Equations},
  59(1):1--22, 2020.

\bibitem{korobkov2024steady}
M.~V. Korobkov, K.~Pileckas, and R.~Russo.
\newblock {\em The {S}teady {N}avier-{S}tokes {S}ystem: {B}asics of the
  {T}heory and the {L}eray {P}roblem}.
\newblock Springer, 2024.

\bibitem{ladyzhenskaya1969mathematical}
O.~A. Ladyzhenskaya.
\newblock {\em The {M}athematical {T}heory of {V}iscous {I}ncompressible
  {F}low}, volume~76.
\newblock Gordon and Breach New York, 1969.

\bibitem{landau}
L.~Landau and E.~Lifshitz.
\newblock {\em Theoretical {P}hysics: {F}luid {M}echanics}, volume~6.
\newblock Pergamon Press, 1987.

\bibitem{marchenko1974boundary}
V.~A. Marchenko and E.~Y. Khruslov.
\newblock {\em Boundary {V}alue {P}roblems in {D}omains with {F}ine-{G}rained
  {B}oundaries}.
\newblock Naukova Dumka, Academy of Sciences of the Ukrainian SSR, Kiev, 1974.

\bibitem{maremonti1999stokes}
P.~Maremonti, R.~Russo, and G.~Starita.
\newblock On the {S}tokes equations: the boundary value problem.
\newblock In {\em Advances in Fluid Dynamics}, volume~4, pages 69--140. Aracne
  Editrice, 1999.

\bibitem{masmoudi2002homogenization}
N.~Masmoudi.
\newblock Homogenization of the compressible {N}avier--{S}tokes equations in a
  porous medium.
\newblock {\em ESAIM: Control, Optimisation and Calculus of Variations},
  8:885--906, 2002.

\bibitem{maz2013sobolev}
V.~Maz'ya.
\newblock {\em Sobolev Spaces: with Applications to Elliptic Partial
  Differential Equations}, volume 342.
\newblock Springer Science \& Business Media, 2011.

\bibitem{necas2011direct}
J.~Ne{\v{c}}as.
\newblock {\em Direct {M}ethods in the {T}heory of {E}lliptic {E}quations}.
\newblock Springer Science \& Business Media, 2011.

\bibitem{panasenko2024multiscale}
G.~Panasenko and K.~Pileckas.
\newblock {\em Multiscale {A}nalysis of {V}iscous {F}lows in {T}hin {T}ube
  {S}tructures}.
\newblock Springer, 2024.

\bibitem{patriarca2025homogenization}
C.~Patriarca and G.~Sperone.
\newblock Homogenization of {L}eray’s flux problem for the steady-state
  {N}avier--{S}tokes equations in a multiply-connected planar domain.
\newblock {\em Journal of Nonlinear Science}, 35(6):111, 2025.

\bibitem{pileckas1983spaces}
K.~Pileckas.
\newblock Spaces of solenoidal vectors.
\newblock {\em Trudy Matematicheskogo Instituta imeni V. A. Steklova},
  159:137--149, 1983.

\bibitem{pileckas2007navier}
K.~Pileckas.
\newblock {N}avier--{S}tokes system in domains with cylindrical outlets to
  infinity. {L}eray’s problem.
\newblock In {\em Handbook of Mathematical Fluid Dynamics}, volume~4, pages
  445--647. Elsevier Amsterdam, 2007.

\bibitem{sanchez1980non}
E.~S{\'a}nchez-Palencia.
\newblock {\em Non-{H}omogeneous {M}edia and {V}ibration {T}heory}, volume 127
  of {\em Lecture Note in Physics}.
\newblock Springer-Verlag, 1980.

\bibitem{shen2022sharp}
Z.~Shen.
\newblock Sharp convergence rates for {D}arcy’s law.
\newblock {\em Communications in Partial Differential Equations},
  47(6):1098--1123, 2022.

\bibitem{sperone2023homogenization}
G.~Sperone.
\newblock Homogenization of the steady-state {N}avier-{S}tokes equations with
  prescribed flux rate or pressure drop in a perforated pipe.
\newblock {\em Journal of Differential Equations}, 375:653--681, 2023.

\bibitem{sperone2022}
G.~Sperone.
\newblock Steady-state {N}avier--{S}tokes flow in an obstructed pipe under
  mixed boundary conditions and with a prescribed transversal flux rate.
\newblock {\em Calculus of Variations and Partial Differential Equations},
  62(9):236, 2023.

\bibitem{stokes1851effect}
G.~G. Stokes.
\newblock On the effect of the internal friction of fluids on the motion of
  pendulums.
\newblock {\em Transactions of the Cambridge Philosophical Society}, 9:8, 1851.

\bibitem{tartar2006homogeneisation}
L.~Tartar.
\newblock Homog{\'e}n{\'e}isation en hydrodynamique.
\newblock In {\em Singular Perturbations and Boundary Layer Theory: Proceedings
  of the Conference held at the \'{E}cole Centrale de Lyon, December 8--10,
  1976}, pages 474--481. Springer, 1976.

\bibitem{tartar2009general}
L.~Tartar.
\newblock {\em The {G}eneral {T}heory of {H}omogenization: {A} {P}ersonalized
  {I}ntroduction}, volume~7 of {\em Lecture Notes of the Unione Matematica
  Italiana}.
\newblock Springer Science \& Business Media, 2009.

\bibitem{von2004aerodynamics}
T.~von K{\'a}rm{\'a}n.
\newblock {\em Aerodynamics: {S}elected {T}opics in the {L}ight of their
  {H}istorical {D}evelopment}.
\newblock Courier Corporation, 2004.

\end{thebibliography}
		\vspace{5mm}
	
	\end{document}